\numberwithin{equation}{section}
\newcommand{\R}{{\mathbb R}}
\newcommand{\be}{\begin{equation}}
\newcommand{\ee}{\end{equation}}
\newcommand{\ben}{\begin{eqnarray*}}
\newcommand{\enn}{\end{eqnarray*}}
\newtheorem{theorem}{\textbf Theorem}[section]
\newtheorem{lemma}{\textbf Lemma}[section]
 \numberwithin{equation}{section}
\newtheorem{remark}{Remark}[section]
\renewcommand{\theequation}{\arabic{section}.\arabic{equation}}
\begin{document}

\title{\textbf{%Maximal Regularities,
 Blow-up Behaviors of Ground States in Ergodic Mean-field Games Systems with Hartree-type Coupling}}
\author{
Fanze Kong \thanks{Department of Applied Mathematics, University of Washington, Seattle, WA 98195, USA; fzkong@uw.edu}, 
Yonghui Tong\thanks{Center for Mathematical Sciences, Wuhan University of Technology, Wuhan 430070, China; myyhtong@whut.edu.cn},
Xiaoyu Zeng\thanks{Center for Mathematical Sciences, Wuhan University of Technology, Wuhan 430070, China; xyzeng@whut.edu.cn}
 and Huan-Song Zhou\thanks{Center for Mathematical Sciences, Wuhan University of Technology, Wuhan 430070, China; hszhou@whut.edu.cn}
}
\date{\today}
\maketitle
\abstract{
In this paper, we investigate the concentration behaviors of ground states to stationary Mean-field Games systems (MFGs) with the nonlocal coupling in $\mathbb R^n$, $n\geq 2.$  With the mass critical exponent imposed on Riesz potentials, we first discuss the existence of ground states to potential-free MFGs, which corresponds to the establishment of Gagliardo-Nirenberg type's inequality.  Next, with the aid of the optimal inequality, we classify the existence of ground states to stationary MFGs with Hartree-type coupling in terms of the $L^1$-norm of population density defined by $M$.  In addition, under certain types of coercive potentials, the asymptotics of ground states to ergodic MFGs with the nonlocal coupling are captured.  Moreover, if the local polynomial expansions are imposed on potentials, we study the refined asymptotic behaviors of ground states and show that they concentrate on the flattest minima of potentials.

\medskip
%{\sc MSC}: {35A01, 35B40, 35K57, 35Q92, 92C17}\\
{\sc MSC}: {35J47, 35J50, 46N10}

{\sc Keywords}: Mean-field Games, Variational Method, Nonlocal Coupling, Ground States, Blow-up Profiles

\maketitle

  %\hypersetup{linkcolor=black}

% \tableofcontents

\section{Introduction}\label{intro1}
In this paper, we are concerned with the following ergodic stationary Mean-field Games systems 
  \begin{align}\label{goalmodel}
\left\{\begin{array}{ll}
-\Delta u+H(\nabla u)+\lambda=V(x)- K_{\alpha}\ast m,&x\in\mathbb R^n,\\
\Delta m+\nabla\cdot (m\nabla H(\nabla u))=0,&x\in\mathbb R^n,\\
 \int_{\mathbb R^n}m\,dx=M>0,
\end{array}
\right.
\end{align}
where $(m,u,\lambda)$ denotes a solution, $\lambda$ is a so-called Lagrange multiplier, $V$ is the potential function and $K_{\alpha}$ is defined as the Riesz potential satisfying 
\begin{align}\label{MFG-K}
K=\frac{1}{|x|^{n-\alpha}}\text{ with }0<\alpha<n. 
\end{align}
Here $m$ represents the population density and $u$ is the value function of a typical player.  In particular, Hamiltonian $H:\mathbb R^n\rightarrow \mathbb R$ is in general assumed to be convex uniformly and the typical form is
\begin{align}\label{MFG-H}
H(\boldsymbol{p}):={C_H}\vert \boldsymbol{p} \vert^{\gamma},~~\exists \gamma>1,~C_H>0.
\end{align}
Correspondingly, the Lagrangian is defined by $L(\boldsymbol{q}):=\sup_{\boldsymbol{p}\in\mathbb R^n }[\boldsymbol{q}\cdot \boldsymbol{p}-H(\boldsymbol{p})]$ and if $H$ is given by \eqref{MFG-H}, $L$ can be written as  
\begin{align}\label{MFG-L}
L(\boldsymbol{q})={C_L}\vert \boldsymbol{q}\vert^{\gamma'},~~\gamma'=\frac{\gamma}{\gamma-1}>1~\text{and}~C_L=\frac{1}{\gamma'}(\gamma C_H)^{\frac{1}{1-\gamma}}>0,
\end{align}
where $\gamma'$ is the conjugate number of $\gamma.$

Assume $H$ in system (\ref{goalmodel}) is given by (\ref{MFG-H}) and $V(x)$ has polynomial lower and upper bounds when $|x|$ is large enough, then Cesaroni and Bernardini \cite{bernardini2023ergodic,bernardini2022mass} studied the existence and concentration of ground states to (\ref{goalmodel}) under the subcritical mass exponent case by using the variational method.  Motivated by their results and our analysis focused on Mean-field Games systems with the local coupling \cite{cirant2024critical}, we shall utilize the variational approach to discuss the existence and asymptotic behaviors of ground states to (\ref{goalmodel}) under the \textit{critical mass exponent} case, i.e. $\alpha=\alpha^*:= n-\gamma'$ in \eqref{MFG-K}.
\subsection{Mean-field Games Theory and Systems}

Motivated by the theories of statistical physics, Huang et al. \cite{Huang} and Lasry et al. \cite{Lasry} in 2007 developed Mean-field Games theories and proposed a class of coupled PDE systems to describe the differential games among a huge number of players, which have rich applications in the fields of economics, finance and management.  

The general form of time-dependent Mean-field Games systems reads as%which are a class of backward-forward parabolic equations consisting of Hamilton-Jacobi equations and Fokker-Planck equations. In the setting described below, their mathematical form reads as
 \begin{equation}\label{MFG-time}
\left\{
\begin{array}{ll}
u_t= -\Delta u+H(\nabla u)-V(x)-f(m), &x \in \mathbb R^n,t>0,\\
m_t=\Delta m +\nabla\cdot (\nabla H(\nabla u)m),&x \in \mathbb R^n, t>0,\\
u\vert_{t=T}=u_T, m|_{t=0}=m_0,&x\in \mathbb R^n,
\end{array}
\right.
\end{equation}
where $m$ and $u$ denote the density and the value function, respectively.  Here $m_0$ represents the initial data of density and $u_T$ is the terminal data of the value function.  Now, we give a brief summary of the derivation of (\ref{MFG-time}).   Suppose the dynamics of the $i$-th player satisfies
\begin{align}\label{game-process-dXti}
dX_t^i=-\nu^i_t dt+\sqrt{2}dB_t^i, \ \ X_0^i=x^i\in\mathbb R^n,~i=1,\cdots,N,
\end{align}
where $x^i$ is the initial condition, $\nu^i_t$ is the velocity and $B_t^i$ represents the Brownian motion.  Assume $B_t^i$ for $i=1,\cdots,N$ are independent and all players are homogeneous, then we have $X_t^i$ for $i=1,\cdots,N$ follow the same process and drop ``$i$" in \eqref{game-process-dXti}.  On the other hand, each player aims to minimize the following expected cost:
\begin{align}\label{longsenseexpectation}
J(\gamma_t):=\mathbb E\int_0^T[L(\gamma_t)+V(X_t)+f(m(X_t))] dt + u_T(X_T),
\end{align}
where $L$ is the Lagrangian, $V$ measures the spatial preference and $f$ is the coupling. Invoking the dynamic programming principle \cite{BF1984,BF1995}, one can formulate the time-dependent system (\ref{MFG-time}) by analyzing the minimization of (\ref{longsenseexpectation}). We point out that many results are concentrated on the study of global well-posedness to (\ref{MFG-time}), see \cite{Car12,Car13,CGMT13,cirantgoffi2021,GPM12,GPM13,GPV13}. %is analyzed and the time-dependent system (\ref{MFG-time}) is formulated.  %Invoking the results of A. Bensoussan and J. Frehse \cite{BF1984,BF1995}, Lions et al. applied the dynamic programming principle to formulate the system (\ref{MFG-SS}) of Nash point, in which $(m,u,\lambda)$ is assumed to satisfy $(m,u,\lambda)\in W^{1,p}(\mathbb R^n)\times C^2(\mathbb R^n)\times \mathbb R.$

%One popular research topic in the study of (\ref{MFG-time}) is the global well-posedness and long-time dynamics of (\ref{MFG-time}), see \cite{Car12,Car13,CGMT13,cirantgoffi2021,GPM12,GPM13,GPV13}.  Numerical techniques including the finite difference method are also useful on the analysis of solutions to the backward-forward system (\ref{MFG-time}) and we refer the readers to \cite{ACD10,achdou2012mean,CS12,CS13}.

%We next consider the stationary problem of (\ref{MFG-time}), which serves as a paradigm to describe the distribution of Nash equilibria of infinite-horizon differential games among numerous players. % The concept of Nash equilibria is proposed by J. Nash \cite{Nash} and then the differential games theory has developed rapidly over the past few decades.
As stated in \cite{cirant2024critical}, the corresponding stationary problem of (\ref{MFG-time}) is
 \begin{equation}\label{MFG-SS}
\left\{
\begin{array}{ll}
-\Delta u+H(\nabla u)+\lambda=f(m)+V(x) , &x \in \mathbb R^n,     \\
 \Delta m+\nabla\cdot (m\nabla H(\nabla u))=0,&x \in \mathbb R^n,\\
\int_{\mathbb R^N} mdx=M>0,
\end{array}
\right.
\end{equation}
where the triple $(m,u,\lambda)$ denotes the solution, $V$ is the potential function and $f$ is the cost function.
There are also some results concerning the existence and qualitative properties of non-trivial solutions to the stationary problem (\ref{MFG-SS}), see \cite{cesaroni2018concentration,GM15,gomes2016regularity,meszaros2015variational,cirant2016stationary,cirant2024critical,bernardini2023ergodic,bernardini2022mass}.  We mention that when the cost $f$ is monotone increasing, as shown in \cite{Lasry}, the uniqueness of the solution to (\ref{MFG-SS}) can be in general guaranteed.  Whereas, when the cost $f$ is monotone decreasing and unbounded, the case is delicate and (\ref{MFG-SS}) may admit many distinct solutions.  In particular, the pioneering work in the study of ground states to stationary Mean-field Games systems with decreasing cost was finished by Cesaroni and Cirant \cite{cesaroni2018concentration}.

We also would like to point out the stationary Mean-field Games systems can be trivialized to nonlinear $\gamma'$-Laplacian Schr\"{o}dinger equations when $H$ is chosen as (\ref{MFG-H}).  Indeed, Fokker-Planck equation in (\ref{MFG-SS}) can be reduced into the following form:
\begin{align}\label{FPeqpartially}
\nabla m+mC_H|\nabla u|^{\gamma-2}\nabla u=0~~\text{a.e.,}~~x\in\mathbb R^n.
\end{align}
%where $m$ is assumed to satisfy $m>0$ a.e. and $m\in W^{1,p}(\mathbb R^n)$ for all $p>1$,
Similarly as shown \cite{cirant2015generalization}, we define $v:=m^{\frac{1}{\gamma'}}$ and obtain from (\ref{FPeqpartially}) and the $u$-equation in (\ref{MFG-SS}) that
\begin{align}\label{nonlinear-Schrodinger}
\left\{\begin{array}{ll}
-\mu\Delta_{\gamma'} v+[f(v^{\gamma'})+V(x)-\lambda]v^{\gamma'-1}=0,~x\in\mathbb R^n,\\
\int_{\mathbb R^n} v^{\gamma'}\,dx=M,~v>0,~\mu=\big(\frac{\gamma'}{C_H}\big)^{\gamma'-1},
\end{array}
\right.
\end{align}
where $\Delta_{\gamma'}$ is the $\gamma'$-Laplacian and given by $\Delta_{\gamma'}v=\nabla\cdot(|\nabla v|^{\gamma'-2}\nabla v)$. It is well-known that nonlinear $\gamma'$-Laplacian Schr\"{o}dinger equation (\ref{nonlinear-Schrodinger}) admits the following variational structures:
\begin{align}\label{variation-schrodinger}
\mathcal F(v):=\int_{\mathbb R^n}\bigg[\frac{\mu}{\gamma'}|\nabla v|^{\gamma'}+F(v)+\frac{1}{\gamma'}V(x)v^{\gamma'}\bigg]\, dx,
\end{align}
    where $F(v)$ denotes the anti-derivative of $f(v^{\gamma'})v^{\gamma'-1}.$  In particular, when $\gamma'=2$ and  $f(v^2)=-K_{\alpha}\ast v^{2}$ in (\ref{nonlinear-Schrodinger}), the equation is the standard nonlinear Schr\"{o}dinger equation with the Hartree-type aggregation term. 

 % As shown in \cite{kwong1989uniqueness}, it is well-known that under the subcritical Sobolev exponent case, the solution to (\ref{nonlinear-Schrodinger}) is radially symmetric, unique up to scaling and translation, has the exponential decay property if $\lambda<0$, etc. % It is necessary to point out that \eqref{FPeqpartially} is not expected to hold in general \cite{CirantHC}, but it is true for instance in the quadratic case $r' = 2$ or if one considers radial solutions. This strong connection when $r'=2$ is seen also at the variational level: the ground states to (\ref{MFG-SS}) and (\ref{nonlinear-Schrodinger}) are the same under some general assumptions on $f(m)$ and $V$  given in \eqref{MFG-SS}, see the detailed discussion in Appendix \ref{appendixA}.

%\textcolor{red}{With the results shown in \cite{cesaroni2018concentration}, we have the existence of ground states to (\ref{MFG-SS}) when $\alpha$ satisfies (\ref{subcritical-exponent}).  Now, it is significant to explore the relationship between the ground states to (\ref{MFG-SS}) and (\ref{nonlinear-Schrodinger}). }
%proposition on minimizers REMOVED}

%solves the Fokker-Planck equation in (\ref{MFG-SS}) partially.  In fact, $m$ solve
%\begin{align*}
%\nabla m+mC_H|\nabla u|^{r'-2}\nabla u=\boldsymbol{\psi}~~\text{a.e.,}~~x\in\mathbb R^n
%\end{align*}
%are also the solutions, where $\boldsymbol{\psi}$ is any divergence-free vector.

Inspired by the relation between Schr\"{o}dinger equations and Mean-field Games systems discussed above, furthermore, the results of Cirant et al. \cite{cirant2024critical} and Bernardini et al. \cite{bernardini2022mass,bernardini2023ergodic}, we focus on the existence and asymptotic behaviors of ground states to \eqref{goalmodel} when $\alpha=n-\gamma'.$  In particular, Bernardini and Cesaroni studied the subcritical mass exponent case with $\alpha\in(n-\gamma',n)$ extensively via the variational method.  It is well-known that system (\ref{goalmodel}) admits the following variational structure:
\begin{align}\label{energy-dual}
\mathcal E(m,w):=
\int_{\mathbb R^n} \left[mL\bigg(-\frac{w}{m}\bigg)+V(x)m+F(m)\right]\, dx,
\end{align}
%\begin{align}\label{energy-dual}
%\mathcal E(m,w)=\left\{\begin{array}{ll}
%\int_{\mathbb R^n} [mL\big(-\frac{w}{m}\big)+V(x)m+F(m)]\, dx, &(m,w)\in \mathcal K_M,\\
%+\infty,&\text{otherwise},
%\end{array}
%\right.
%\end{align}
where $F(m):=-\frac{1}{2}(K_{\alpha}\ast m)m$ for $m\geq 0$ and $F(m)=0$ for $m\leq 0.$  Here Lagrangian $L$ is defined by
\begin{align}\label{general-Lagrangian}
L\bigg(-\frac{w}{m}\bigg):=\left\{\begin{array}{ll}
\sup\limits_{p\in\mathbb R^n}\big(-\frac{p\cdot w}{m}-H(p)\big),&m>0,\\
0,&(m,w)=(0,0),\\
+\infty,&\text{otherwise}.
\end{array}
\right.
\end{align}
To explain the range of exponent $\alpha$, we are concerned with the following constrained minimization problem:
\begin{align}\label{ealphaM-117}
e_{\alpha,M}:=\inf_{(m,w)\in \mathcal K_{M}}\mathcal E(m,w),
\end{align}
where the admissible set $\mathcal K_{M}$ is given by
\begin{align}\label{constraint-set-K}
\mathcal K_{ M}:=\Big\{&(m,w)\in (L^1(\mathbb R^n)\cap W^{1,\hat q}(\mathbb R^n))\times L^{1}(\mathbb R^n)\nonumber\\
~&\text{s. t. }\int_{\mathbb R^n}\nabla m\cdot\nabla\varphi\,dx=\int_{\mathbb R^n}w\cdot\nabla\varphi\, dx,\forall \varphi\in C_c^{\infty}(\mathbb R^n),\nonumber\\
~&\int_{\mathbb R^n} V(x)m\,dx<+\infty,~
\int_{\mathbb R^n}m\,dx=M>0,~m\geq 0\text{~a.e.~}\Big\},
\end{align}
with
\begin{equation}\label{hatqconstraint}
\hat q:=
\frac{n}{n-\gamma'+1}, \text{ for each } \gamma'<n.
\end{equation}
%\begin{align}\label{hatqconstraint}
%\frac{1}{\hat q}=\frac{1}{r}+\frac{1}{r'(1+\alpha)}.
%\end{align}
% and the condition $\int_{\mathbb R^n} V(x)m\,dx<+\infty$ does not influence the existence of minimizers. In particular, $\int_{\mathbb R^n}V(x)m\,dx<+\infty$ when potential $V$ satisfies (\ref{cirant-V}).
It is straightforward to show that $e_{\alpha,M}<+\infty.$  Indeed, by choosing $(m_s,w_s)=\Big(ce^{-|x|},-\frac{xe^{-|x|}}{|x|}\Big)$ with $c$ determined by $\int_{\mathbb R^n}m\,dx=M$ and $w_s=\nabla m_s$, then one has $(m_s,w_s)\in\mathcal K_{\alpha,M}$ and $\mathcal E(m_s,w_s)<+\infty,$ which implies $e_{\alpha,M}<+\infty.$  Now, we mention that the lower bound $\alpha>n-\gamma'$ is a necessary condition to guarantee that $e_{\alpha,M}>-\infty$ for all $M>0$.  To clarify this, we find if $\alpha<n-\gamma',$ for any $(\bar m,\bar w)\in \mathcal K_{\alpha, M},$
\begin{align*}
\mathcal E(\bar m_{\delta},\bar w_{\delta})\rightarrow -\infty\text{ as }\delta\rightarrow 0^+,
\end{align*}
 where $(\bar m_{\delta},\bar w_{\delta})$ is defined as $(\bar m_{\delta},\bar w_{\delta}):=(\delta^{-n}\bar m(\delta^{-1}x),\delta^{-(n+1)}\bar w(\delta^{-1}x))\in \mathcal K_{\alpha,M}$ and $\delta$ is chosen such that $\delta^{-n}\int_{\mathbb R^n}\bar m(\delta^{-1}x)\, dx\equiv M.$  Based on the discussion stated above, Bernardini and Cesaroni employed the direct method and the concentration-compactness approach to investigate the ground states to (\ref{goalmodel}) with $H$ given by (\ref{MFG-H}) when $\alpha$ satisfies $n-\gamma'<\alpha<n$ in \eqref{MFG-K}.  In this paper,  similarly as the work finished in \cite{cirant2024critical}, we shall study the existence and blow-up behaviors of ground states to (\ref{goalmodel}) under the critical mass exponent case.   We also would like to mention that there exists the other critical exponent $\alpha=n-2\gamma'$ from the restriction of Sobolev embedding Theorem.
Next, we state our main results in Subsection \ref{mainresults11}.
\begin{remark}
    We would like to mention that, throughout the paper, we shall only consider the case $\gamma'<n$. Since when $\gamma'<n$, the minimization problem \eqref{ealphaM-117} will be well-posed for any $\alpha\in [n-\gamma',n)$, where the relevant discussions are shown in \cite{bernardini2022mass,bernardini2023ergodic}. 
\end{remark}

\subsection{Main results}\label{mainresults11}
We consider Hamiltonian $H$ satisfies \eqref{MFG-H} and $f(m):=- K_{\alpha}* m$ in \eqref{MFG-SS}, where $K_{\alpha}$ is the Riesz potential of order $\alpha\in[n-\gamma',n)$ defined by $K_{\alpha}(x)=\frac{1}{|x|^{n-\alpha}}$.  In particular, we assume potential $V$ is locally H\"{o}lder continuous and satisfies
\begin{center}
\begin{itemize}
\item[(V1). ] $\inf\limits_{x\in\mathbb R^n}V(x)=0\leq V(x)\in L^\infty_{\text{loc}}(\mathbb R^n)$.
\item[(V2). ] there exist positive constants $C,\bar C, K$ and $b, \delta$ such that % sufficiently small
\begin{subequations}\label{V2mainasumotiononv}
\begin{align}
    &C(1+|x|^b)\leq V(x)\leq \bar C e^{\delta|x|}, \ \ \forall x\in\mathbb R^n; \label{V2mainassumption_1}\\
   &0< C\leq \frac{V(x+y)}{V(x)}\leq \bar C\text{~for~}|x|\geq K\text{~with~}|y|<2; \label{V2mainassumption_2}\\
   &\sup_{\nu\in[0,1]}V(\nu x)\leq \bar CV(x)\text{~for~}|x|\geq K.\label{V2mainassumption_3}
\end{align}
\end{subequations}

\item[(V3).]  $|\mathcal Z|=0$ with $\mathcal Z:=\{x\in\mathbb R^n~|~V(x)=0\}$.
%$\text{supp} V=\mathbb R^n.$
\end{itemize}
\end{center}
% \begin{align}
 %\inf_{x\in\mathbb R^n} V(x)=0\leq V(x)\in L_{\text{loc}}^\infty (\mathbb R^n),\ \ \lim_{|x|\rightarrow \infty }V(x)=+\infty,
 %\end{align}
 %and $\exists \delta>0$ such that
 %\begin{align}
 % 0\leq V(x)\leq C e^{\delta_1|x|} \text{~as~}|x|\rightarrow %+\infty.
 %\end{align}
With the assumptions shown above, we shall classify the existence of ground states to \eqref{goalmodel} with $\alpha=n-\gamma'$ in terms of the total mass of density via the variational method.  Compared to the arguments for the existence of ground states to the Mean-field Games system with a local coupling, one has to control $m$ in some $L^p$ space with the aid of the nonlinearity in (\ref{energy-dual}).  Motivated by this, we exploit Hardy-Littlewood-Sobolev inequality stated in Appendix \ref{appendixA} and establish desired estimates.  %  A vital step here is to study the regularity of the solution after the minimizer is obtained.  In fact, the regularity of solutions to the Fokker-Planck equation is well understood \cite{bogachev2022fokker} and some useful results are stated in Section \ref{preliminaries}. % Whereas, focusing on the Hamilton-Jacobi equation, the quasi-linear structure may cause difficulties.  There are some references on the regularity of the Hamilton-Jacobi  equation that deserve discussion.  Consider the following equation:
One of our main goals is to study the attainability of the constrained minimization problem \eqref{ealphaM-117}} with the critical mass exponent $\alpha=\alpha^*$, namely,
\begin{align}\label{ealphacritical-117}
e_{\alpha^*,M}:=\inf_{(m,w)\in \mathcal K_{M}}\mathcal E(m,w),
\end{align}
%\begin{align}\label{minimization-problem-critical}
%e_{\alpha^*,M}=\inf_{(m,w)\in \mathcal K_{M}} \mathcal E(m,w), \ \ \alpha^*:=n-r,
%\end{align}
where $\mathcal K_{M}$ is given in \eqref{constraint-set-K} and the energy $\mathcal E(m,w)$ (\ref{energy-dual}) is precisely written as
\begin{align}\label{41engliang}
\mathcal E(m,w)=C_L\int_{\mathbb R^n}\Big|\frac{w}{m}\Big|^{\gamma'}m\,dx+\int_{\mathbb R^n}V(x) m\,dx-\frac{1}{2}\int_{\mathbb R^n}m(x)(K_{\alpha^*}*m)(x)\,dx
\end{align}
and 
$$\int_{\mathbb R^n}m(x)(K_{\alpha^*}*m)(x)\,dx=\int_{\mathbb R^n}\int_{\mathbb R^n}\frac{m(x)m(y)}{|x-y|^{n-\alpha^*}}\,dx\,dy.$$
To this end, similarly as discussed in \cite{cirant2024critical}, we have to first investigate the Gagliardo-Nirenberg type's inequality up to the critical mass exponent, which is
\begin{align}\label{sect2-equivalence-scaling}
\Gamma_{\alpha}:=\inf_{(m,w)\in\mathcal A}\frac{\Big(C_L\int_{\mathbb R^n}m\big|\frac{w}{m}\big|^{\gamma'}\,dx\Big)^{\frac{n-\alpha}{\gamma'}}\Big(\int_{\mathbb R^n}m\,dx\Big)^{\frac{2\gamma'+\alpha-n}{\gamma'}}}{\int_{\mathbb R^n}m(x)(K_{\alpha}*m)(x)\,dx}, \ \  \alpha\in[n-\gamma',n),
\end{align}
where
\begin{align}\label{mathcalA-equivalence}
\mathcal A:=\Big\{&(m,w)\in (L^1(\mathbb R^n)\cap W^{1,\hat q}(\mathbb R^n))\times L^{1}(\mathbb R^n)\nonumber\\
~&\text{s. t. }\int_{\mathbb R^n}\nabla m\cdot\nabla\varphi\,dx=\int_{\mathbb R^n}w\cdot\nabla\varphi\, dx,\forall \varphi\in C_c^{\infty}(\mathbb R^n),~\int_{\mathbb R^n} |x|^bm\,dx<+\infty,~m\geq,\not\equiv 0\text{~a.e.~}\Big\},
\end{align}
with $\hat q$ defined as (\ref{hatqconstraint}) and
$b>0$.  It is worthy mentioning that problem (\ref{sect2-equivalence-scaling}) is  scaling invariant  under the scaling $(t^{\beta}m(tx),t^{\beta+1}w(tx))$ for any $t>0$ and $\beta>0.$ %As a consequence, it is easy to see that (\ref{sect2-equivalence-scaling}) is equivalent to the following minimization problem
%\begin{align}\label{optimal-inequality-sub}
%	\Gamma_{\alpha}:=\inf_{(m,w)\in\mathcal \mathcal A_M}\frac{\Big(C_L\int_{\mathbb R^n}m\big|\frac{w}{m}\big|^{r}\,dx\Big)^{\frac{n-\alpha}{r}}\Big(\int_{\mathbb R^n}m\,dx\Big)^{\frac{2r+\alpha-n}{r}}}{\int_{\mathbb R^n}m(x)(K_{\alpha}*m)(x)\,dx}, \ \  \alpha\in[n-r,n),
%\end{align}
%where
%\begin{align}\label{mathcalAMbegining}
%\mathcal A_M:=\Big\{(m,w)\in  \mathcal{A}, \int_{\mathbb R^n}m\,dx=M>0\Big\}.
%\end{align}

%with $\hat q$ defined by (\ref{hatqconstraint}).
%with
%$$\frac{1}{q}=\frac{1}{r}+\frac{1}{r'(1+\alpha)}.$$
%Indeed, we can check the equivalence between (\ref{optimal-inequality-sub}) and (\ref{sect2-equivalence-scaling}) by the scaling invariance %property of (\ref{sect2-equivalence-scaling}) under the scaling $(t^{\beta}m(tx),t^{\beta+1}w(tx))$ for any $t>0$ and $\beta>0.$  With the %independence of $\Gamma_{\alpha}$ on $M$,

With the help of the conclusions shown in \cite{bernardini2022mass}, we can prove the existence of minimizers to \eqref{sect2-equivalence-scaling} for any $\alpha\in(n-\gamma',n)$.  Then, we perform the approximation argument to study the case of $\alpha=n-\gamma'$.  In fact, we have
\begin{theorem}\label{thm11-optimal}
Suppose $\alpha=\alpha^*:=n-\gamma'$ in (\ref{sect2-equivalence-scaling}) and $\gamma'\in (1,n)$, then we have $\Gamma_{\alpha^*}$ is finite and attained by some minimizer $(m_{\alpha^*},w_{\alpha^*})\in\mathcal A$.  Moreover, we have there exists a classical solution {$\big(m_{\alpha^*},u_{\alpha^*}\big)\in W^{1,p}(\mathbb R^n)\times C^2(\mathbb R^n)$}, $\forall p>1,$ to the following Mean-field Games systems:
\begin{align}\label{limitingproblemminimizercritical0}
\left\{\begin{array}{ll}
-\Delta u+C_H|\nabla u|^{\gamma}-\frac{1}{M^*}=-K_{\alpha^*}*m,&x\in\mathbb R^n,\\
\Delta m+C_H\gamma\nabla\cdot (m|\nabla u|^{\gamma-2}\nabla u)=0,&x\in\mathbb R^n,\\
w=-C_H\gamma m\big|\nabla u\big|^{\gamma-2}\nabla u,\ \int_{\mathbb R^n}m\,dx=M^*,
\end{array}
\right.
\end{align}
where
\begin{align}\label{Mstar-critical-mass}
M^*:=2 \Gamma_{\alpha^*}.
\end{align}
In particular, there exists constants $c_1>0$ and $c_2>0$ such that $0<  m_{\alpha^*}(x)\leq c_1e^{-c_2|x|}$.
\end{theorem}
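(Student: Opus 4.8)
The plan is to prove Theorem~\ref{thm11-optimal} in three stages: first establish finiteness and attainability of $\Gamma_{\alpha^*}$ by an approximation argument from the subcritical case; second derive the Euler--Lagrange system and upgrade regularity; third obtain the exponential decay bound. For the first stage, I would invoke the known existence of minimizers $(m_\alpha, w_\alpha) \in \mathcal{A}$ of \eqref{sect2-equivalence-scaling} for $\alpha \in (n-\gamma', n)$ (available from \cite{bernardini2022mass}), and study the limit $\alpha \to (n-\gamma')^+$. Using the scaling invariance of the quotient under $(t^\beta m(tx), t^{\beta+1} w(tx))$, I would normalize the approximating minimizers so that, say, $\int_{\mathbb{R}^n} m_\alpha \, dx = 1$ and $C_L \int m_\alpha |w_\alpha/m_\alpha|^{\gamma'}\,dx = 1$; then the quotient reduces to the reciprocal of the Riesz-potential term, so a uniform lower bound on $\Gamma_{\alpha^*}$ amounts to a uniform upper bound on $\int\int m_\alpha(x) m_\alpha(y) |x-y|^{-(n-\alpha)}\,dx\,dy$ along the sequence. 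This is where the Hardy--Littlewood--Sobolev inequality from Appendix~\ref{appendixA} enters: combined with the Gagliardo--Nirenberg--Sobolev control of $\|m_\alpha\|_{L^p}$ coming from the bound on $\|\nabla v_\alpha\|_{\gamma'}$ with $v_\alpha = m_\alpha^{1/\gamma'}$ (note $w = \nabla m$ in the relevant class, so the kinetic term controls a Sobolev norm of $v_\alpha$), one gets the needed uniform estimate. Then $\liminf_{\alpha \to \alpha^*} \Gamma_\alpha \ge \Gamma_{\alpha^*} > 0$, and an explicit competitor (e.g. a suitably scaled $e^{-|x|}$-type pair, as already used in the excerpt to show $e_{\alpha,M} < \infty$) gives $\Gamma_{\alpha^*} < \infty$.

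For attainability at $\alpha = \alpha^*$, I would take a minimizing sequence $(m_k, w_k)$ for $\Gamma_{\alpha^*}$, again normalized via scaling to fix the mass and the kinetic energy. The HLS-based estimates give uniform bounds on $\|m_k\|_{W^{1,\hat q}}$ and on $\|m_k\|_{L^p}$ for the relevant $p$, hence weak convergence (up to subsequence and, crucially, up to translations to defeat the translation invariance) $m_k \rightharpoonup m_{\alpha^*}$, $w_k \rightharpoonup w_{\alpha^*}$. The main obstacle is the usual one in mass-critical problems: ruling out vanishing and dichotomy of the minimizing sequence, i.e. showing the weak limit is nontrivial and carries the full mass, so that no loss occurs in the Riesz term in the limit. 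I expect this to be handled by a concentration--compactness argument (Lions) exploiting strict subadditivity-type inequalities for the scaling-invariant functional, together with lower semicontinuity of the convex kinetic term $\int m|w/m|^{\gamma'}$ and the compact-in-the-right-sense behavior of the Riesz bilinear form under the uniform $L^p$ bound. Once $m_{\alpha^*} \not\equiv 0$ with the correct mass and kinetic energy, lower semicontinuity forces $(m_{\alpha^*}, w_{\alpha^*})$ to be a minimizer.

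The remaining two items are more routine. Writing the Euler--Lagrange equations for the constrained minimizer of \eqref{sect2-equivalence-scaling}, and using the scaling freedom to fix the normalization $M^* = 2\Gamma_{\alpha^*}$ so that the Lagrange multiplier equals $-1/M^*$, yields precisely system \eqref{limitingproblemminimizercritical0}; here one uses the relation $w = -C_H \gamma\, m |\nabla u|^{\gamma-2}\nabla u$ coming from the first-order condition in $w$ together with the identification $L(-w/m) = C_L |w/m|^{\gamma'}$. Elliptic regularity bootstrapping — $W^{1,p}$ estimates for the Fokker--Planck equation for $m$ (for all $p>1$, using that $m \in L^1 \cap L^\infty$ after a first De Giorgi--Nash--Moser step, and that $\nabla u$ is bounded) and Schauder/Calderón--Zygmund theory for the Hamilton--Jacobi equation for $u$, exploiting local Hölder continuity of the data — gives $(m_{\alpha^*}, u_{\alpha^*}) \in W^{1,p}(\mathbb{R}^n) \times C^2(\mathbb{R}^n)$. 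Finally, positivity of $m_{\alpha^*}$ follows from the strong maximum principle applied to the Fokker--Planck equation, and the exponential decay $0 < m_{\alpha^*}(x) \le c_1 e^{-c_2|x|}$ follows by a standard comparison/barrier argument: the $u$-equation shows $-\Delta u + C_H|\nabla u|^\gamma + 1/M^* \to 0$ at infinity since $K_{\alpha^*}*m \to 0$, which forces $|\nabla u|$ to be bounded below away from zero at infinity (so the drift in the Fokker--Planck equation points inward), and then constructing an exponential supersolution $c_1 e^{-c_2|x|}$ and invoking the comparison principle for the (degenerate) elliptic operator governing $m$ closes the estimate.
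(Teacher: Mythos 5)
Your broad architecture (establish finiteness by approximating from the subcritical range, then prove attainability, then derive the Euler--Lagrange system, regularity, and decay) matches the paper's, and your competitor for the upper bound and the normalization-plus-HLS argument for the positive lower bound on $\Gamma_\alpha$ align with the paper's Lemma~\ref{uniformlyboundC1CalphaC2}. However, the attainability step is where you depart from the paper and where the genuine difficulty of the theorem lies, and your plan for it has gaps.

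First, a local error: you assert that ``$w=\nabla m$ in the relevant class'' to pass from the kinetic term to a Sobolev norm of $v_\alpha=m_\alpha^{1/\gamma'}$. The constraint in $\mathcal A$ is $\int\nabla m\cdot\nabla\varphi=\int w\cdot\nabla\varphi$ for all $\varphi\in C_c^\infty$, which only says $\nabla\cdot(\nabla m-w)=0$ in the distributional sense; it does not force $w=\nabla m$ (and indeed for the Euler--Lagrange solutions $w=-C_H\gamma m|\nabla u|^{\gamma-2}\nabla u$, not $\nabla m$). The correct route to control $\|m_\alpha\|_{W^{1,\hat q}}$ and $\|m_\alpha\|_{L^p}$ by the kinetic term is via the Fokker--Planck regularity estimates (the paper's Lemma~\ref{lemma21-crucial} and Lemma~\ref{lemma21-crucial-cor}), which exploit the divergence-form constraint directly.

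Second, and more substantively: for attainability at $\alpha^*$ you propose to take an abstract minimizing sequence for $\Gamma_{\alpha^*}$ and run Lions' concentration-compactness on it. The paper deliberately does not do this. Instead it passes to the limit $\alpha\searrow n-\gamma'$ along the \emph{subcritical MFG solutions} $(\bar m_{\alpha,M_\alpha},\bar u_{\alpha,M_\alpha},\lambda_{\alpha,M_\alpha})$ with a carefully chosen mass $M_\alpha$, because these enjoy three things an abstract minimizing sequence lacks: (i) the Pohozaev identities (Lemma~\ref{poholemma}), which after the choice of $M_\alpha$ pin down the kinetic term, the Riesz term, and the Lagrange multiplier ($\to -1/M^*$), and later are the mechanism that rules out mass loss (equations \eqref{370criticalpohoidentity}--\eqref{combining1strongconverge2}); (ii) a uniform $L^\infty$ bound via blow-up analysis (Lemma~\ref{lemma34uniformlinfboundmalpha}), which feeds the elliptic bootstrap; and (iii) the normalization $\bar u_{\alpha,M_\alpha}(0)=0=\inf\bar u_{\alpha,M_\alpha}$, which kills translations and, through the Hamilton--Jacobi equation at $x=0$, yields a uniform local mass lower bound \eqref{eq3.62}, i.e.\ ``no vanishing'' without any abstract dichotomy argument. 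In the mass-critical, translation- and scaling-invariant setting, with the admissible class $\mathcal A$ only requiring $W^{1,\hat q}$ regularity and a linear constraint between $m$ and $w$, a direct concentration-compactness for the quotient \eqref{sect2-equivalence-scaling} would require proving strict subadditivity and ruling out dichotomy at the critical exponent; your proposal only names this step (``I expect this to be handled by a concentration--compactness argument'') rather than resolving it. You would also need to show that the ``approximation'' value $\bar\Gamma_{\alpha^*}:=\liminf_{\alpha\searrow\alpha^*}\Gamma_\alpha$ actually equals the infimum $\Gamma_{\alpha^*}$ at the endpoint, which the paper establishes as a nontrivial two-sided claim \eqref{ourclaimtwolimit}.

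The last two stages of your sketch (Euler--Lagrange with scaling chosen so the multiplier is $-1/M^*$; bootstrap to $W^{1,p}\times C^2$; exponential decay from $K_{\alpha^*}\ast m\to0$ plus a barrier argument) are essentially the paper's route and are fine modulo the points above.
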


Theorem \ref{thm11-optimal} implies the best constant in (\ref{sect2-equivalence-scaling}) exists even if $\alpha=\alpha^*:=n-\gamma'.$  
%Note that working on the whole Euclidean space here involves non trivial compactness issues. These were solved in \cite{cesaroni2018concentration} by a concentration-compactness argument; here we follow a different strategy based on Pohozaev type identities.
Next, with the aid of Theorem \ref{thm11-optimal}, we are able to study the attainability of $e_{\alpha^*,M}$ and classify the existence of minimizers to \eqref{ealphacritical-117}, which is
%give solution classification. find some critical mass $M^*>0$ such that when $M<M^*$, \eqref{energy-dual} possesses minimizers, which are the ground states to (\ref{MFG-SS}); while if $M>M^*,$ there is no existence of minimizers to (\ref{energy-dual}).  Moreover, when $M=M^*,$ the minimizing problem associated with (\ref{energy-dual}) is not attained.  We also perform the blow-up profiles analysis of ground states as $M\nearrow M^*.$
\begin{theorem}\label{thm11}
    Suppose $V$ satisfies assumption (V1)-(V2) and  $M^*=2\Gamma_{\alpha^*}$, where $\Gamma_{\alpha^*}$ is shown in \eqref{Mstar-critical-mass},
the we have the following alternatives:
\begin{itemize}
    \item[(i).] %Suppose that $V$ satisfies (\ref{V2mainasumotiononv}). %when $r>n$ or $V$ satisfies (\ref{cirant-V}) when $r\leq n$.}
    If $0<M<M^*$, problem \eqref{ealphacritical-117} admits at least one minimizer $(m_M,w_M)\in W^{1,p}(\mathbb R^n)\times L^{p}(\mathbb R^n)$,$\forall p>1$, which satisfies for some $\lambda_M\in \mathbb R$,
    \begin{align}\label{125potentialfreesystem}
\left\{\begin{array}{ll}
-\Delta u_{M}+C_H|\nabla u_{M}|^{\gamma}+\lambda_M=V(x)- K_{(n-\gamma')}\ast m_{M},\\
\Delta m_{M}+C_H\gamma\nabla\cdot (m_{M}|\nabla u_{M}|^{\gamma-2}\nabla u_{M})=0,\\
w_{M}=-C_H\gamma m_{M}|\nabla u_{M}|^{\gamma-2}\nabla u_{M}, \ \int_{\mathbb R^n}m_{M}\,dx=M<M^*.
\end{array}
\right.
\end{align}

    \item[(ii).] If $M> M^*$, problem \eqref{ealphacritical-117} does not admit any minimizer.
    \item [(iii).] If $M=M^*$ and potential $V$ satisfies (V3) additionally, then there does not exist any minimizer to problem \eqref{ealphacritical-117}.
\end{itemize}
\end{theorem}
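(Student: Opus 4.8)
The plan is to establish Theorem \ref{thm11} by combining the sharp Gagliardo--Nirenberg-type inequality from Theorem \ref{thm11-optimal} with a scaling analysis and the direct method in the calculus of variations. The key identity underlying everything is the rewriting of the energy: for any $(m,w)\in\mathcal K_M$, the inequality \eqref{sect2-equivalence-scaling} at $\alpha=\alpha^*$ gives, after noting $n-\alpha^*=\gamma'$ and $2\gamma'+\alpha^*-n=\gamma'$, the bound
\begin{align}\label{planineq}
\frac12\int_{\mathbb R^n}m(K_{\alpha^*}*m)\,dx\le \frac{M}{2\Gamma_{\alpha^*}}\,C_L\int_{\mathbb R^n}m\Big|\frac{w}{m}\Big|^{\gamma'}dx=\frac{M}{M^*}\,C_L\int_{\mathbb R^n}m\Big|\frac{w}{m}\Big|^{\gamma'}dx.
\end{align}
Hence $\mathcal E(m,w)\ge \big(1-\tfrac{M}{M^*}\big)C_L\int m|w/m|^{\gamma'}\,dx+\int V(x)m\,dx$.

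For part (i), $0<M<M^*$: the above shows $\mathcal E$ is bounded below on $\mathcal K_M$ and that along a minimizing sequence $(m_k,w_k)$ both the kinetic term $\int m_k|w_k/m_k|^{\gamma'}$ and the potential term $\int V(x)m_k$ are bounded, since $1-M/M^*>0$. Using (V2), the bound on $\int|x|^b m_k$ (from $\int V m_k$ together with $C(1+|x|^b)\le V$) prevents mass from escaping to infinity. Standard compactness for the constraint set $\mathcal K_M$ — exactly as developed in \cite{bernardini2022mass,bernardini2023ergodic} for the subcritical regime — then yields a subsequence converging to some $(m_M,w_M)\in\mathcal K_M$ with $\mathcal E(m_M,w_M)\le e_{\alpha^*,M}$; lower semicontinuity (convexity of $(m,w)\mapsto m|w/m|^{\gamma'}$ and of $\int V m$, plus continuity of the Riesz bilinear form under the relevant convergence, using Hardy--Littlewood--Sobolev from Appendix \ref{appendixA}) forces equality, so $(m_M,w_M)$ is a minimizer. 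The Euler--Lagrange system \eqref{125potentialfreesystem} and the regularity $(m_M,w_M)\in W^{1,p}\times L^p$ follow from the by-now-standard bootstrap arguments for MFG systems with Hartree coupling, identical in form to those in Theorem \ref{thm11-optimal}.

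For part (ii), $M>M^*$: I would show $e_{\alpha^*,M}=-\infty$ by a scaling test. Take the minimizer $(m_{\alpha^*},w_{\alpha^*})$ of $\Gamma_{\alpha^*}$ from Theorem \ref{thm11-optimal} (which has mass $M^*$ and exponential decay), rescale it to have mass $M$, and then apply the scaling-invariant dilation $(m^t,w^t):=(t^{\beta}m(tx),t^{\beta+1}w(tx))$ that leaves $\mathcal A$ invariant; with $\beta=n$ the mass is preserved. Under this dilation the kinetic term scales like $t^{\gamma'}$, the Riesz term like $t^{\gamma'}$ as well (by the critical choice $\alpha^*=n-\gamma'$), so the difference of the first and third terms of $\mathcal E$ scales like $t^{\gamma'}\big(1-\tfrac{M}{M^*}\big)<0$, while the potential term $\int V(x)m^t\,dx$ is controlled using (V2), in particular \eqref{V2mainassumption_3} and the upper bound, so that it grows strictly slower than $t^{\gamma'}$ as $t\to\infty$ (one shifts the profile near a minimum of $V$, i.e. a point of $\mathcal Z$, or uses the concentration at such a point). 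Letting $t\to+\infty$ drives $\mathcal E\to-\infty$; hence no minimizer.

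For part (iii), the borderline $M=M^*$ with (V3): here \eqref{planineq} shows $\mathcal E(m,w)\ge\int V(x)m\,dx\ge0$, so $e_{\alpha^*,M^*}\ge0$; and testing with a rescaled, translated copy of the $\Gamma_{\alpha^*}$-minimizer concentrating near a point of $\mathcal Z$ shows $e_{\alpha^*,M^*}\le \int V(x)m^t\,dx\to0$, so $e_{\alpha^*,M^*}=0$. If a minimizer $(m,w)$ existed it would have to satisfy $\int V(x)m\,dx=0$ \emph{and} equality in the Gagliardo--Nirenberg inequality \eqref{sect2-equivalence-scaling}. The first forces $\operatorname{supp} m\subset\mathcal Z$, which by (V3) has measure zero, contradicting $\int m=M^*>0$ with $m\in L^1$ (indeed $m\in W^{1,\hat q}$ is absolutely continuous in the appropriate sense). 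This gives the nonexistence.

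The main obstacle I anticipate is part (i): making the compactness of the minimizing sequence fully rigorous in the space $L^1\cap W^{1,\hat q}$ with the nonlocal coupling — in particular ruling out vanishing and dichotomy in the concentration-compactness sense while simultaneously controlling the Riesz energy, which is not weakly continuous on $L^1$ alone. One must exploit the a priori $W^{1,\hat q}$ bound (coming from the kinetic term via the constraint linking $\nabla m$ and $w$) together with the tightness from $\int|x|^b m_k\le C$ to upgrade to strong $L^p$-convergence on the range of exponents where Hardy--Littlewood--Sobolev applies to the critical Riesz kernel; this is where the argument is genuinely delicate, though the groundwork in \cite{bernardini2022mass} and the estimates already used to prove Theorem \ref{thm11-optimal} carry over. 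The scaling computations in (ii)--(iii) are routine once the correct exponents ($n-\alpha^*=\gamma'$ making both kinetic and interaction terms homogeneous of the same degree) are identified.
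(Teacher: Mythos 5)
Your parts (ii) and (iii) essentially track the paper's argument, and your version of (iii) is actually a touch cleaner: the paper argues via $\operatorname{supp}V=\mathbb R^n$ to force $\hat m=0$, whereas you invoke (V3) directly — $\int V\hat m=0$ forces $\operatorname{supp}\hat m\subset\mathcal Z$, and $|\mathcal Z|=0$ with $\hat m\in L^1$ gives the contradiction. The scaling computation in (ii), with both the kinetic and Riesz terms homogeneous of degree $\gamma'$ under the dilation while the potential term tends to $MV(x_0)$, is the same as the paper's.

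The genuine gap is in part (i), and it is exactly the part you flag as "delicate" but then wave off. Your direct method on the unregularized energy does suffice to produce \emph{a} minimizer $(m_M,w_M)\in\mathcal K_M$: the energy bound from \eqref{planineq}, the uniform $W^{1,\hat q}$ bound via Lemma~\ref{lemma21-crucial}, tightness from $\int V m_k\,dx\le C$ together with (V2), compactness of the embedding (Lemma~\ref{lem4.1}) into $L^p$ for $p<\hat q^*=n/(n-\gamma')$, and Hardy–Littlewood–Sobolev at $L^{2n/(2n-\gamma')}$ (which is strictly below $\hat q^*$) all go through. What this does \emph{not} give you, and what the theorem actually claims, is that $(m_M,w_M)\in W^{1,p}\times L^p$ for every $p>1$ and that the Euler--Lagrange MFG system \eqref{125potentialfreesystem} holds. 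The obstruction is that with $m_M$ only in $W^{1,\hat q}\cap L^1$, the coupling $K_{\alpha^*}\ast m_M$ is not a priori locally bounded — Lemma~\ref{HolderforRiesz} requires $m\in L^r$ with $r>n/\alpha^*=n/(n-\gamma')=\hat q^*$, and the borderline exponent $\hat q^*$ is exactly missed by the Sobolev embedding — so neither the solvability theory for the Hamilton--Jacobi equation (Lemma~\ref{lemma22preliminary}) nor the maximal regularity estimates (Lemma~\ref{thmmaximalregularity}) apply directly. Your claim that the regularity follows "identical in form to those in Theorem~\ref{thm11-optimal}" is not right: in that proof the approximating solutions come from the subcritical problems $\alpha>\alpha^*$ and are already classical with exponential decay, which is a very different starting point from a bare $W^{1,\hat q}$ minimizer.

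The paper resolves this by mollifying the Riesz interaction, minimizing the regularized energy $\mathcal E_\epsilon$ of \eqref{approxenergy}, deriving the Euler--Lagrange system for the mollified problem (where $(K_{\alpha^*}\ast m_\epsilon)\ast\eta_\epsilon$ is automatically $L^\infty$), and then — and this is the core technical step, cf.\ Remark~1.2 — proving a uniform-in-$\epsilon$ $L^\infty$ bound on $m_\epsilon$ by blow-up analysis (Lemma~\ref{blowupanalysismlinfboundcritical}), which permits the gradient estimate $|\nabla u_\epsilon|\le C(1+V)^{1/\gamma}$ with constant independent of $\epsilon$, uniform $C^{2,\theta}_{\rm loc}$ bounds on $u_\epsilon$, the bootstrap $m_\epsilon\in W^{1,p}$ for all $p$, and finally passage to the limit $\epsilon\to0^+$ after verifying $e_{\epsilon,M}\to e_{\alpha^*,M}$. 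None of this is present in your sketch, and it cannot be borrowed verbatim from the proof of the Gagliardo--Nirenberg inequality. You have correctly identified the inequality \eqref{planineq} as the engine of the whole theorem, but for the existence-plus-regularity claim in (i) the missing ingredient is precisely this regularization-and-blow-up machinery.
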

\begin{remark}
We remark that in case (i), the $L^\infty$ estimates of $m$ is crucial due to the maximal regularity properties of Hamilton-Jacobi equations.  Following the arguments in \cite{cesaroni2018concentration,cirant2024critical}, we perform the blow-up analysis to obtain the desired estimates.
\end{remark}

%It is worthy mentioning that while discussing the case of $M<M^*$ in Theorem \ref{thm11}, we need to apply the maximal regularity shown in Theorem \ref{thmmaximalregularity} to obtain the uniformly boundedness of $m$-component in $L^\infty$ and the detailed discussion is shown in Section \ref{sect4-criticalmass}.
 %Theorem \ref{thm11} implies that in the mass critical exponent case, the existence of minimizers to \eqref{ealphaM-117} depends on the total mass of population density $m$ and there exists a mass threshold $M^*$ explicitly given by \eqref{Mstar-critical-mass}.  In particular, when $M=M^*,$ the constrained problem \eqref{ealphaM-117} is not attained.

Theorem \ref{thm11} indicates that the minimizers to \eqref{ealphacritical-117} do not exist when $M$ is large enough.  A natural question is the behaviors of ground states as $M\nearrow M^*$, where $M^*$ is the existence threshold defined by (\ref{Mstar-critical-mass}).  To explore this, we perform the scaling argument and investigate the convergence to get
\begin{theorem}\label{thm13basicbehavior} 
Suppose that $V(x)$ satisfies $(V1)-(V3)$ and let $(m_M,w_M)$ be the minimizer of $e_{\alpha^*,M}$ given in Theorem \ref{thm11} with $0<M<M^*$.  Then, we have
\begin{itemize}
    \item[(i).]
    \begin{align}\label{thm51property1}
    {\varepsilon}_M={\varepsilon}:=\Big(C_L\int_{\mathbb R^n}\bigg|\frac{w_M}{m_M}\bigg|^{\gamma'}m_M\,dx\Big)^{-\frac{1}{\gamma'}}\rightarrow 0\text{~as~}M \nearrow M^*.
    \end{align}
    \item[(ii).] %\textcolor{red}{Assume $V$ satisfies (V1) and (V3) shown in Subsection \ref{mainresults11}.  Moreover, if $r>n$, $V$ satisfies (V2); otherwise if $r\leq n$, $V$ satisfies (\ref{cirant-V}).}
    %\textcolor{red}{Suppose that $V$ satisfies (V1) and (\ref{V2mainasumotiononv})}.
    Let $\{x_{\varepsilon}\}$ be one of the global minimum points of $u_M$, then $\text{dist}(x_{\varepsilon},\mathcal Z)\rightarrow 0$ as $M \nearrow M^*$, where $\mathcal Z=\{x\in\mathbb R^n~|~V(x)=0\}$. Moreover,
    \begin{align}\label{thm51property2}
    u_{\varepsilon}:=\varepsilon^{\frac{2-\gamma}{\gamma-1}} u_{M}(\varepsilon x+x_{\varepsilon}),~
m_{\varepsilon}:=\varepsilon^n m_{M}(\varepsilon x+x_{\varepsilon}),~ w_{\varepsilon}:=\varepsilon^{n+1}w_M(\varepsilon x+x_{\varepsilon}),
    \end{align}
    satisfies  up to a subsequence,
     \begin{equation}\label{eq1.40}
      u_{\varepsilon}\rightarrow u_0\text{ in }C^2_{\rm loc}(\mathbb R^n), ~
     m_{\varepsilon}\rightarrow m_0 \text{ in }L^p(\mathbb R^n) ~\forall ~p\in[1,{\hat q}^*),~ \text{ and } w_{\varepsilon}\rightharpoonup  w_0 \text{ in }L^{\hat q}(\mathbb R^n),\end{equation}
    where $(m_0,w_0)$ is a minimizer of (\ref{sect2-equivalence-scaling}), and $(u_0,m_0,w_0)$ satisfies \eqref{limitingproblemminimizercritical0}.
   % \begin{align}
    %\left\{\begin{array}{ll}
  %  -\Delta u+C_H|\nabla u|^{r'}-\frac{r}{M^*n}=-m^{\frac{r}{n}},&x\in\mathbb R^n,\\
  %  -\Delta m-C_Hr'\nabla\cdot(m|\nabla u|^{r'-2}\nabla u)=0,&x\in\mathbb R^n,\\
   % \int_{\mathbb R^n}m\,dx =M^*,~~w=-C_Hm|\nabla u|^{r'-2}\nabla u,
   % \end{array}
   % \right.
   % \end{align}
%where $M^*$ is defined by (\ref{Mstar-critical-mass}).
In particular, when $V$ satisfies 
\begin{align}\label{cirant-V}
 C_V^{-1}(\max\{|x|-C_V,0\})^b\leq V(x)\leq C_V(1+|x|)^b,~~\text{ for some } b,C_V>0.
 \end{align}
 and $\bar x_{\varepsilon}$ denotes any one of global maximum points of $m_M$, then
\begin{align}\label{eq141realtionuminmmax}
\limsup_{\varepsilon\rightarrow 0^+}\frac{|\bar x_{\varepsilon}-x_{\varepsilon}|}{\varepsilon}<+\infty.
\end{align}
\end{itemize}
\end{theorem}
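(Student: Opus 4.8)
The plan is to combine sharp two-sided bounds on the minimum value $e_{\alpha^*,M}$ of \eqref{ealphacritical-117} with a blow-up (rescaling) analysis driven by the Gagliardo--Nirenberg inequality of Theorem~\ref{thm11-optimal}. Write $D_M:=C_L\int_{\mathbb R^n}m_M|w_M/m_M|^{\gamma'}\,dx$, so that $\varepsilon_M=D_M^{-1/\gamma'}$. Since $\alpha^*=n-\gamma'$ both exponents in \eqref{sect2-equivalence-scaling} equal $1$, and that inequality reads $\int m(K_{\alpha^*}*m)\le(\int m)(C_L\int m|w/m|^{\gamma'})/\Gamma_{\alpha^*}$; inserting it into \eqref{41engliang} gives, for every $M<M^*=2\Gamma_{\alpha^*}$ and every $(m,w)\in\mathcal K_M$,
\[
\mathcal E(m,w)\ \ge\ \Big(1-\tfrac{M}{M^*}\Big)\,C_L\!\int_{\mathbb R^n}m\Big|\tfrac{w}{m}\Big|^{\gamma'}dx\ +\ \int_{\mathbb R^n}V(x)m\,dx\ \ge\ 0 .
\]
For the matching upper bound I would test \eqref{ealphacritical-117} with a translated, rescaled and (near the weight $V$) suitably truncated copy of the minimizer $(m_{\alpha^*},w_{\alpha^*})$ of Theorem~\ref{thm11-optimal}, centered at a point $x_0\in\mathcal Z$ --- which is nonempty and compact because $V$ is continuous with $\inf V=0$ and coercive by \eqref{V2mainassumption_1}. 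Using the identity $\int m_{\alpha^*}(K_{\alpha^*}*m_{\alpha^*})=2\,C_L\int m_{\alpha^*}|w_{\alpha^*}/m_{\alpha^*}|^{\gamma'}$ (a consequence of $M^*=2\Gamma_{\alpha^*}$ and $\int m_{\alpha^*}=M^*$), the energy of such a test pair at scale $\tau$ equals $\tfrac{M}{M^*}A\tau^{\gamma'}(1-\tfrac{M}{M^*})+\tfrac{M}{M^*}\int V(x_0+y/\tau)m_{\alpha^*}(y)\,dy$ with $A:=C_L\int m_{\alpha^*}|w_{\alpha^*}/m_{\alpha^*}|^{\gamma'}$, and letting $\tau=\tau_M\to\infty$ slowly as $M\nearrow M^*$ sends both terms to $0$. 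Hence $0\le e_{\alpha^*,M}\to0$, and the lower bound then also yields $\int_{\mathbb R^n}Vm_M\to0$ and $\int m_M(K_{\alpha^*}*m_M)=2D_M+o(1)$.

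For part~(i), suppose $D_M$ stayed bounded along some sequence $M\nearrow M^*$. Then $\int Vm_M\to0$ together with the coercivity $V(x)\ge C(1+|x|^b)$ for large $|x|$ forces tightness of $\{m_M\}$, while boundedness of $D_M$ and of $\int m_M$ gives, via H\"older and the regularity theory of \cite{bernardini2022mass,bernardini2023ergodic}, boundedness of $\{m_M\}$ in $W^{1,\hat q}(\mathbb R^n)$ and of $\{w_M\}$ in $L^{\hat q}(\mathbb R^n)$. Consequently $m_M\to\bar m$ strongly in $L^1(\mathbb R^n)$ with $\int\bar m=M^*$, and by Fatou $\int V\bar m\le\liminf\int Vm_M=0$, so $\bar m$ is supported in $\mathcal Z$. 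Since $|\mathcal Z|=0$ by (V3), this forces $\bar m\equiv0$, contradicting $\int\bar m=M^*$. Hence $D_M\to\infty$, i.e. $\varepsilon_M\to0$.

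For part~(ii) put $\varepsilon=\varepsilon_M$ and use the scaling \eqref{thm51property2}, whose exponents are fixed by $a+2=\gamma'$ and $(a+1)(\gamma-1)=1$ with $a=\tfrac{2-\gamma}{\gamma-1}$. Then (up to an additive constant in $u_\varepsilon$) the triple $(u_\varepsilon,m_\varepsilon,w_\varepsilon)$ solves
\[
-\Delta u_\varepsilon+C_H|\nabla u_\varepsilon|^{\gamma}+\varepsilon^{\gamma'}\lambda_M=\varepsilon^{\gamma'}V(\varepsilon x+x_\varepsilon)-K_{\alpha^*}*m_\varepsilon,\qquad \Delta m_\varepsilon+C_H\gamma\,\nabla\!\cdot\!\big(m_\varepsilon|\nabla u_\varepsilon|^{\gamma-2}\nabla u_\varepsilon\big)=0,
\]
with $C_L\int m_\varepsilon|w_\varepsilon/m_\varepsilon|^{\gamma'}=1$, $\int m_\varepsilon=M$, and $\int m_\varepsilon(K_{\alpha^*}*m_\varepsilon)=\varepsilon^{\gamma'}\int m_M(K_{\alpha^*}*m_M)=2+o(1)$. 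Testing the $u_M$-equation of \eqref{125potentialfreesystem} against $m_M$ and the Fokker--Planck equation against $u_M$, and using Euler's identity $\nabla u\cdot\nabla H(\nabla u)=\gamma H(\nabla u)$ together with $D_M=(\gamma-1)\int m_MH(\nabla u_M)$, one gets $\lambda_M M=-D_M-\int Vm_M+2e_{\alpha^*,M}$, whence $\varepsilon^{\gamma'}\lambda_M=(-1+o(1))/M\to-1/M^*$. The three normalizations above, read through \eqref{sect2-equivalence-scaling}, show that $(m_\varepsilon,w_\varepsilon)$ is a minimizing sequence for $\Gamma_{\alpha^*}$ with mass tending to $M^*$: by the Hardy--Littlewood--Sobolev inequality (Appendix~\ref{appendixA}) the nonlocal term cannot vanish (it tends to $2>0$), and splitting the mass strictly lowers it below the value $2$ required, which rules out dichotomy; hence, up to a translation (see the next paragraph), $m_\varepsilon\to m_0$ in $L^p(\mathbb R^n)$ for $p\in[1,{\hat q}^*)$ and $w_\varepsilon\rightharpoonup w_0$ in $L^{\hat q}(\mathbb R^n)$, with $(m_0,w_0)$ a minimizer of $\Gamma_{\alpha^*}$ with $\int m_0=M^*$; equality in \eqref{sect2-equivalence-scaling} forces $C_L\int m_0|w_0/m_0|^{\gamma'}=1$, and since the right-hand side of the $u_\varepsilon$-equation is locally bounded in $C^\theta$ (using $\varepsilon^{\gamma'}V(\varepsilon x+x_\varepsilon)\to0$ in $C_{\rm loc}$, justified below), local gradient bounds and Schauder estimates bootstrap $u_\varepsilon\to u_0$ in $C^2_{\rm loc}(\mathbb R^n)$; passing to the limit identifies $(u_0,m_0,w_0)$ as a solution of \eqref{limitingproblemminimizercritical0}.

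It remains to locate $x_\varepsilon$ and prove the last estimate. At a global minimum $\nabla u_M(x_\varepsilon)=0$ and $-\Delta u_M(x_\varepsilon)\le0$, so the $u_M$-equation gives $V(x_\varepsilon)\le\lambda_M+(K_{\alpha^*}*m_M)(x_\varepsilon)\le CD_M$, using the uniform $L^\infty$ bound on $m_\varepsilon$ furnished by the blow-up analysis of \cite{cesaroni2018concentration,cirant2024critical}; combined with the coercivity of $V$ and the fact that, after rescaling, a fixed fraction of the mass of $m_\varepsilon$ sits in a fixed ball about the origin, $\int Vm_M\to0$ forces $x_\varepsilon$ to stay bounded and $V(x_\varepsilon)\to0$, i.e. $\text{dist}(x_\varepsilon,\mathcal Z)\to0$. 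Boundedness of $x_\varepsilon$ then justifies $\varepsilon^{\gamma'}V(\varepsilon x+x_\varepsilon)\to0$ in $C_{\rm loc}(\mathbb R^n)$ through \eqref{V2mainassumption_1}, and comparing the minimum of $u_\varepsilon$ with the minimum of the limit $u_0$ shows that the translation used above differs from $x_\varepsilon$ by $O(\varepsilon)$, so \eqref{eq1.40} indeed holds for the recentered functions \eqref{thm51property2}. Finally, under the polynomial control \eqref{cirant-V} the maximal regularity for the Hamilton--Jacobi equation yields uniform exponential decay of $m_\varepsilon$, so every global maximum point of $m_\varepsilon$ lies in a fixed ball; translating back gives $\limsup_{\varepsilon\to0^+}|\bar x_\varepsilon-x_\varepsilon|/\varepsilon<+\infty$. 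The crux of the whole argument is this compactness step --- excluding dichotomy for the nonlocal functional and, especially, pinning the concentration center of $m_\varepsilon$ to the Hamilton--Jacobi minimum $x_\varepsilon$ up to the scale $\varepsilon$ --- together with the uniform $L^\infty$ and decay estimates for $m_\varepsilon$, which themselves rely on a blow-up analysis of the Hamilton--Jacobi equation.
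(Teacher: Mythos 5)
Your overall architecture matches the paper's: show $e_{\alpha^*,M}\to 0$, deduce $\varepsilon\to 0$, rescale, and identify the limit as the minimizer of the critical Gagliardo--Nirenberg ratio. Part (i) is correct, and in fact slightly more direct than the paper's: you derive $\int Vm_M\to 0$ from $e_{\alpha^*,M}\to 0$, get strong $L^1$ convergence to some $\bar m$ with $\int\bar m=M^*$ by the weighted compact embedding, and then use $|\mathcal Z|=0$ to force $\bar m\equiv 0$, a contradiction. The paper instead concludes that the limit would be a minimizer of $e_{\alpha^*,M^*}$ and invokes the nonexistence statement of Theorem~\ref{thm11}(iii); both routes use (V3) in essentially the same way.

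In part (ii), however, two steps that carry most of the technical weight are not actually proved.

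First, the passage from weak convergence of $(m_\varepsilon,w_\varepsilon)$ to strong $L^p$ convergence with full mass $M^*$ is invoked via a concentration--compactness dichotomy ("splitting the mass strictly lowers $\int m(K*m)$ below $2$; up to a translation, $m_\varepsilon\to m_0$"). The dichotomy count is plausibly right, but the phrase "up to a translation (see the next paragraph)" and the later attempt to show the translation is $O(\varepsilon)$ is circular: the whole point of centering at $x_\varepsilon$ is to eliminate the translation freedom before passing to the limit, and the device the paper uses for this --- the maximum principle at the global minimum of $u_M$, giving $(K_{\alpha^*}*m_\varepsilon)(0)\ge -\lambda_M\varepsilon^{\gamma'}\ge C>0$ and hence, after local $C^{2,\theta}$ and $C^{0,\theta}$ estimates, $m_\varepsilon\ge C>0$ on a fixed ball --- is absent from your argument. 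The paper then upgrades to strong $L^1$ convergence and $\int m_0=M^*$ not by a dichotomy lemma but by the Pohozaev identity for the limit system plus the squeeze $\Gamma_{\alpha^*}\le G_{\alpha^*}(m_0,w_0)=\tfrac12\tilde M\le\tfrac12 M^*=\Gamma_{\alpha^*}$. Your route is not wrong in principle, but as written it leaves exactly the "where is the mass?" question open.

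Second, and more seriously, the final estimate $\limsup|\bar x_\varepsilon-x_\varepsilon|/\varepsilon<\infty$ is derived from the claim that "the maximal regularity for the Hamilton--Jacobi equation yields uniform exponential decay of $m_\varepsilon$." This is not established and is essentially as hard as the estimate itself: the rescaled system carries the nonzero potential $\varepsilon^{\gamma'}V(\varepsilon x+x_\varepsilon)$, and Lemma~\ref{mdecaylemma} applies to the potential-free system with constants that depend on the particular solution; no uniform-in-$\varepsilon$ decay is available a priori, especially at distances of order $|\bar x_\varepsilon-x_\varepsilon|/\varepsilon$ from the origin, which is precisely the quantity you need to control. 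The paper's proof is by contradiction: assuming $|\bar x_\varepsilon-x_\varepsilon|/\varepsilon\to\infty$, it establishes a pointwise lower bound $m_\varepsilon\ge C>0$ on a fixed ball around $(\bar x_\varepsilon-x_\varepsilon)/\varepsilon$, splitting into the two cases $\varepsilon^{\gamma'}V(\bar x_\varepsilon)$ bounded or unbounded (the second requiring a further rescaling and the growth bound \eqref{eq5.31sect5gradtildeu} on $\nabla\tilde u_\varepsilon$), and then contradicts the strong $L^1$ convergence $m_\varepsilon\to m_0$. That case analysis is the substance of the proof and needs to be supplied.

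Finally, a smaller but real gap: your derivation of $\varepsilon^{\gamma'}V(x_\varepsilon)\le C$ from the maximum principle reads $V(x_\varepsilon)\le\lambda_M+(K_{\alpha^*}*m_M)(x_\varepsilon)$ and then bounds the convolution "using the uniform $L^\infty$ bound on $m_\varepsilon$." That $L^\infty$ bound is itself a nontrivial blow-up lemma (cf.\ Lemma~\ref{blowupanalysismlinfboundcritical}) and, applied to the rescaled $m_\varepsilon$, does not follow verbatim because the potential in the rescaled equation is $\varepsilon^{\gamma'}V(\varepsilon\cdot+x_\varepsilon)$, not a fixed coercive $V$. The paper proves $\varepsilon^{\gamma'}V(x_\varepsilon)\le C$ directly by contradiction, via the auxiliary rescaling \eqref{eqscaling} and the vanishing of $\int V(\varepsilon x+x_\varepsilon)m_\varepsilon$; that argument should be included or replaced by a complete substitute.
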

 Theorem \ref{thm13basicbehavior} implies as $M\nearrow M^*$, the ground states to (\ref{goalmodel}) concentrate and their basic blow-up behaviors are captured by the least energy solution to potential-free Mean-field Games systems with some mild assumptions imposed on $V$.  Moreover, by imposing some typical local expansions on potential $V(x)$, one can obtain the refined asymptotics of ground states, which are summarized as
\begin{theorem}\label{thm14preciseblowup}
Assume that all conditions in Theorem \ref{thm13basicbehavior} hold and suppose that $V$ has $l\in\mathbb{N}_+$ distinct zeros denoted by $\{P_1,\cdots,P_l\}$ and  there exist  $a_i>0$, $q_i>0$ and $d>0$ such that
\begin{align*}
V(x)=a_i|x-P_i|^{q_i}+O\big(|x-P_i|^{q_i+1}\big), \ \ 0<|x-P_i|\leq d,\ i=1,\cdots,l.
\end{align*}
Define
$$Z:=\{P_i~|~q_i=q,\ i=1,\cdots,l\} ~\text{ and }~Z_0:=\{P_i~|~q_i\in Z \text{~and~}\mu_i=\mu,i=1,\cdots,l\},$$
%$Z:=\{P_i|q_i=q,\forall i=1,\cdots,l\}$
where $q:=\max\{q_1,\cdots,q_l\}$ and $
\mu:=\min\{\mu_i~|~P_i\in Z, i=1,\cdots,l\}$  with
\begin{equation*}
\mu_i:=\min\limits_{y\in\mathbb R^n}H_i(y),\ H_i(y):=\int_{\mathbb R^n} a_i|x+y|^{q_i}m_0(x)\,dx, ~i=1,\cdots,l.
\end{equation*}
Let $(m_{\varepsilon},w_{\varepsilon},u_{\varepsilon})$ be  the sequence given by (\ref{thm51property2}) and $(m_0,w_0,u_0)$ be the limiting solution. Then we have $x_{\varepsilon}\rightarrow P_i\in Z_0$.  Moreover, as $M\nearrow M^*,$
\begin{align*}
\frac{e_{\alpha^*,M}}{\frac{q+\gamma'}{q}\Big(\frac{q\mu}{\gamma'}\Big)^{\frac{\gamma'}{\gamma'+q}}\Big[1-\frac{M}{M^*}\Big]^{\frac{q}{\gamma'-q}}}\rightarrow 1,
\end{align*}
and
\begin{align}\label{131thm14}
\frac{\varepsilon}{\left(\frac{\gamma'}{q\mu}\left(1-\frac{M}{M^*}\right)\right]^{\frac{1}{\gamma'+q}}}\rightarrow 1,
\end{align}
where $e_{\alpha^*,M}$ and $\varepsilon=\varepsilon_M$ are given by \eqref{ealphaM-117} and \eqref{thm51property1}, respectively.  In particular, up to a subsequence,
\begin{align}\label{132thm14}
\frac{x_{\varepsilon}-P_i}{\varepsilon_M}\rightarrow y_0 ~\text{ with }~
P_i\in Z_0 ~\text{ and }~  H_i(y_0)=\inf_{y\in\mathbb R^n} H_i(y)=\mu.
\end{align}
\end{theorem}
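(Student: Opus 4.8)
The plan is to run a refined energy expansion built on the scaling limit already established in Theorem \ref{thm13basicbehavior}, upgrading the qualitative convergence there to a quantitative one by matching leading-order terms in the small parameter $\varepsilon = \varepsilon_M$. First I would derive a sharp two-sided bound for $e_{\alpha^*,M}$. For the upper bound, I would use the minimizer $(m_0,w_0)$ of the Gagliardo--Nirenberg problem \eqref{sect2-equivalence-scaling} as a test function: set $(m,w) = (\tau^n m_0(\tau(x-\xi)), \tau^{n+1}w_0(\tau(x-\xi)))$ for suitable scaling $\tau = \tau(M)$ and translation $\xi$ near a zero $P_i$ of $V$, plug into \eqref{41engliang}, and use the identity $M^* = 2\Gamma_{\alpha^*}$ together with the local expansion $V(x) = a_i|x-P_i|^{q_i} + O(|x-P_i|^{q_i+1})$ to compute
\begin{align*}
\mathcal E(m,w) \approx \left(1 - \frac{M}{M^*}\right)\tau^{\gamma'}\,\varepsilon_{GN}^{-\gamma'}\cdot(\text{const}) + \tau^{-q_i}H_i(\tau\xi') + \text{l.o.t.},
\end{align*}
then optimize over $\tau$ (balancing the $\tau^{\gamma'}$ and $\tau^{-q}$ terms, which forces $\tau \sim \varepsilon^{-1}$ with $\varepsilon$ as in \eqref{131thm14}) and over the translation to select the flattest zero, i.e. the one realizing $q = \max q_i$ and then $\mu = \min \mu_i$. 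This yields the claimed upper bound for $e_{\alpha^*,M}$ with the exponent $\frac{q}{\gamma'-q}$ and prefactor $\frac{q+\gamma'}{q}(\frac{q\mu}{\gamma'})^{\gamma'/(\gamma'+q)}$.

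For the matching lower bound, I would start from an arbitrary near-minimizer, apply the Gagliardo--Nirenberg inequality \eqref{sect2-equivalence-scaling} to bound the Hartree term by the kinetic term and mass, so that
\begin{align*}
\mathcal E(m,w) \ge \left(1 - \frac{M}{M^*}\right)C_L\!\int_{\mathbb R^n}\!\Big|\tfrac{w}{m}\Big|^{\gamma'}m\,dx + \int_{\mathbb R^n} V(x)m\,dx,
\end{align*}
then use the concentration information from Theorem \ref{thm13basicbehavior} — namely $m_M$ concentrates at rate $\varepsilon$ near a point $x_\varepsilon$ with $\mathrm{dist}(x_\varepsilon,\mathcal Z)\to 0$, and the rescaled densities $m_\varepsilon$ converge strongly in $L^p$ to the GN-minimizer $m_0$ — to estimate $\int V(x)m_M\,dx$ from below by $\varepsilon^{q}H_i(\cdot) + o(\varepsilon^q)$ after rescaling and Fatou/Vitali. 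Here I would need to argue $x_\varepsilon$ cannot drift to a zero with smaller $q_i$ (else the potential term is of larger order, contradicting the upper bound) nor to one with larger $\mu_i$, which pins down $x_\varepsilon \to P_i \in Z_0$. Combining the lower bound with the upper bound and optimizing the resulting function of $\varepsilon$ gives both the energy asymptotics and, by reading off the optimal $\varepsilon$, the relation \eqref{131thm14}; the finer localization \eqref{132thm14} of $\frac{x_\varepsilon - P_i}{\varepsilon}$ follows by a second-order expansion of $\int V(x)m_M\,dx$ around the concentration point, matching it to $\tau^{-q}H_i(y)$ and using strict positivity of the minimum structure of $H_i$.

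The main obstacle I anticipate is controlling the potential term $\int_{\mathbb R^n} V(x)m_M\,dx$ with enough precision: the rescaled density $m_\varepsilon$ only converges in $L^p$ for $p < \hat q^*$, while $V$ grows (at most exponentially by (V2)), so one must combine the exponential decay estimate $m_{\alpha^*}(x)\le c_1 e^{-c_2|x|}$ for the limit profile (from Theorem \ref{thm11-optimal}) with a uniform-in-$M$ tail bound on $m_M$ — this is exactly where the blow-up/regularity analysis behind Theorem \ref{thm11}(i) and the coercivity assumptions (V1)--(V2) must be leveraged to justify passing the limit inside $\int V m_\varepsilon$ and to show the error terms are genuinely $o(\varepsilon^q)$. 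A secondary delicate point is ruling out the degenerate possibility that mass splits between several zeros of $V$; this should follow from the strict subadditivity / the fact that the GN-minimizer has a single bump (no dichotomy), already implicit in the concentration-compactness dichotomy analysis used for Theorem \ref{thm11}, but it needs to be invoked carefully. Once these analytic estimates are in place, the remaining steps are elementary one-variable optimizations.
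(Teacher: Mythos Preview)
Your proposal is correct and follows essentially the same route as the paper: an upper bound via scaled GN-minimizer test functions centered at a flattest zero (this is the paper's Lemma \ref{lemma61}), matched against a lower bound obtained from the Gagliardo--Nirenberg inequality plus Fatou on the potential term, with a contradiction against the upper bound forcing first $q_i=q$, then boundedness of $(x_\varepsilon-P_i)/\varepsilon$, then $P_i\in Z_0$ and \eqref{132thm14}. Your two anticipated obstacles turn out to be non-issues: for the potential term only a \emph{lower} bound on $\liminf \varepsilon^{-q}\int V(\varepsilon x+x_\varepsilon)m_\varepsilon\,dx$ is needed, so Fatou's lemma (with a.e.\ convergence of $m_\varepsilon$ from Theorem \ref{thm13basicbehavior}) suffices and no uniform-in-$M$ tail decay of $m_M$ is ever invoked; and mass splitting is already excluded by the single-point concentration $m_\varepsilon\to m_0$ in $L^p$ established in Theorem \ref{thm13basicbehavior}, so no additional concentration-compactness step is required.
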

Theorem \ref{thm14preciseblowup} demonstrates that under certain types of potentials with the local polynomial expansions, ground states to (\ref{goalmodel}) are localized as $M\nearrow M^*$, in which the locations converge to the flattest minima of $V$.% \textcolor{red}{explain here which minimum is selected?}
% which demonstrates that under some technical assumptions on potential $V$, system (\ref{MFG-SS}) admits the concentration phenomenon when Hamiltonian $H$ is given by (\ref{MFG-H}) and the nonlinear term $f(m)=-m^{\alpha^*}$ with critical exponent.

\medskip

The rest of this paper is organized as follows. In Section \ref{preliminaries}, we give some preliminaries for the investigation of ground states to \eqref{goalmodel} with $\alpha=n-\gamma'$.  Section \ref{sect3-optimal} is dedicated to the formulation of the optimal Gagliardo-Nirenberg type's inequality and the proof of Theorem \ref{thm11-optimal}. In Section \ref{sect4-criticalmass}, we prove Theorem \ref{thm11} by using the blow-up analysis and the Gagliardo-Nirenberg inequality shown in Theorem \ref{thm11-optimal}.  Finally, in Section \ref{sect5preciseblowup}, we focus on Theorem \ref{thm13basicbehavior} and \ref{thm14preciseblowup}, i.e. discuss the existence and concentration behaviors of ground states in some singular limit of $M$ given in \eqref{goalmodel}.  Without confusing readers, $C>0$ is chosen as a generic constant, which may vary line to line.

\medskip

\section{Preliminaries}\label{preliminaries}
This section is devoted to some preliminary results including existence and regularities of the solutions to Hamilton-Jacobi and Fokker-Planck equations.
\subsection{Hamilton-Jacobi Equations}\label{subsection1}
%As mentioned above, one of the goals of this paper is to classify the ground states for \eqref{MFG-SS} by using the variational method and introducing some critical mass threshold $M^*.$ For this purpose, A vital step here is to obtain the regularity of the solution after the minimizer is obtained. 
Consider the following Hamilton-Jacobi equation:
\begin{align}\label{introductioneqmaximalnew}
-\Delta u+C_H|\nabla u|^{\gamma}=f, ~x\in\Omega,
\end{align}
where $\Omega$ is a bounded domain with the smooth boundary, $C_H>0$ and $\gamma>1$.  % There are some results on the regularity of the following A natural question is the $W^{2,p}$ regularity of the solution to (\ref{introductioneqmaximalnew}) provided that $f\in L^{p}$ for some $p>1.$
%One seminal work is finished by P.-L. Lions \cite{lions1985quelques} in 1985.  By employing the Bernstein method, he established the $L^p$ estimate for the gradient  of the solution when $p>{n}$ for any $r'>1.$  Moreover,
%P.-L. Lions conjectured that for all $p>\frac{n}{r},$ the $L^p$ estimate should  hold for the $D^2 u$ of (\ref{introductioneqmaximalnew}) when $f$ is assumed to satisfy $f\in L^p.$  This conjecture is confirmed when the periodic boundary condition is imposed on equation (\ref{introductioneqmaximalnew}) in \cite{cirant2021problem}.  For the case of $1<r'\leq 2$, the global estimates were discussed in \cite{grenon2014priori, betta2015gradient, goffi2023optimal, goffipediconi2023} with Dirichlet or Neumann boundary conditions.  There are a few results on the local $L^p$ estimates for $D^2 u$; these have been obtained by the first author \cite{cirant2022local} and Verzini, in the superquadratic case $r'>2.$  However, the local $L^p$ regularity in the case of $r'\leq 2$ is open to our knowledge.
For the local $W^{2,p}$ estimates of the solutions $u$ to \eqref{introductioneqmaximalnew}, we have% It turns out that in our analysis of the MFG system in the mass critical case, the local maximal regularity estimate plays a crucial role, which is stated as follows.
\begin{lemma}[C.f. Theorem 1.1 in \cite{cirant2024critical}]\label{thmmaximalregularity}
Let $C_H>0$, $p>\frac{n}{\gamma'}$, $ \gamma\geq \frac{n}{n-1}$ and $f\in L^p(\Omega)$.   Suppose $u\in W^{2,p}(\Omega)$ solves \eqref{introductioneqmaximalnew} in the strong sense.  Then for each $M>0$ and $\Omega'\subset\subset \Omega$, we have
{$$ \Vert \nabla u\Vert_{L^p(\Omega')} + \Vert D^2 u\Vert_{L^p(\Omega')}\leq C,$$}
where $\Vert f\Vert_{L^p(\Omega)}\leq M $ and the constant $C=C(M,\mathrm{dist}(\Omega',\partial\Omega),n,p, C_H,\gamma')>0$.
\end{lemma}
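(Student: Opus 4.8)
The statement is cited from \cite{cirant2024critical}, so the plan is to reconstruct the standard proof via a bootstrap/interpolation argument combined with elliptic $L^p$ regularity. The plan is as follows. First I would fix a cutoff function $\xi\in C_c^\infty(\Omega)$ with $\xi\equiv 1$ on $\Omega'$ and $\operatorname{supp}\xi$ compactly contained in an intermediate domain $\Omega''$ with $\Omega'\subset\subset\Omega''\subset\subset\Omega$. Writing $v:=\xi u$, one computes $-\Delta v = \xi f - C_H\xi|\nabla u|^\gamma - 2\nabla\xi\cdot\nabla u - u\Delta\xi$ in $\Omega$, with $v$ compactly supported; then apply the Calderón–Zygmund estimate $\|D^2 v\|_{L^p}\le C(\|\Delta v\|_{L^p}+\|v\|_{L^p})$. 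The key point is to control $\||\nabla u|^\gamma\|_{L^p(\Omega'')}=\|\nabla u\|_{L^{\gamma p}(\Omega'')}^\gamma$ by the right-hand side, which is exactly where the hypothesis $p>n/\gamma'$ enters.

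The heart of the matter is a gradient bound: I would show $\|\nabla u\|_{L^{\gamma p}(\Omega'')}$ is controlled in terms of $M=\|f\|_{L^p(\Omega)}$ and a bound on $\|u\|_{L^p}$ (or by a further a priori argument, purely in terms of $M$). The mechanism is interpolation: by the Gagliardo–Nirenberg–Sobolev inequality on the localized function, $\|\nabla u\|_{L^{\gamma p}}\lesssim \|D^2 u\|_{L^p}^{\theta}\|u\|_{L^p}^{1-\theta}+\|u\|_{L^p}$ for an appropriate $\theta\in(0,1)$ provided $\gamma p < p^{**}$ in the relevant Sobolev sense; the condition $p>n/\gamma'$, equivalently $\gamma p<\frac{np}{n-p}$ type relation, is precisely what makes the exponent $\theta\gamma<1$, so that after raising to the power $\gamma$ the term $\|D^2u\|_{L^p}^{\theta\gamma}$ can be absorbed into the left side of the Calderón–Zygmund estimate by Young's inequality. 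Feeding this back gives $\|D^2 v\|_{L^p(\Omega)}\le C$, hence $\|D^2 u\|_{L^p(\Omega')}+\|\nabla u\|_{L^p(\Omega')}\le C$ with $C$ depending only on the stated quantities.

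The main obstacle — and the step requiring care rather than routine computation — is making the absorption rigorous without circularity: one needs an a priori bound on a lower-order norm of $u$ (e.g. $\|u\|_{L^\infty}$ or $\|u\|_{L^p}$ on $\Omega''$) that depends only on $\|f\|_{L^p}$ and the geometry, which for Hamilton–Jacobi equations of this type follows from comparison-principle/Bernstein-type arguments or from the coercivity $C_H|\nabla u|^\gamma\le |f|+|\Delta u|$ combined with an iteration on shrinking domains (a De Giorgi–type or Moser-type level-set iteration). I would handle this by running the cutoff-and-interpolation argument on a nested family of domains $\Omega'\subset\Omega_1\subset\cdots\subset\Omega_k=\Omega$ and chaining the estimates, so that only $\|f\|_{L^p(\Omega)}$ and $\operatorname{dist}(\Omega',\partial\Omega)$ appear in the final constant. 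Since the result is quoted verbatim from \cite{cirant2024critical}, in the paper itself this lemma is simply invoked; the proposal above indicates the route one would take to establish it from scratch.
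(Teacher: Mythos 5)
You correctly note that the paper simply cites this result from \cite{cirant2024critical} without proving it, so there is no in-text argument to compare against; the question is whether the route you sketch would actually succeed.

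There is a genuine gap, and it sits exactly at the step you flag as "requiring care": the Calder\'on--Zygmund plus Gagliardo--Nirenberg absorption does \emph{not} close at the sharp threshold $p>n/\gamma'$. Concretely, interpolating $\|\nabla(\xi u)\|_{L^{\gamma p}}\lesssim\|D^2(\xi u)\|_{L^p}^{\theta}\|\xi u\|_{L^p}^{1-\theta}$ forces, by scaling, $\theta=\tfrac12+\tfrac{n}{2\gamma' p}$, so that
\begin{equation*}
\theta\gamma \;=\; \frac{\gamma}{2}+\frac{n(\gamma-1)}{2p}.
\end{equation*}
Absorption after raising to the $\gamma$-th power and applying Young requires $\theta\gamma<1$, which is equivalent to $p>\tfrac{n(\gamma-1)}{2-\gamma}$ \emph{and} $\gamma<2$. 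This is strictly stronger than the stated hypothesis $p>n/\gamma'=\tfrac{n(\gamma-1)}{\gamma}$ (since $\gamma>2-\gamma$ whenever $\gamma>1$), and it is vacuous for $\gamma\ge 2$, a case that the lemma explicitly permits (for $n=2$ the hypothesis $\gamma\ge\tfrac{n}{n-1}$ already forces $\gamma\ge 2$). At the actual threshold $p=n/\gamma'$, one has $\theta=1$ (the Sobolev embedding $W^{1,p}\hookrightarrow L^{\gamma p}$), so $\theta\gamma=\gamma>1$ and the term you want to absorb \emph{dominates} the left-hand side. Your remark that "the condition $p>n/\gamma'$ \dots is precisely what makes $\theta\gamma<1$" conflates Sobolev subcriticality of the exponent $\gamma p$ with smallness of the interpolation power; these are different thresholds. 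The supplementary ideas you propose (an a priori $L^\infty$ bound on $u$, De Giorgi/Moser iteration, chaining over nested domains) control lower-order norms but do not change the exponent count, so they do not repair the absorption.

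The sharp estimate at $p>n/\gamma'$ is not a bootstrap from linear elliptic regularity. The proof in \cite{cirant2024critical} (in the line of Cirant--Goffi) relies on a structurally different mechanism: either the \emph{integral Bernstein method} -- derive an inequality for $w=|\nabla u|^2$ via Bochner's formula, multiply by $w^{s}\xi^2$ for a suitably chosen power $s$ and cutoff $\xi$, and exploit the superlinear coercivity $C_H|\nabla u|^{\gamma}$ of the Hamiltonian to reabsorb the top-order terms at the optimal exponent -- or the \emph{nonlinear adjoint method}, pairing the linearised equation with the solution of the dual Fokker--Planck equation $\Delta\rho+\nabla\cdot(\rho\,D_pH(\nabla u))=0$ and reading the gradient integrability off the crossed energy identity. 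Both avenues produce the threshold $p>n/\gamma'$ precisely because the convexity of $H$ in $\boldsymbol{p}$ supplies a sign that linear Calder\'on--Zygmund theory cannot see. Your write-up is a reasonable first instinct, but the key missing idea is this use of the Hamiltonian's structure rather than treating $|\nabla u|^\gamma$ as a generic right-hand-side perturbation.
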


Since our arguments in Section \ref{sect3-optimal}, \ref{sect4-criticalmass} and \ref{sect5preciseblowup} involve some limits of solution sequences, we also focus on the following sequence of Hamilton-Jacobi equations:
\begin{align}\label{HJB-regularity}
-\Delta u_k+C_H|\nabla u_k|^{\gamma}+\lambda_k=V_k(x)+f_k(x),\ \ x\in\mathbb R^n,
\end{align}
where $C_H>0$ and $\gamma>1$ are fixed.  Here $(u_k,\lambda_k)$ denote the solution pair to \eqref{HJB-regularity}. Concerning the regularities of $u_k$, we obtain
\begin{lemma} [C.f. Lemma 3.1 in \cite{cirant2024critical}]\label{sect2-lemma21-gradientu}
Assume that $f_k\in L^{\infty}(\mathbb R^n)$ satisfies $\Vert f_k\Vert_{L^\infty}\leq C_f$ and $|\lambda_k|\leq \lambda$.  Suppose the potential functions $V_k(x)$ are uniformly local H\"{o}lder continuous satisfying  $0\leq V_k(x)\rightarrow +\infty$ as $|x|\rightarrow +\infty,$ and there exists $R>0$ sufficiently large such that
\begin{align}\label{themoregeneralvkcondition}
0< C_1\leq \frac{V_k(x+y)}{V_k(x)}\leq C_2,\text{~for~all~}k\text{~and~all~}|x|\geq R \text{~with~}|y|<2,
\end{align}
where the positive constants $C_f$, $\lambda$, $R$, $C_1$ and $C_2$ are independent of $k$.  Define $(u_k,\lambda_k)\in C^2(\mathbb R^n)\times \mathbb R$ as the solutions to (\ref{HJB-regularity}).  Then, we have for all $k$,
\begin{align}\label{usolutiongradientestimatepre1}
|\nabla u_k(x)|\leq C(1+V_k(x))^{\frac{1}{\gamma}}, \text{ for all } x\in\mathbb{R}^n,
\end{align}
where constant $C$ depends on $C_H$, $C_1$, $C_2$, $\lambda$, $\gamma'$, $n$ and $C_f.$

In particular, if each $V_k$ satisfies 
\begin{align}\label{cirant-Vk}
 C_F^{-1}(\max\{|x|-C_F,0\})^b\leq V_k(x)\leq C_F(1+|x|)^b,~~\text{for all }k\text{ and }x\in\mathbb R^n,
 \end{align}
  where $b\geq 0$ and $C_{F}>0$ independent of $k,$ then we have
\begin{align}\label{usolutiongradientestimatepre}
|\nabla u_k|\leq C(1+|x|)^{\frac{b}{\gamma}}, ~\text{for all }k\text{ and }x\in\mathbb R^n,
\end{align}
where constant $C$ depends on $C_H$, $C_{F}$, $b$, $\lambda$, $\gamma'$, $n$ and $C_f.$
\end{lemma}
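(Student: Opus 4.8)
The plan is to derive this Bernstein-type gradient bound directly from the interior $W^{2,p}$ estimate of Lemma~\ref{thmmaximalregularity} by a rescaling argument (Lipschitz bounds of this kind for viscous Hamilton--Jacobi equations are classical). Throughout one uses the standing assumption of the paper $\gamma'<n$, i.e. $\gamma>\tfrac{n}{n-1}$, so that Lemma~\ref{thmmaximalregularity} is available for every $p>n$ (in particular $p>\tfrac{n}{\gamma'}$).

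First I would prove a \emph{universal pointwise estimate on a ball}: there is $C_0=C_0(n,C_H,\gamma')>0$ such that any $u\in C^2(\overline{B_r(x_0)})$ solving $-\Delta u+C_H|\nabla u|^{\gamma}=g$ in $B_r(x_0)$ with $\|g\|_{L^\infty(B_r(x_0))}\le A$ satisfies $|\nabla u(x_0)|\le C_0\big(r^{-1/(\gamma-1)}+A^{1/\gamma}\big)$. To see this I would rescale: set $\rho:=\min\{r,\,A^{-1/\gamma'}\}$ ($\le r$) and $\tilde u(z):=\rho^{(2-\gamma)/(\gamma-1)}u(x_0+\rho z)$ for $z\in B_1(0)$. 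A direct computation (the exponent $(2-\gamma)/(\gamma-1)$ is dictated by the requirement that the Hamiltonian term keep the fixed coefficient $C_H$) shows that $\tilde u\in C^2(\overline{B_1})$ solves $-\Delta\tilde u+C_H|\nabla\tilde u|^{\gamma}=\tilde g$ with $\tilde g(z)=\rho^{\gamma'}g(x_0+\rho z)$, whence $\|\tilde g\|_{L^\infty(B_1)}\le\rho^{\gamma'}A\le1$ by the choice of $\rho$. Lemma~\ref{thmmaximalregularity}, applied on $B_{1/2}\subset\subset B_1$ with a fixed $p>n$ (and $\|\tilde g\|_{L^p(B_1)}\le|B_1|^{1/p}$ a dimensional constant), bounds $\|\nabla\tilde u\|_{L^p(B_{1/2})}+\|D^2\tilde u\|_{L^p(B_{1/2})}$ by a constant depending only on $n,p,C_H,\gamma'$; by the Morrey embedding $W^{1,p}(B_{1/2})\hookrightarrow C^0(\overline{B_{1/2}})$ this gives $|\nabla\tilde u(0)|\le C_0$. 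Undoing the scaling, $\nabla\tilde u(0)=\rho^{1/(\gamma-1)}\nabla u(x_0)$ and $\rho^{-1/(\gamma-1)}=\max\{r^{-1/(\gamma-1)},A^{1/\gamma}\}$, using $(2-\gamma)/(\gamma-1)+1=1/(\gamma-1)$ and $\gamma'(\gamma-1)=\gamma$; the claim follows.

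Next I would localise to pass from this to \eqref{usolutiongradientestimatepre1}. Fix $x_0$. For $|x_0|\ge R$ the doubling hypothesis \eqref{themoregeneralvkcondition} gives $V_k(x)\le C_2V_k(x_0)$ on $B_1(x_0)\subset B_2(x_0)$, hence $\|V_k+f_k-\lambda_k\|_{L^\infty(B_1(x_0))}\le C_2V_k(x_0)+C_f+\lambda\le C(1+V_k(x_0))$; for $|x_0|<R$ the uniform local H\"older continuity of the $V_k$ bounds the oscillation of $V_k$ over $B_2(x_0)$ by a constant, giving again $\|V_k+f_k-\lambda_k\|_{L^\infty(B_1(x_0))}\le C(1+V_k(x_0))$. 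Applying the pointwise estimate with $r=1$ and $A=C(1+V_k(x_0))$ then yields $|\nabla u_k(x_0)|\le C_0\big(1+C^{1/\gamma}(1+V_k(x_0))^{1/\gamma}\big)\le C(1+V_k(x_0))^{1/\gamma}$, with $C$ independent of $k$ and $x_0$; this is \eqref{usolutiongradientestimatepre1}. For the polynomial case, the upper bound in \eqref{cirant-Vk} gives $V_k(x)\le C_F(2+|x_0|)^{b}\le C(1+|x_0|)^{b}$ on $B_1(x_0)$, so the pointwise estimate with $r=1$ and $A=C(1+|x_0|)^{b}+C_f+\lambda$ gives $|\nabla u_k(x_0)|\le C(1+|x_0|)^{b/\gamma}$, uniformly in $k$, which is \eqref{usolutiongradientestimatepre}. (One may also observe that \eqref{cirant-Vk} implies \eqref{themoregeneralvkcondition} together with $(1+V_k)^{1/\gamma}\le C(1+|x|)^{b/\gamma}$, so the second bound follows from the first.)

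The step needing care is the pointwise estimate: besides choosing the scaling exponents so that the Hamiltonian keeps its coefficient, one must take the radius $\rho$ \emph{adapted to $A$} rather than fixed, so that the rescaled source is bounded by an absolute constant and the constant delivered by Lemma~\ref{thmmaximalregularity} does not degenerate as $A\to\infty$; the exponent bookkeeping then reduces to the identities $(2-\gamma)/(\gamma-1)+1=1/(\gamma-1)$ and $\gamma'(\gamma-1)=\gamma$. An alternative would be a contradiction argument: rescale a hypothetical sequence violating \eqref{usolutiongradientestimatepre1} to an entire solution of $-\Delta v+C_H|\nabla v|^{\gamma}=0$ with bounded and nonvanishing gradient, which is impossible --- e.g. via the Hopf--Cole transform ($e^{-C_Hv}$ is positive harmonic, hence constant) when $\gamma=2$, and by the same interior estimate for general $\gamma$ --- but the direct route above is shorter and uses only Lemma~\ref{thmmaximalregularity}.
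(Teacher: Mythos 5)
Your argument is correct, and it is the natural way to obtain Lemma~\ref{sect2-lemma21-gradientu} from Lemma~\ref{thmmaximalregularity}. The paper itself does not reproduce a proof here --- it simply cites Lemma~3.1 of \cite{cirant2024critical} --- but the route taken there is also a rescaling of the interior $W^{2,p}$ estimate (their Theorem~1.1, reproduced here as Lemma~\ref{thmmaximalregularity}), so you have essentially reconstructed the intended argument.

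A few remarks on the places you flagged as needing care, all of which check out. The exponent bookkeeping in the rescaling is correct: with $\tilde u(z)=\rho^{(2-\gamma)/(\gamma-1)}u(x_0+\rho z)$ both $\Delta\tilde u$ and $|\nabla\tilde u|^{\gamma}$ pick up the same factor $\rho^{\gamma'}$, so the coefficient $C_H$ is preserved and the source becomes $\tilde g=\rho^{\gamma'}g(x_0+\rho\cdot)$; the choice $\rho=\min\{r,A^{-1/\gamma'}\}$ then pins $\|\tilde g\|_{L^\infty(B_1)}\le1$, which is exactly what keeps the constant in Lemma~\ref{thmmaximalregularity} (which degenerates with $M=\|f\|_{L^p}$) from blowing up. Two points worth stating explicitly when you write this up: (i) you need $p>n$ both to satisfy the hypothesis $p>n/\gamma'$ of Lemma~\ref{thmmaximalregularity} (automatic since $\gamma'>1$) and to invoke the Morrey embedding $W^{1,p}(B_{1/2})\hookrightarrow C^{0}(\overline{B_{1/2}})$, which converts the $W^{2,p}$ bound on $\tilde u$ into $|\nabla\tilde u(0)|\le C_0$; and (ii) for $|x_0|<R$ the doubling hypothesis \eqref{themoregeneralvkcondition} is silent, and it is really the \emph{uniform} local H\"older continuity (uniform in $k$ on the fixed compact $\overline{B_{R+1}}$) that gives $V_k(x)\le V_k(x_0)+C$ on $B_1(x_0)$ and hence $A\le C(1+V_k(x_0))$; this is a slightly nonobvious but correct use of that hypothesis. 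Your final observation, that \eqref{cirant-Vk} yields \eqref{usolutiongradientestimatepre} directly from the pointwise estimate rather than by verifying \eqref{themoregeneralvkcondition}, is also the cleaner route since it sidesteps the fact that for $b=0$ one no longer has $V_k\to\infty$.
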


% Moreover, if we assume that $u$ is bounded from below and $b>0$, then (\ref{cirant-V}) with $b\geq 0$.
%\begin{align}\label{lowerbounducontrol}
%|u(x)|\geq C|x|^{\frac{b}{r'}+1}-C^{-1},
%\end{align}
%where constant $C>0$ depends on $C_V$ and $r'.$  In particular, when $b=0$, if we in addition assume that there exists $\sigma>0$ such that
%$$V-h-d\geq \sigma>0, \ \ \text{for }|x|>K_1,$$
%where $K_1$ is a large constant, then (\ref{lowerbounducontrol}) also holds.

For the lower bounds of $u_k$, we have the following results:
\begin{lemma}[C.f. Lemma 3.2 in \cite{cirant2024critical}]\label{lowerboundVkgenerallemma22}
Suppose all conditions in Lemma \ref{sect2-lemma21-gradientu} hold. %that $|\lambda|$, $\Vert f_n(x)\Vert_{L^\infty}\leq C$ and $f_n+V_n\in C_{\text{loc}}^{\alpha}(\mathbb R^n)$ and $\exists R>1$ independent of $n$ such that $C_1$, $C_2>0$ independent of $n$
%$$0\leq C_1\leq \frac{V_n(x+y)}{V_n(x)}\leq C_2,\text{~for~}|x|\leq R\text{~and~}|y|\leq 2.$$
Let $u_k$ be a family of $C^2$ solutions and assume that $u_k(x)$ are bounded from below uniformly.  Then there exist positive constants $C_3$ and $C_4$ independent of $k$ such that
\begin{align}\label{29uklemma22}
u_k(x)\geq C_3V^{\frac{1}{\gamma}}_k(x)-C_4,\text{~}\forall x\in\mathbb R^n,~\text{for all }k.
\end{align}
In particular, if the following conditions hold on $V_k$
\begin{align}\label{cirant-Vk-1}
 C_F^{-1}(\max\{|x|-C_F,0\})^b\leq V_k(x)\leq C_F(1+|x|)^b,~~\text{for all }k\text{ and }x\in\mathbb R^n,
 \end{align}
 where constants $b> 0$ and $C_{F}$ are independent of $k,$ then we have
\begin{align}\label{usolutionlowerestimatepre-11}
 u_k(x)\geq C_3|x|^{1+\frac{b}{\gamma}}-C_4,\text{~for all}k, x\in\mathbb R^n.
\end{align}
If $b=0$ in (\ref{cirant-Vk-1}) and there exist $R>0$ and $\hat \delta>0$ independent of $k$ such that
\begin{align}\label{lemma22holdsbeforeconclusion}
f_k+V_k-\lambda_k>\hat \delta>0\text{~for~all }|x|>R,
\end{align}
then \eqref{usolutionlowerestimatepre-11} also holds.
\end{lemma}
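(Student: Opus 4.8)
The plan is to construct an explicit subsolution of the Hamilton--Jacobi equation that grows like $V_k^{1/\gamma}$ and then invoke a comparison principle. First I would fix a large ball $B_R$ (with $R$ as in the hypotheses) and work on $\mathbb R^n\setminus B_R$, where the coercivity of $V_k$ is available. Since $u_k$ is bounded below uniformly, say $u_k\geq -C_0$ for all $k$, it suffices to produce a function $\psi_k$ with $\psi_k\leq u_k$ on $\partial B_R$, $\psi_k$ a subsolution of $-\Delta \psi_k+C_H|\nabla\psi_k|^\gamma+\lambda_k\leq V_k+f_k$ outside $B_R$, and $\psi_k(x)\geq C_3 V_k^{1/\gamma}(x)-C_4$. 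The natural ansatz is $\psi_k(x)=\eta\, d(x)^{1+b'/\gamma}$ (or more intrinsically $\eta$ times a power of $V_k^{1/\gamma}$) for a small constant $\eta>0$ independent of $k$; the point is that the gradient term $C_H|\nabla\psi_k|^\gamma$ is of order $\eta^\gamma V_k$, which for $\eta$ small is dominated by $V_k$ on the right-hand side, while the Laplacian term is lower order, and $f_k,\lambda_k$ are bounded. Then the comparison principle for viscosity (or classical $C^2$) solutions of this quasilinear equation, applied on the exterior domain with the boundary ordering on $\partial B_R$ and the growth of $u_k$ at infinity forced by \eqref{usolutiongradientestimatepre1}, gives $u_k\geq \psi_k$, hence \eqref{29uklemma22} after adjusting constants to also cover $B_R$.

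For the power-potential refinement \eqref{usolutionlowerestimatepre-11}, I would simply substitute the explicit lower bound $V_k(x)\geq C_F^{-1}(|x|-C_F)^b$ from \eqref{cirant-Vk-1} into \eqref{29uklemma22}: since $V_k^{1/\gamma}(x)\geq c|x|^{b/\gamma}$ for $|x|$ large, and we want the stronger exponent $1+b/\gamma$, this is exactly where the subsolution must be taken as $\psi_k(x)\sim |x|^{1+b/\gamma}$ rather than $|x|^{b/\gamma}$. Checking that this $\psi_k$ is a subsolution is the key computation: $|\nabla\psi_k|^\gamma\sim |x|^{(1+b/\gamma-1)\gamma}=|x|^b$, which matches the growth of $V_k$, so again a small multiplicative constant makes the inequality hold; the $\Delta\psi_k\sim |x|^{b/\gamma-1}$ term is of strictly lower order. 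For the borderline case $b=0$ with the extra sign condition \eqref{lemma22holdsbeforeconclusion}, the RHS $f_k+V_k-\lambda_k$ is bounded below by a positive constant $\hat\delta$ outside $B_R$, and one takes the linear-growth subsolution $\psi_k(x)=\eta|x|-C$; then $|\nabla\psi_k|^\gamma=\eta^\gamma$ and $-\Delta\psi_k=-\eta(n-1)/|x|\to 0$, so for $\eta$ small the subsolution inequality $-\Delta\psi_k+C_H\eta^\gamma\leq \hat\delta$ holds for $|x|>R$ (enlarging $R$ if needed), yielding linear growth, which is \eqref{usolutionlowerestimatepre-11} with $b=0$.

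I expect the main obstacle to be the rigorous application of the comparison principle on an unbounded domain: one must rule out loss of ordering at infinity. This is handled by combining the gradient bound \eqref{usolutiongradientestimatepre1} from Lemma 2.2 (which controls how fast $u_k$ can grow — at most like $V_k^{1/\gamma}$, i.e.\ like $\psi_k$ up to constants) with a standard device, e.g.\ comparing $u_k+\delta\langle x\rangle$ with $\psi_k$ on an exhausting sequence of balls and letting $\delta\to 0^+$, or using that $u_k-\psi_k$ cannot attain an interior negative minimum by the equation. A secondary technical point is the regularity needed to run the classical maximum principle (the $\gamma$-growth Hamiltonian is smooth away from $\nabla u=0$, and $\psi_k$ is $C^2$ away from the origin, which lies inside $B_R$), so everything takes place in the region where the coefficients are smooth; near $\partial B_R$ one absorbs the discrepancy into the additive constant $C_4$. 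Finally, uniformity of $C_3,C_4$ in $k$ follows because every constant entering the subsolution construction ($\eta$, $R$, the comparison argument) depends only on $C_H,\gamma,n$, the uniform lower bound $C_0$ for $u_k$, the bounds $C_f,\lambda$, and the structural constants $C_1,C_2$ (resp.\ $C_F,b$, resp.\ $\hat\delta$), all of which are $k$-independent by hypothesis.
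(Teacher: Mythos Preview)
The paper does not give its own proof of this lemma; it simply refers to Lemma~3.2 of \cite{cirant2024critical}. Your barrier/comparison approach is the standard one for such coercive Hamilton--Jacobi lower bounds and is what one expects the cited argument to do. In particular, your computation for the power-potential case---taking $\psi_k(x)=\eta|x|^{1+b/\gamma}$ so that $C_H|\nabla\psi_k|^\gamma\sim \eta^\gamma|x|^b$ matches the growth of $V_k$ while $\Delta\psi_k\sim|x|^{b/\gamma-1}$ is lower order---is exactly right, as is the linear barrier $\psi_k(x)=\eta|x|-C$ in the $b=0$ case under \eqref{lemma22holdsbeforeconclusion}. Your handling of the comparison on the unbounded exterior domain (exhaustion by balls, $\delta$-perturbation to force ordering at infinity) is also the standard device, and the uniformity in $k$ is correctly traced back to the $k$-independent structural constants.

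One point to sharpen: for the general estimate \eqref{29uklemma22} you write the ansatz as ``$\eta$ times a power of $V_k^{1/\gamma}$'', but under the hypotheses of Lemma~\ref{sect2-lemma21-gradientu} the potentials $V_k$ are only locally H\"older, so such a function need not be $C^2$ and cannot serve directly as a classical subsolution. The usual fix is to build the barrier from $|x|$ alone (as you already do in the explicit cases) and then convert the resulting bound into one in terms of $V_k^{1/\gamma}$ \emph{a posteriori} via the doubling condition \eqref{themoregeneralvkcondition}; alternatively one runs the comparison in the viscosity sense. This is a minor adjustment and your overall strategy is sound.
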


The following results are concerned with
 the existence of the classical solution to (\ref{HJB-regularity}), which are 
\begin{lemma} [C.f. Lemma 3.3 in \cite{cirant2024critical}] \label{lemma22preliminary}
%Assume all conditions in Lemma \ref{sect2-lemma21-gradientu} hold.
Suppose $V_k+f_k$ are locally H\"{o}lder continuous and bounded from below uniformly in $k$.  Define
\begin{align}
\bar \lambda_k:=\sup\{\lambda\in\mathbb R~|~(\ref{HJB-regularity})\text{ has a solution }u_k\in C^2(\mathbb R^n)\}.
\end{align}
Then we have
\begin{itemize}
    \item[(i).] $\bar \lambda_k$ are finite for every $k$ and (\ref{HJB-regularity}) admits a solution $(u_k,\lambda_k)\in C^2(\mathbb R^n)\times \mathbb R$  with $\lambda_k=\bar \lambda_k$ and $u_k(x)$ being bounded from below (may not uniform in $k$).  Moreover,
    $$\bar \lambda_k=\sup\{\lambda\in\mathbb R~|~(\ref{HJB-regularity})\text{ has a subsolution }u_k\in C^2(\mathbb R^n)\}.$$
    \item[(ii).] If $V_k$ satisfies (\ref{cirant-Vk}) with $b>0$, then $u_k$ is unique up to constants for fixed $k$ and there exists a positive constant $C$ independent of $k$ such that
    \begin{align}\label{lowerboundusect2}
    u_k(x)\geq C|x|^{\frac{b}{\gamma}+1}-C, \forall x\in\mathbb R^n.
    \end{align}
    In particular, if $V_k\equiv 0$ and $b=0$ in \eqref{cirant-V} and there exists $\sigma>0$ independent of $k$ such that
    \begin{align}\label{verifylemma22}
    f_k-\lambda_k\geq \sigma>0, \ \ \text{for~} |x|>K_2,
    \end{align}
    where $K_2>0$ is a large constant independent of $k$, then (\ref{lowerboundusect2}) also holds.
 \end{itemize}
 (iii). If $V_k$ satisfies \eqref{V2mainassumption_2} with $V$ replaced by $V_k$ and positive constants $C_1$, $C_2$ and $\delta$ independent of $k,$ then there exist uniformly bounded from below classical solutions $u_k$ to problem (\ref{HJB-regularity}) satisfying estimate (\ref{29uklemma22}).
\end{lemma}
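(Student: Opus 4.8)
All three assertions are instances of the classical ergodic theory for the quasilinear operator $u\mapsto-\Delta u+C_H|\nabla u|^{\gamma}$ on $\mathbb R^n$, and the plan is to combine a vanishing--discount construction with the a priori estimates of Lemmas \ref{thmmaximalregularity}--\ref{lowerboundVkgenerallemma22} and a Perron--type argument. For part (i) I would fix $k$, write $g:=V_k+f_k$ (locally H\"older, bounded below, say $g\ge-C_0$), and for $\rho>0$ solve the discounted problems $-\Delta u_\rho+C_H|\nabla u_\rho|^{\gamma}+\rho u_\rho=g$ on $\mathbb R^n$ --- first on balls $B_R$ with zero Dirichlet data by the standard theory for quasilinear equations with this structure, then passing $R\to\infty$ with the interior estimates of Lemma \ref{thmmaximalregularity} and Schauder bootstrapping. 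The constant $-C_0/\rho$ is a subsolution, and a global supersolution $\psi$ --- a large multiple of $|x|^{1+b/\gamma}$ when $V_k$ grows polynomially and of $e^{\delta|x|/\gamma}$ when it grows at most exponentially, arranged so that $C_H|\nabla\psi|^{\gamma}$ dominates $g$ and $\Delta\psi$ for $|x|$ large --- gives an upper bound; hence, for a fixed $\bar x$, the numbers $\rho u_\rho(\bar x)$ stay in a bounded interval. Along a subsequence $\rho_j\to0$ they converge to a real number which will be the ergodic constant $\lambda_k$; the normalized correctors $v_{\rho_j}:=u_{\rho_j}-u_{\rho_j}(\bar x)$ are locally bounded by the gradient estimate of Lemma \ref{sect2-lemma21-gradientu}, and Lemma \ref{thmmaximalregularity} plus Schauder estimates then give local $C^2$ precompactness, so any limit $u_k\in C^2(\mathbb R^n)$ solves \eqref{HJB-regularity} with $\lambda=\lambda_k$ and (by a standard argument) may be taken bounded below. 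That $\lambda_k=\bar\lambda_k$ and that $\bar\lambda_k$ also equals the supremum over $C^2$ subsolutions then follows from two comparison facts: a solution at $\lambda$ is a subsolution at every smaller $\lambda$, and, conversely, from a subsolution at some $\lambda'>\lambda$ one obtains a strict subsolution at $\lambda$, whence Perron's method applied between this strict subsolution and the barrier $\psi$ produces a solution at $\lambda$. Finiteness of $\bar\lambda_k$ is the generalized principal eigenvalue bound for the ergodic operator: when $\gamma=2$ it drops out of the Hopf--Cole substitution $v=e^{-u}$, which turns a subsolution into a positive supersolution of $-\Delta+(g-\lambda)$ and hence forces $\lambda$ below the (finite) bottom of that Schr\"odinger spectrum, and for general $\gamma$ one uses the analogous eigenvalue characterization.

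For part (ii), once the $u_k$ of part (i) is known to be bounded below, Lemma \ref{lowerboundVkgenerallemma22} immediately upgrades it to the lower bound \eqref{lowerboundusect2} with a $k$--independent constant, both when $V_k$ obeys the polynomial two--sided bound with $b>0$ and, by the last clause of that lemma, in the degenerate case $V_k\equiv0$, $b=0$ under hypothesis \eqref{verifylemma22} (which is \eqref{lemma22holdsbeforeconclusion} with $V_k\equiv0$); in particular each such $u_k$ is coercive. For uniqueness up to constants I would first show that the ergodic constant attached to a coercive solution is forced to equal $\bar\lambda_k$, by comparing a coercive solution with the barrier $\psi$ and with the discounted approximations and exploiting the confinement from $b>0$. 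Then, given two coercive solutions $u_1,u_2$ at $\lambda=\bar\lambda_k$, set $w:=u_1-u_2$: since each $u_i\to+\infty$ while $w$ has at most polynomial growth, $w$ attains its infimum at some $x_0$ with $\nabla w(x_0)=0$ and $\Delta w(x_0)\ge0$; subtracting the two equations and writing $C_H|\nabla u_1|^{\gamma}-C_H|\nabla u_2|^{\gamma}=\mathbf b(x)\cdot\nabla w$ with $\mathbf b(x):=\int_0^1\nabla H(\nabla u_2+t\nabla w)\,dt$ locally bounded (fundamental theorem of calculus, using the gradient bounds), $w$ solves the linear nondivergence equation $-\Delta w+\mathbf b(x)\cdot\nabla w=0$; as $w\ge w(x_0)$ with equality at the interior point $x_0$, the strong maximum principle gives $w\equiv w(x_0)$, i.e.\ $u_1=u_2+\text{const}$.

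For part (iii) I would rerun the construction of part (i) under the doubling--at--infinity condition \eqref{V2mainassumption_2} (with $V$ replaced by $V_k$ and its constants uniform in $k$): the key point is that in this regime Lemma \ref{sect2-lemma21-gradientu} gives $|\nabla u_k|\le C(1+V_k)^{1/\gamma}$ and Lemma \ref{lowerboundVkgenerallemma22} gives $u_k\ge C_3V_k^{1/\gamma}-C_4$ with constants depending only on $C_1,C_2$, hence uniform in $k$; fixing the additive constant by normalizing $\min_{\mathbb R^n}u_k=0$ --- consistent with these bounds --- then yields solutions $u_k$ bounded below uniformly in $k$ and satisfying \eqref{29uklemma22}. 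I expect the genuine obstacles to be exactly: the construction of global super/subsolutions whose growth is matched to the (merely exponential, resp.\ merely doubling) growth of $V_k$ so that Perron's method and the $\rho\to0$ limit actually close; the uniqueness step of part (ii) in the degenerate $b=0$ case, where coercivity of $u_k$ --- needed for the infimum of $w$ to be attained --- rests entirely on \eqref{lemma22holdsbeforeconclusion}; and the verification that every constant produced along the way depends on $V_k$ only through $C_1,C_2$. The remaining ingredients --- interior $W^{2,p}$ and Schauder estimates and the comparison principle --- are routine.
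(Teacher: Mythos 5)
The paper offers no proof of this lemma: it is stated verbatim as a citation to Lemma~3.3 of \cite{cirant2024critical}, and there is no \verb|proof| environment following it. Your proposal is therefore being compared against the cited reference rather than against text in this paper. That said, the route you outline --- the vanishing--discount approximation $-\Delta u_\rho+C_H|\nabla u_\rho|^{\gamma}+\rho u_\rho=V_k+f_k$ on expanding balls, growth-matched polynomial/exponential barriers, Perron between a strict subsolution and the barrier, passage to the limit with Lemma~\ref{thmmaximalregularity} and Lemma~\ref{sect2-lemma21-gradientu}, upgrading via Lemma~\ref{lowerboundVkgenerallemma22}, and uniqueness through the strong maximum principle applied to $w=u_1-u_2$ after linearizing $|\nabla u|^\gamma$ --- is exactly the technology used in the cited reference and in the Cesaroni--Cirant line of work, and you correctly isolate where the uniformity in $k$ for part (iii) comes from (the constants in Lemmas~\ref{sect2-lemma21-gradientu}--\ref{lowerboundVkgenerallemma22} depend only on the doubling constants, so normalizing $\min u_k=0$ closes the loop).

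One step is not justified as written: you assert that $\lambda_k=\bar\lambda_k$ ``follows from two comparison facts,'' but those facts only show that the set of admissible $\lambda$ (for solutions or for subsolutions) is the same half-line $(-\infty,\bar\lambda_k]$ and that values strictly below $\bar\lambda_k$ are attained. They do not, by themselves, show that the constant $\lambda_k=\lim_{\rho\to0}\rho u_\rho(\bar x)$ produced by the vanishing discount is the top of that half-line, nor that a solution exists \emph{at} $\bar\lambda_k$. The missing ingredient is a comparison between the discounted correctors $u_\rho$ and a bounded-below subsolution $\varphi$ of the ergodic equation at level $\lambda$, yielding $\rho u_\rho(\bar x)\ge\lambda-o_\rho(1)$ for every such $\lambda$, hence $\lambda_k\ge\bar\lambda_k$; combined with the trivial $\lambda_k\le\bar\lambda_k$ this identifies the limit and establishes attainment. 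This is a standard estimate (present in \cite{cirant2024critical} and its predecessors) but it does need to be stated; without it your chain leaves open the possibility that the vanishing discount limit undershoots $\bar\lambda_k$. Also note that the statement only claims uniqueness up to constants for $b>0$; in the degenerate $V_k\equiv0$, $b=0$ case under \eqref{verifylemma22} only the lower bound \eqref{lowerboundusect2} is asserted, so your worry about uniqueness there is moot.
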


\subsection{Fokker-Planck Equations}\label{subsection2}
% Before stating the gradient estimates satisfied by solutions to Fokker-Planck equations, we recall the following key lemma for any function $m\in L^p(\mathbb R^n)$:
% \begin{lemma}\label{proposition-lemma21-FP}
%  Let $p>1$ and $m\in L^p(\mathbb R^n)$ with
%  \begin{align*}
%  \Big|\int_{\mathbb R^n}m\Delta\varphi\,dx\Big|\leq N\Vert\nabla \varphi\Vert_{L^{p'}(\mathbb R^n)}\ \ \text{~for~all~}\varphi\in C_c^{\infty}(\mathbb R^n),
%  \end{align*}
%  where $N>0$ is a positive constant.  Then we have $m\in W^{1,p}(\mathbb R^n)$ and
%  \begin{align*}
%  \Vert\nabla m\Vert_{L^p(\mathbb R^n)}\leq C_pN,
%  \end{align*}
%  where $C_p$ is a positive constant depending only on $p.$
% \end{lemma}
% \begin{proof}
% See Proposition 2.4 in \cite{cesaroni2018concentration}.
% \end{proof}
Now, we focus on the following Fokker-Planck equations:
\begin{align}\label{sect2-FP-eq}
-\Delta m+\nabla\cdot w=0,\ \ x\in\mathbb R^n,
\end{align}
where $w$ is given and $m$ denotes the solution. Firstly, we state the regularity results of solutions to equation \eqref{sect2-FP-eq}, which are
\begin{lemma}\label{lemma21-crucial}
Assume that $(m,w)\in \left(L^1(\mathbb R^n)\cap W^{1, \hat q}(\mathbb R^n)\right)\times L^1(\mathbb R^n)$ is a  solution to (\ref{sect2-FP-eq}) and
\begin{equation*}%\label{221lemmacrucialnewest}
\Lambda_{\gamma'}:=\int_{\mathbb R^n}|m|\Big|\frac{w}{m}\Big|^{\gamma'}\, dx<\infty.
\end{equation*}
Then, we have $w\in L^{1}(\mathbb R^n)\cap L^{\hat q}(\mathbb R^n)$ and there exists constant $\mathcal{C}=\mathcal{C}(\Lambda_{\gamma'},\|m\|_{L^1(\mathbb R^n)})>0$ such that
$$\|m\|_{W^{1,\hat q}(\mathbb R^n)}, \|w\|_{L^1(\mathbb R^n)},\|w\|_{L^{\hat q}(\mathbb R^n)}\leq \mathcal{C}.$$ 
% More precisely, we have
% \begin{equation}\label{eq240}
% \|\nabla m\|_{L^{\hat q}(\mathbb R^n)}\leq \mathcal{S}_{\hat q,r}^{\frac{1}{r'-\theta}}\left(C_{\hat q} \Lambda_r^\frac{1}{r}\right)^{\frac{r'}{r'-\theta}}\|m\|_{L^1(\mathbb R^n)}^\frac{1-\theta} {r'-\theta},~
% ~
% \|m\|_{L^{\hat q}(\mathbb R^n)}\leq  \mathcal{S}_{\hat q,r}^{\frac{1}{r'-\theta}} \left(C_{\hat q} \Lambda_r^\frac{1}{r}\right)^{\frac{\theta}{r'-\theta}}\|m\|_{L^1(\mathbb R^n)}^{\frac{1-\theta} {r'-\theta}+\frac{1}{r}}.
% \end{equation}
% and
% \begin{equation}\label{eq2.31}
 %\|w\|_{L^1(\mathbb R^n)}\leq  \Lambda_r^\frac{1}{r}\|m\|_{L^1(\mathbb R^n)}^\frac{r-1}{r},~~\|w\|_{L^{\hat q}(\mathbb R^n)}\leq \Lambda_r^\frac{1}{r} \left(\mathcal{S}_{\hat q,r}\right)^{\frac{1}{r'-\theta}}\left(C_{\hat q} \Lambda_r^\frac{1}{r}\right)^{\frac{\theta}{r'-\theta}}\|m\|_{L^1(\mathbb R^n)}^\frac{1-\theta} {r'-\theta},
 %\end{equation}
% where 
% \begin{equation}\label{Gag}
% 0<\mathcal{S}_{\hat q,r}^{-1}:=\inf_{m\in W^{1, \hat q}(\mathbb R^n)} \frac{\|\nabla m\|^\theta_{L^{\hat q}(\mathbb R^n)}\| m\|^{1-\theta}_{L^{1}(\mathbb R^n)}}{\|m\|_{L^{\beta}(\mathbb R^n)}}<\infty,
% \end{equation}
% and $\theta\in[0,1]$ satisfying $\frac{1}{\beta}=\theta(\frac{1}{{\hat q}}-\frac{1}{n})+1-\theta$.
% where
% \begin{equation}\label{eq2.201}
% \theta=\frac{nr(\hat q-1)}{(r-1)(nq-n+q)}=\begin{cases} 1, \ &\text{ if }r<n,\\
% \frac{n^2(\hat q-1)}{(n-1)(n\hat q-n+\hat q)} \ &\text{ if }r=n,\\
% \frac{nr}{nr-n+r},&\text{ if }r>n.
% \end{cases}
% \end{equation}

\end{lemma}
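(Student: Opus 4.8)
The plan is to exploit the weak formulation of $-\Delta m+\nabla\cdot w=0$ together with the finiteness of $\Lambda_{\gamma'}=\int |m||w/m|^{\gamma'}\,dx$ to bootstrap integrability of $m$ and $w$. First I would observe that the quantity $\Lambda_{\gamma'}$ controls $w$ in terms of $m$ by Hölder's inequality: writing $|w|=|m|^{1/\gamma}\,|m|^{1/\gamma'}|w/m|$ and applying Hölder with exponents $\gamma$ and $\gamma'$ gives
\begin{align*}
\int_{\mathbb R^n}|w|\,dx\le\Big(\int_{\mathbb R^n}|m|\,dx\Big)^{1/\gamma}\Big(\int_{\mathbb R^n}|m|\Big|\frac{w}{m}\Big|^{\gamma'}dx\Big)^{1/\gamma'}=\|m\|_{L^1}^{1/\gamma}\,\Lambda_{\gamma'}^{1/\gamma'},
\end{align*}
so $w\in L^1(\mathbb R^n)$ with a bound depending only on $\|m\|_{L^1}$ and $\Lambda_{\gamma'}$. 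More generally, interpolating $|w|=|m|^{1/r}\cdot\big(|m|^{1/\gamma'}|w/m|\big)$ with exponents $r'$ and $\gamma'$ (so that $1/r'+1/\gamma'=1$, i.e. $r'=\gamma$, but with $|m|$ appearing to a fractional power one gets the whole scale) shows that if $m\in L^s$ for suitable $s$ then $w\in L^{q}$ for the corresponding $q$; the target exponent $\hat q=\frac{n}{n-\gamma'+1}$ is exactly the one that matches the Sobolev scaling below.

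Next I would run the elliptic bootstrap. Treat the equation $-\Delta m=-\nabla\cdot w$ with $w\in L^1$ first: by the Calderón–Zygmund / Sobolev theory for the operator $\nabla(-\Delta)^{-1}\nabla\cdot$, or more elementarily by testing against $\varphi\in C_c^\infty$ and using the representation $m=(-\Delta)^{-1}(-\nabla\cdot w)$ via the Riesz kernel, one gets $\nabla m\in L^{1,\infty}$ and hence $m\in L^{p}$ for $p$ slightly below $\frac{n}{n-1}$; then, knowing $m\in L^{p}$, Hölder upgrades $w$ to a slightly better Lebesgue space, which by the Sobolev–Calderón–Zygmund estimate upgrades $m$ again. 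Iterating finitely many times, the exponents converge to the fixed point $m\in W^{1,\hat q}$, $w\in L^{\hat q}$ dictated by the scaling relation $\frac{1}{\hat q}=\frac{1}{\gamma}\cdot\frac{1}{s}+\frac{1}{\gamma'}$ combined with Sobolev's $\frac{1}{s}=\frac{1}{\hat q}-\frac{1}{n}$; one checks algebraically that $\hat q=\frac{n}{n-\gamma'+1}$ solves this system, which also explains the restriction $\gamma'<n$ so that $\hat q\in(1,n)$. All constants produced in this loop depend only on $n,\gamma'$ and the two quantities $\|m\|_{L^1}$ and $\Lambda_{\gamma'}$, giving the stated bound $\mathcal C=\mathcal C(\Lambda_{\gamma'},\|m\|_{L^1})$. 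Since we are told a priori that $m\in L^1\cap W^{1,\hat q}$, the bootstrap is really an a priori estimate rather than an existence/improvement statement, so I do not need to worry about regularity breaking down at an intermediate step — the finite chain of Hölder + Sobolev inequalities applied to the given solution suffices.

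The main obstacle I anticipate is the degeneracy at the level $L^1$: the Calderón–Zygmund estimate $\|\nabla m\|_{L^q}\lesssim\|w\|_{L^q}$ fails at $q=1$, so the very first step has to be handled by the weak-$L^{n/(n-1)}$ (Lorentz) bound for $\nabla(-\Delta)^{-1}\nabla\cdot$ on $L^1$, and then one must re-enter the strong-$L^q$ theory only once $m$ (equivalently $w$) lands in some $L^{1+\delta}$. Care is needed to verify that each Hölder step is legitimate — in particular that the interpolation exponents stay in the open range where all norms are finite — and that the iteration indeed terminates at $\hat q$ rather than overshooting; this is where the assumption $\gamma'<n$ (recorded in the Remark) is used. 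A secondary technical point is justifying the representation formula / integration by parts for merely $L^1$ data, which can be done by a standard approximation of $w$ by smooth compactly supported vector fields and passing to the limit using the uniqueness of solutions to $-\Delta m+\nabla\cdot w=0$ in the class $L^1\cap W^{1,\hat q}$.
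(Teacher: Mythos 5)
Your strategy is essentially the right one, and since the paper simply defers to Lemma~3.5 of the cited reference, I will evaluate the proposal on its own merits. The Hölder step giving
\[
\|w\|_{L^1}\le \|m\|_{L^1}^{1/\gamma}\Lambda_{\gamma'}^{1/\gamma'}
\]
is correct, and your algebra identifying $\hat q=\tfrac{n}{n-\gamma'+1}$ as the unique exponent compatible with the Hölder--Calder\'on--Zygmund--Sobolev loop is accurate (including the remark that $\gamma'<n$ gives $\hat q\in(1,n)$). These are the same ingredients one would find in the reference.

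Where the write-up is loose is in how the loop is \emph{closed}. You phrase it as a bootstrap ``iterating finitely many times'' that ``converges'' to $\hat q$, but as written the iteration approaches $\hat q$ only geometrically and never reaches it in finitely many steps, and the resulting infinite product of Sobolev/CZ constants (which depend on the exponent) would need to be controlled. You also flag the $L^1$ endpoint of Calder\'on--Zygmund as the main obstacle and propose a weak-$L^p$ detour. In fact both difficulties disappear if you simply work \emph{at} the critical exponents and absorb. Since $m\in W^{1,\hat q}\cap L^1$ is assumed a priori, Sobolev gives $m\in L^{\hat q^*}$ with $\hat q^*=\tfrac{n}{n-\gamma'}$; the Hölder computation with the correct split (using $\hat q^*/(\hat q p_1)=1/\gamma$, $1/(\hat q p_2)=1/\gamma'$) yields
\[
\|w\|_{L^{\hat q}}\le \|m\|_{L^{\hat q^*}}^{1/\gamma}\Lambda_{\gamma'}^{1/\gamma'};
\]
Calder\'on--Zygmund (legitimate since $\hat q>1$) gives $\|\nabla m\|_{L^{\hat q}}\le C\|w\|_{L^{\hat q}}$; and Sobolev closes the chain
\[
\|m\|_{L^{\hat q^*}}\le C\|\nabla m\|_{L^{\hat q}}\le C\,\Lambda_{\gamma'}^{1/\gamma'}\|m\|_{L^{\hat q^*}}^{1/\gamma},
\]
which, since $1/\gamma<1$ and all quantities are a priori finite, absorbs to $\|m\|_{L^{\hat q^*}}\le C\Lambda_{\gamma'}$. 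The bounds on $\|w\|_{L^{\hat q}}$, $\|\nabla m\|_{L^{\hat q}}$ follow, and $\|m\|_{L^{\hat q}}$ comes by interpolating $L^1$ and $L^{\hat q^*}$, which is where the dependence on $\|m\|_{L^1}$ enters. So your proposal identifies all the right pieces, but the ``bootstrap'' framing introduces a convergence issue you do not resolve; reorganizing it as a one-step absorption at $\hat q$ (which is exactly what the a priori hypothesis $m\in W^{1,\hat q}$ permits) removes both that issue and the $L^1$-endpoint obstacle you were worried about.
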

\begin{proof}
See the proof of Lemma 3.5 in \cite{cirant2024critical}.  
\end{proof}

\begin{lemma}[C.f. Corollary 1.1 in \cite{cirant2024critical}]\label{lemma21-crucial-cor}
Assume that $(m,w)\in (L^1(\mathbb R^n)\cap L^{1+\beta}(\mathbb R^n)\cap W^{1, q}(\mathbb R^n))\times L^1(\mathbb R^n)$ is the solution to (\ref{sect2-FP-eq}) with
$$\frac{1}{ q}=\frac{1}{\gamma'}+\frac{1}{\gamma(1+\beta)}.$$
Then for $\beta \in(0,\frac{\gamma'}{n}\big]$, there exists a positive constant $C$ depending only on $n$ and $\beta$ such that
\begin{align}\label{lemma24eq28w1q}
\Vert \nabla m\Vert_{L^{q}(\mathbb R^n)}\leq C\Big(\int_{\mathbb R^n}m\Big|\frac{w}{m}\Big|^{\gamma'}\, dx\Big)^\frac{1}{\gamma'}\Vert m\Vert_{L^{1+\beta}}^{\frac{1}{\gamma}}.
\end{align}
Moreover, there exists a positive constant $C$ only depending on $\gamma',$ $n$ and $\alpha$ such that
\begin{align}
\Vert m\Vert^{1+\beta}_{L^{1+\beta}(\mathbb R^n)}\leq  C\bigg(\int_{\mathbb R^n}m\Big|\frac{w}{m}\Big|^{\gamma'}\,dx\bigg)^{\frac{n\beta}{\gamma'}}\bigg(\int_{\mathbb R^n}m\,dx\bigg)^{\frac{(\beta+1)\gamma'-n\beta}{\gamma'}}.
\end{align}
\end{lemma}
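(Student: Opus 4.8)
The plan is to derive both estimates directly from the Fokker--Planck equation $-\Delta m=\nabla\cdot w$, by combining the Calder\'on--Zygmund $L^q$-regularity of this operator with, respectively, H\"older's inequality and the Gagliardo--Nirenberg interpolation inequality; this is the argument behind Corollary 1.1 of \cite{cirant2024critical}.

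For \eqref{lemma24eq28w1q}, I would first pass from $w$ to $m$ via H\"older. Writing $|w|=\big|\frac{w}{m}\big|\,m=\big(\big|\frac{w}{m}\big|\,m^{1/\gamma'}\big)\cdot m^{1/\gamma}$ and applying H\"older with the conjugate exponents $\gamma'/q$ and $\gamma'/(\gamma'-q)$ --- which is legitimate since $1/q=1/\gamma'+1/(\gamma(1+\beta))>1/\gamma'$ forces $1<q<\gamma'$ --- gives
\begin{equation*}
\int_{\mathbb R^n}|w|^{q}\,dx\le\Big(\int_{\mathbb R^n}\Big|\frac{w}{m}\Big|^{\gamma'}m\,dx\Big)^{q/\gamma'}\Big(\int_{\mathbb R^n}m^{\frac{q}{\gamma}\cdot\frac{\gamma'}{\gamma'-q}}\,dx\Big)^{(\gamma'-q)/\gamma'}.
\end{equation*}
The relation defining $q$ is equivalent to $\frac{q}{\gamma}\cdot\frac{\gamma'}{\gamma'-q}=1+\beta$ and, simultaneously, to $(1+\beta)\frac{\gamma'-q}{\gamma'}=\frac{q}{\gamma}$; hence the right-hand side above equals $\big(\int_{\mathbb R^n}m|\frac{w}{m}|^{\gamma'}\big)^{q/\gamma'}\|m\|_{L^{1+\beta}}^{q/\gamma}$, so that $w\in L^q(\mathbb R^n)$ with $\|w\|_{L^q}\le\big(\int_{\mathbb R^n}m|\frac{w}{m}|^{\gamma'}\big)^{1/\gamma'}\|m\|_{L^{1+\beta}}^{1/\gamma}$. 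Differentiating the equation and using that $\nabla(-\Delta)^{-1}\nabla\cdot$ is a composition of Riesz transforms, bounded on $L^q(\mathbb R^n)$ for $1<q<\infty$, I obtain $\|\nabla m\|_{L^q}\le C\|w\|_{L^q}$, and \eqref{lemma24eq28w1q} follows. The only point requiring a little care is the identification $\nabla m=\nabla(-\Delta)^{-1}(-\nabla\cdot w)$, for which the integrability $m\in L^1\cap W^{1,q}$ rules out any additional harmonic contribution.

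For the second estimate, I would invoke the Gagliardo--Nirenberg inequality $\|m\|_{L^{1+\beta}}\le C\|\nabla m\|_{L^q}^{\theta}\|m\|_{L^1}^{1-\theta}$, where $\theta\in(0,1)$ is determined by the scaling identity $\frac{n}{1+\beta}=\theta\big(\frac{n}{q}-1\big)+(1-\theta)n$; here $\gamma'<n$ (so $q<n$) and the restriction $\beta\in(0,\gamma'/n]$ are exactly what is needed for $1+\beta$ to lie in the admissible range of this inequality. A short computation using $1/q=1/\gamma'+1/(\gamma(1+\beta))$ and $\gamma-1=\gamma/\gamma'$ gives $\theta=\frac{n\beta\gamma}{\gamma(1+\beta)+n\beta}$, so that $\theta/\gamma=\frac{n\beta}{\gamma(1+\beta)+n\beta}<1$. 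Plugging the first estimate into the Gagliardo--Nirenberg bound yields
\begin{equation*}
\|m\|_{L^{1+\beta}}\le C\Big(\int_{\mathbb R^n}m\Big|\frac{w}{m}\Big|^{\gamma'}\,dx\Big)^{\theta/\gamma'}\|m\|_{L^{1+\beta}}^{\theta/\gamma}\|m\|_{L^1}^{1-\theta};
\end{equation*}
since $\theta/\gamma<1$, the factor $\|m\|_{L^{1+\beta}}^{\theta/\gamma}$ can be absorbed on the left, and raising the result to the power $\frac{1+\beta}{1-\theta/\gamma}$ gives the claimed inequality once one checks, with the above value of $\theta$, that $\frac{\theta(1+\beta)}{\gamma'(1-\theta/\gamma)}=\frac{n\beta}{\gamma'}$ and $\frac{(1-\theta)(1+\beta)}{1-\theta/\gamma}=\frac{(\beta+1)\gamma'-n\beta}{\gamma'}$.

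The argument involves no serious analytic difficulty --- the two classical inputs (Calder\'on--Zygmund regularity and Gagliardo--Nirenberg) do all the work. The main thing to be careful about is the exponent bookkeeping: verifying $1<q<\gamma'$, $\theta\in(0,1)$, $\theta/\gamma<1$, and the admissibility of $1+\beta$ in the Gagliardo--Nirenberg inequality throughout the range $\beta\in(0,\gamma'/n]$, together with the (standard) justification that $\nabla m$ is genuinely represented by the singular integral acting on $w$.
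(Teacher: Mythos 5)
The paper itself does not prove this lemma; it is stated as ``C.f.\ Corollary 1.1 in \cite{cirant2024critical}'' with the proof deferred to that reference, so there is no in-paper argument to compare against. Your proposal is correct and is precisely the expected route: H\"older's inequality with conjugate exponents $\gamma'/q$ and $\gamma'/(\gamma'-q)$ gives $\|w\|_{L^q}\le\big(\int m|w/m|^{\gamma'}\big)^{1/\gamma'}\|m\|_{L^{1+\beta}}^{1/\gamma}$, Calder\'on--Zygmund (Riesz transform) regularity for $-\Delta m=-\nabla\cdot w$ upgrades this to $\|\nabla m\|_{L^q}$, and Gagliardo--Nirenberg together with the absorption of $\|m\|_{L^{1+\beta}}^{\theta/\gamma}$ yields the second estimate; I verified the exponent bookkeeping, in particular that $1<q<\gamma'<n$, $\theta=\tfrac{n\gamma\beta}{n\beta+\gamma(1+\beta)}\in(0,1)$, $\theta/\gamma<1$, $\tfrac{\theta(1+\beta)}{\gamma'(1-\theta/\gamma)}=\tfrac{n\beta}{\gamma'}$, and $\tfrac{(1-\theta)(1+\beta)}{1-\theta/\gamma}=\tfrac{(\beta+1)\gamma'-n\beta}{\gamma'}$, all of which come out right. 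One minor remark: the restriction $\beta\le\gamma'/n$ is sufficient but not ``exactly'' what the Gagliardo--Nirenberg step demands --- the computation only needs $\beta<\gamma'/(n-\gamma')$ (so that $1-\theta>0$ and the exponent $\tfrac{(\beta+1)\gamma'-n\beta}{\gamma'}$ is nonnegative), and $\gamma'/n<\gamma'/(n-\gamma')$; this does not affect the validity of the proof on the stated range.
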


Next, we discuss the exponential decay property of the solutions to system \eqref{MFG-SS} and obtain%which is obtained by imposing some assumptions on population density $m$ and Lagrange multiplier $\lambda$.

\begin{lemma}\label{mdecaylemma}
Assume that $(u,m,\lambda)\in   C^2(\mathbb R^n)\times \big(W^{1,p}(\mathbb R^n)\cap L^1(\mathbb R^n)\big)\times\mathbb R$ with $ p>n$ and $\lambda<0$ is the solution of the following potential-free problem
 %$m\in L^1(\mathbb R^n)\cap C^{0,\theta}(\mathbb R^n)$ with $\theta\in(0,1)$ and $(m,u,\lambda)$
\begin{align}\label{26preliminaryfinal}
\left\{\begin{array}{ll}
-\Delta u+C_H|\nabla u|^{\gamma}+\lambda=-K_{\alpha}\ast m, &x\in\mathbb R^n,\\
\Delta m+C_H\gamma\nabla\cdot(m|\nabla u|^{\gamma-2}\nabla u )=0, &x\in\mathbb R^n.
\end{array}
\right.
\end{align}
 Suppose $u$ is bounded from below.  Then, we have there exist $\kappa_1,\kappa_2>0$ such that
\begin{align}\label{exponentialdecaym}
m(x)\leq \kappa_1 e^{-\kappa_2|x|}  ~\text{ for all } x\in \mathbb R^n.
\end{align}
\end{lemma}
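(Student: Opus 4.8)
The plan is to reduce the exponential decay of $m$ to a scalar differential inequality for $v:=m^{1/\gamma'}$, and then to trap $v$ between $0$ and a radial exponential barrier. First I would record the elementary facts. Since $m\in W^{1,p}(\mathbb R^n)$ with $p>n$, Morrey's inequality makes $m$ uniformly Hölder continuous, and together with $m\in L^1(\mathbb R^n)$ this forces $m(x)\to0$ as $|x|\to\infty$. Splitting the Riesz kernel $|x|^{-(n-\alpha)}$ into its singular part near $0$ and its tail, and using $m\in L^1\cap L^\infty$ with $0<\alpha<n$, one gets $K_\alpha\ast m\in L^\infty(\mathbb R^n)$ and $K_\alpha\ast m(x)\to0$ at infinity. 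We may assume $m\ge0$, $m\not\equiv0$; since the drift $C_H\gamma|\nabla u|^{\gamma-2}\nabla u$ of the Fokker--Planck operator lies in $L^\infty_{\mathrm{loc}}$ (as $u\in C^2$), the Harnack inequality gives $m>0$, so $v:=m^{1/\gamma'}>0$ belongs to $W^{1,p}_{\mathrm{loc}}(\mathbb R^n)$. As in the reduction \eqref{FPeqpartially}--\eqref{nonlinear-Schrodinger}, the Fokker--Planck equation in \eqref{26preliminaryfinal} implies $\nabla m=-C_H\gamma\,m|\nabla u|^{\gamma-2}\nabla u$, whence $v$ solves
\begin{align*}
-\mu\,\Delta_{\gamma'}v+\big(-K_\alpha\ast m-\lambda\big)v^{\gamma'-1}=0\quad\text{in }\mathbb R^n,\qquad \mu:=\Big(\tfrac{\gamma'}{C_H}\Big)^{\gamma'-1}>0 .
\end{align*}
Because $\lambda<0$ and $K_\alpha\ast m\to0$ at infinity, there is $R>0$ with $-K_\alpha\ast m(x)-\lambda\ge-\lambda/2>0$ for $|x|\ge R$, so
\begin{align*}
\Delta_{\gamma'}v\ \ge\ c_0\,v^{\gamma'-1}\quad\text{in }\Omega_R:=\{\,|x|>R\,\},\qquad c_0:=\frac{-\lambda}{2\mu}>0 .
\end{align*}

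The core step is the barrier. For $\kappa>0$ and $A>0$ set $\overline v(x):=A\,e^{-\kappa|x|}$; a direct computation gives, for $x\neq0$,
\begin{align*}
\Delta_{\gamma'}\overline v=\kappa^{\gamma'-1}\,\overline v^{\gamma'-1}\Big[(\gamma'-1)\kappa-\frac{n-1}{|x|}\Big]\ \le\ (\gamma'-1)\,\kappa^{\gamma'}\,\overline v^{\gamma'-1}.
\end{align*}
Choosing $\kappa:=\tfrac12\big(c_0/(\gamma'-1)\big)^{1/\gamma'}$ makes $(\gamma'-1)\kappa^{\gamma'}\le c_0$, so $\overline v$ is a classical supersolution of $\Delta_{\gamma'}w=c_0 w^{\gamma'-1}$ on $\Omega_R$ for every $A>0$; and $A:=e^{\kappa R}\max_{|x|=R}v$ makes $\overline v\ge v$ on $\partial\Omega_R$, while $v-\overline v\to0$ at infinity.

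Then I would invoke the weak comparison principle for the proper operator $w\mapsto\Delta_{\gamma'}w-c_0 w^{\gamma'-1}$ on the unbounded domain $\Omega_R$. For $\tau>0$ the set $\{x\in\Omega_R:\ v(x)-\overline v(x)>\tau\}$ is compactly contained in $\Omega_R$ (by the boundary inequality and the vanishing at infinity), so $\varphi_\tau:=(v-\overline v-\tau)_+$ is admissible. Subtracting the weak formulations of $\Delta_{\gamma'}v\ge c_0 v^{\gamma'-1}$ and $\Delta_{\gamma'}\overline v\le c_0\overline v^{\gamma'-1}$ tested against $\varphi_\tau$, the monotonicity inequality $(|\xi|^{\gamma'-2}\xi-|\eta|^{\gamma'-2}\eta)\cdot(\xi-\eta)\ge0$ bounds the resulting left-hand side below by $0$, while $v^{\gamma'-1}\ge\overline v^{\gamma'-1}$ on $\{\varphi_\tau>0\}$ bounds the right-hand side above by $0$; strict monotonicity then forces $\varphi_\tau\equiv0$, i.e. $v\le\overline v$ on $\Omega_R$. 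Hence $m=v^{\gamma'}\le A^{\gamma'}e^{-\gamma'\kappa|x|}$ for $|x|>R$, and since $m\le\|m\|_{L^\infty}$ on $\{|x|\le R\}$, enlarging the constant yields $m(x)\le\kappa_1 e^{-\kappa_2|x|}$ on $\mathbb R^n$ with $\kappa_2:=\gamma'\kappa$, which is \eqref{exponentialdecaym}. When $\gamma=2$ one can skip the barrier entirely: the relation $\nabla\log m=-2C_H\nabla u$ gives $m(x)=m(0)\,e^{-2C_H(u(x)-u(0))}$, and the linear lower bound $u(x)\ge C_3|x|-C_4$ — which follows from Lemma \ref{lowerboundVkgenerallemma22} / Lemma \ref{lemma22preliminary}(ii) with $V\equiv0$, since $-K_\alpha\ast m-\lambda\ge-\lambda/2>0$ for large $|x|$ and $u$ is bounded below — closes the estimate.

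I expect the delicate points to be the reduced inequality and the exterior comparison: one must verify that the Fokker--Planck equation genuinely reduces to \eqref{nonlinear-Schrodinger} for the solution at hand and that $v$ is a legitimate weak sub/supersolution, and one must run the comparison principle on the unbounded $\Omega_R$ using only $v-\overline v\to0$ at infinity rather than a quantitative decay. It is precisely here that the hypothesis $\lambda<0$ (equivalently $c_0>0$) is indispensable, as it supplies the coercivity that rules out a nonzero ``bump'' escaping to infinity.
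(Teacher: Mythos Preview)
Your argument rests on the pointwise identity $\nabla m=-C_H\gamma\,m|\nabla u|^{\gamma-2}\nabla u$, lifted from the heuristic passage \eqref{FPeqpartially}--\eqref{nonlinear-Schrodinger} in the introduction. For $\gamma\neq2$ and $n\ge2$ this does \emph{not} follow from the Fokker--Planck equation, and in fact generically fails. The FP equation says only that $F:=\nabla m+C_H\gamma\,m|\nabla u|^{\gamma-2}\nabla u$ is divergence-free. For the first-order system $\nabla\log m=-C_H\gamma|\nabla u|^{\gamma-2}\nabla u$ to admit \emph{any} solution, the right-hand side must be curl-free, and for $\gamma\ne2$ this forces $\nabla|\nabla u|^2\parallel\nabla u$ --- i.e.\ $|\nabla u|$ constant on the level sets of $u$ --- a constraint the HJ equation does not provide. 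So the reduction to a $\gamma'$-Laplacian equation for $v=m^{1/\gamma'}$ is not merely ``delicate'' but invalid in the generality of the lemma, and the barrier comparison collapses with it. Your final paragraph for $\gamma=2$ is fine, since there the drift $2C_H\nabla u$ \emph{is} a gradient and the Hopf--Cole identity $m=m(0)e^{-2C_H(u-u(0))}$ is legitimate.

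The paper's route is different. After showing $K_\alpha\ast m\to0$ at infinity (as you also do), it defers to \cite[Proposition~4.2]{bernardini2022mass} and \cite[Lemma~3.6]{cirant2024critical}, which work directly with the coupled system: since $-K_\alpha\ast m-\lambda\ge\delta>0$ outside a large ball, Lemma~\ref{lowerboundVkgenerallemma22}\,/\,Lemma~\ref{lemma22preliminary}(ii) give the linear growth $u(x)\ge C_3|x|-C_4$, and together with the gradient bound $|\nabla u|\le C$ from Lemma~\ref{sect2-lemma21-gradientu} this is fed into the Fokker--Planck equation (via a Lyapunov / test-function argument) to produce the exponential decay of $m$ --- without ever collapsing the system to a scalar equation. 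To repair your argument for general $\gamma$, you would have to replace the $\gamma'$-Laplacian barrier by such a direct argument on the FP equation.
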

\begin{proof}
Noting that $m\in W^{1,p}(\mathbb R^n)$ with $p>n$, we use Sobolev embedding to get $m\in  C^{0,\theta}(\mathbb R^n)$ for some $\theta\in(0,1)$, and thus $m\in L^{\infty}(\mathbb R^n)$. Moreover, by using the fact that $m\in L^{1} (\mathbb R^n)$ and the interpolation inequality, one finds $m\in L^{q}(\mathbb R^n)$ for every $q\in (1,\infty)$. Therefore, invoking Lemma \ref{H-L-S} and Lemma \ref{HolderforRiesz}, one can obtain that $K_{\alpha}\ast m\in L^{\beta}(\mathbb R^n)\cap C^{0,\theta_1}(\mathbb R^n)$ for some $\beta>1$ and $\theta_1\in(0,1)$, which implies  
\begin{align*}
    K_{\alpha}\ast m\rightarrow 0\ \ \text{as} \ \  |x|\rightarrow +\infty.
\end{align*}
The rest of proof follows from \cite[Proposition 4.2]{bernardini2022mass} and  \cite[Lemma 3.6]{cirant2024critical}.
\end{proof}

Thanks to Lemma \ref{mdecaylemma}, we establish Pohozaev identities satisfied by the solution to system (\ref{26preliminaryfinal}), which are
\begin{lemma}[C.f. Lemma 3.1 in \cite{bernardini2023ergodic}]\label{poholemma}
%Let $g$, $G\in C^1(\mathbb R^n)$ be such that
%$$\partial_{m}G(m)=g(m), \forall m.$$
Assume all conditions satisfied by $(u,m,\lambda)$ hold in Lemma  \ref{mdecaylemma} and denote $w=-C_H\gamma m|\nabla u|^{\gamma-2}\nabla u$.
%\in C^2(\mathbb R^n)\times \mathbb R\times W^{1,p}(\mathbb R^n)$, $\forall p>1$ satisfies %\eqref{26preliminaryfinal} with
%\begin{align}\label{pohoequationbefore}
%\left\{\begin{array}{ll}
%-\Delta u+{C_H}|\nabla u|^{r'}+\lambda=-m^{\alpha},&x\in\mathbb R^n,\\
%\Delta m+C_Hr'\nabla\cdot(m|\nabla u|^{r'-2}\nabla u )=0,&x\in\mathbb R^n.
%\end{array}
%\right.
%\end{align}
 %$\lambda<0$,
 Then the following identities hold:
\begin{align}\label{eq2.49}
\left\{\begin{array}{ll}
\lambda\int_{\mathbb R^n}m\, dx=-\frac{\alpha+2\gamma'-n}{2\gamma'}\int_{\mathbb R^n}m(x)(K_{\alpha}*m)(x)\,dx,\\
C_L\int_{\mathbb R^n}m\big|\frac{w}{m}\big|^{\gamma'}\, dx=\frac{n-\alpha}{2\gamma'}\int_{\mathbb R^n}m(x)(K_{\alpha}*m)(x)\,dx=(\gamma-1)C_H\int_{\mathbb R^n} m|\nabla u|^{\gamma}\, dx.
\end{array}
\right.
\end{align}

\end{lemma}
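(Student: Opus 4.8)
The plan is to derive the two identities by testing each of the two PDEs in \eqref{26preliminaryfinal} against carefully chosen vector fields and against $u$, $m$ themselves, and then combining the resulting integral identities. Throughout, the exponential decay of $m$ from Lemma \ref{mdecaylemma}, together with the gradient bound $|\nabla u|\le C(1+V)^{1/\gamma}$ in the potential-free case (so $|\nabla u|$ is bounded) and the elliptic regularity $u\in C^2$, $m\in W^{1,p}$, $p>n$, will be what justifies all the integrations by parts on $\mathbb R^n$ and the vanishing of boundary terms at infinity; one checks that $m|\nabla u|^{\gamma}$, $m(K_\alpha*m)$ and their relevant derivatives are integrable and decay, using also that $K_\alpha*m\to 0$ at infinity and is Hölder continuous, as established in the proof of Lemma \ref{mdecaylemma}.

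First I would record the three "elementary" identities. (a) Multiply the Fokker--Planck equation $\Delta m + C_H\gamma\,\nabla\cdot(m|\nabla u|^{\gamma-2}\nabla u)=0$ by $u$ and integrate: after integration by parts this gives $\int \nabla m\cdot\nabla u\,dx = C_H\gamma\int m|\nabla u|^{\gamma}\,dx$. (b) Multiply the Hamilton--Jacobi equation by $m$ and integrate: $\int \nabla u\cdot\nabla m\,dx + C_H\int m|\nabla u|^{\gamma}\,dx + \lambda\int m\,dx = -\int m(K_\alpha*m)\,dx$. Combining (a) and (b) eliminates $\int\nabla u\cdot\nabla m$ and yields the relation $\lambda M = -(1+(\gamma-1))C_H\int m|\nabla u|^\gamma\,dx - \int m(K_\alpha*m)\,dx$ — actually I would instead keep it in the form $\lambda M + \gamma C_H\int m|\nabla u|^\gamma dx = -\int m(K_\alpha*m)\,dx$, to be reconciled later with the scaling identity. (c) The scaling (Pohozaev) identity: multiply the Hamilton--Jacobi equation by $x\cdot\nabla m$ (or, more symmetrically, apply the rescaling $m\mapsto t^\beta m(tx)$ to the energy and differentiate at $t=1$, which is cleaner and matches the "scaling invariance" remark near \eqref{sect2-equivalence-scaling}). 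The rescaling route: since $(m,w)$ with $w=-C_H\gamma m|\nabla u|^{\gamma-2}\nabla u$ solves the Euler--Lagrange system for $\mathcal E$ restricted to mass $M$, the derivative in $t$ at $t=1$ of $\mathcal E(t^\beta m(t\cdot), t^{\beta+1}w(t\cdot))$ must vanish (the constraint $\int m = M$ forces $\beta = n$, and then the kinetic term scales like $t^{?}$, the Riesz term like $t^{\alpha-n}\cdot$(something)). Explicitly, $C_L\int m|w/m|^{\gamma'}dx$ scales with exponent, and $\int\int m(x)m(y)/|x-y|^{n-\alpha}$ scales with exponent $\alpha$; setting the $t$-derivative to zero gives precisely the second identity, $C_L\int m|\frac{w}{m}|^{\gamma'}dx = \frac{n-\alpha}{2\gamma'}\int m(K_\alpha*m)dx$.

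Next I would combine these. The first equation of \eqref{eq2.49} follows from (c) together with (a)+(b): using $C_L\int m|w/m|^{\gamma'} = (\gamma-1)C_H\int m|\nabla u|^\gamma$ (which is itself the identity $L(\nabla H(p))\cdot$... — i.e. from $w/m = -C_H\gamma|\nabla u|^{\gamma-2}\nabla u$ and the explicit form of $L$ in \eqref{MFG-L}, a pointwise algebraic identity), one rewrites (b) as $\lambda M = -\int m(K_\alpha*m) - \gamma C_H\int m|\nabla u|^\gamma$, and then substitutes $\gamma C_H\int m|\nabla u|^\gamma = \frac{\gamma}{\gamma-1}C_L\int m|w/m|^{\gamma'} = \gamma'\cdot\frac{n-\alpha}{2\gamma'}\int m(K_\alpha*m) = \frac{n-\alpha}{2}\int m(K_\alpha*m)$, giving $\lambda M = -(1+\frac{n-\alpha}{2})\int m(K_\alpha*m)$. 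Hmm, this should read $-\frac{\alpha+2\gamma'-n}{2\gamma'}$; to get that exact coefficient I must be more careful and use the correct scaling exponent of the kinetic term (the kinetic term does not have exponent $0$ under the mass-preserving rescaling — that is the whole point of the critical exponent $\alpha = n-\gamma'$), so the reconciliation in step (c) produces $\frac{n-\alpha}{\gamma'}$ rather than $n-\alpha$, and this is exactly what fixes the denominators. I expect the \textbf{main obstacle} to be precisely this bookkeeping: getting the scaling exponents right so that the $\gamma'$'s land in the stated places, and — the genuinely analytic point — rigorously justifying the integration-by-parts identities (a), (b), (c) on all of $\mathbb R^n$, i.e. showing all boundary integrals over $\partial B_R$ tend to $0$ along a sequence $R\to\infty$. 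For that I would invoke the exponential decay $m\le\kappa_1 e^{-\kappa_2|x|}$ from Lemma \ref{mdecaylemma}, the boundedness of $|\nabla u|$ (potential-free case of Lemma \ref{sect2-lemma21-gradientu}), the decay and Hölder regularity of $K_\alpha*m$ noted above, and, for the term $x\cdot\nabla m$ in the Pohozaev route, the $W^{1,p}$-bound on $m$ combined with its $L^1$-decay — or simply bypass that term entirely by using the clean rescaling-of-the-energy argument, which only needs $(m,w)\in\mathcal K_M$ and the Euler--Lagrange equations, both already in hand. Finally, reading off $(\gamma-1)C_H\int m|\nabla u|^\gamma = C_L\int m|w/m|^{\gamma'}$ from the explicit Legendre duality completes the chain of equalities in the second line of \eqref{eq2.49}, and the first line is the combination just described; this matches \cite[Lemma 3.1]{bernardini2023ergodic}, whose argument I would follow for the details.
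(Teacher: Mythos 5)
Your overall strategy is the same as the paper's: test the two equations against $u$ and $m$, derive a dilation (Pohozaev) identity, and combine, using the exponential decay of $m$ and the boundedness of $\nabla u$ to kill boundary terms. However, there is a concrete sign error in your step (a): multiplying $\Delta m + C_H\gamma\nabla\cdot(m|\nabla u|^{\gamma-2}\nabla u)=0$ by $u$ and integrating by parts gives
\begin{equation*}
\int_{\mathbb R^n}\nabla u\cdot\nabla m\,dx \;=\; -\,C_H\gamma\int_{\mathbb R^n} m|\nabla u|^{\gamma}\,dx,
\end{equation*}
not $+C_H\gamma\int m|\nabla u|^\gamma$. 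With the correct sign, the combination of (a) and (b) yields $(1-\gamma)C_H\int m|\nabla u|^{\gamma}\,dx+\lambda M=-\int m(K_\alpha*m)\,dx$, and the chain $(\gamma-1)C_H\int m|\nabla u|^{\gamma}\,dx = C_L\int m|w/m|^{\gamma'}\,dx = \frac{n-\alpha}{2\gamma'}\int m(K_\alpha*m)\,dx$ then gives $\lambda M=-\frac{2\gamma'+\alpha-n}{2\gamma'}\int m(K_\alpha*m)\,dx$ exactly as stated — the "bookkeeping obstacle" you flagged is entirely this sign error, not the scaling exponents.

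On the dilation identity, you propose a genuinely different route: differentiate the mass-constrained rescaling of $\mathcal E_0$ at $t=1$. Carried out, this does reproduce $C_L\int m|w/m|^{\gamma'}\,dx=\frac{n-\alpha}{2\gamma'}\int m(K_\alpha*m)\,dx$ (kinetic term scales like $t^{\gamma'}$, Riesz term like $t^{n-\alpha}$, so $\gamma' A=(n-\alpha)B$). But the vanishing of that $t$-derivative requires that $(m,w)$ is a constrained critical point of $\mathcal E_0$ in a sense strong enough to test against the dilation direction, whereas the lemma is stated for any solution $(u,m,\lambda)$ of the system with the given regularity and decay. The paper sidesteps this by testing the two PDEs directly against $\nabla m\cdot x$ and $\nabla u\cdot x$, which only uses the PDEs themselves. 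If you want the energy-scaling route you need to add an argument that the PDE solution is such a critical point, or simply run the $x\cdot\nabla$ test as the paper does. Finally, your proposal does not address the case $1<\gamma<2$: the paper points out that then the Fokker--Planck equation is only solved weakly, and one must regularize $H$ by $H_\epsilon(\boldsymbol p)=C_H(\epsilon+|\boldsymbol p|^2)^{\gamma/2}$ and pass to the limit; your integrations by parts on the FP equation are not directly justified there.
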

\begin{proof}
%Test the two equations against $m$ and $u$, respectively, then we have
%\begin{align}
%\left\{\begin{array}{ll}
%\int_{\mathbb R^n} \nabla u\cdot \nabla m\,dx+C_H\int_{\mathbb R^n} |\nabla u|^{r'}m\,dx+\lambda\int_{\mathbb R^n}m\,dx=-1\int_{\mathbb R^n}m^{\alpha+1},\\
%\int_{\mathbb R^n}\nabla u\cdot\nabla m\,dx=-C_Hr'\int_{\mathbb R^n} m|\nabla u|^{r'}\,dx,
%\end{array}
%\right.
%\end{align}
%which implies
%\begin{align}
%(1-r')C_H\int_{\mathbb R^n}m|\nabla u|^{r'}\,dx+\lambda\int_{\mathbb R^n}m\,dx=-\int_{\mathbb R^n}m^{\alpha+1}\,dx.
%\end{align}

Proceeding the similar argument shown in Lemma 3.7 of \cite{cirant2024critical}, we can prove this lemma.  For the sake of completeness, we exhibit the proof briefly.  First all, we multiply the $u$-equation and $m$-equation in (\ref{26preliminaryfinal}) by $m$ and $u$, respectively, then integrate them by parts and subtract the two identities to get
\begin{align}\label{sect3-combine11}
(1-\gamma)C_H\int_{\mathbb R^n}m|\nabla u|^{\gamma}\,dx+\lambda\int_{\mathbb R^n}m\,dx =-\int_{\mathbb R^n}m(x)(K_{\alpha}*m)(x)\, dx,
\end{align}
where we have used the exponential decay property of $m$ shown in Lemma \ref{mdecaylemma} and the uniformly boundedness of $\nabla u$ stated in Lemma \ref{HJB-regularity}.% one has there exist positive constants $\kappa_1$ and $\kappa_2$ such that $m\leq \kappa_1e^{-\kappa_2 |x|}$.  On the other hand, \eqref{usolutiongradientestimatepre} with $b=0$ indicates that
%\begin{align}\label{29ulowerboundpotentialfree}
%|\nabla u(x)|\leq C, \ \ x\in\mathbb R^n ~\text{for some }~C>0.
%\end{align}
%It then follows that there exists $C>0$ and $R>0$ such that $|u|\leq C|x|$ for $|x|>R$.  %Focusing on the first equation in (\ref{26preliminaryfinal}), we test it against $m$ and use the integration by parts to get
%\begin{align}\label{sect3-eq1-poho}
%\int_{\mathbb R^n}\nabla u\cdot\nabla m\, dx+C_H\int_{\mathbb R^n}|\nabla u|^{r'}m\, dx+\lambda \int_{\mathbb R^n}m\,dx=-\int_{\mathbb R^n}m(x)(K_{\alpha}*m)(x)\,dx.
%\end{align}
% Multiplying the second equation in \eqref{eq-attained-sub} by $u$, we find
%\begin{align}\label{sect3-eq2-poho}
%\int_{\mathbb R^n}\nabla u\cdot\nabla m\,dx=-C_Hr'\int_{\mathbb R^n} m|\nabla u|^{r'}\,dx.
%\end{align}
%Combining (\ref{sect3-eq1-poho}) with \eqref{sect3-eq2-poho}, one has

Next, we focus on the proof of the following identity:
\begin{align}\label{pohoequation2}
- n\lambda\int_{\mathbb R^n}m\,dx-\frac{n+\alpha }{2}\int_{\mathbb R^n}m(x)(K_{\alpha}*m)(x)\,dx+C_H\frac{n-\gamma'}{\gamma'-1}\int_{\mathbb R^n} m|\nabla u|^{\gamma}\,dx=0.
\end{align}
In fact, by testing the first equation and the second equation in (\ref{26preliminaryfinal}) against $\nabla m\cdot x$ and $\nabla u\cdot x$, we apply the integration by parts to obtain
\begin{align}\label{pohocombine1}
\int_{\mathbb R^n}(-(K*m)(x)-\lambda)\nabla m\cdot x\,dx
%=&-\int_{\mathbb R^n}\Delta u(\nabla m\cdot x)\,dx+{C_H}\int_{\mathbb R^n}|\nabla u|^{r'}(\nabla m\cdot x)\,dx\nonumber\\
=& {\int_{\mathbb R^n}\nabla u\cdot \nabla(\nabla m\cdot x)\,dx}-{C_H}\int_{\mathbb R^n}\nabla\cdot(|\nabla u|^{\gamma}x)m\,dx,
\end{align}
and
%Testing the second equation in (\ref{26preliminaryfinal}) against $\nabla u\cdot x$, then we get
\begin{align}\label{pohocombine2}
-{C_H}\int_{\mathbb R^n}\nabla(|\nabla u|^{\gamma})\cdot xm\,dx=\int_{\mathbb R^n}\nabla m\cdot \nabla(\nabla u\cdot x)\,dx+C_H\gamma\int_{\mathbb R^n}|\nabla u|^{\gamma}m\,dx,
\end{align}
where the boundary integrals vanish due to the decay property of $m$ and the upper bound of $u.$
%$${C_H}\nabla(|\nabla u|^{r'})\cdot x=C_Hr'|\nabla u|^{r'-2}u_{x_i}u_{x_ix_j}x_j=C_Hr'|\nabla u|^{r'-2}\nabla u\cdot\nabla(\nabla u\cdot x)-r'C_H|\nabla u|^{r'}.$$
Also, we find %To estimate $I_1$, we apply the integration by parts to obtain 
\begin{align}\label{pohocombine3}
\int_{\mathbb R^n}\nabla u\cdot \nabla(\nabla m\cdot x)\,dx=&\sum_{i,j=1}^n\int_{\mathbb R^n} u_{x_i}m_{x_ix_j}x_j\,dx+\int_{\mathbb R^n}\nabla u\cdot \nabla m\,dx\nonumber\\
=&-\sum_{i,j=1}^n\int_{\mathbb R^n}m_{x_i}u_{x_ix_j}x_j\,dx+(1-n)\int_{\mathbb R^n}\nabla u\cdot\nabla m\,dx\nonumber\\
=&-\int_{\mathbb R^n} \nabla m\cdot\nabla(\nabla u\cdot x)\,dx+(2-n)\int_{\mathbb  R^n}\nabla u\cdot \nabla m\,dx.
\end{align}
Collecting (\ref{pohocombine1}), (\ref{pohocombine2}) and (\ref{pohocombine3}), we have the following equality holds:
\begin{align}\label{pohofinalbefore}
{\int_{\mathbb R^n}(-(K*m)(x)-\lambda)\nabla m\cdot x\,dx}=C_H\big(\gamma-{n}\big)\int_{\mathbb R^n}|\nabla u|^{\gamma}m\,dx+(2-n)\int_{\mathbb R^n}\nabla u\cdot \nabla m\,dx.
\end{align}
%Here 
%\begin{align}\label{pohofinalbefore2}
%{\int_{\mathbb R^n}(-(K*m)(x)-\lambda)\nabla m\cdot x\,dx}=\frac{n+\alpha}{2}\int_{\mathbb R^n}m(x)(K*m)(x)\,dx+\lambda n\int_{\mathbb R^n} m\,dx.
%\end{align}
%By using (\ref{pohofinalbefore}) and (\ref{pohofinalbefore2}),
With the help of the integration by parts, one further gets
\begin{align*}
- n\lambda\int_{\mathbb R^n}m\,dx-\frac{n+\alpha}{2}\int_{\mathbb R^n}m(x)(K*m)(x)\,dx+ C_H\Big(\gamma-{n}\Big)\int_{\mathbb R^n} |\nabla u|^{\gamma}m\,dx+(2-n)\int_{\mathbb R^n}\nabla u\cdot \nabla m\,dx=0,
\end{align*}
%\begin{align}\label{sect3-eq2-poho}
%\int_{\mathbb R^n}\nabla u\cdot\nabla m\,dx=-C_Hr'\int_{\mathbb R^n} m|\nabla u|^{r'}\,dx.
%\end{align}ogether with \eqref{sect3-eq2-poho}
which indicates
\eqref{pohoequation2} by using the $u$-equation in \eqref{26preliminaryfinal}.   In addition, since $w=-C_H\gamma m|\nabla u|^{\gamma-2}\nabla u$ and $C_L=\frac{1}{\gamma'}(\gamma C_H)^{\frac{1}{1-\gamma}}$, we obtain
\begin{align}\label{sect3-combine200}
C_L\int_{\mathbb R^n}m\bigg|\frac{w}{m}\bigg|^{\gamma'}\, dx=C_L (C_H\gamma)^{\gamma'}\int_{\mathbb R^n}m|\nabla u|^{\gamma}\,dx=(\gamma-1)C_H\int_{\mathbb R^n}m|\nabla u|^{\gamma}\,dx.
\end{align}
Finally, by using \eqref{sect3-combine11}, \eqref{pohoequation2} and \eqref{sect3-combine200}, we conclude that \eqref{eq2.49} holds.

We mention that the argument shown above hold only when $\gamma\geq 2$ and in this case, the Fokker-Planck equation can be solved in the strong sense.  When $1<\gamma<2$, one can only solve the Fokker-Planck equation in the weak sense.  Whereas, we can replace $H$ with $H_{\epsilon}(\boldsymbol{p}):=C_H(\epsilon+|\boldsymbol{p}|^2)^{\frac{\gamma}{2}}$ in \eqref{MFG-H} and proceed the same argument shown above with $m_{\epsilon}$, then take the limit $\epsilon\rightarrow 0$ to get the desired conclusion.

%We would like to remark that all boundary integrals vanish due to the linear growth of $u$ and the exponential decay property of $m$ as %$|x|$ large enough.
\end{proof}
Now, we are well prepared to prove Theorem \ref{thm11-optimal} and the arguments are shown in Section \ref{sect3-optimal}.%, the attainability of problem (\ref{sect2-equivalence-scaling}).
 %e would like to recall that minimization problem (\ref{sect2-equivalence-scaling}) is equivalent to (\ref{optimal-inequality-sub}).

\section{Optimal Gagliardo-Nirenberg Type's Inequality}\label{sect3-optimal}
In this section, we are going to discuss the existence of minimizers to problem \eqref{sect2-equivalence-scaling} and prove Theorem \ref{thm11-optimal}. As mentioned above, problem \eqref{sect2-equivalence-scaling} is scaling invariant under the scaling $(t^{\beta}m(tx),t^{\beta+1}w(tx))$ for any $t>0$ and $\beta>0.$
Therefore, one can verify that \eqref{sect2-equivalence-scaling} is equivalent to 
\begin{align}\label{optimal-inequality-sub}
\Gamma_{\alpha}:=\inf_{(m,w)\in\mathcal \mathcal A_M}\frac{\Big(C_L\int_{\mathbb R^n}m\big|\frac{w}{m}\big|^{\gamma'}\,dx\Big)^{\frac{n-\alpha}{\gamma'}}\Big(\int_{\mathbb R^n}m\,dx\Big)^{\frac{2\gamma'+\alpha-n}{\gamma'}}}{\int_{\mathbb R^n}m(x)(K_{\alpha}*m)(x)\,dx}, \ \  \alpha\in[n-\gamma',n),
\end{align}
where
\begin{align}\label{mathcalAMbegining}
\mathcal A_M:=\Big\{(m,w)\in  \mathcal{A}, \int_{\mathbb R^n}m\,dx=M>0\Big\}.
\end{align}

Now, we start by studying the subcritical mass exponent case of problem \eqref{optimal-inequality-sub}, namely, $\alpha\in(n-\gamma',n)$.  For this case, Bernardini \cite{bernardini2022mass} proved that there exists $(\bar u_{\alpha,M},\bar m_{\alpha,M},\lambda_{\alpha,M})\in C^{2}(\mathbb R^n)\times W^{1,p}(\mathbb R^n)\times \mathbb R$ for every $p\in(0,+\infty)$ solving  the following system 
\begin{align}\label{eq-attained-sub}
\left\{\begin{array}{ll}
-\Delta u+C_H|\nabla u|^{\gamma}+\lambda=-K_{\alpha}*m,&x\in\mathbb R^n,\\
\Delta m+C_H\gamma\nabla\cdot(m|\nabla u|^{\gamma-2}\nabla u)=0,&x\in\mathbb R^n,\\
\int_{\mathbb R^n}m\, dx=M>0,
\end{array}
\right.
\end{align}
which is the classical solution to system \eqref{eq-attained-sub}.  Furthermore, the author showed there exist $c_{1,M}, c_{2,M}>0$ such that 
\begin{equation}\label{eq3.2}
    0<\bar m_{\alpha,M}<c_{1,M}e^{-c_{2,M}|x|}.
\end{equation}
In particular, Bernardini obtained the following minimization problem 
\begin{align}\label{energy-epsilon0}
e_{0,\alpha,M}:=\inf\limits_{(m,w)\in {\mathcal A}_M}\mathcal E_0(m,w) 
\end{align}
with
\begin{align}\label{energy-epsilon00}
\mathcal E_0(m,w)=\int_{\mathbb R^n}\Bigg(C_Lm\bigg|\frac{w}{m}\bigg|^{\gamma'}\Bigg)\,dx-\frac{1}{2}\int_{\mathbb R^n}m(x)(K_{\alpha}*m)(x)\,dx
\end{align}
is attained by the pair $(\bar m_{\alpha,M},\bar w_{\alpha,M})$ with $\bar w_{\alpha,M}=-C_H\gamma\bar m_{\alpha,M}|\nabla\bar u_{\alpha,M}|^{\gamma-2}\nabla\bar u_{\alpha,M}$.  In addition, invoking Lemma \ref{poholemma}, one finds
\begin{align}\label{sect3-poho-final}
	\left\{\begin{array}{ll}
       \lambda\int_{\mathbb R^n}\bar m_{\alpha,M}\, dx=-\frac{2\gamma'+\alpha-n}{2\gamma'}\int_{\mathbb R^n}\bar m_{\alpha,M}(x)(K_{\alpha}*\bar m_{\alpha,M})(x)\,dx,\\
		C_L\int_{\mathbb R^n}\bar m_{\alpha,M}\big|\frac{\bar w_{\alpha,M}}{\bar m_{\alpha,M}}\big|^{\gamma'}\, dx=\frac{n-\alpha}{2\gamma'}\int_{\mathbb R^n}\bar m_{\alpha,M}(x)(K_{\alpha}*\bar m_{\alpha,M})(x)\,dx=(\gamma-1)C_H\int_{\mathbb R^n} \bar m_{\alpha,M}|\nabla u|^{\gamma}\, dx.
	\end{array}
	\right.
\end{align}
Collecting the results shown above, we are able to investigate a relationship between $(\bar m_{\alpha,M},\bar w_{\alpha,M})$, the minimizer of \eqref{energy-epsilon0} and the minimizer of problem (\ref{optimal-inequality-sub}), which is
% that $(\bar m_{\alpha,M},\bar w_{\alpha,M})$, the minimizer of \eqref{energy-epsilon0}, is also a minimizer of problem (\ref{optimal-inequality-sub}).
\begin{lemma}\label{sect3-lemma32}
For any fixed $\alpha\in\big(n-\gamma',n)$ and $M>0$, problem \eqref{sect2-equivalence-scaling} is attained by $(\bar m_{\alpha,M},\bar w_{\alpha,M})$ with $e_{0,\alpha,M}=\mathcal E_0(\bar m_{\alpha,M},\bar w_{\alpha,M})$.  More precisely, we have
\begin{align}\label{sect3-relation-Calpha-emalpha}
\Gamma_{\alpha}=\frac{(n-\alpha) (-e_{0,\alpha,M})^{\frac{n-\alpha-\gamma'}{\gamma'}}M^{\frac{2\gamma'+\alpha-n}{\gamma'}}}{2\gamma'}\Bigg(\frac{\gamma'-n+\alpha}{n-\alpha}\Bigg)^{\frac{\gamma'+\alpha-n}{\gamma'}}.
\end{align}
\end{lemma}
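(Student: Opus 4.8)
The plan is to relate the quotient defining $\Gamma_\alpha$ to the functional $\mathcal E_0$ by exploiting the scaling invariance recorded after \eqref{optimal-inequality-sub}. First I would fix $\alpha\in(n-\gamma',n)$ and $M>0$, and for a generic admissible pair $(m,w)\in\mathcal A_M$ introduce the two-parameter rescaled family $(m_t,w_t):=(t^{\beta}m(tx),t^{\beta+1}w(tx))$; the constraint $\int m\,dx=M$ forces $\beta=n$, but to probe the functional I would instead keep the mass free, i.e. consider $(m^{(t)},w^{(t)})=(t^n\sigma\, m(tx),\,t^{n+1}\sigma\, w(tx))$ with an additional amplitude parameter $\sigma>0$. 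Under this family the three integrals appearing in $\mathcal E_0$ scale as explicit monomials in $(t,\sigma)$: the kinetic term $C_L\int m|w/m|^{\gamma'}$ picks up $\sigma t^{\gamma'}$, the mass picks up $\sigma$, and the Riesz energy $\int m(K_\alpha*m)$ picks up $\sigma^2 t^{n-\alpha}$. Therefore along this family $\mathcal E_0$ becomes $A\sigma t^{\gamma'}-\tfrac12 B\sigma^2 t^{n-\alpha}$ where $A=C_L\int m|w/m|^{\gamma'}$ and $B=\int m(K_\alpha*m)$; since $n-\alpha<\gamma'$ by $\alpha>n-\gamma'$, this is a concave-in-an-appropriate-variable expression that one minimizes explicitly over $\sigma,t>0$ subject to the mass constraint $\sigma M_0=M$ (with $M_0=\int m\,dx$), and optimizing yields the negative minimum value in terms of $A$, $B$, $M$ and the Gagliardo--Nirenberg quotient $Q(m,w):=A^{(n-\alpha)/\gamma'}M_0^{(2\gamma'+\alpha-n)/\gamma'}/B$.

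Carrying out this one- (or two-) variable calculus gives, for every $(m,w)\in\mathcal A$, an identity of the form $\inf_{\sigma,t}\mathcal E_0(m^{(t)},w^{(t)})=-c(\alpha)\,Q(m,w)^{-\gamma'/(\gamma'-n+\alpha)}M^{(2\gamma'+\alpha-n)/(\gamma'-n+\alpha)}$ for an explicit constant $c(\alpha)$ depending only on $n,\alpha,\gamma'$. Taking the infimum over all admissible pairs, the left side becomes $e_{0,\alpha,M}$ (because the rescaled pairs exhaust, modulo the scaling symmetry, all of $\mathcal A_M$, and $e_{0,\alpha,M}<0$ is known from \cite{bernardini2022mass}), while on the right side $\inf Q(m,w)=\Gamma_\alpha$. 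Solving the resulting relation for $\Gamma_\alpha$ produces exactly \eqref{sect3-relation-Calpha-emalpha}; matching exponents, $\tfrac{n-\alpha-\gamma'}{\gamma'}$ and $\tfrac{2\gamma'+\alpha-n}{\gamma'}$ and the algebraic prefactor $\tfrac{n-\alpha}{2\gamma'}\big(\tfrac{\gamma'-n+\alpha}{n-\alpha}\big)^{(\gamma'+\alpha-n)/\gamma'}$ should drop out of the explicit optimization. To see that the infimum is attained by $(\bar m_{\alpha,M},\bar w_{\alpha,M})$, I would run the computation backwards: since $(\bar m_{\alpha,M},\bar w_{\alpha,M})$ realizes $e_{0,\alpha,M}$, the chain of inequalities above must be equalities at this pair, forcing $Q(\bar m_{\alpha,M},\bar w_{\alpha,M})=\Gamma_\alpha$. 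Alternatively, and more directly, I would simply substitute the Pohozaev relations \eqref{sect3-poho-final} into $\mathcal E_0(\bar m_{\alpha,M},\bar w_{\alpha,M})$ to express $e_{0,\alpha,M}$ in terms of $\int \bar m_{\alpha,M}(K_\alpha*\bar m_{\alpha,M})$ alone, and likewise express $A$ and $B$ for this pair via \eqref{sect3-poho-final}; plugging these into $Q$ and comparing with the formula for $e_{0,\alpha,M}$ gives \eqref{sect3-relation-Calpha-emalpha} by pure algebra.

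I expect the main obstacle to be bookkeeping rather than conceptual: one must be careful that the scaling family genuinely stays inside $\mathcal A_M$ (the weak identity $\int\nabla m\cdot\nabla\varphi=\int w\cdot\nabla\varphi$ and the moment condition $\int|x|^b m<\infty$ are preserved under $(m,w)\mapsto(t^n\sigma m(t\cdot),t^{n+1}\sigma w(t\cdot))$, which is routine but should be checked), and that the optimization in $(\sigma,t)$ has an interior minimum, which uses precisely $0<n-\alpha<\gamma'$, i.e. the subcriticality $\alpha\in(n-\gamma',n)$ — this is why the lemma excludes $\alpha=n-\gamma'$. A secondary point is that $e_{0,\alpha,M}<0$ must be invoked (from \cite{bernardini2022mass}) to take the negative powers $(-e_{0,\alpha,M})^{(n-\alpha-\gamma')/\gamma'}$ meaningfully; this sign is consistent with the first Pohozaev identity in \eqref{sect3-poho-final} since $\lambda<0$ there. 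Once these are in place, the derivation of \eqref{sect3-relation-Calpha-emalpha} and the attainment statement are immediate.
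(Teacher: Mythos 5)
Your proposal is correct and follows essentially the same route as the paper: the paper likewise optimizes $\mathcal E_0$ along the mass-preserving rescaling $(m_\mu,w_\mu)=(\mu^n m(\mu\cdot),\mu^{n+1}w(\mu\cdot))$ to produce the lower bound on $G_\alpha$, then verifies equality at $(\bar m_{\alpha,M},\bar w_{\alpha,M})$ via the Pohozaev identities \eqref{sect3-poho-final}, exactly your alternative. One small remark: once you work on $\mathcal A_M$ the amplitude $\sigma$ is pinned to $1$, so the "two-parameter" family is in fact the same one-parameter optimization the paper carries out, and the scaling expression $At^{\gamma'}-\tfrac12 B t^{n-\alpha}$ (for $A,B>0$, $n-\alpha<\gamma'$) is convex, not concave, in the variable $s=t^{n-\alpha}$ — but this has no bearing on the interior minimum you need.
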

\begin{proof}
We follow the procedures shown in Lemma 4.1 of \cite{cirant2024critical} to prove this lemma.  First of all, we define
\begin{align}\label{sect3-Galpha-39}
G_{\alpha}(m,w):&=\frac{\Big(C_L\int_{\mathbb R^n}m\big|\frac{w}{m}\big|^{\gamma'}\,dx\Big)^{\frac{n-\alpha}{\gamma'}}\Big(\int_{\mathbb R^n}m\,dx\Big)^{\frac{2\gamma'+\alpha-n}{\gamma'}}}{\int_{\mathbb R^n}m(x)(K_{\alpha}*m)(x)\,dx}.
\end{align}
With the definition \eqref{sect3-Galpha-39}, the minimization problem \eqref{optimal-inequality-sub} can be rewritten as
\begin{align}\label{sect3-Calpha-equivalent}
\Gamma_{\alpha}=\inf_{(m,w)\in{{\mathcal A}}_M}G_{\alpha}(m,w).
\end{align}

Now, we aim to verify that  \eqref{sect3-Calpha-equivalent} is attained by $(\bar m_{\alpha,M},\bar w_{\alpha,M})$, which is the minimizer of \eqref{energy-epsilon0}. First of all, we estimate the energy $\mathcal E_0$ defined by \eqref{energy-epsilon0} from below. We remark that $G_\alpha(m,w)=+\infty$ provided with $\int_{\mathbb R^n}m\big|\frac{w}{m}\big|^{\gamma'}\, dx=+\infty$.  Thus, we only need to consider the case that $(m,w)\in{{\mathcal A}}_M$ satisfying $\int_{\mathbb R^n}m\big|\frac{w}{m}\big|^{\gamma'}\, dx<\infty$.
Define $(m_{\mu}(x),w_{\mu}(x))=(\mu^nm(\mu x),\mu^{n+1}w(\mu x))$ for $\mu\in\mathbb R^+\backslash\{0\}$, then we have
\begin{align}\label{sect3-311-mathcalE0}
\mathcal E_0(m_{\mu},w_{\mu})=&\mu^{\gamma'}\int_{\mathbb R^n}C_Lm\Big|\frac{w}{m}\Big|^{\gamma'}\, dx-\frac{1}{2}\mu^{n-\alpha}\int_{\mathbb R^n}m(x)(K_{\alpha}*m)(x)\,dx\nonumber\\
\geq&-\bigg(\frac{n-\alpha}{2\gamma'}\bigg)^{\frac{\gamma'}{\gamma'-n+\alpha}}\Bigg(\frac{\gamma'-n+\alpha}{n-\alpha}\Bigg)\bigg(\int_{\mathbb R^n}m(x)(K_{\alpha}*m)(x)\,dx\bigg)^{\frac{\gamma'}{\gamma'-n+\alpha}}\bigg(C_L\int_{\mathbb R^n}m\Big|\frac{w}{m}\Big|^{\gamma'}\, dx\bigg)^{-\frac{n-\alpha}{\gamma'-n+\alpha}},
\end{align}
where the equality holds if and only if
\begin{align*}
\mu=\Bigg[\frac{(n-\alpha)\int_{\mathbb R^n}m(x)(K_{\alpha}*m)(x)\,dx} {2\gamma' C_L\int_{\mathbb R^n}m\big|\frac{w}{m}\big|^{\gamma'}\, dx}\Bigg]^{\frac{1}{\gamma'-n+\alpha}}.
\end{align*}
 It then follows from the definition of $e_{0,\alpha,M}:=\inf\limits_{(m,w)\in { {\mathcal A}}_M}\mathcal E_0(m,w)$ and \eqref{sect3-311-mathcalE0} that
\begin{align*}
-\bigg(\frac{n-\alpha}{\gamma'}\bigg)^{\frac{\gamma'}{\gamma'-n+\alpha}}\bigg(\frac{\gamma'-n+\alpha}{n-\alpha}\bigg)\bigg( \frac{1}{2} \int_{\mathbb R^n}m(x)(K_{\alpha}*m)(x)\,dx\bigg)^{\frac{\gamma'}{\gamma'-n+\alpha}}\bigg(C_L\int_{\mathbb R^n}m\Big|\frac{w}{m}\Big|^{\gamma'}\, dx\bigg)^{-\frac{n-\alpha}{\gamma'-n+\alpha}}\geq e_{0,\alpha,M},
\end{align*}
which yields
\begin{align}\label{313-estimate-sect3}
\frac{\bigg(C_L\int_{\mathbb R^n}m\Big|\frac{w}{m}\Big|^{\gamma'}\, dx\bigg)^{\frac{n-\alpha}{\gamma'-n+\alpha}}}{\bigg( \frac{1}{2} \int_{\mathbb R^n}m(x)(K_{\alpha}*m)(x)\,dx\bigg)^{\frac{\gamma'}{\gamma'-n+\alpha}}}\geq (-e_{0,\alpha,M})^{-1}\Big(\frac{n-\alpha}{\gamma'}\Big)^{\frac{\gamma'}{\gamma'-n+\alpha}}\bigg(\frac{\gamma'-n+\alpha}{n-\alpha}\bigg).
\end{align}
Denote 
$$\mathcal H_{\alpha,M}:=\frac{n-\alpha}{\gamma'}(-e_{0,\alpha,M})^{\frac{n-\gamma'-\alpha}{\gamma'}}\bigg(\frac{\gamma'-n+\alpha}{n-\alpha}\bigg)^{\frac{\gamma'-n+\alpha}{\gamma'}},$$
then we invoke \eqref{313-estimate-sect3} to obtain
%The definition $G_{\alpha}$ given in (\ref{sect3-Galpha-39}), together with the inequality (\ref{313-estimate-sect3}), implies
\begin{align}\label{combine-sect3-314}
G_{\alpha}(m,w)=\frac{\Big(C_L\int_{\mathbb R^n}m\big|\frac{w}{m}\big|^{\gamma'}\, dx\Big)^{\frac{n-\alpha}{\gamma'}}\Big(\int_{\mathbb R^n}m\,dx\Big)^{\frac{2\gamma'+\alpha-n}{\gamma'}}}{\int_{\mathbb R^n}m(x)(K_{\alpha}*m)(x)\,dx}\geq\frac{1}{2}\mathcal H_{\alpha,M}M^{\frac{2\gamma'+\alpha-n}{\gamma'}},
\end{align}
where we have used the definition \eqref{sect3-Galpha-39} and the fact %, %together with the inequality (\ref{313-estimate-sect3})
$\int_{\mathbb R^n}m\,dx=M.$%~~\mathcal H_{\alpha,M}:=\frac{n-\alpha}{r}(-e_{0,\alpha,M})^{\frac{n-r-\alpha}{r}}\bigg(\frac{r-n+\alpha}{n-\alpha}\bigg)^{\frac{r-n+\alpha}{r}}.

Next, by using the fact that $(\bar m_{\alpha,M},\bar w_{\alpha,M})$ is a minimizer of problem  \eqref{energy-epsilon0}, we apply \eqref{sect3-poho-final} to get
\begin{align}\label{sect3-combine-315}
G_\alpha(\bar m_{\alpha,M},\bar w_{\alpha,M})=\frac{1}{2}\mathcal H_{\alpha,M}M^{\frac{2\gamma'+\alpha-n}{\gamma'}}.
\end{align}
Combining \eqref{combine-sect3-314} with \eqref{sect3-combine-315}, one can conclude that \eqref{sect3-Calpha-equivalent} is attained by $(\bar m_{\alpha,M},\bar w_{\alpha,M}).$  Moreover, we have
\begin{align*}
\Gamma_{\alpha}=G_\alpha(\bar m_{\alpha,M},\bar w_{\alpha,M})=\frac{1}{2}\mathcal H_{\alpha,M}M^{\frac{2\gamma'+\alpha-n}{\gamma'}}=\frac{(n-\alpha) (-e_{0,\alpha,M})^{\frac{n-\alpha-\gamma'}{\gamma'}}M^{\frac{2\gamma'+\alpha-n}{\gamma'}}}{2\gamma'}\bigg(\frac{\gamma'-n+\alpha}{n-\alpha}\bigg)^{\frac{\gamma'+\alpha-n}{\gamma'}},
\end{align*}
 which shows that \eqref{sect3-relation-Calpha-emalpha} holds and the proof of this lemma is completed.
\end{proof}
We can see from Lemma \ref{sect3-lemma32} that for all $M>0$, Gagliardo-Nirenberg type inequalities given by \eqref{optimal-inequality-sub} can be attained under the subcritical mass exponent case $\alpha\in(n-\gamma',n)$.  In addition, invoking \eqref{sect3-poho-final} and  \eqref{sect3-relation-Calpha-emalpha}, we obtain that
\begin{align}\label{sect3-lemma31-imply}
	e_{0,\alpha,M}=\bigg(\frac{\gamma'+\alpha-n}{2\gamma'+\alpha-n}\bigg)\lambda M,
\end{align}
and
\begin{align}\label{sect3-318lambdam}
	\lambda M=-S_{\alpha,M}\bigg(\frac{2\gamma'+\alpha-n}{n-\alpha}\bigg)\bigg(\frac{n-\alpha}{\gamma'}\bigg)^{\frac{\gamma'}{\gamma'+\alpha-n}}, \ \ S_{\alpha,M}:=\Bigg[\frac{M^{\frac{2\gamma'+\alpha-n}{\gamma'}}}{2\Gamma_{\alpha}}\Bigg]^{\frac{\gamma'}{\gamma'+\alpha-n}}.
\end{align}

% $\lambda$, $\Gamma_{\alpha}$ and $M$ satisfy (\ref{sect3-318lambdam}).

% With the aid of Lemma \ref{sect3-lemma32}, one can use \eqref{sect3-poho-final} to establish the relationship between Lagrange multiplier $\lambda$ and $\Gamma_{\alpha}$ defined by \eqref{energy-epsilon0}.  Indeed, invoking (\ref{sect3-poho-final}) and  (\ref{sect3-relation-Calpha-emalpha}), we  get that
% \begin{align}\label{sect3-lemma31-imply}
% 	e_{0,\alpha,M}=\bigg(\frac{r+\alpha-n}{2r+\alpha-n}\bigg)\lambda M,
% \end{align}
% and
% \begin{align}\label{sect3-318lambdam}
% 	\lambda M=-S_{\alpha,M}\bigg(\frac{2r+\alpha-n}{n-\alpha}\bigg)\bigg(\frac{n-\alpha}{r}\bigg)^{\frac{r}{r+\alpha-n}}, \ \ S_{\alpha,M}:=\Bigg[\frac{M^{\frac{2r+\alpha-n}{r}}}{2\Gamma_{\alpha}}\Bigg]^{\frac{r}{r+\alpha-n}}.
% \end{align}
The next lemma will indicate that $\Gamma_{\alpha}$ defined in \eqref{optimal-inequality-sub} is uniformly bounded as  $\alpha\searrow(n-\gamma')$, which is essential for us to investigate the mass critical exponent case and prove Theorem \ref{thm11-optimal}.  
\begin{lemma}\label{uniformlyboundC1CalphaC2}
There are constants $C_1>0$ and $C_2>0$ independent of $\alpha$ such that for all $\alpha\in[n-\gamma',n-\gamma'+\epsilon)$ with $\epsilon>0$ small,
\begin{align}\label{317inlemma33alphaC2}
0<C_1\leq \Gamma_{\alpha}\leq C_2.
\end{align}
%where constant $C_1$ is independent of $\alpha,$ $C_L$, $r$ are given in (\ref{MFG-L}), $\omega_n$ is the %surface area of $n$-dimensional ball and $\Gamma(n)$ is the Gamma function.
\end{lemma}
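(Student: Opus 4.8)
The plan is to exhibit explicit upper and lower bounds on $\Gamma_\alpha$ that are uniform for $\alpha$ near $n-\gamma'$. For the upper bound, I would fix once and for all a single admissible competitor $(m,w)\in\mathcal A$ that is independent of $\alpha$ — for instance a smooth, rapidly decaying profile such as $m(x)=e^{-|x|^2}$ (suitably normalized so that $m\geq 0,\not\equiv 0$ and $\int |x|^b m<\infty$) together with $w=\nabla m$, which lies in $\mathcal A$ by construction. Then $\Gamma_\alpha\leq G_\alpha(m,w)$, and the numerator of $G_\alpha(m,w)$, namely $\big(C_L\int m|w/m|^{\gamma'}\big)^{(n-\alpha)/\gamma'}\big(\int m\big)^{(2\gamma'+\alpha-n)/\gamma'}$, depends continuously on $\alpha$ and is bounded on the compact interval $[n-\gamma',n-\gamma'+\epsilon]$. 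The only point requiring care is the denominator $\int\int \frac{m(x)m(y)}{|x-y|^{n-\alpha}}\,dx\,dy$: I must show it stays bounded \emph{away from $0$} as $\alpha\searrow n-\gamma'$. Since $m$ is fixed, continuous, strictly positive on a ball, and $n-\alpha\in(0,\gamma']\subset(0,n)$, the map $\alpha\mapsto \int\int \frac{m(x)m(y)}{|x-y|^{n-\alpha}}\,dx\,dy$ is continuous and strictly positive on the closed interval, hence bounded below by some $\delta>0$; this gives $\Gamma_\alpha\leq C_2$ with $C_2$ independent of $\alpha$. (One can invoke the Hardy–Littlewood–Sobolev bounds from Appendix~\ref{appendixA} to control the integral from above and dominated convergence for continuity.)

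For the lower bound I would use the Gagliardo–Nirenberg-type estimate already available, namely Lemma~\ref{lemma21-crucial-cor}. Applied with the choice $\beta=\frac{\gamma'}{n}$, it gives, for any $(m,w)\in\mathcal A$,
\begin{align*}
\|m\|_{L^{1+\beta}}^{1+\beta}\leq C\Big(\int_{\mathbb R^n}m\Big|\tfrac{w}{m}\Big|^{\gamma'}dx\Big)^{\frac{n\beta}{\gamma'}}\Big(\int_{\mathbb R^n}m\,dx\Big)^{\frac{(\beta+1)\gamma'-n\beta}{\gamma'}},
\end{align*}
with $C=C(n,\gamma')$ independent of $\alpha$. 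On the other hand, when $\alpha=\alpha^*=n-\gamma'$ we have $n-\alpha^*=\gamma'$, so by the sharp Hardy–Littlewood–Sobolev inequality (Lemma~\ref{H-L-S}) the denominator satisfies
\begin{align*}
\int_{\mathbb R^n}m(K_{\alpha^*}*m)\,dx=\int_{\mathbb R^n}\int_{\mathbb R^n}\frac{m(x)m(y)}{|x-y|^{n-\gamma'}}\,dx\,dy\leq C_{\mathrm{HLS}}\,\|m\|_{L^{2n/(n+\gamma')}}^2,
\end{align*}
and $\frac{2n}{n+\gamma'}$ is exactly $1+\beta$ with $\beta=\frac{\gamma'}{n}$; here $C_{\mathrm{HLS}}$ depends only on $n,\gamma'$. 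Combining the two displays bounds the denominator of $G_{\alpha^*}(m,w)$ by a constant times the numerator, which yields $\Gamma_{\alpha^*}\geq C_1>0$. For $\alpha\in(n-\gamma',n-\gamma'+\epsilon)$ I would run the same argument but keep track of the extra Riesz exponent: split $\int\int \frac{m(x)m(y)}{|x-y|^{n-\alpha}}$ into the region $|x-y|\leq 1$, where $|x-y|^{-(n-\alpha)}\leq|x-y|^{-(n-\gamma')}$ for $\alpha>n-\gamma'$ so it is dominated by the critical kernel, and the region $|x-y|>1$, where $|x-y|^{-(n-\alpha)}\leq 1$ and the contribution is at most $\|m\|_{L^1}^2=M^2$. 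Using the scaling invariance of $G_\alpha$ one may normalize $M=1$, and then both pieces are controlled by the same HLS/Lemma~\ref{lemma21-crucial-cor} combination with $\alpha$-independent constants, giving the uniform lower bound $C_1$.

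The main obstacle I anticipate is the lower bound in the range $\alpha>n-\gamma'$: the sharp HLS inequality in its clean form matches the conjugate exponents only at the critical value $\alpha^*$, so for $\alpha$ slightly larger one loses the exact exponent matching and must instead interpolate. The near/far splitting of the kernel described above is the device that circumvents this — the near-diagonal part is dominated by the critical kernel (hence controlled by $\|m\|_{L^{1+\beta}}^2$ via HLS plus Lemma~\ref{lemma21-crucial-cor}) and the far part is a trivially bounded $L^1\times L^1$ pairing — but one must make sure that after the scaling normalization these two bounds combine into the homogeneous quantity $G_\alpha$ with the correct powers of $\int m|w/m|^{\gamma'}$ and $\int m$, and that the constants produced by the Appendix~\ref{appendixA} inequalities do not degenerate as $\alpha\to n-\gamma'$. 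This is a routine but slightly delicate bookkeeping of exponents rather than a conceptual difficulty.
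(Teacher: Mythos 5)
Your upper-bound argument is essentially the same as the paper's (they use $\tilde m = e^{-|x|}$ and a direct lower bound on the Riesz pairing over a small ball; you use a Gaussian and continuity — both fine and uniform in $\alpha$). Your lower-bound argument is a genuinely different and, once repaired, arguably cleaner route than the paper's: the paper takes a minimizer $(m_\alpha,w_\alpha)$ (which exists by Lemma \ref{sect3-lemma32}), normalizes $\|m_\alpha\|_{L^1}=\|m_\alpha\|_{L^{2n/(n+\alpha)}}=1$, assumes $\Gamma_\alpha\to 0$ for contradiction, deduces via HLS and $W^{1,\hat q}$ control that $\|m_\alpha\|_{L^{2n/(n+\alpha)}}\to 0$, and contradicts the normalization (this requires checking that the sharp HLS constant $C(n,\alpha)$ stays bounded as $\alpha\searrow n-\gamma'$). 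Your near/far kernel splitting avoids the need for $\alpha$-dependent HLS constants entirely, and gives a direct (non-contradiction) inequality.

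However, there are two concrete defects. First, the exponents. At $\alpha^*=n-\gamma'$, the kernel is $|x-y|^{-(n-\alpha^*)}=|x-y|^{-\gamma'}$, and the HLS conjugate exponent is $\tfrac{2n}{n+\alpha^*}=\tfrac{2n}{2n-\gamma'}$, \emph{not} $\tfrac{2n}{n+\gamma'}$: you have substituted the order $n-\alpha^*=\gamma'$ where the Riesz index $\alpha^*=n-\gamma'$ belongs. Correspondingly the right choice in Lemma \ref{lemma21-crucial-cor} is $\beta=\tfrac{\gamma'}{2n-\gamma'}$ (so $1+\beta=\tfrac{2n}{2n-\gamma'}$), which does satisfy $\beta\le\gamma'/n$ since $\gamma'<n$; the choice $\beta=\gamma'/n$ gives $1+\beta=\tfrac{n+\gamma'}{n}\ne\tfrac{2n}{2n-\gamma'}$, and the ensuing bound does not have the correct homogeneity. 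With the correct $\beta$ one gets, for $M:=\int m=1$, $\|m\|_{L^{1+\beta}}^2\le C\,A$ where $A:=\int m|w/m|^{\gamma'}$, and then the near part of the denominator is $\le C'A$ while the far part is $\le M^2=1$, yielding $D\le C'A+1$.

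Second, the normalization. You only normalize $M=1$. But for $\alpha=\alpha^*+\delta$ with $\delta\in(0,\epsilon)$ the numerator of $G_\alpha$ is $(C_LA)^{1-\delta/\gamma'}$, and the ratio $(C_LA)^{1-\delta/\gamma'}/(C'A+1)$ tends to $0$ both as $A\to 0$ and as $A\to\infty$, so with only $M$ fixed no uniform $C_1$ comes out; indeed, the bound $D\le C'A+1$ is not itself scale invariant, so minimizing over the remaining dilation freedom is the missing step. The fix is to use the full two-parameter scaling $(m,w)\mapsto (st^n m(tx),\,st^{n+1}w(tx))$ (which is exactly what \eqref{sect2-equivalence-scaling} allows) to set \emph{both} $M=1$ \emph{and} $A=1$; then the denominator is $\le C'+1$ and the numerator is $C_L^{(n-\alpha)/\gamma'}$, uniformly bounded below for $\alpha\in[n-\gamma',n-\gamma'+\epsilon)$. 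Alternatively, one can keep only $M=1$ and observe that the family of dilates $(t^n m(tx),t^{n+1}w(tx))$ all have $M=1$, so applying $D\le C'A+1$ at the optimally chosen $t$ and then dividing recovers the scale-invariant estimate; this is a routine optimization but does need to be carried out. With these two repairs the proof goes through and is a valid, slightly more direct alternative to the paper's contradiction argument.
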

\begin{proof}
We first estimate $\Gamma_{\alpha}$ from above uniformly in $\alpha$.  By setting $\tilde m=e^{-|x|}$ with $\tilde w=\nabla \tilde m$, we have $(\tilde m,\tilde w)\in\mathcal A$ for any $\alpha\in(n-\gamma',n)$ and
\begin{align}\label{upperboundCalpha}
\Gamma_{\alpha}\leq G_{\alpha}(\tilde m,\tilde w)=\frac{\Big(C_L\int_{\mathbb R^n}\tilde m\Big|\frac{\tilde w}{\tilde m}\Big|^{\gamma'}\, dx\Big)^{\frac{n-\alpha}{\gamma'}}\Big(\int_{\mathbb R^n}\tilde m\,dx\Big)^{\frac{2\gamma'-n-\alpha}{\gamma'}}}{\int_{\mathbb R^n}\tilde m(x)(K_{\alpha}*\tilde m)(x)\,dx}
\leq C_2(C_L,\gamma',n)<+\infty,
\end{align}
where we have used the following inequality
\begin{align*}
\int_{\mathbb R^n}\tilde m(x)(K_{\alpha}*\tilde m)(x)\,dx&=\int_{\mathbb R^n}\int_{\mathbb R^n}\frac{e^{-|x|}e^{-|y|}}{|x-y|^{n-\alpha}}\,dx\,dy\geq \iint_{\mathbb R^{2n}\cap \{|x-y|\leq 1\}}\frac{e^{-|x|}e^{-|y|}}{|x-y|^{n-\alpha}}\,dx\,dy\\
&\geq \iint_{\mathbb R^{2n}\cap \{|x-y|\leq 1\}\cap \{|x|\leq \frac{1}{4},|y|\leq \frac{1}{4}\}}e^{-|x|}e^{-|y|}\,dx\,dy=\tilde C(n).
\end{align*}

%By using the definition shown in \eqref{sect3-Calpha-equivalent}, we further obtain that
%\begin{align}\label{upperboundCalpha}
%\Gamma_{\alpha}\leq C_2(C_L,\omega_n,\Gamma(n),r),
%\end{align}
%where constant $C_2>0$ is independent of $\alpha$.

We next focus on the positive lower bound satisfied by $\Gamma_{\alpha}$ uniformly in $\alpha$. To show this, we argue by contradiction and assume
\begin{align}\label{contradiction-assumption}
\liminf_{\alpha\searrow (n-\gamma')} \Gamma_{\alpha}=0.
\end{align}
%In light of the alternative definition of $\Gamma_{\alpha}$ given in (\ref{sect2-equivalence-scaling}), for %simplicity,
Lemma \ref{sect3-lemma32} implies there is a minimizer $(m_{\alpha},w_{\alpha})\in \mathcal A$ of problem \eqref{sect2-equivalence-scaling}.  Since \eqref{sect2-equivalence-scaling} is invariant under the scaling $s(t^{n}m(tx),t^{n+1}w(tx))$ for any $s>0$ and $t>0$, we normalize $m_{\alpha}$ to get
\begin{align}\label{normalize-alpha1-alpha}
\int_{\mathbb R^n} m_{\alpha}\, dx=\int_{\mathbb R^n} m_{\alpha}^{\frac{2n}{n+\alpha}}\, dx\equiv 1.
\end{align}
By using this equality and the Hardy-Littlewood-Sobolev inequality given in \eqref{eqHLS_2}, we obtain
\begin{align}\label{sect2-Hardy-Littlewood-Sobolev}
	0<\int_{\mathbb R^n} m_{\alpha}(x)(K_{\alpha}*m_{\alpha})(x)\,dx\leq C(n,\alpha)\bigg(\int_{\mathbb R^n} m_{\alpha}^{\frac{2n}{n+\alpha}}\,dx\bigg)^{\frac{n+\alpha}{n}}\leq C(n,\alpha),
\end{align}
where $C(n,\alpha)>0$ is the best constant. %To explain the positivity of $\int_{\mathbb R^n} m_{\alpha}(x)(K_{\alpha}*m_{\alpha})(x)\,dx$ shown in (\ref{sect2-Hardy-Littlewood-Sobolev}), we find otherwise if $\int_{\mathbb R^n} m_{\alpha}(x)(K_{\alpha}*m_{\alpha})(x)\,dx\equiv 0 $, then $m_{\alpha}$ must satisfy $m_{\alpha}(x)=0, a.e.$ in $\mathbb R^n$, which contradicts \eqref{normalize-alpha1-alpha}.
On the other hand, since $\alpha\in(n-\gamma',n)$, we follow the argument shown in \cite[Theorem 4.3]{LiebLoss} to get 
\begin{align}\label{eq3.170}
	\lim_{\alpha\searrow (n-\gamma')} C(n,\alpha)=C(n,n-\gamma')<+\infty.
\end{align}
Hence, 
\begin{align}\label{eq3.171}
	\liminf_{\alpha\searrow (n-\gamma')}\int_{\mathbb R^n} m_{\alpha}(x)(K_{\alpha}*m_{\alpha})(x)\,dx=:\tilde{C}(n,\gamma')\leq C(n,n-\gamma')<+\infty.
\end{align}
Then it follows from \eqref{contradiction-assumption}, \eqref{eq3.171} and  \eqref{sect2-equivalence-scaling} that, as $\alpha \searrow (n-\gamma'),$
\begin{align}\label{eq3.18}
\int_{\mathbb R^n}m_{\alpha}\bigg|\frac{w_{\alpha}}{m_{\alpha}}\bigg|^{\gamma'}\,dx\rightarrow 0.
\end{align}
Proceeding the same argument shown in Lemma 3.5 of \cite{cirant2024critical}, one finds $\Vert m_{\alpha}\Vert_{W^{1,\hat q}(\mathbb R^n)}\rightarrow 0.$  By using the Sobolev embedding theorem, we obtain 
\begin{align}\label{combine20241123}
  \Vert m_{\alpha}\Vert_{L^{\frac{n}{n-\gamma'}}(\mathbb R^n)}\rightarrow 0,\text{ as }\alpha\searrow (n-\gamma'). 
\end{align}
 On the other hand, the following interpolation inequality holds:
$$\Vert m_{\alpha}\Vert_{L^{\frac{2n}{n+\alpha}}(\mathbb R^n)}\leq \Vert m_{\alpha}\Vert^{1-\theta_{\alpha}}_{L^1(\mathbb R^n)}\Vert m_{\alpha}\Vert^{\theta_{\alpha}}_{L^{\frac{n}{n-\gamma'}}(\mathbb R^n)},$$
where $\theta_{\alpha}:=\frac{n-\alpha}{\gamma'}\in (0,1).$  With the help of (\ref{combine20241123}), we further get as $\alpha\searrow (n-\gamma'),$ $\theta_{\alpha}\nearrow 1$ and  
$$\Vert m_{\alpha}\Vert_{L^{\frac{2n}{n+\alpha}}(\mathbb R^n)}\rightarrow 0,$$
which reaches a contradiction to (\ref{normalize-alpha1-alpha}).
Thus, we have  $\exists C_1>0$ independent of $\alpha$ such that
\begin{align}\label{lemma33lowebounddesired20231027}
0<C_1\leq \Gamma_{\alpha}.
\end{align}
Finally, combining \eqref{lemma33lowebounddesired20231027} with \eqref{upperboundCalpha}, one finds \eqref{317inlemma33alphaC2} holds.  This completes the proof of the lemma.
\end{proof}

With the aid of the uniform boundedness of $\Gamma_{\alpha}$, we next establish the uniform $L^\infty$ bound of $m_{\alpha}$ as $\alpha\searrow(n-\gamma')$, which is
\begin{lemma}\label{lemma34uniformlinfboundmalpha}
Let $(u_{\alpha},m_{\alpha},\lambda_{\alpha})\in C^2(\mathbb R^n)\times W^{1,p}(\mathbb R^n)\times \mathbb R$, $\forall p>1$ be the solution of
\begin{align}\label{lemma34mualphaeq}
\left\{\begin{array}{ll}
-\Delta u+C_H|\nabla u|^{\gamma}+\lambda=-K_{\alpha }*m,&x\in\mathbb R^n,\\
-\Delta m-C_H \gamma\nabla\cdot(m|\nabla u|^{\gamma-2}\nabla u)=0,&x\in\mathbb R^n,\\
\int_{\mathbb R^n}m\, dx=M_{\alpha}.
\end{array}
\right.
\end{align}
Define $w_{\alpha}=-C_H\gamma m_{\alpha}|\nabla u_{\alpha}|^{\gamma-2}\nabla u_{\alpha}$.
 Assume that each $u_{\alpha}$ is bounded from below  and there exists a constant $C>0$ independent of $\alpha$ such that
\begin{align}\label{energypohoboundedness}
\limsup_{\alpha\searrow(n-\gamma')}\int_{\mathbb R^n}m_{\alpha}|\nabla u_{\alpha}|^{\gamma}\,dx\leq C,\ \ \lim_{\alpha\searrow(n-\gamma')}\int_{\mathbb R^n}m_{\alpha}\, dx=\lim_{\alpha\searrow(n-\gamma')} M_{\alpha}\leq C, \ \ \limsup_{\alpha\searrow(n-\gamma')}|\lambda_{\alpha}|\leq C,
\end{align}
then there is $C_1>0$ independent of $\alpha$ such that
\begin{align}\label{linfinityboundofmuniform}
\limsup_{\alpha\searrow(n-\gamma')}\Vert m_{\alpha}\Vert_{L^\infty(\mathbb R^n)}\leq C_1.
\end{align}

\end{lemma}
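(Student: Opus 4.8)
The plan is to argue by contradiction through a blow-up (rescaling) analysis, following the strategy of \cite{cesaroni2018concentration,cirant2024critical}. Suppose \eqref{linfinityboundofmuniform} fails: then there is a sequence $\alpha_k\searrow n-\gamma'$ with $L_k:=\|m_{\alpha_k}\|_{L^\infty(\mathbb R^n)}\to+\infty$. Since $m_{\alpha_k}\in W^{1,p}(\mathbb R^n)$ for every $p>1$, Sobolev embedding gives $m_{\alpha_k}\in C^0(\mathbb R^n)$ with $m_{\alpha_k}\to0$ at infinity, so $L_k=m_{\alpha_k}(x_k)$ for some $x_k\in\mathbb R^n$. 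I would use that \eqref{lemma34mualphaeq} is invariant under translations and under the scaling $(u,m)\mapsto(\mu^{a}u(\mu\,\cdot),\mu^{b}m(\mu\,\cdot))$ with $a=\tfrac{2-\gamma}{\gamma-1}$ (so $a+2=(a+1)\gamma=\gamma'$) and $b=\gamma'+\alpha_k$ (so $b-\alpha_k=a+2$, which keeps the Hartree term unchanged), and set $\mu_k:=L_k^{-1/(\gamma'+\alpha_k)}\to0^+$,
$$\widetilde m_k(x):=\mu_k^{\,\gamma'+\alpha_k}m_{\alpha_k}(x_k+\mu_k x),\quad \widetilde u_k(x):=\mu_k^{\,a}\big(u_{\alpha_k}(x_k+\mu_k x)-u_{\alpha_k}(x_k)\big),\quad \widetilde w_k:=-C_H\gamma\,\widetilde m_k|\nabla\widetilde u_k|^{\gamma-2}\nabla\widetilde u_k.$$
Then $(\widetilde u_k,\widetilde m_k,\widetilde\lambda_k)$ with $\widetilde\lambda_k:=\mu_k^{\gamma'}\lambda_{\alpha_k}$ solves the same system as $(u_{\alpha_k},m_{\alpha_k},\lambda_{\alpha_k})$, and $\|\widetilde m_k\|_{L^\infty}=\widetilde m_k(0)=1$. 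Writing $\varepsilon_k:=\alpha_k-(n-\gamma')\to0^+$, a change of variables together with \eqref{energypohoboundedness} yields
$$\int_{\mathbb R^n}\widetilde m_k\,dx=\mu_k^{\,\varepsilon_k}M_{\alpha_k}\le C,\qquad |\widetilde\lambda_k|\le C\mu_k^{\gamma'}\to0,\qquad \int_{\mathbb R^n}\widetilde m_k\Big|\tfrac{\widetilde w_k}{\widetilde m_k}\Big|^{\gamma'}dx=(C_H\gamma)^{\gamma'}\mu_k^{\,\gamma'+\varepsilon_k}\!\int_{\mathbb R^n}\! m_{\alpha_k}|\nabla u_{\alpha_k}|^{\gamma}\,dx\to0.$$

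Thus the rescaled mass stays bounded while the rescaled kinetic energy vanishes, so Lemma \ref{lemma21-crucial-cor} with $\beta=\gamma'/n$ gives $\|\widetilde m_k\|_{L^{1+\gamma'/n}(\mathbb R^n)}\to0$. This already isolates the eventual contradiction; what remains is to upgrade it to local compactness of $(\widetilde m_k)$.

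The crux — and the step I expect to be most delicate — is a finite bootstrap, with constants uniform in $k$, yielding a uniform bound for $\widetilde m_k$ in $C^{0,\theta}_{\mathrm{loc}}(\mathbb R^n)$. Starting from $\|\widetilde m_k\|_{L^1\cap L^{1+\gamma'/n}}\le C$, the Hardy–Littlewood–Sobolev inequality (Lemma \ref{H-L-S}, whose sharp constant is uniform for $\alpha$ near $n-\gamma'$, cf. the proof of Lemma \ref{uniformlyboundC1CalphaC2}) controls $K_{\alpha_k}\ast\widetilde m_k$ — splitting the kernel into its singular and its bounded part when only local integrability of $\widetilde m_k$ is available — and produces a uniform bound on the Hamilton–Jacobi source $\widetilde f_k:=-\widetilde\lambda_k-K_{\alpha_k}\ast\widetilde m_k$ in $L^{q_0}_{\mathrm{loc}}$ with $q_0>n/\gamma'$. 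The maximal regularity estimate of Lemma \ref{thmmaximalregularity} then bounds $\nabla\widetilde u_k$ in $W^{1,q_0}_{\mathrm{loc}}$, hence in $L^{q_0^*}_{\mathrm{loc}}$ by Sobolev; writing Fokker–Planck as $-\Delta\widetilde m_k=\nabla\!\cdot(\widetilde m_k\,\widetilde b_k)$ with $\widetilde b_k=C_H\gamma|\nabla\widetilde u_k|^{\gamma-2}\nabla\widetilde u_k$ and using interior Calderón–Zygmund estimates improves the local integrability of $\nabla\widetilde m_k$, and of $\widetilde m_k$ by Sobolev. Since $\alpha_k$ stays strictly above the Sobolev-critical exponent $n-2\gamma'$, each round strictly raises the exponent, so after finitely many rounds $\widetilde m_k$ is bounded in $L^\infty_{\mathrm{loc}}$ and then in $C^{0,\theta}_{\mathrm{loc}}$, all bounds independent of $k$ (this is precisely the bootstrap of \cite{cirant2024critical}; the potential-free, translation-invariant form of \eqref{lemma34mualphaeq} and the uniform-in-$\alpha$ HLS constant are what make it uniform in $k$).

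Finally, Arzelà–Ascoli gives, along a subsequence, $\widetilde m_k\to\widetilde m_0$ in $C^0_{\mathrm{loc}}(\mathbb R^n)$, so $\widetilde m_0$ is continuous with $\widetilde m_0(0)=\lim_k\widetilde m_k(0)=1$. But $\widetilde m_k\to\widetilde m_0$ pointwise and $\|\widetilde m_k\|_{L^{1+\gamma'/n}(\mathbb R^n)}\to0$, so Fatou's lemma forces $\widetilde m_0\equiv0$, contradicting $\widetilde m_0(0)=1$. (Alternatively, Hölder's inequality gives $\widetilde m_k|\nabla\widetilde u_k|^{\gamma-1}\to0$ in $L^1_{\mathrm{loc}}$, so $\widetilde m_0$ is a bounded nonnegative harmonic function of finite mass, hence $\equiv0$ — again a contradiction.) This proves \eqref{linfinityboundofmuniform}.
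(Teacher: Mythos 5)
Your proposal is correct and follows essentially the same blow-up/contradiction strategy as the paper: rescale so that $\|\widetilde m_k\|_{L^\infty}=1$, show that the rescaled kinetic energy (and hence an $L^{1+\gamma'/n}$ norm via Lemma \ref{lemma21-crucial-cor}) vanishes, establish uniform local regularity of $\widetilde m_k$ through HLS bounds on the Hartree term and elliptic/Fokker--Planck estimates, and derive a contradiction with the normalized $L^\infty$ bound. The only cosmetic deviations are your choice of scaling exponent $\mu_k^{\gamma'+\alpha_k}$ (which makes the system exactly scale-invariant, versus the paper's mass-preserving $\mu_\alpha^n$ carrying a harmless extra factor $\mu_\alpha^{\gamma'-n+\alpha}\le 1$) and your explicit recentering at the maximum of $m_{\alpha_k}$ followed by Arzelà--Ascoli + Fatou, whereas the paper uses a global uniform Hölder bound to control the neighborhood of the (arbitrary) maximum point directly; both are standard variants of the same argument.
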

\begin{proof}
The proof is similar as shown in \cite[Lemma 4.3]{cirant2024critical}.
We proceed by contradiction and suppose that up to a subsequence,
\begin{align}\label{blowupassumption}
\mu_{\alpha}:=\Vert m_{\alpha}\Vert_{L^\infty(\mathbb R^n)}^{-\frac{1}{n}}\rightarrow 0 \ \ \text{~as~}\alpha\searrow(n-\gamma').
\end{align}
Now, we fix $0=u_{\alpha}(0)=\inf\limits_{x\in \mathbb R^n} u_{\alpha}(x)$ without loss of generality, as this is due to the fact that $u_{\alpha}$ is bounded from below.  Define
\begin{align}\label{blowupanalysisrescaling}
\bar u_{\alpha}:=\mu_{\alpha}^{\frac{2-\gamma}{\gamma-1}}u_{\alpha}(\mu_{\alpha}x)+1,\
\bar m_{\alpha}:=\mu_{\alpha}^n m_{\alpha}(\mu_{\alpha} x) ~\text{ and }~
\bar w_{\alpha}:=\mu_{\alpha}^{n+1}w_{\alpha}(\mu_{\alpha}x),
\end{align}
then, by (\ref{energypohoboundedness}) and (\ref{blowupassumption}), we obtain that up to a subsequence
\begin{align}\label{eq3.36}
\int_{\mathbb R^n}\bar m_{\alpha}\,dx=\int_{\mathbb R^n} m_{\alpha}\, dx=M_{\alpha},\
\int_{\mathbb R^n}\bar m_{\alpha}^{\frac{2n}{n+\alpha}}\, dx=\mu_{\alpha}^{\frac{n(n-\alpha)}{n+\alpha}}\int_{\mathbb R^n}m_{\alpha}^{\frac{2n}{n+\alpha}}\, dx\rightarrow 0 \ \ \text{as~}\alpha\searrow (n-\gamma'),
\end{align}
and
\begin{align}\label{338thevanishofmgradu}
\int_{\mathbb R^n}\bar m_{\alpha}|\nabla \bar u_{\alpha}|^{\gamma}\,dx=\mu_{\alpha}^{\gamma'}\int_{\mathbb R^n}m_{\alpha}|\nabla u_{\alpha}|^{\gamma}\, dx\rightarrow 0 \ \ \text{as~}\alpha\searrow (n-\gamma').
\end{align}
Recall the definition of $w_{\alpha}$, then we deduce from \eqref{MFG-L} and \eqref{blowupanalysisrescaling} that
\begin{align}\label{operatorbodyconverge0}
C_L\int_{\mathbb R^n}\bigg|\frac{\bar w_{\alpha}}{\bar m_{\alpha}}\bigg|^{\gamma'} \bar m_{\alpha}\,dx=(\gamma-1)C_H\int_{\mathbb R^n}\bar m_{\alpha}|\nabla\bar u_{\alpha}|^{\gamma}\,dx\rightarrow 0, \ \ \text{as~}\alpha\searrow (n-\gamma').
\end{align}
In light of \eqref{blowupassumption} and \eqref{blowupanalysisrescaling}, we infer that
\begin{align}\label{342malphainfequiv1always}
\Vert \bar m_{\alpha}\Vert_{L^\infty}\equiv 1.
\end{align}
This together with \eqref{eq3.36} implies that for any $q>\frac{2n}{n+\alpha}$,
\begin{align}\label{343mL1plusgammaovernnorm}
\int_{\mathbb R^n}\bar m_{\alpha}^{q}\, dx\leq \bigg(\int_{\mathbb R^n}\bar m_{\alpha}^{\frac{2n}{n+\alpha}}\, dx\bigg)\|\bar m_\alpha\|_{L^\infty(\mathbb \R^n)}^{q-\frac{2n}{n+\alpha}}\rightarrow 0 ~~\text{as}~~\alpha\searrow (n-\gamma').
\end{align}
On the other hand, invoking \eqref{blowupanalysisrescaling} and  \eqref{lemma34mualphaeq}, one can obtain that
\begin{align}\label{limitingproblembarubarm}
\left\{\begin{array}{ll}
-\Delta_x\bar u_{\alpha}+C_H|\nabla_x \bar u_{\alpha}|^{\gamma}+\lambda_{\alpha}\mu_{\alpha}^{\gamma'}=-\mu_{\alpha}^{(\gamma'-n+\alpha)} K_{\alpha}\ast\bar m_{\alpha},&x\in\mathbb R^n,\\
-\Delta_x \bar m_{\alpha}-C_H\gamma\nabla_x\cdot(\bar m_{\alpha}|\nabla_x \bar u_{\alpha}|^{\gamma-2}\nabla_x \bar u_{\alpha})=0,&x\in\mathbb R^n,\\
\int_{\mathbb R^n}\bar m_{\alpha}\, dx=M_{\alpha}.
\end{array}
\right.
\end{align}
It follows from \eqref{energypohoboundedness} and \eqref{blowupassumption} that
$\lambda_{\alpha}\mu_{\alpha}^{\gamma'}\rightarrow 0\ \ \text{as~} \alpha\searrow (n-\gamma').$
In addition, we claim that %for $\alpha$ close to $n-r$, 
\begin{align}\label{convolution-terms-inftynormestimate}
0\leq \mu_{\alpha}^{\gamma'-n-\alpha}\Vert K_{\alpha}*\bar m_{\alpha}\Vert_{L^\infty(\mathbb \R^n)}\leq C,
\end{align}
where $C>0$ independent of $\alpha$. Indeed, by the definition of $K_{\alpha}$, we get
\begin{align}\label{convolution-terms-estimate-1}
|K_{\alpha}*\bar m_{\alpha}|\leq \int_{\mathbb R^n} \frac{|\bar m_{\alpha}(x-y)|}{|y|^{n-\alpha}}\, dy =\overbrace{\int_{B_{1}} \frac{|\bar m_{\alpha}(x-y)|}{|y|^{n-\alpha}}\, dy}^{I}+\overbrace{\int_{B^{c}_{1}} \frac{|\bar m_{\alpha}(x-y)|}{|y|^{n-\alpha}}\, dy}^{II}.
\end{align}
For $I$, we apply \eqref{342malphainfequiv1always} to get
\begin{align}\label{convolution-terms-estimate-2}
	I=\int_{B_{1}} \frac{|\bar m_{\alpha}(x-y)|}{|y|^{n-\alpha}}\,dy\leq \Vert \bar m_{\alpha}\Vert_{L^\infty}(\mathbb \R^n)\int_{B_{1}} \frac{1}{|y|^{n-\alpha}}\,dy=: C(n).
\end{align}
For $II$, taking into account the condition  \eqref{energypohoboundedness}, we get from \eqref{eq3.36} that 
\begin{align}\label{convolution-terms-estimate-3}
	II=\int_{B^{c}_{1}} \frac{|\bar m_{\alpha}(x-y)|}{|y|^{n-\alpha}}\,dy\leq \int_{B^{c}_{1}} |\bar m_{\alpha}(x-y)|\,dy \leq \Vert \bar m_{\alpha}\Vert_{L^1(\mathbb \R^n)}=M_{\alpha}\leq C.
\end{align}
%whenever $\alpha$ close to $(n-r)$. 
Then, we conclude that \eqref{convolution-terms-inftynormestimate} holds by collecting \eqref{blowupassumption}, \eqref{convolution-terms-estimate-1}-\eqref{convolution-terms-estimate-3} and the fact $\gamma'\geq n-\alpha$. 
Hence, applying Lemma \ref{sect2-lemma21-gradientu} to the first equation in (\ref{limitingproblembarubarm}), one finds
\begin{align}\label{346gradualphaestimate}
\limsup_{\alpha\searrow (n-\gamma')}\Vert\nabla \bar u_{\alpha}\Vert_{L^\infty(\mathbb \R^n)}\leq C<\infty.
\end{align}
Noting that $\bar w_{\alpha}=-C_H\gamma\bar m_{\alpha}|\nabla \bar u_{\alpha}|^{\gamma-2}\nabla \bar u_{\alpha}$,
we deduce from \eqref{346gradualphaestimate} that
\begin{align}\label{350barwalphaLinflessC}
\limsup_{\alpha\searrow (n-\gamma')}\Vert \bar w_{\alpha}\Vert_{L^\infty(\mathbb \R^n)}\leq C<\infty.
\end{align}

Now, we turn our attention to H\"{o}lder estimates of $\bar m_{\alpha}$ and the proof of H\"{o}lder continuity of $\bar m_{\alpha}$ is the same as shown in Lemma 4.3 of \cite{cirant2024critical}.   %By using (\ref{operatorbodyconverge0}) and H\"{o}lder's inequality, we have 
%\begin{align}\label{351barwalphaL1lessC}
%\int_{\mathbb R^n}|\bar w_{\alpha}|\, dx=\int_{\mathbb R^n}|\bar w_{\alpha}|\bar m_{\alpha}^{\frac{1-r}{r}}\bar m_{\alpha}^{\frac{r-1}{r}}\, dx\leq &\bigg(\int_{\mathbb R^n}|\bar w_{\alpha}|^{r}|\bar m_{\alpha}|^{1-r}\, dx\bigg)^{\frac{1}{r}}\bigg(\int_{\mathbb R^n}\bar m_{\alpha}\, dx\bigg)^{\frac{1}{r'}}\rightarrow 0,
%\end{align}
%as $\alpha\searrow (n-r).$ 
%In light of (\ref{350barwalphaLinflessC}) and (\ref{351barwalphaL1lessC}), one finds
%\begin{align}\label{350inproveoptimal}
%\int_{\mathbb R^n}|\bar w_{\alpha}|^p\,dx\rightarrow 0\ \ \text{as~}\alpha\searrow (n-r), \ \ \forall 1<p<+\infty.
%\end{align}
%For any $q>n$, we deduce from the second equation in (\ref{limitingproblembarubarm}) that
%\begin{align}\label{350inproveoptimal1}
%\Big|-\int_{\mathbb R^n}\bar m_{\alpha}\Delta \varphi \, dx\Big|=\Big|\int_{\mathbb R^n}\bar w_{\alpha}\cdot\nabla \varphi \, dx\Big|\leq \Big(\int_{\mathbb R^n}|\bar w_{\alpha}|^q\,dx\Big)^{\frac{1}{q}}\Vert \nabla \varphi\Vert_{L^{q'}}\rightarrow 0, \ \ \forall \varphi\in C_{c}^\infty(\mathbb R^n),
%\end{align}
%as $\alpha\searrow (n-r).$
%With the help of (\ref{350inproveoptimal}) and (\ref{350inproveoptimal1}), we can apply Lemma \ref{lemma21-crucial} to obtain
%$
%\Vert \nabla \bar m_{\alpha}\Vert_{L^q}\rightarrow 0 \ \ \text{as~}\alpha\searrow (n-r).
%$
%This together with \eqref{343mL1plusgammaovernnorm} implies that
%$\Vert \bar m_{\alpha}\Vert_{W^{1,q}(\mathbb R^n)}\rightarrow 0\ \ \text{as~}\alpha\searrow (n-r).$  
In fact, we obtain for some $\theta'\in(0,1),$
\begin{align}\label{malphaholdercontinuousuniformlyinalpha}
\Vert \bar m_{\alpha}\Vert_{C^{0,\theta'}(\mathbb R^n)}\rightarrow 0 \ \ \text{as~}\alpha\searrow (n-\gamma').
\end{align}
Assume that $x_{\alpha}$ is a maximum point of $\bar m_{\alpha}$, i.e.,  $\bar m_{\alpha}(x_{\alpha})=\Vert \bar m_{\alpha}\Vert_{L^\infty(\mathbb R^n)}=1.$  Then we deduce from (\ref{malphaholdercontinuousuniformlyinalpha}) that there exists $R_1>0$ independent of $\alpha$ such that
$|\bar m_{\alpha}(x)|\geq \frac{1}{2},\forall x\in B_{R_1}(x_{\alpha})$.
Thus,
\begin{align*}%\label{356malphalowerboundc0}
\bigg(\frac{1}{2}\bigg)^{\frac{2n}{2n-\gamma'}}|B_{R_1}|\leq \int_{B_{R_1}(x_{\alpha})}\bar m_{\alpha}^{\frac{2n}{2n-\gamma'}}\, dx\leq \int_{\mathbb R^n}\bar m_{\alpha}^{\frac{2n}{2n-\gamma'}}\, dx,
\end{align*}
which contradicts \eqref{343mL1plusgammaovernnorm}, and the proof of the lemma is finished.
\end{proof}

With the aid of Lemma \ref{sect3-lemma32}, Lemma \ref{uniformlyboundC1CalphaC2} and Lemma \ref{lemma34uniformlinfboundmalpha}, we are able to show conclusions stated in Theorem \ref{thm11-optimal}, which are

\medskip

\textbf{Proof of Theorem \ref{thm11-optimal}:}
\begin{proof}
We first recall that, for 
any $M>0$ and $p\in(1,+\infty)$, $(\bar u_{\alpha,M},\bar m_{\alpha,M},\lambda_{\alpha,M})\in C^{2}(\mathbb R^n)\times W^{1,p}(\mathbb R^n)\times \mathbb R$  denotes the solution to system \eqref{eq-attained-sub}, and the pair $(\bar m_{\alpha,M},\bar w_{\alpha,M})$ with $\bar w_{\alpha,M}=-C_H\gamma\bar m_{\alpha,M}|\nabla\bar u_{\alpha,M}|^{\gamma-2}\nabla\bar u_{\alpha,M}$ is a minimizer of the minimization problem \eqref{sect3-Calpha-equivalent}, in which $\bar m_{\alpha,M}$ satisfies the estimate \eqref{eq3.2}. Now, we take $$M=M_{\alpha}:=e^{\frac{\gamma'+\alpha-n}{2\gamma'+\alpha-n}}\big[2\Gamma_{\alpha}\big]^{\frac{\gamma'}{2\gamma'+\alpha-n}}$$ in \eqref{eq-attained-sub}, then one can deduce from \eqref{sect3-318lambdam} that
$$S_{\alpha,M_{\alpha}}:=\Bigg[\frac{M_{\alpha}^{\frac{2\gamma'+\alpha-n}{\gamma'}}}{2\Gamma_{\alpha}}\Bigg]^{\frac{\gamma'}{\gamma'+\alpha-n}}\equiv e.$$
%In other words, $M_{\alpha}$ is given by
%\begin{align}
%M_{\alpha}:=\Big[(\alpha+1)\Gamma_{\alpha}\Big]^{\frac{r}{(\alpha+1)r-n\alpha}}.
%\end{align}
Moreover, we obtain that, up to a subsequence,  
\begin{align}\label{356Malphagammaalpharelation}
\frac{M_{\alpha}^{\frac{2\gamma'+\alpha-n}{\gamma'}}}{2\Gamma_{\alpha}}\rightarrow 1, \ \ \frac{2\gamma'+\alpha-n}{\gamma'}\rightarrow 1,\ \ \text{as} \ \ \alpha\searrow (n-\gamma').
\end{align}
%and
%\begin{align}
%\frac{(\alpha+1)r-n\alpha}{r}\rightarrow \frac{r}{n}.
%\end{align}
Since $M=M_\alpha$ depends on $\alpha$, to emphasize the dependence of a solution on $\alpha$,
we will rewrite $(\bar m_{\alpha,M},\bar{w}_{\alpha,M}, \lambda_{\alpha,M})$ as $(\bar m_{\alpha,M_{\alpha}},\bar{w}_{\alpha,M_{\alpha}}, \lambda_{\alpha,M_{\alpha}})$.
Hence, we know from \eqref{eq-attained-sub} that $(\bar m_{\alpha,M_{\alpha}},\bar u_{\alpha,M_\alpha}, \bar{w}_{\alpha,M_{\alpha}}, \lambda_{\alpha,M_{\alpha}})$ satisfies
\begin{align}\label{eq3.50}
\left\{\begin{array}{ll}
-\Delta u+C_H|\nabla u|^{\gamma}+\lambda=-K_{\alpha}*m,&x\in\mathbb R^n,\\
\Delta m+C_H\gamma\nabla\cdot(m|\nabla u|^{\gamma-2}\nabla u)=0,&x\in\mathbb R^n,\\
 w=-C_H\gamma m|\nabla u|^{\gamma-2}\nabla u,\ \int_{\mathbb R^n}m\, dx=M_\alpha.
\end{array}
\right.
\end{align}

We can infer from Lemma \ref{uniformlyboundC1CalphaC2} that, up to a subsequence,
\begin{align*}
\Gamma_{\alpha}\rightarrow \bar \Gamma_{\alpha^*}:=\liminf\limits_{\alpha\searrow (n-r)} \Gamma_{\alpha}>0\ \ \text{ as}\ \ \alpha\searrow \alpha^*.
\end{align*}
In addition, invoking \eqref{356Malphagammaalpharelation} we have that $M_{\alpha}\rightarrow M_{\alpha^*}:=M^*$ \ as \ $\alpha\searrow (n-\gamma')$, where
\begin{align}\label{mstarlimitfinal}
M^*=2\bar \Gamma_{\alpha^*}, \ \alpha^*=n-\gamma'.
\end{align}
Moreover, due to the relation \eqref{sect3-318lambdam}, we obtain that, up to a subsequence,
%\begin{align}
%\lambda_{M_{\alpha}} M_{\alpha}\rightarrow -\frac{r}{n}e^{-1},
%\end{align}
%which implies up to a subsequence,
\begin{align}\label{lambdalimitbehavior}
\lambda_{\alpha, M_{\alpha}}\rightarrow \lambda_{\alpha^*}:=-\frac{1}{M^*} \ as \ \alpha\searrow (n-\gamma')
\end{align}
and it follows from \eqref{sect3-poho-final} that
\begin{align}\label{eq3.58}
\int_{\mathbb R^n}{\bar m}_{\alpha,M_{\alpha}}\,dx=M_{\alpha}\rightarrow M^*>0,\ \ \int_{\mathbb R^n}{\bar m}_{\alpha,M_{\alpha}}(x)(K_{\alpha}\ast{\bar m}_{\alpha,M_{\alpha}})(x)\, dx\rightarrow  2, \ \ C_L\int_{\mathbb R^n}\bar m_{\alpha,M_{\alpha}}\bigg|\frac{\bar w_{\alpha,M_{\alpha}}}{\bar m_{\alpha,M_{\alpha}}}\bigg|^{\gamma'}\,dx\rightarrow 1.
\end{align}
Applying Lemma \ref{lemma34uniformlinfboundmalpha}, we derive from  \eqref{lambdalimitbehavior} and \eqref{eq3.58} that
\begin{equation}\label{eq3.54}
\limsup_{\alpha\searrow (n-\gamma')}\Vert \bar m_{\alpha, M_\alpha}\Vert_{L^\infty(\mathbb R^n)}<\infty.
\end{equation}
Then, by using the estimate \eqref {usolutiongradientestimatepre} with $b=0$ from Lemma \ref{lowerboundVkgenerallemma22}, we have
\begin{align}\label{364supnablauupper}
\limsup_{\alpha\searrow (n-\gamma')}\Vert \nabla \bar u_{\alpha,M_{\alpha}}\Vert_{L^\infty}<\infty,
\end{align}
which, together with the definition of $\bar w_{\alpha,M_{\alpha}}$, yields
\begin{align}\label{361sandwichbefore}
\limsup_{\alpha\searrow (n-\gamma')}\Vert \bar w_{\alpha,M_{\alpha}}\Vert_{L^\infty}<\infty.
\end{align}
Proceeding the arguments similar
as those used in the proof of \eqref{malphaholdercontinuousuniformlyinalpha}, we collect \eqref{eq3.58}-\eqref{361sandwichbefore} to obtain  that
%\begin{align}\label{362sandwich}
%\sup_{\alpha\nearrow \frac{r}{n}}\Vert \bar w_{\alpha,M_{\alpha}}\Vert_{L^1}\leq \bigg(\int_{\mathbb R^n}| %\bar  w_{\alpha,M_{\alpha}}|^{r}| \bar m_{\alpha,M_{\alpha}}|^{1-r}\, dx\bigg)^{\frac{1}{r}}\int_{\mathbb %R^n} \bar m_{\alpha,M_{\alpha}}\, dx\leq C,
%\end{align}
%where $C>0$ is independent of $\alpha.$  Combining (\ref{361sandwichbefore}) with (\ref{362sandwich}), one %arrives at
%\begin{align*}
%\limsup_{\alpha\nearrow \frac{r}{n}}\Vert \bar w_{\alpha,M_{\alpha}}\Vert_{L^p}\leq C_p<\infty, \ \ \forall %p>1,
%\end{align*}
% Thus, for any $q>\max\big\{n,1+\frac{r}{n}\big\}$, we use Lemma \ref{lemma21-crucial} to get
%\begin{align}\label{itselfnablamalphauniformnotblow}
%\limsup_{\alpha\nearrow \frac{r}{n}}\Vert \nabla \bar m_{\alpha,M_{\alpha}}\Vert_{L^q}\leq C_q<\infty.
%\end{align}
%In addition, \eqref{eq3.58} and \eqref{eq3.54} imply that
%$$\limsup_{\alpha\nearrow \frac{r}{n}}\Vert \bar m_{\alpha,M_{\alpha}}\Vert_{L^q}\leq C_q<\infty.$$
%Thus, we have
\begin{align}\label{eq3.60}
\limsup_{\alpha\searrow (n-\gamma')}\Vert \bar m_{\alpha,M_{\alpha}}\Vert_{W^{1,q}(\mathbb R^n)}<+\infty, \ \forall \ q>n, 
\end{align}
and thus 
\begin{align}\label{eq3.60_0}
\limsup_{\alpha\searrow (n-\gamma')}\Vert\bar m_{\alpha,M_{\alpha}}\Vert_{C^{0,\tilde\theta}(\mathbb R^n)}<\infty ~\text{ for some}~\tilde\theta\in(0,1).
\end{align}
We may assume that $\bar u_{\alpha,M_\alpha}(0)=0=\inf_{x\in \mathbb R^n}\bar u_{\alpha,M_\alpha}(x)$ due to the fact that $\bar u_{\alpha,M_\alpha}\in C^2(\mathbb R^n)$ is bounded from below. Hence, by the first equation of \eqref{eq3.50}, one has $$(K_{\alpha}*\bar m_{\alpha,M_\alpha})(0)\geq -\lambda_{\alpha, M_{\alpha}}>0,$$ which together with \eqref{eq3.54} and a similar argument as used in \cite[Lemma 4.1]{bernardini2023ergodic} to obtain that there are $\delta_1$ and a large $R>0$ independent of $\alpha$ such that,
\begin{equation}\label{eq3.62}
\int_{|x|\leq R}{\bar m_{\alpha,M_{\alpha}}(x)}\,dx>\frac{\delta_1}{2}>0.
\end{equation}

Now, we rewrite the first equation of \eqref{eq3.50} as
\begin{equation}\label{eq3.56}
     -\Delta \bar u_{\alpha,M_\alpha}=-C_H|\nabla \bar u_{\alpha,M_\alpha}|^{r'}+h_\alpha(x),
\end{equation}
 where $h_\alpha(x):=-\lambda_{\alpha, M_\alpha}-K_{\alpha}*\bar m_{\alpha,M_\alpha},\ x\in\mathbb R^n$.
%In light of \eqref{364supnablauupper}, we obtain that $|\bar u_{\alpha,M_\alpha}(x)|\leq C(|x|+1)$ with $C>0$ independent of $\alpha$. 
By performing the same procedure shown in \eqref{convolution-terms-estimate-1}, one can see that 
\begin{equation*}
\big|(K_{\alpha}*m_{\alpha,M_\alpha})(x)\big|\leq C
\end{equation*}
 for some $C>0$ independent of $\alpha$.
%We then derive from the  classical $W^{2,p}$ estimate, \eqref{eq3.54} and \eqref{364supnablauupper} that for any $\bar R>0$ and $p>n$,
% \begin{equation*}
% \|\bar u_{\alpha,M_\alpha}\|_{W^{2,p}(B_{\bar R+1})}\leq C\left(\|\bar u_{\alpha,M_\alpha}\|_{L^{p}(B_{2\bar R}(0))}+\|h_\alpha\|_{L^{p}(B_{2\bar R}(0))}+\||\nabla\bar u_{\alpha,M_\alpha}|^{r'}\|_{L^{p}(B_{2\bar R}(0))}\right)
% \leq C_{p,\bar R}<\infty,
% \end{equation*}
%  where the constant  $C_{p, \bar R}>0$ is independent of $\alpha$. Then, by Sobolev embedding theorem, we have
%$$\|\bar u_{\alpha,M_\alpha}\|_{C^{1,\theta_1}(B_{\bar R+1}(0))}\leq C_{\theta_1,\bar R}<\infty \  \text{for some $\theta_1\in(0,1)$.}$$
%In addition, one can deduce from \eqref{eq3.60_0} that 
%\begin{equation*}
%\|K_{\alpha}*m_{\alpha,M_\alpha}\|_{C^{0,\theta_2}(B_{\bar R+1}(0))}\leq C_{\theta_2,\bar R}<\infty \  \text{for some}\ \theta_2\in(0,1).
%\end{equation*}
%Hence,  we get
%$$\||\nabla\bar u_{\alpha,M_\alpha}|^{r'}\|_{C^{0,\theta_3}(B_{\bar R+1}(0))} +\|h_\alpha\|_{C^{0,\theta_3}(B_{\bar R+1}(0))}\leq C_{\theta_3,\bar R}<\infty \ \ \text{for some $\theta_3\in(0,1)$.}$$
Then, we apply the standard elliptic regularity to \eqref{eq3.56} and obtain
 \begin{equation}\label{eq4.54}
     \|\bar u_{\alpha,M_\alpha}\|_{C^{2,\theta}(B_{R}(0))} \leq C_{\theta,\bar R}<\infty\ \text{ for some $\theta\in(0,1)$,}
 \end{equation}
 where $0<R<\bar R.$
% Define
%\begin{equation}\label{eq3.63}
%\big(\tilde m_{\alpha,M_{\alpha}}(x),  \tilde w_{\alpha,M_{\alpha}}(x),\tilde  u_{\alpha,M_{\alpha}}(x)\big)=(\bar %m(x+x_{\alpha}),\bar w(x+x_{\alpha}),\bar  u_{\alpha,M_{\alpha}}(x+x_{\alpha})).\end{equation}
%From \eqref{eq3.54} and \eqref{364supnablauupper}, we have
%\begin{equation}\label{eq3.054}
%\limsup_{\alpha\nearrow \frac{r}{n}}\Vert \tilde m_{\alpha, M_\alpha}\Vert_{L^\infty(\mathbb R^n)}<\infty,\ \limsup_{\alpha\nearrow %\frac{r}{n}}\Vert\tilde m_{\alpha,M_{\alpha}}\Vert_{C^{0,\tilde\theta}(\mathbb R^n)}<\infty~\text{ and }~
%\limsup_{\alpha\nearrow \frac{r}{n}}\Vert \nabla \tilde u_{\alpha,M_{\alpha}}\Vert_{L^\infty}<\infty.
%\end{equation}
%Then by Arzel\`{a}-Ascoli theorem, up to a subsequence, $\tilde m_{\alpha,M_{\alpha}}$ converges to $m_{\alpha^*}\geq 0$ locally %uniformly.
% In addition, by \eqref{eq3.60} and \eqref{eq3.62}, we see that
% \begin{align}\label{newgninequalitylowboundmstar}
%\tilde m_{\alpha,M_{\alpha}}\rightharpoonup m_{\alpha^*} \text{ in }W^{1,p}(\mathbb R^n)\  \forall\ p>n, \text{ and }  m_{\alpha^*}(x)\geq \frac{\delta_1}{2}>0\ \text{ in }B_{R}(0).
% \end{align}
    % On the other hand, since  each $\bar u_{\alpha,M_\alpha}\in C^2(\mathbb R^n)$ is bounded from below, we may assume that $\bar %u_{\alpha,M_\alpha}(x_\alpha)=0$.
% Now, passing to the limit as $\bar R\to\infty$ and 
Performing the standard diagonal procedure, we take the limit and apply Arzel\`{a}-Ascoli theorem, \eqref{eq3.60} and \eqref{eq4.54} to obtain that there exists  $(m_{\alpha^*},u_{\alpha^*})\in W^{1,p}(\mathbb R^n)\times C^2(\mathbb R^n)$  such that
\begin{align}\label{269locallyuniform}
 \bar m_{\alpha,M_{\alpha}}\rightharpoonup m_{\alpha^*} \text{ in }W^{1,p}(\mathbb R^n), \text{ and }\bar  u_{\alpha,M_{\alpha}}\rightarrow  u_{\alpha^*} \ \ \text{in  } C^2_{\rm loc}(\mathbb R^n),  \text{ as }\alpha\searrow (n-\gamma').
\end{align}
Combining  \eqref{eq3.50}, \eqref{lambdalimitbehavior} and \eqref {269locallyuniform}, we conclude that
$(m_{\alpha^*},u_{\alpha^*})\in W^{1,p}(\mathbb R^n)\times C^2(\mathbb R^n)$ satisfies
\begin{align}\label{limitingproblemminimizercritical}
\left\{\begin{array}{ll}
-\Delta u+C_H|\nabla u|^{\gamma}-\frac{1}{M^*}=-K_{\alpha^*}*m,&x\in\mathbb R^n,\\
-\Delta m-\gamma C_H\nabla\cdot(m|\nabla u|^{\gamma-2}\nabla u)=0,&x\in\mathbb R^n,\\
w=-C_H\gamma m|\nabla u|^{\gamma-2}\nabla u.
\end{array}
\right.
\end{align}
In light of \eqref{eq3.62} and Fatou's lemma, we have
\begin{equation}\label{eq3.67}
\int_{\mathbb R^n}m_{\alpha^*}\,dx=\tilde{M} \in(0,M^*].
\end{equation}
 Moreover, by Lemma \ref{mdecaylemma}, we obtain that there exists some $\kappa,C>0$ such that $m_{\alpha^*}(x)<Ce^{-\kappa|x|}$.  In addition, by using \eqref{364supnablauupper}, we get $\Vert \nabla u_{\alpha^*}\Vert_{L^\infty}<\infty$.  It then follows from Lemma \ref{poholemma} that
\begin{align}\label{370criticalpohoidentity}
C_L\int_{\mathbb R^n}\Bigg|\frac{w_{\alpha^*}}{m_{\alpha^*}}\Bigg|^{\gamma'}m_{\alpha^*}\,dx=\frac{1}{2}\int_{\mathbb R^n} m_{\alpha^*}(x)(K_{\alpha^*}*m_{\alpha^*})(x)\, dx.
\end{align}

Next, we discuss the relationship between $\bar\Gamma_{\alpha^*}:=\liminf\limits_{\alpha\searrow(n-\gamma')}\Gamma_{\alpha}$ and $\Gamma_{\alpha^*}$ with $\alpha^*=(n-\gamma').$  We claim that
\begin{align}\label{ourclaimtwolimit}
\bar\Gamma_{\alpha^*}=\Gamma_{(n-\gamma')}.
\end{align}
Indeed, we first utilize Lemma \ref{sect3-lemma32} and obtain
\begin{align}\label{337beforecomparecbarc}
&\Gamma_{\alpha}=G_{\alpha}(\bar m_{\alpha,M_{\alpha}},\bar w_{\alpha,M_{\alpha}})\\
&=G_{(n-\gamma')}(\bar m_{\alpha,M_{\alpha}},\bar w_{\alpha,M_{\alpha}})\frac{\Big(C_L\int_{\mathbb R^n}\bar m_{\alpha,M_{\alpha}}\Big|\frac{\bar w_{\alpha,M_{\alpha}}}{\bar m_{\alpha,M_{\alpha}}}\Big|^{\gamma'}\, dx\Big)^{\frac{n-\alpha}{\gamma'}}\Big(\int_{\mathbb R^n}\bar m_{\alpha,M_{\alpha}}\,dx\Big)^{\frac{2\gamma'+\alpha-n}{\gamma'}}}{\Big(C_L\int_{\mathbb R^n}\bar m_{\alpha,M_{\alpha}}\Big|\frac{\bar w_{\alpha,M_{\alpha}}}{\bar m_{\alpha,M_{\alpha}}}\Big|^{\gamma'}\, dx\Big)\Big(\int_{\mathbb R^n}\bar m_{\alpha,M_{\alpha}}\,dx\Big)}\cdot\frac{\int_{\mathbb R^n}\bar m_{\alpha,M_{\alpha}}(x)(K_{(n-\gamma')}*\bar m_{\alpha,M_{\alpha}})\,dx}{{\int_{\mathbb R^n}\bar m_{\alpha,M_{\alpha}}(x)(K_{\alpha}*\bar m_{\alpha,M_{\alpha}}})\,dx}\nonumber\\
&\geq \Gamma_{(n-\gamma')}\frac{\Big(C_L\int_{\mathbb R^n}\bar m_{\alpha,M_{\alpha}}\Big|\frac{\bar w_{\alpha,M_{\alpha}}}{\bar m_{\alpha,M_{\alpha}}}\Big|^{\gamma'}\, dx\Big)^{\frac{n-\alpha}{\gamma'}}\Big(\int_{\mathbb R^n}\bar m_{\alpha,M_{\alpha}}\,dx\Big)^{\frac{2\gamma'+\alpha-n}{\gamma'}}}{\Big(C_L\int_{\mathbb R^n}\bar m_{\alpha,M_{\alpha}}\Big|\frac{\bar w_{\alpha,M_{\alpha}}}{\bar m_{\alpha,M_{\alpha}}}\Big|^{\gamma'}\, dx\Big)\Big(\int_{\mathbb R^n}\bar m_{\alpha,M_{\alpha}}\,dx\Big)}\cdot\frac{\int_{\mathbb R^n}\bar m_{\alpha,M_{\alpha}}(x)(K_{(n-\gamma')}*\bar m_{\alpha,M_{\alpha}})\,dx}{{\int_{\mathbb R^n}\bar m_{\alpha,M_{\alpha}}(x)(K_{\alpha}*\bar m_{\alpha,M_{\alpha}}})\,dx}.
\end{align}
Then, we derive
from \eqref{eq3.58} that, as $\alpha\searrow (n-\gamma')$,
\begin{align*}
\frac{\Big(C_L\int_{\mathbb R^n}\bar m_{\alpha,M_{\alpha}}\Big|\frac{\bar w_{\alpha,M_{\alpha}}}{\bar m_{\alpha,M_{\alpha}}}\Big|^{\gamma'}\, dx\Big)^{\frac{n-\alpha}{\gamma'}}\Big(\int_{\mathbb R^n}\bar m_{\alpha,M_{\alpha}}\,dx\Big)^{\frac{2\gamma'+\alpha-n}{\gamma'}}}{\Big(C_L\int_{\mathbb R^n}\bar m_{\alpha,M_{\alpha}}\Big|\frac{\bar w_{\alpha,M_{\alpha}}}{\bar m_{\alpha,M_{\alpha}}}\Big|^{\gamma'}\, dx\Big)\Big(\int_{\mathbb R^n}\bar m_{\alpha,M_{\alpha}}\,dx\Big)}\cdot\frac{\int_{\mathbb R^n}\bar m_{\alpha,M_{\alpha}}(x)(K_{(n-\gamma')}*\bar m_{\alpha,M_{\alpha}})\,dx}{{\int_{\mathbb R^n}\bar m_{\alpha,M_{\alpha}}(x)(K_{\alpha}*\bar m_{\alpha,M_{\alpha}}})\,dx}\rightarrow 1.
\end{align*}
Moreover, one takes the limit in \eqref{337beforecomparecbarc} to get
\begin{align}\label{gammaalphastar}
\bar \Gamma_{\alpha^*}:=\liminf_{\alpha\searrow (n-\gamma')}\Gamma_{\alpha}\geq \Gamma_{(n-\gamma')}.
\end{align}
To complete the proof of our claim, it suffices to prove that the "=" holds in \eqref{gammaalphastar}. Suppose the contrary that $\Gamma_{(n-\gamma')}<\bar \Gamma_{\alpha^*}$, then by the definition of $\Gamma_{(n-\gamma')},$  we get that there exists $(\hat m,\hat w)\in \mathcal A$ given in \eqref{sect2-equivalence-scaling} such that
\begin{align}\label{340limit1}
G_{(n-\gamma')}(\hat m,\hat w)\leq \Gamma_{(n-\gamma')}+\delta<\Gamma_{(n-\gamma')}+2\delta<\bar \Gamma_{\alpha^*},
\end{align}
where $\delta>0$ is sufficiently small.
On the other hand, by the definition of $\Gamma_{\alpha},$  one finds
\begin{align}\label{342limit2}
G_{(n-\gamma')}(\hat m,\hat w)=&G_{\alpha}(\hat m,\hat w)\frac{\Big(C_L\int_{\mathbb R^n}\hat m\Big|\frac{\hat w}{\hat m}\Big|^{\gamma'}\, dx\Big)\Big(\int_{\mathbb R^n}\hat m\,dx\Big)}{{\Big(C_L\int_{\mathbb R^n}\hat  m\Big|\frac{\hat w}{\hat m}\Big|^{\gamma'}\, dx\Big)^{\frac{n-\alpha}{\gamma'}}\Big(\int_{\mathbb R^n}\hat m\,dx\Big)^{\frac{2\gamma'+\alpha-n}{\gamma'}}}}\cdot\frac{\int_{\mathbb R^n}{\hat m(x)(K_{\alpha}*\hat m)(x)}\,dx}{\int_{\mathbb R^n}\hat m(x)(K_{(n-\gamma')}*\hat m)(x)\,dx}\nonumber\\
\geq& \Gamma_{\alpha}\frac{\Big(C_L\int_{\mathbb R^n}\hat m\Big|\frac{\hat w}{\hat m}\Big|^{\gamma'}\, dx\Big)\Big(\int_{\mathbb R^n}\hat m\,dx\Big)}{{\Big(C_L\int_{\mathbb R^n}\hat  m\Big|\frac{\hat w}{\hat m}\Big|^{\gamma'}\, dx\Big)^{\frac{n-\alpha}{\gamma'}}\Big(\int_{\mathbb R^n}\hat m\,dx\Big)^{\frac{2\gamma'+\alpha-n}{\gamma'}}}}\cdot\frac{\int_{\mathbb R^n}{\hat m(x)(K_{\alpha}*\hat m)(x)}\,dx}{\int_{\mathbb R^n}\hat m(x)(K_{(n-\gamma')}*\hat m)(x)\,dx}.
\end{align}
Since
\begin{align*}
\frac{\Big(C_L\int_{\mathbb R^n}\hat m\Big|\frac{\hat w}{\hat m}\Big|^{\gamma'}\, dx\Big)\Big(\int_{\mathbb R^n}\hat m\,dx\Big)}{{\Big(C_L\int_{\mathbb R^n}\hat  m\Big|\frac{\hat w}{\hat m}\Big|^{\gamma'}\, dx\Big)^{\frac{n-\alpha}{\gamma'}}\Big(\int_{\mathbb R^n}\hat m\,dx\Big)^{\frac{2\gamma'+\alpha-n}{r}}}}\cdot\frac{\int_{\mathbb R^n}{\hat m(x)(K_{\alpha}\ast\hat m)(x)}\,dx}{\int_{\mathbb R^n}\hat m(x)(K_{(n-\gamma')}\ast\hat m)(x)\,dx}\rightarrow 1 \ \ \text{~as~}\alpha\searrow (n-\gamma'),
\end{align*}
then we can pass a limit in \eqref{340limit1} and \eqref{342limit2} to get
\begin{align*}
\bar\Gamma_{\alpha^*}=\liminf_{\alpha\searrow (n-\gamma')}\Gamma_{\alpha}\leq \Gamma_{\alpha\searrow (n-\gamma')}+\delta<\Gamma_{\alpha\searrow (n-\gamma')}+2\delta\leq \liminf_{\alpha\searrow (n-\gamma') }\Gamma_{\alpha},
\end{align*}
which reaches a contradiction.  Hence, the claim holds, i.e. $\bar \Gamma_{\alpha^*}=\Gamma_{n-\gamma'}$.

Next, we prove $(m_{\alpha^*},w_{\alpha^*})\in \mathcal A.$
  Since $(m_{\alpha^*},w_{\alpha^*})$ solves (\ref{limitingproblemminimizercritical}) and $m_{\alpha^*}\in C^{0,\theta}(\mathbb R^n)$ with $\theta\in(0,1)$, we conclude from (\ref{269locallyuniform}) and Lemma \ref{sect2-lemma21-gradientu} that $u_{\alpha^*}\in C^1(\mathbb R^n).$  Then by standard elliptic estimates, the boundedness of $\Vert\nabla u_{\alpha^*}\Vert_{L^\infty}$ and the exponentially decaying property of $m_{\alpha^*}$, one can prove that $(m_{\alpha^*},w_{\alpha^*}) \in \mathcal A$.

Finally, it follows from \eqref{eq3.58} and \eqref{gammaalphastar} that
\begin{align}\label{combining1strongconverge}
\liminf_{\alpha\searrow (n-\gamma')}\Gamma_{\alpha}=\frac{1}{2}M^*=\Gamma_{n-\gamma'},
\end{align}
where $M^*$ is given in \eqref{mstarlimitfinal}.  Then, by the fact $(m_{\alpha^*},w_{\alpha^*}) \in \mathcal A$, we deduce from \eqref{eq3.67}, (\ref{370criticalpohoidentity}) and (\ref{combining1strongconverge}) that  % We have the fact that $(m_{\alpha^*}^*,w_{\alpha^*}^*)$ solves (\ref{limitingproblemminimizercritical}), and one can prove that $(m_{\alpha^*}^*,w_{\alpha^*}^*) \in \mathcal A$.  Indeed, (\ref{367weakconvergem}) implies $m_{\alpha^*}^*\in W^{1,q}(\mathbb R^n)$ for $q>n.$  Furthermore, we conclude from (\ref{269locallyuniform}) and Lemma \ref{sect2-lemma21-gradientu} that $u^*_{\alpha^*}\in C^1(\mathbb R^n).$  By standard elliptic estimates and the exponentially decaying property of $m_{\alpha^*}^*$, one has $(m_{\alpha^*}^*,w_{\alpha^*}^*)\in \mathcal A.$  Moreover, $m_{\alpha^*}^*\in W^{1,p}(\mathbb R^n)$ for any $p>1.$  Then it follows from (\ref{combining1strongconverge}) that
\begin{align}\label{combining1strongconverge2}
\Gamma_{(n-\gamma')}=\frac{1}{2}M^*\leq \frac{\bigg(C_L\int_{\mathbb R^n}\Big|\frac{w_{\alpha^*}}{m_{\alpha^*}}\Big|^{\gamma'}m_{\alpha^*}\,dx\bigg)\Big(\int_{\mathbb R^n}m_{\alpha^*}\, dx\Big)}{\int_{\mathbb R^n}{m_{\alpha^*}(x)(K_{\alpha}*m_{\alpha^*})(x)}\,dx}=\frac{1}{2}\tilde{M} \leq \frac{1}{2}M^*,
\end{align}
which shows $(m_{\alpha^*},w_{\alpha^*}) \in \mathcal A$ is a minimizer of $\Gamma_{(n-\gamma')}$ and 
\begin{align*}
\int_{\mathbb R^n}m_{\alpha^*}\,dx=M^* ~\text{ and }~\bar  m_{\alpha,M_{\alpha}}\rightarrow m_{\alpha^*} \ \text{in~} L^1(\mathbb R^n) \text{ as }\alpha\searrow (n-\gamma').
\end{align*}
These facts together with \eqref{limitingproblemminimizercritical} indicate \eqref{limitingproblemminimizercritical0} holds.  Now, we finish The proof of Theorem \ref{thm11-optimal}.
%Invoking (\ref{370criticalpohoidentity}), we combine \eqref{eq3.67}, (\ref{combining1strongconverge}) with
 %(\ref{combining1strongconverge2}) to find
%$$M^*\geq a=\int_{\mathbb R^n}m_{\alpha^*}\,dx\geq M^*=\liminf_{\alpha\nearrow \frac{r}{n}}M_{\alpha},$$
%which indicates

%According to \eqref{eq3.54}, we further have
%$$\tilde  m_{\alpha,M_{\alpha}}\rightarrow m_{\alpha^*}\ \  \text{in~}L^p(\mathbb R^n),~\forall 1<p<+\infty.$$
%Finally, noting that $\big(m_{\alpha^*},u_{\alpha^*},\lambda_{\alpha^*}\big)$ solves \eqref{limitingproblemminimizercritical}, by %standard elliptic estimates, we obtain from (\ref{269locallyuniform}) and $m_{\alpha^*}\in W^{1,p}$ with any $p>1$ that %$u_{\alpha^*}\in C^2(\mathbb R^n).$  This completes the proof. %$m_{\alpha^*}^*\in W^{1,p}(\mathbb R^n)$ and $u_{\alpha^*}^*\in %C^2(\mathbb R^n).$
\end{proof}

As shown in Theorem \ref{thm11-optimal}, we have obtained the existence of ground states to potential-free MFG systems under the mass critical exponent case, which is the Gagliardo-Nirenberg type's inequality. In next section, we focus on the proof of Theorem \ref{thm11}.
%--
\section{Existence of Ground States: Coercive Potential MFGs}\label{sect4-criticalmass}
In this section, we shall discuss the existence of minimizers to problem \eqref{ealphacritical-117}. To this end, we have to perform the regularization procedure on \eqref{41engliang} since when $\gamma'<n$, the $m$-component enjoys the worse regularity. In detail, we first consider the following auxiliary minimization problem
\begin{align}\label{Eepsilonminimizerproblem1}
e_{\epsilon,M}:=\inf_{(m,w)\in\mathcal A_M}\mathcal E_{\epsilon}(m,w),
\end{align}
where ${\mathcal A}_M$ is given by \eqref{mathcalAMbegining}
and
\begin{align}\label{approxenergy}
\mathcal E_{\epsilon}(m,w):=C_L\int_{\mathbb R^n}\Big|\frac{w}{m}\Big|^{\gamma'}m\,dx+\int_{\mathbb R^n}V(x)m\,dx-\frac{1}{2}\int_{\mathbb R^n} \bigg\{ m(x)(K_{(n-\gamma')}\ast m)(x)\bigg\}\ast\eta_{\epsilon}\,dx,
\end{align}
 and $\eta_{\epsilon}\geq 0$ is the standard mollifier with
$$\int_{\mathbb R^n}\eta_{\epsilon}\,dx=1, \ \ \text{supp}(\eta_{\epsilon})\subset B_{\epsilon}(0),$$
for $\epsilon>0$ is sufficiently small. With the regularized energy \eqref{approxenergy}, we are able to study the existence of minimizers to \eqref{ealphacritical-117} by taking the limit. The crucial step in this procedure, as discussed in \cite{cesaroni2018concentration}, is the uniformly boundedness of $m_{\epsilon}$ in $L^{\infty}$, in which $(m_{\epsilon},w_{\epsilon})$ is assumed to be a minimizer of (\ref{approxenergy}).
 %intend to prove that the minimization problem \eqref{ealphaM-117} with $\alpha=\alpha^*$, that is
 
Before proving Theorem \ref{thm11}, we collect some vital result shown in Section \ref{sect3-optimal}, which is %  In light of
%the definition of $\Gamma_{\alpha^*}$ given in (\ref{sect2-equivalence-scaling}), we have
\begin{align}\label{optimalinequality415}
\int_{\mathbb R^n} m(x)(K_{(n-\gamma')}*m)(x)\,dx \leq \frac{2C_L}{M^*}\bigg(\int_{\mathbb R^n}\big|\frac{w}{m}\big|^{\gamma'}m\,dx\bigg)\bigg(\int_{\mathbb R^n}m\,dx\bigg),~~\forall (m,w)\in\mathcal A,
\end{align}
where $\mathcal A$ is given by \eqref{mathcalA-equivalence} and $M^*$ is defined by \eqref{Mstar-critical-mass}.

Then, we shall first prove energy $\mathcal E(m,w)$ given by \eqref{41engliang}
has a minimizer $(m,w)\in \mathcal K_{M}$ if and only if $M<M^*$, where  $\mathcal K_{M}$ is defined by \eqref{constraint-set-K}.  Next, we show that there exists $(u,\lambda)\in C^2(\mathbb R^n)\times \mathbb R$ such that $(m,u,\lambda)\in W^{1,p}(\mathbb R^n)\times C^2(\mathbb R^n)\times \mathbb R$ is a solution to \eqref{125potentialfreesystem} when $V$ is assumed to satisfy \eqref{V2mainasumotiononv} when $\gamma'>1.$ 
%We would like to mention that, to prove the existence of minimizers to problem \eqref{ealphaM-117} with $\alpha=(n-r)$, we must follow the procedure shown in \cite{cesaroni2018concentration} to perform the regularization on (\ref{41engliang}) due to the worse regularity of $m$ when $r< n$ .
%When $r>n$, we set $(m_k,w_k)$ as a minimizing sequence to energy (\ref{minimization-problem-critical}) for $(m_k,w_k)\in\mathcal K_{\alpha^*,M}.$  We shall show that as $k\rightarrow +\infty,$ up to a subsequence, there exists a limit function pair $(m,w)\in\mathcal K_{\alpha^*,M}$, which is a corresponding minimizer.  With the minimizer $(m,w)$, we must prove the existence of $(u,\lambda)\in C^2(\mathbb R^n)\times \mathbb R$ such that $(m,u,\lambda)\in W^{1,p}(\mathbb R^n)\times C^2(\mathbb R^n)\times \mathbb R$ is a solution to (\ref{125potentialfreesystem}) when $V$ is assumed to satisfy (V1) and (V2) in Subsection \ref{mainresults11}.
Following the procedures discussed above, we are able to prove conclusions stated in Theorem \ref{thm11}.  We would like to remark that with (\ref{V2mainassumption_2}) in assumption (V2) imposed on potential $V$, the condition $\int_{\mathbb R^n}|x|^bm\,dx<+\infty$ in (\ref{mathcalAMbegining}) must be satisfied for any minimizer.  With this assumption, Gagliardo-Nirenberg type's inequality (\ref{optimalinequality415}) is valid. Next, we state some crucial propositions and lemmas, which will be used in the proof of Theorem \ref{thm11}, as follows:%We first recall the following  classical compactly embedding results.

\begin{lemma}\label{lem4.1}
Let $$\mathcal{W}_{p,V}:=\bigg\{m \big|\ m\in W^{1,p}(\mathbb R^n)\cap L^1(\mathbb R^n) \text{ and }\int_{\mathbb R^n}V(x)|m|\,dx<\infty\bigg\}.$$
 Assume that $0\leq V(x)\in L_{\rm loc}^\infty(\mathbb R^n)$ with $\liminf\limits_{|x|\to\infty}V(x)=\infty$.
% $$\mathcal{W}_{p,V}:=\bigg\{m \big|\ m\in W^{1,p}(\mathbb R^n)\cap L^1(\mathbb R^n) \text{ and }\int_{\mathbb R^n}V(x)|m|\,dx<\infty\bigg\}.$$
Then, the embedding $\mathcal{W}_{p,V}\hookrightarrow L^q(\mathbb R^n)$ is compact for any $1\leq q<p^*$, where $p^*=\frac{np}{n-p}$ if $1\leq p<n$ and $p^*=\infty$ if $p\geq n$. 
\end{lemma}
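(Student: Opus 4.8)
The plan is the classical two-ingredient argument for compact embeddings on all of $\mathbb R^n$: local compactness from the Rellich--Kondrachov theorem on balls, combined with a \emph{uniform tightness at infinity} forced by the coercivity $\liminf_{|x|\to\infty}V(x)=\infty$. I would endow $\mathcal W_{p,V}$ with the norm $\|m\|_{\mathcal W_{p,V}}:=\|m\|_{W^{1,p}(\mathbb R^n)}+\|m\|_{L^1(\mathbb R^n)}+\int_{\mathbb R^n}V|m|\,dx$, fix a bounded sequence $\{m_k\}$ with $\|m_k\|_{\mathcal W_{p,V}}\le C_0$ for all $k$, and prove that it has a subsequence converging strongly in $L^q(\mathbb R^n)$.

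First I would establish uniform smallness of the tails. Writing $\Lambda_R:=\inf_{|x|\ge R}V(x)$, the hypothesis yields $\Lambda_R=\inf_{|x|\ge R}V(x)\to\liminf_{|x|\to\infty}V(x)=\infty$, so $\Lambda_R>0$ for $R$ large. For such $R$ and $|x|\ge R$ one has $|m_k(x)|\le\Lambda_R^{-1}V(x)|m_k(x)|$, whence
\begin{align}\label{tailL1}
\int_{|x|\ge R}|m_k|\,dx\le \frac1{\Lambda_R}\int_{\mathbb R^n}V|m_k|\,dx\le \frac{C_0}{\Lambda_R}\longrightarrow 0 \quad\text{as }R\to\infty,\ \text{uniformly in }k.
\end{align}
Since, by the Gagliardo--Nirenberg--Sobolev inequality, $\{m_k\}$ is also bounded in $L^s(\mathbb R^n)$ for some finite $s$ with $q<s\le p^*$ (take $s=p^*$ if $p<n$; if $p\ge n$ the sequence is bounded in every finite $L^s$, so any $s>q$ works), I would interpolate between $L^1$ and $L^s$ on $\{|x|\ge R\}$: choosing $\theta\in(0,1]$ with $\tfrac1q=\theta+\tfrac{1-\theta}{s}$,
\begin{align}\label{tailLq}
\|m_k\|_{L^q(|x|\ge R)}\le \|m_k\|_{L^1(|x|\ge R)}^{\theta}\,\|m_k\|_{L^s(|x|\ge R)}^{1-\theta}\le C\,\Lambda_R^{-\theta}\longrightarrow 0 \quad\text{as }R\to\infty,\ \text{uniformly in }k.
\end{align}

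For the local part, on each ball $B_R(0)$ the sequence is bounded in $W^{1,p}(B_R(0))$ and $q<p^*$, so Rellich--Kondrachov yields a strongly $L^q(B_R(0))$-convergent subsequence; a diagonal extraction over $R=1,2,\dots$ produces one subsequence (still denoted $\{m_k\}$) that is Cauchy in $L^q(B_R(0))$ for every $R$. Then for large $k,l$, splitting the norm over $B_R(0)$ and its complement and using \eqref{tailLq},
\begin{align*}
\|m_k-m_l\|_{L^q(\mathbb R^n)}\le \|m_k-m_l\|_{L^q(B_R(0))}+\|m_k\|_{L^q(|x|\ge R)}+\|m_l\|_{L^q(|x|\ge R)},
\end{align*}
which is made arbitrarily small by first choosing $R$ large and then $k,l$ large. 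Hence $\{m_k\}$ is Cauchy, and therefore convergent, in $L^q(\mathbb R^n)$, which is the asserted compactness; that the limit lies back in $\mathcal W_{p,V}$ follows (for $p>1$) from weak $W^{1,p}$-compactness together with Fatou applied to $\int V|m_k|$ and $\|m_k\|_{L^1}$, and is in any event not needed for the embedding statement.

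The only genuinely delicate point is the exponent bookkeeping in \eqref{tailLq}: one must have a uniform bound in some $L^s$ with $q<s\le p^*$, which is exactly why the threshold $q<p^*$ is sharp and why the subcases $p<n$, $p=n$, $p>n$ have to be recorded separately. Everything else---Rellich--Kondrachov on balls and the diagonal argument---is entirely routine.
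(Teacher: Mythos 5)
Your argument is correct. The paper supplies no proof of this lemma; it only points to \cite{bartsch1995} and \cite{reed2012methods}. What you have written is the standard argument behind such embedding results, and it is sound: the coercivity $\liminf_{|x|\to\infty}V=\infty$ gives $\Lambda_R=\operatorname*{ess\,inf}_{|x|\ge R}V\to\infty$, which forces a uniform $L^1$-tail bound on bounded sets of $\mathcal W_{p,V}$; the Sobolev bound in some $L^s$ with $s>q$ (namely $s=p^*$ when $p<n$, and any finite $s>q$ when $p\ge n$, using $W^{1,n}\hookrightarrow L^s$ for $n\le s<\infty$ and $W^{1,p}\hookrightarrow L^\infty$ for $p>n$ together with the $L^1$ control) upgrades the $L^1$ tail smallness to $L^q$ tail smallness by interpolation with $\tfrac1q=\theta+\tfrac{1-\theta}{s}$; and Rellich--Kondrachov on balls plus a diagonal extraction closes the Cauchy estimate. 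The only bookkeeping that deserves care, and which you handle correctly, is that $\theta>0$ precisely because $q<s\le p^*$, so the tail bound genuinely tends to zero; this is where the restriction $q<p^*$ enters. Since the paper merely cites, your self-contained proof is a useful complement rather than a divergence.
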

\begin{proof}
See \cite[Theorem 2.1]{bartsch1995} or \cite[Theorem XIII.67]{reed2012methods}.
\end{proof}

In light of $\gamma'<n,$ we establish the following lemma for the uniformly boundedness of $\Vert m_{\epsilon}\Vert_{L^{\infty}}:$

{
\begin{lemma}\label{blowupanalysismlinfboundcritical}
Suppose that $V(x)$ is locally H\"{o}lder continuous and satisfies \eqref{V2mainasumotiononv}.
Let  $(u_{k},\lambda_{k},m_{k})\in C^2(\mathbb R^n)\times \mathbb R\times (L^1(\mathbb R^n)\cap L^{\frac{2n}{n+\alpha^*}}(\mathbb R^n)) $ be solutions to the following systems
%-(\eta_k\ast m_k)^{\alpha^*}\ast \eta_{k}
\begin{align}\label{eq-potential-newest}
\left\{\begin{array}{ll}
-\Delta u_k+C_H|\nabla u_k|^{\gamma}+\lambda_k=V-g_k[m_k], &x\in\mathbb R^n,\\
\Delta m_k+C_H\gamma\nabla\cdot(m_k|\nabla u_k|^{\gamma-2}\nabla u_k)=0, &x\in\mathbb R^n,\\
\int_{\mathbb R^n}m_k\,dx=M,
\end{array}
\right.
\end{align}
where $\alpha^*=n-\gamma'$ with $1<\gamma'< n$, $g_k: L^1(\mathbb \R^n) \mapsto L^1(\mathbb \R^n)$ with $\theta\in(0,1)$ satisfies for all $m\in L^p(\mathbb R^n)$, $p\in[1,\infty]$ and $k\in \mathbb N$,
\begin{equation}\label{gkm-newest-estimate-crucial2024207}
\|g_k[m]\|_{L^p(\mathbb R^n)}\leq \mathrm{K}\bigg(\|m^{\frac{n-\alpha^*}{n+\alpha^*}}\|_{L^p(\mathbb R^n)}+1\bigg) \ \text{ for some} \
  \mathrm{K}>0,
\end{equation}
and
\begin{equation}\label{gkm-newest-estimate-crucial202420711}
\|g_k[m]\|_{L^p(B_R(x_0))}\leq \mathrm{K}\bigg(\|m^{\frac{n-\alpha^*}{n+\alpha^*}}\|_{L^p(B_{2R}(x_0))}+1\bigg) \ \text{ for any }R>0\text{ and }x_0\in \mathbb R^n.
\end{equation}
Assume that
\begin{align}\label{assumptionsincrucialemmanewest}
\sup_k\|m_k\|_{L^1(\mathbb R^n)}<\infty,~~\sup_k\|m_k\|_{L^{\frac{2n}{n+\alpha^*}}(\mathbb R^n)}<\infty,~~\sup_k\int_{\mathbb R^n}Vm_k\,dx<\infty,~~\sup_k|\lambda_k|<\infty,
\end{align}
and for all $k$, $u_k$ is bounded from below uniformly.  Then we have
\begin{equation}\label{uniformlyboundmkinlinfty}
\limsup_{k\to\infty}\|m_k\|_{L^\infty(\mathbb \R^n)}<\infty.
\end{equation}
\end{lemma}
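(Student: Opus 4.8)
The plan is to argue by contradiction, following the blow-up scheme used in \cite{cesaroni2018concentration} and \cite{cirant2024critical} but now tracking the contribution of the potential $V$. Suppose \eqref{uniformlyboundmkinlinfty} fails, so that along a subsequence $\mu_k:=\|m_k\|_{L^\infty(\mathbb R^n)}^{-1/n}\to 0$. Let $x_k$ be a point where $m_k$ (almost) attains its supremum, and introduce the rescaled triple
\begin{align*}
\bar u_k(x):=\mu_k^{\frac{2-\gamma}{\gamma-1}}\big(u_k(\mu_k x+x_k)-u_k(x_k)\big),\quad
\bar m_k(x):=\mu_k^n m_k(\mu_k x+x_k),\quad
\bar w_k(x):=\mu_k^{n+1}w_k(\mu_k x+x_k),
\end{align*}
so that $\|\bar m_k\|_{L^\infty}\equiv 1$ and $\int_{\mathbb R^n}\bar m_k\,dx=M$. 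First I would show, exactly as in the proof of Lemma \ref{lemma34uniformlinfboundmalpha}, that the rescaled nonlinear cost $C_L\int \bar m_k|\bar w_k/\bar m_k|^{\gamma'}\,dx=\mu_k^{\gamma'}\,C_L\int m_k|w_k/m_k|^{\gamma'}\,dx\to 0$: indeed, from the Pohozaev-type identity \eqref{eq2.49}/\eqref{sect3-poho-final} combined with the Gagliardo–Nirenberg inequality \eqref{optimalinequality415} and the mass and $L^{2n/(n+\alpha^*)}$ bounds in \eqref{assumptionsincrucialemmanewest}, the quantity $\int m_k|w_k/m_k|^{\gamma'}\,dx$ stays bounded, hence multiplying by $\mu_k^{\gamma'}\to 0$ kills it; equivalently $\int \bar m_k|\nabla\bar u_k|^{\gamma}\,dx\to0$. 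Then, again as in Lemma \ref{lemma34uniformlinfboundmalpha}, $\int \bar m_k^{\,2n/(n+\alpha^*)}\,dx=\mu_k^{n\gamma'/(n+\alpha^*)}\int m_k^{\,2n/(n+\alpha^*)}\,dx\to0$, so the rescaled density has vanishing $L^{2n/(n+\alpha^*)}$-norm, and by interpolation with the uniform $L^\infty$ bound, $\int \bar m_k^{\,q}\,dx\to0$ for every $q>\frac{2n}{n+\alpha^*}$.

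Next I would write the equation satisfied by $(\bar u_k,\bar m_k)$, namely
\begin{align*}
\left\{\begin{array}{ll}
-\Delta \bar u_k+C_H|\nabla\bar u_k|^{\gamma}+\mu_k^{\gamma'}\lambda_k=\mu_k^{\gamma'}V(\mu_k x+x_k)-\mu_k^{\gamma'}g_k[m_k](\mu_k x+x_k),&x\in\mathbb R^n,\\[1mm]
\Delta \bar m_k+C_H\gamma\nabla\cdot(\bar m_k|\nabla\bar u_k|^{\gamma-2}\nabla\bar u_k)=0,&x\in\mathbb R^n.
\end{array}\right.
\end{align*}
The term $\mu_k^{\gamma'}\lambda_k\to0$ by \eqref{assumptionsincrucialemmanewest}; the rescaled potential term is nonnegative, and crucially the growth–doubling conditions \eqref{V2mainassumption_1}–\eqref{V2mainassumption_3} allow the application of Lemma \ref{sect2-lemma21-gradientu}–Lemma \ref{lemma22preliminary} with $V_k(x):=\mu_k^{\gamma'}V(\mu_k x+x_k)$ (these $V_k$ satisfy \eqref{themoregeneralvkcondition} uniformly by \eqref{V2mainassumption_2}), provided the right-hand side $g_k[m_k](\mu_k\cdot+x_k)$ times $\mu_k^{\gamma'}$ is controlled in $L^\infty_{\rm loc}$. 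For this I would use the \emph{local} bound \eqref{gkm-newest-estimate-crucial202420711}: on a ball $B_{2R}$, $\mu_k^{\gamma'}\|g_k[m_k](\mu_k\cdot)\|_{L^\infty(B_R)}\lesssim \mu_k^{\gamma'}\big(\|m_k^{(n-\alpha^*)/(n+\alpha^*)}\|_{L^\infty(B_{2R\mu_k}(x_k))}+1\big)=\mu_k^{\gamma'}\big(\|\bar m_k\|_{L^\infty}^{(n-\alpha^*)/(n+\alpha^*)}\mu_k^{-(n-\alpha^*)\cdot\frac{n}{n+\alpha^*}}\cdot\mu_k^{(n-\alpha^*)\cdot\frac{n}{n+\alpha^*}}\cdots\big)$ — after bookkeeping the exponents, and using $\|\bar m_k\|_\infty\equiv1$ together with $\gamma'=n-\alpha^*$, this quantity is bounded uniformly in $k$, exactly as the convolution estimate \eqref{convolution-terms-inftynormestimate} was handled. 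Granting that, Lemma \ref{sect2-lemma21-gradientu} gives $\|\nabla\bar u_k\|_{L^\infty_{\rm loc}}\le C$, hence $\|\bar w_k\|_{L^\infty_{\rm loc}}\le C$, and then the Fokker–Planck regularity (Lemma \ref{lemma21-crucial}, Lemma \ref{lemma21-crucial-cor}) together with elliptic estimates upgrades this to a uniform $C^{0,\theta'}$ bound on $\bar m_k$ on every ball, for some $\theta'\in(0,1)$.

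Finally I would derive the contradiction: since $\bar m_k(0)=\|\bar m_k\|_{L^\infty}=1$ (up to a harmless additive error $o(1)$ in the choice of $x_k$), equicontinuity gives $r_0>0$, independent of $k$, with $\bar m_k\ge \tfrac12$ on $B_{r_0}(0)$; hence for any fixed $q>\frac{2n}{n+\alpha^*}$,
\begin{align*}
\Big(\tfrac12\Big)^{q}|B_{r_0}|\le \int_{B_{r_0}(0)}\bar m_k^{\,q}\,dx\le \int_{\mathbb R^n}\bar m_k^{\,q}\,dx\longrightarrow 0,
\end{align*}
which is absurd. Therefore $\limsup_k\|m_k\|_{L^\infty(\mathbb R^n)}<\infty$. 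I expect the main obstacle to be the bookkeeping in the second paragraph: verifying that the combination of the scaling weight $\mu_k^{\gamma'}$, the exponent $\frac{n-\alpha^*}{n+\alpha^*}$ appearing in \eqref{gkm-newest-estimate-crucial2024207}–\eqref{gkm-newest-estimate-crucial202420711}, and the identity $\gamma'=n-\alpha^*$ really conspire to make the rescaled right-hand side bounded in $L^\infty_{\rm loc}$ (this is the analogue of the step $\gamma'\ge n-\alpha$ used to close \eqref{convolution-terms-inftynormestimate}), and in checking that the rescaled potentials $V_k$ retain the doubling property \eqref{themoregeneralvkcondition} uniformly in $k$ — here assumption \eqref{V2mainassumption_2}, which is scale-invariant in the relevant sense, is exactly what is needed, but one must be careful near the blow-up point when $x_k$ escapes to infinity, using \eqref{V2mainassumption_3} to dominate $\sup_{\nu\in[0,1]}V(\nu x)$.
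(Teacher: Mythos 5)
The overall blow-up scheme you propose is the right one, and it correctly mirrors Lemma \ref{lemma34uniformlinfboundmalpha}. Your bookkeeping for the nonlocal term works out: writing $\mu_k^{\gamma'}\|g_k[m_k]\|_{L^\infty(B_{R\mu_k}(x_k))}\le \mathrm K\,\mu_k^{\gamma'}\big(\|m_k\|_{L^\infty}^{(n-\alpha^*)/(n+\alpha^*)}+1\big)$ and using $\|m_k\|_{L^\infty}=\mu_k^{-n}$ with $\gamma'=n-\alpha^*$, the exponent $\gamma'-\tfrac{n(n-\alpha^*)}{n+\alpha^*}=\tfrac{(n-\alpha^*)\alpha^*}{n+\alpha^*}>0$, so the rescaled $g_k$-contribution indeed vanishes. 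The contradiction at the end (uniform $C^{0,\theta'}$-bound near $0$, $\bar m_k(0)=1$, vanishing $L^{2n/(n+\alpha^*)}$-norm) is also correctly set up. A small misattribution: the boundedness of $\int m_k|w_k/m_k|^{\gamma'}\,dx$ comes from the integration-by-parts identity of type \eqref{eq4.301}, $\lambda_k M = C_L\int m_k|w_k/m_k|^{\gamma'}\,dx+\int Vm_k\,dx-\int m_k g_k[m_k]\,dx$ (the last term controlled via H\"{o}lder and the $L^{2n/(n+\alpha^*)}$ hypothesis), not from the Pohozaev identity \eqref{eq2.49} or the Gagliardo--Nirenberg inequality; the Pohozaev identity in the paper is stated for the potential-free system and does not directly apply here.

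The genuine gap is in the potential term, and you flag it yourself but do not resolve it with the right tool. After rescaling around a near-maximum $x_k$ of $m_k$, the HJB right-hand side contains $\tilde V_k(x):=\mu_k^{\gamma'}V(\mu_k x+x_k)$, and the local gradient/H\"{o}lder estimates you invoke (whether via Lemma \ref{sect2-lemma21-gradientu} or the maximal regularity Lemma \ref{thmmaximalregularity}) require $\tilde V_k$ to be locally bounded. Using \eqref{V2mainassumption_2} this reduces precisely to showing $\mu_k^{\gamma'}V(x_k)\le C$, and nothing in your argument establishes that. Appealing to \eqref{V2mainassumption_3} does not help: that is a ray-monotonicity condition used in the paper only to bound $u$ from above (see the derivation of \eqref{eq4.37}), and it says nothing about the size of $\mu_k^{\gamma'}V(x_k)$. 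Also note that you cannot apply Lemma \ref{sect2-lemma21-gradientu} directly to $V_k(x)=\mu_k^{\gamma'}V(\mu_k x+x_k)$ as a family of potentials: the doubling condition \eqref{themoregeneralvkcondition} asks for a radius $R$ independent of $k$ such that $|\,\mu_k x+x_k|\ge K$ whenever $|x|\ge R$, and since $\mu_k\to 0$ no uniform $R$ works (and inside $B_K$ the ratio $V(\cdot+w)/V(\cdot)$ can degenerate because $V$ may vanish there). Closing the gap requires a separate argument for the case $\mu_k^{\gamma'}V(x_k)\to\infty$, analogous to the two-case analysis in the proof of Theorem \ref{thm13basicbehavior} (in particular ``Case 2'' around \eqref{eq5.29}--\eqref{eq5.30} and the secondary rescaling at scale $V(x_k)^{-1/\gamma'}$), which must be combined with the constraint $\sup_k\int Vm_k\,dx<\infty$ to produce a contradiction; this is the substantive content referenced from \cite[Lemma 5.2]{cirant2024critical}, and it is missing from your proposal.
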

}
\begin{proof}
By slightly modifying the argument shown in \cite[Lemma 5.2]{cirant2024critical}, we finish the proof of this lemma.
\end{proof}

With the preliminary results shown above, we now begin the proof of Theorem \ref{thm11}. 

\medskip

\textbf{Proof of Theorem \ref{thm11}:}
\begin{proof}
We first prove the Conclusion (i) in Theorem \ref{thm11}.  To this end, we focus on the auxiliary problem \eqref{Eepsilonminimizerproblem1}. 
% % define $0\leq \eta_{\epsilon}$ as the standard mollifier with $\int_{\mathbb R^n}\eta_{\epsilon}\,dx=1$ and $\text{supp}\eta_{\epsilon}(x)\in B_{\epsilon}(0),$ then
% %\begin{align}\label{Eepsilonminimizerproblem}
%  %   e_{\epsilon,M}=\inf_{(m,w)\in\mathcal A_M}\mathcal E_{\epsilon}(m,w),
% %\end{align}
% with $\mathcal E_{\epsilon}(m,w)$ being given by (\ref{approxenergy}).
% %\begin{align}\label{417regularizedproblem}
% %E_{\epsilon}(m,w)=C_L\int_{\mathbb R^n}\big|\frac{w}{m}\big|^{r}m\,dx +\int_{\mathbb R^n}V(x)m\,dx-\frac{1}{1+\frac{r}{n}}\int_{\mathbb R^n}\big(\eta_{\epsilon}\ast m\big)^{1+\frac{r}{n}}\,dx.
% %\end{align}
Invoking the Young's inequality for convolution and the property of mollifier, one finds 
\begin{align}\label{inlightofsect4}
\int_{\mathbb R^n}m(x)(K_{(n-\gamma')}\ast m)(x)\,dx\geq \int_{\mathbb R^n}\bigg(\Big[m(K_{(n-\gamma')}\ast m)\Big]\ast \eta_{\epsilon}\bigg)(x)\,dx \overset{\epsilon\to0^+}\longrightarrow \int_{\mathbb R^n}m(x)(K_{(n-\gamma')}\ast m)(x)\,dx,
\end{align}
for any  $m\in L^{\frac{2n}{2n-\gamma'}}(\mathbb R^n)$. Here, we have used the following  Hardy-Littlewood-Sobolev inequality  
\begin{align}\label{H-L-S2}
\bigg |\int_{\mathbb R^n}m(x)(K_{(n-\gamma')}\ast m)(x)\,dx \bigg| \leq C(n,\gamma')\Vert m\Vert^{2}_{ L^{\frac{2n}{2n-\gamma'}}(\mathbb R^n)},\ \ \forall m\in L^{\frac{2n}{2n-\gamma'}}(\mathbb R^n).
\end{align}
As a consequence, in light of \eqref{optimalinequality415}  and \eqref{inlightofsect4}, we get
\begin{align}\label{418cruciallowerbound}
\mathcal E_{\epsilon}(m,w)\geq\mathcal E(m,w)\geq \frac{1}{2}\bigg(\frac{M^*}{M}-1\bigg)\int_{\mathbb R^n}m(x)(K_{(n-\gamma')}\ast m)(x)\,dx +\int_{\mathbb R^n}V(x)m\,dx.
\end{align}

Next, we show that the minimization problem \eqref{Eepsilonminimizerproblem1} is attainable. We first show that there exists $C>0$ independent of  $\epsilon$ such that
\begin{equation}\label{eq4.24_0}
e_{\epsilon, M}<C<+\infty,
\end{equation}
where $e_{\epsilon, M}$ is given by \eqref{Eepsilonminimizerproblem1}. 
Indeed, choosing $$(\hat m,\hat w):=\left(\frac{\|e^{-|x|}\|_{L^1(\mathbb R^n)}}{M} e^{-|x|}, \frac{\|e^{-|x|}\|_{L^1(\mathbb R^n)}}{M} \frac{x}{|x|}e^{-|x|}\right)\in\mathcal K_{M}, $$
 one can find
$$e_{\epsilon, M}\leq C_L\int_{\mathbb R^n}\Big|\frac{\hat w}{\hat m}\Big|^{\gamma'}\hat m\,dx+\int_{\mathbb R^n}V(x) \hat m\,dx<+\infty,$$
which indicates that \eqref{eq4.24_0} holds.  Let $(m_{\epsilon,k},w_{\epsilon,k})\in\mathcal K_M$ be a minimizing sequence of problem \eqref{Eepsilonminimizerproblem1}, then we have from \eqref{eq4.24_0} that there exists $C>0$ independent of $\epsilon$ such that
\begin{equation}\label{eq4.24}
\lim_{k\to\infty}\mathcal{E}_{\epsilon}(m_{\epsilon,k},w_{\epsilon,k})=e_{\epsilon, M}<C<+\infty.
\end{equation}
Moreover, it follows from \eqref{optimalinequality415}, \eqref{418cruciallowerbound}, \eqref{eq4.24} and the fact $M<M^*$ that
\begin{align}\label{suanzitouguji}
\sup_{k\in\mathbb N^+}\int_{\mathbb R^n}m_{\epsilon,k}(x)\big (K_{(n-\gamma')}\ast m_{\epsilon,k}\big)(x)\,dx\leq C<+\infty,
\end{align}
and
\begin{align}\label{suanzitouguji1}
~~\sup_{k\in\mathbb N^+}\int_{\mathbb R^n}\bigg(\bigg|\frac{w_{\epsilon,k}}{m_{\epsilon,k}}\bigg|^{\gamma'}m_{\epsilon,k}+V(x)m_{\epsilon,k}\bigg)\,dx\leq C<+\infty,
\end{align}
where $C>0$ is  independent of  $\epsilon$.  The subsequent argument for proving Conclusion (i) is similar as shown in the proof of Theorem 1.3 in \cite{cirant2024critical}.  In fact, with the aid of the key Lemma \ref{lemma21-crucial}, we obtain from (\ref{suanzitouguji}) that
\begin{align}\label{411mepsilonk}
\sup_{k\in\mathbb N^+}\Vert m_{\epsilon,k}\Vert_{W^{1,\hat q}(\mathbb R^n)}\leq C<+\infty \ \text{ and } \ \sup_{k\in\mathbb N^+}\Vert w_{\epsilon,k}\Vert_{L^{p}(\mathbb R^n)}\leq C<+\infty,\ \ \text{for any }p\in[1,\hat q],
\end{align}
where $\hat q$ is defined by \eqref{hatqconstraint} and $C>0$ is some constant independent of $\epsilon$. As a consequence, there exists $(m_{\epsilon},w_{\epsilon})\in W^{1,\hat q}(\mathbb R^n)\times  L^{\hat q}(\mathbb R^n)$ such that
\begin{align}\label{convergencesect4firstone}
(m_{\epsilon,k},w_{\epsilon,k})\overset{k}\rightharpoonup (m_{\epsilon},w_{\epsilon}) \text{~in~}  W^{1,\hat q}(\mathbb R^n)\times L^{\hat q}(\mathbb R^n).
\end{align}
In light of the assumption (V1), $\lim\limits_{|x|\rightarrow \infty}V(x)=+\infty$, given in Subsection \ref{mainresults11}, one can deduce from Lemma \ref{lem4.1} that
\begin{align}\label{convergencesect4secondone}
m_{\epsilon,k}\overset{k}\rightarrow m_{\epsilon} \text{~in~} L^1(\mathbb R^n)\cap L^{\frac{2n}{2n-\gamma'}}(\mathbb R^n) .
\end{align}
Therefore, up to a subsequence,
\begin{equation}\label{eq4.26}
\int_{\mathbb R^n}\bigg(\Big[m_{\epsilon,k}\big (K_{(n-\gamma')}\ast m_{\epsilon,k}\big)\Big]\ast \eta_{\epsilon}\bigg)(x)\,dx\overset{k}\rightarrow \int_{\mathbb R^n}\bigg(\Big[m_{\epsilon}\big(K_{(n-\gamma')}\ast m_{\epsilon}\big)\Big]\ast \eta_{\epsilon}\bigg)(x)\,dx.\end{equation}
In addition, thanks to the convexity of $\int_{\mathbb R^n}\big|\frac{w}{m}\big|^{\gamma'}m\,dx $, by letting $k\to\infty$ in \eqref{suanzitouguji}, we have there exists $C>0$  independent of  $\epsilon>0$ such that
\begin{align}\label{eq4.32}
\int_{\mathbb R^n}\Big|\frac{w_{\epsilon}}{m_{\epsilon}}\Big|^{\gamma'}m_{\epsilon}\,dx+\int_{\mathbb R^n}V(x)m_{\epsilon}\,dx\leq\liminf_{k\rightarrow +\infty }\int_{\mathbb R^n}\big|\frac{w_{\epsilon,k}}{m_{\epsilon,k}}\big|^{\gamma'}m_{\epsilon,k}\,dx+\int_{\mathbb R^n}V(x)m_{\epsilon,k}\,dx\leq C<+\infty.
\end{align}
Moreover,
 \begin{equation}\label{eq4.33}
 \int_{\mathbb R^n}|w_\epsilon|V^{\frac{1}{\gamma}}\,dx\leq \left(\int_{\mathbb R^n}\Big|\frac{w_\epsilon}{m_\epsilon}\Big|^{\gamma'}m_\epsilon\,dx\right)^{\gamma'}\left(\int_{\mathbb R^n}V m_\epsilon \,dx\right)^{\gamma}\leq C<\infty.
 \end{equation}
 and
  \begin{equation}\label{eq4.330}
 \int_{\mathbb R^n}|w_\epsilon|\,dx\leq \left(\int_{\mathbb R^n}\Big|\frac{w_\epsilon}{m_\epsilon}\Big|^{\gamma'}m_\epsilon\,dx\right)^{\gamma'}\left(\int_{\mathbb R^n} m_\epsilon \,dx\right)^{\gamma}\leq C<\infty.
 \end{equation}
 Combining \eqref{convergencesect4firstone} and \eqref{convergencesect4secondone} with \eqref{eq4.330}, we deduce that $(m_{\epsilon},w_{\epsilon})\in \mathcal K_{M}$. Then, one invokes \eqref{eq4.26} and \eqref{eq4.32} to get
\begin{align*}
e_{\epsilon, M}=\lim_{k\rightarrow\infty}\mathcal E_{\epsilon}(m_{\epsilon,k},w_{\epsilon,k})\geq \mathcal E_{\epsilon}(m_{\epsilon},w_{\epsilon})\geq e_{\epsilon,M},
\end{align*}
which indicates $(m_{\epsilon},w_{\epsilon})\in \mathcal K_{M}$ is a minimizer of problem (\ref{Eepsilonminimizerproblem1}).  {Finally, similarly as the proof of Proposition 3.4 in \cite{cesaroni2018concentration} and the arguments shown in Proposition 5.1 and Proposition 5.2 in \cite{cirant2024critical}, we apply Lemma \ref{lemma22preliminary} to obtain that} 
there exists $u_{\epsilon}\in C^2(\mathbb R^n)$ bounded from below (depending on $\epsilon$) and   $\lambda_{\epsilon}\in \mathbb R$ such that
\begin{align}\label{eq4.31}
\left\{\begin{array}{ll}
-\Delta u_{\epsilon}+C_H|\nabla u_{\epsilon}|^{\gamma}+\lambda_{\epsilon}=V(x)-\big(K_{(n-\gamma')}\ast m_{\epsilon}\big) \ast\eta_{\epsilon},\\
\Delta m_{\epsilon}+C_H\gamma \nabla\cdot (m_{\epsilon}|\nabla u_{\epsilon}|^{\gamma-2}\nabla u_{\epsilon})=0,\\
w_{\epsilon}=-C_H\gamma m_{\epsilon}|\nabla u_{\epsilon}|^{\gamma-2}\nabla u_{\epsilon}, \ \int_{\mathbb R^n}m_{\epsilon}\,dx=M<M^*.
\end{array}
\right.
\end{align}

For each fixed $\epsilon>0$, we utilize Lemma \ref{sect2-lemma21-gradientu} to obtain that there exists $C_\epsilon>0$ depends on $\epsilon$ such that
$|\nabla u_{\epsilon}(x)|\leq C_\epsilon(1+V(x))^{\frac{1}{\gamma}}$. Noting that $u_\epsilon\in C^2(\mathbb R^n)$ and $\big(K_{(n-\gamma')}\ast m_{\epsilon}\big)\ast\eta_{\epsilon}\in L^\infty(\mathbb R^n)$, we have from the classical regularity of the $u$-equation in \eqref{eq4.31}  that $|\Delta u_{\epsilon}(x)|\leq C_\epsilon(1+V(x))$.  We next prove
\begin{equation}\label{eq4.34}
|\lambda_\epsilon|\leq C<\infty, \text{ with $C>0$   independent of  $\epsilon>0$}.
\end{equation}
To show this, we apply the integration by parts to the $m$-equation and the $u$-equation in \eqref{eq4.31}, then get%By using the $m$-equation of \eqref{}, we have from \eqref{eq4.33} that
\begin{align*}
\int_{\mathbb R^n}m\Delta u_{\epsilon}\,dx=\int_{\mathbb R^n}w_{\epsilon}\cdot\nabla u_{\epsilon}\,dx=-C_H\gamma\int_{\mathbb R^n}m_{\epsilon}|\nabla u_{\epsilon}|^{\gamma}\,dx,
\end{align*}
and
%In addition, we test the $u$-equation of \eqref{eq4.31} against $m_\epsilon$ and integrate to get
\begin{equation}\label{eq4.301}
\begin{split}
\lambda_\epsilon M&=-(1-\gamma)C_H\int_{\mathbb R^n}m_\epsilon|\nabla u_\epsilon|^{\gamma}\,dx+\int_{\mathbb R^n}V m_\epsilon\,dx-\int_{\mathbb R^n}m_{\epsilon}\big(K_{(n-\gamma')}\ast m_{\epsilon}\big ) \ast\eta_{\epsilon}\,dx\\
&=C_L\int_{\mathbb R^n}m_\epsilon\bigg|\frac{w_\epsilon}{m_\epsilon}\bigg|^{\gamma'}\, dx+\int_{\mathbb R^n}V m_\epsilon\,dx-\int_{\mathbb R^n}m_{\epsilon}\big(K_{(n-\gamma')}\ast m_{\epsilon}\big) \ast\eta_{\epsilon}\,dx\end{split}
\end{equation}
where we have used the fact that $C_L=\frac{1}{\gamma'}(\gamma C_H)^{\frac{1}{1-\gamma}}$.
Collecting \eqref{eq4.32}, \eqref{eq4.33} and \eqref{eq4.301}, one finds \eqref{eq4.34} holds.
%\begin{equation}\label{eq4.34}
%|\lambda_\epsilon|\leq C<\infty, \text{ with $C>0$   independent of  $\epsilon>0$}.
%\end{equation}

{
Next, we let $\epsilon\rightarrow 0$ and show the existence of the minimizer $(m_{M},w_{M})$ to problem \eqref{ealphacritical-117}.  Noting $(m_{\epsilon},u_{\epsilon},\lambda_{\epsilon})$ satisfies (\ref{assumptionsincrucialemmanewest}) with $k$ replaced by $\epsilon.$
We utilize Young's inequality for convolution and Hardy-Littlewood-Sobolev inequality \eqref{eqHLS_1} to get
 \begin{align*}
\sup_{k}\Vert \big(K_{(n-\gamma')}\ast m_{k}\big ) \ast\eta_{k}\Vert_{L^{1+\frac{n+\alpha^*}{n-\alpha^*}}(\mathbb R^n)}\leq C(n,\gamma')\sup_{k}\Vert  m_{k}^{\frac{n-\alpha^*}{n+\alpha^*}}\Vert_{L^{1+\frac{n+\alpha^*}{n-\alpha^*}}(\mathbb R^n)}=C(n,\gamma')\sup_{k}\Vert  m_{k}\Vert_{L^{\frac{2n}{2n-\gamma'}}(\mathbb R^n)}<\infty,
\end{align*}
and
\begin{align*}
\sup_{k}\Vert \big(K_{(n-\gamma')}\ast m_{k}\big ) \ast\eta_{k}\Vert_{L^{1+\frac{n+\alpha^*}{n-\alpha^*}}\big(B_{2R}(x_0)\big)}\leq C(n,\gamma') \sup_{k}\Vert  m_k\Vert_{L^{\frac{2n}{2n-\gamma'}}\big (B_{2R}(x_0)\big)}<\infty,
\end{align*}}
Then, collecting \eqref{eq4.32} and \eqref{eq4.34}, we invoke Lemma \ref{blowupanalysismlinfboundcritical} to conclude that
 \begin{equation}\label{eq4.35}
 \limsup_{\epsilon\to 0^+}\|m_\epsilon\|_{L^\infty(\mathbb R^n)}<\infty.
 \end{equation}
%On the other hand, it is easy to check (\ref{maxxBR0supkBklesthanCR}), (\ref{maxxBR0supkBklesthanCRcondition2}) and (\ref{410incriticalnewcondition22}) hold when H\"{o}lder's continuous function $V$ satisfies (\ref{V2mainasumotiononv}).}
Then, by using Lemma \ref{sect2-lemma21-gradientu}, we obtain
 \begin{equation}\label{eq4.36}
 |\nabla u_{\epsilon}(x)|\leq C(1+V(x))^{\frac{1}{\gamma}}, \text{ where  $C>0$ is independent of $\epsilon$.}
 \end{equation}
 Since $u_{\epsilon}$ is bounded from below,  without loss of generality, we assume that $u_{\epsilon}(0)=0$.
In light of \eqref{29uklemma22}, one finds that $u_{\epsilon}(x)\geq C_\epsilon V^{\frac{1}{\gamma}}(x)-C_\epsilon\to +\infty$ as $|x|\to+\infty$, which indicates each $u_\epsilon(x)\in C^2( \mathbb R^n)$ admits its minimum at some finite point $x_{\epsilon}$. By using \eqref{eq4.34}, \eqref{eq4.35} and the coercivity of $V$, we obtain from the $u$-equation of \eqref{eq4.31}  that $x_{\epsilon}$ is uniformly bounded with respect to $\epsilon$.  The fact $u_{\epsilon}(0)=0$ together with \eqref{eq4.36} implies that
there exists $C>0$ independent of $\epsilon$ such that
\begin{equation*}
-C\leq u_\epsilon(x)\leq C|x|(1+V(x))^{\frac{1}{\gamma}} \text{ for all } x\in\mathbb R^n,
\end{equation*}
where we have used \eqref{V2mainassumption_3} in the second inequality.
Since $u_{\epsilon}$ are bounded from below uniformly, one can employ Lemma \ref{lowerboundVkgenerallemma22} to get that
 $u_{\epsilon}(x)\geq CV^{\frac{1}{\gamma}}(x)-C \text{ with $C>0$ independent of $\epsilon$.}$
 %In particular, if $V$ satisfies (\ref{cirant-V}), we further have $u_{\epsilon}\geq C |x|^{1+\frac{b}{r'}}-C.$
Thus, with the assumptions \eqref{V2mainasumotiononv} imposed on $V$, we get 
 \begin{equation}\label{eq4.37}
  C_1V^{\frac{1}{\gamma}}(x)-C_1\leq u_{\epsilon}\leq C_2|x|(1+V(x))^{\frac{1}{\gamma}},\text{for all } x\in\mathbb R^n.
 \end{equation}
 %In particular, if  $V$ satisfies (\ref{cirant-V}), the following conclusion holds:
 %\begin{equation}\label{eq4.37}
  %C_1|x|^{1+\frac{b}{r'}}-C_1\leq u_{\epsilon}\leq C_2|x|^{1+\frac{b}{r'}}+C_2,\text{ for all } x\in\mathbb R^n,
 %\end{equation}
 where $C_1,C_2>0$ are independent of $\epsilon$.

 In light of \eqref{eq4.35} and \eqref{eq4.36}, one finds for any $R>1$ and $p>1$,
 \begin{equation}\label{eq4.306}
 \|w_{\epsilon}\|_{L^p(B_{2R}(0))}=C_H\gamma\|m_{\epsilon}|\nabla u_{\epsilon}|^{\gamma-1}\|_{L^p(B_{2R}(0))}\leq C_{p,R}<\infty,\end{equation}
 where the constant  $C_{p, R}>0$ depends only on $p$, $R$ and is independent of $\epsilon$.  Then, with the help of Lemma \ref{lemma21-crucial}, we obtain from \eqref{eq4.306} that $\|m_\epsilon\|_{W^{1,p}(B_{2R}(0))}\leq C_{p,R}<\infty$. Taking $p>n$ large enough, we utilize Sobolev embedding theorem to get
\begin{equation}\label{eq4.38}
\|m_\epsilon\|_{C^{0,\theta_1}(B_{2R}(0))}\leq C_{\theta_1,R}<\infty \text{ for some $\theta_1\in(0,1)$.}
\end{equation}

To estimate $u_{\epsilon},$ we rewrite the $u$-equation of \eqref{eq4.31} as
\begin{equation}\label{eq4.39}
-\Delta u_{\epsilon}=-C_H|\nabla u_{\epsilon}|^{\gamma}+f_\epsilon(x)\ \text{ with }f_\epsilon(x):=-\lambda_{\epsilon}+V(x)-\big(K_{(n-\gamma')}\ast m_{\epsilon}\big)\ast \eta_{\epsilon},
 \end{equation}
Since $m_\epsilon\in C^{0,\theta_1}(B_{2R}(0))$, then $m_\epsilon\ast \eta_\epsilon\in L^{1}(B_{2R}(0))\cap  L^{\tilde{q}}(B_{2R}(0))$ with $\tilde{q}>\frac{n}{\alpha}$. Thus, we deduce from Lemma \ref{HolderforRiesz} that $\big(K_{(n-\gamma')}\ast m_{\epsilon}\big)\ast \eta_{\epsilon}\in C^{0,\theta_2}(B_{2R}(0))$ for some $\theta_2\in (0,1)$. Now, by using \eqref{eq4.35}, \eqref{eq4.36} and the fact that $V$ is locally H\"older continuous, we obtain that for any $p>1$,
 $$\|f_\epsilon\|_{L^{p}(B_{2R}(0))}+\||\nabla u_\epsilon|^{\gamma}\|_{L^{p}(B_{2R}(0))}\leq C_{p,R}<\infty.$$
%Then by $W^{2,p}$ estimates, one gets from \eqref{eq4.39}  and \eqref{eq4.37} that
% \begin{equation}
% \|u_\epsilon\|_{W^{2,p}(B_{R+1})}\leq C_{p,R}\left(\|u_\epsilon\|_{L^{p}(B_{2R}(0))}+\|f_\epsilon\|_{L^{p}(B_{2R}(0))}+\||\nabla u_\epsilon|^{r'}\|_{L^{p}(B_{2R}(0))}\right)
% \leq \tilde C_{p,R}<\infty.
% \end{equation}
%Taking $p>n$ large enough, we obtain that
%\begin{equation}\label{eq4.41}
%\|u_\epsilon\|_{C^{1,\theta_2}(B_{R+1}(0))}\leq C_{\theta_2,R}<\infty, \text{ for some $\theta_2\in(0,1)$.}
%\end{equation}
%Combining \eqref{eq4.38} with (\ref{eq4.41}), one finds
%$$\||\nabla u_{\epsilon}|^{r'}\|_{C^{0,\theta_3}(B_{R+1}(0))} +\|f_\epsilon\|_{C^{0,\theta_3}(B_{R+1}(0))}\leq C_{\theta_3,R}<\infty, \text{ for some $\theta_3\in(0,1)$.}$$
Then we utilize the standard elliptic regularity in \eqref{eq4.39} to get
 \begin{equation}\label{eq4.401}
     \|u_{\epsilon}\|_{C^{2,\theta_3}(B_{R}(0))} \leq C_{\theta_3,R}<\infty, \text{ for some $\theta_3\in(0,1)$,}
 \end{equation}
 where $R>0.$
Letting $R\to\infty$ and proceeding the standard diagonalization procedure, we invoke Arzel\`{a}-Ascoli theorem to find there exists $u_M\in C^2(\mathbb R^n)$ such that
 \begin{equation}\label{eq4.42}
 u_{\epsilon}\overset{\epsilon\to0^+}\longrightarrow u_{M} \text{ in }C^{2,\theta_4}_{\rm loc}(\mathbb R^n) \text{ for some $\theta_4\in(0,1)$.}
 \end{equation}
In addition, by using Lemma \ref{lemma21-crucial} and \eqref{eq4.32}, we find there exists $(m_{M},w_{M})\in W^{1,\hat q}(\mathbb R^n)\times \big(L^{1}(\mathbb R^n)\cap L^{\hat q}(\mathbb R^n) \big)$ such that
\begin{align}\label{eq4.43}
m_{\epsilon}\overset{\epsilon\to0^+}\to m_{M} \text{ a.e. in $\mathbb R^n$,  \ and \ \ }(m_{\epsilon},w_{\epsilon})\overset{\epsilon\to0^+}\rightharpoonup (m_{M},w_{M}) \text{~in~}  W^{1,\hat q}(\mathbb R^n)\times  L^{\hat q}(\mathbb R^n).
\end{align}
Moreover, invoking Lemma \ref{lem4.1}, one finds
\begin{align}\label{eq4.44}
m_{\epsilon}\overset{\epsilon\to0^+}\rightarrow m_{M} \text{~in~} L^1(\mathbb R^n)\cap L^{\frac{2n}{2n-\gamma'}}(\mathbb R^n) .
\end{align}

Passing to the limit as $\epsilon\to0^+$ in \eqref{eq4.31}, we then obtain from \eqref{eq4.34} and \eqref{eq4.42}-\eqref{eq4.44} that there exists $\lambda_M\in \mathbb R$ such that
$(m_M, u_M, w_M)$ satisfies  \eqref{125potentialfreesystem}.
In addition, we infer from \eqref{eq4.36} and \eqref{eq4.37} that 
 \begin{equation}\label{eq4.46}
 |\nabla u_{M}(x)|\leq C(1+V(x))^{\frac{1}{\gamma}}\ \text{and} \ C_1|x|^{1+\frac{b}{\gamma}}-C_1\leq u_{M}\leq C_2|x|^{1+\frac{b}{\gamma}}+C_2,\  \forall x\in\mathbb R^n.
 \end{equation}
  Recall that $m_{\epsilon}\to m_{M} $ a.e. as $\epsilon\to0^+$ in $\mathbb R^n$, then we use \eqref{eq4.35} to get that $m_M\in L^\infty(\mathbb R^n)$.  Then, proceeding the same argument as shown in the proof of Proposition 5.2 in \cite{cirant2024critical}, one can further find from \eqref{125potentialfreesystem} and \eqref{eq4.46} that
  \begin{equation}\text{$w_{M}=-C_H\gamma m_{M}|\nabla u_{M}|^{\gamma-2}\nabla u_{M}\in L^p(\mathbb R^n)$ and $m_M\in W^{1,p}(\mathbb R^n)$,
$ \forall p>1$.}\end{equation}

Finally, we prove that $(m_{M},w_{M})\in\mathcal K_{M}$ is a minimizer of $e_{\alpha^*,M}$.  To this end, we claim that for $M<M^*$,
 \begin{equation}\label{eq4.406}
 \lim_{\epsilon\to0^+}e_{\epsilon, M}=e_{\alpha^*,M},
 \end{equation}
 where $e_{\alpha^*,M}$ is given in \eqref{ealphacritical-117}.  On one hand, in view of \eqref{inlightofsect4}, it is straightforward to get $\lim\limits_{\epsilon\to0^+}e_{\epsilon, M}\geq e_{\alpha^*,M}$.  On the other hand,  we aim to show $ \lim\limits_{\epsilon\to0^+}e_{\epsilon, M}\leq e_{\alpha^*,M}$.  Due to the definition of $e_{\alpha^*,M}$,  for any $\delta>0$, we choose $(m,w)\in\mathcal K_{M}$ such that $\mathcal{E}(m,w)\leq e_{\alpha^*,M}+\frac{\delta}{2}$.  In light of \eqref{inlightofsect4}, we conclude that for $\epsilon>0$  small enough,  $\mathcal{E}_\epsilon(m,w)\leq \mathcal{E}(m,w)+\frac{\delta}{2} $.  Thus,
 $$e_{\epsilon, M}\leq \mathcal{E}_\epsilon(m,w)\leq \mathcal{E}(m,w)+\frac{\delta}{2} \leq e_{\alpha^*,M}+{\delta}.$$
 Letting $\epsilon\to0^+$ at first and then $\delta\to 0^+$, one has $ \lim\limits_{\epsilon\to0^+}e_{\epsilon, M}\leq e_{\alpha^*,M}$.  Combining the two facts, we finish the proof of \eqref{eq4.406}.

  We collect  \eqref{eq4.43}, \eqref{eq4.44}, \eqref{eq4.406} and the convexity of $\int_{\mathbb R^n}\big|\frac{w}{m}\big|^{\gamma'}m\,dx $ to get
 $$e_{\alpha^*,M}=\lim_{\epsilon\to0^+}e_{\epsilon, M}=\lim_{\epsilon\to0^+}\mathcal{E}_\epsilon(m_\epsilon,w_\epsilon)\geq \mathcal{E}(m_M,w_M)\geq  e_{\alpha^*,M},$$
which implies $(m_{M},w_{M})\in\mathcal K_{M}$ is a minimizer of $e_{\alpha^*,M}$. This completes the proof of Conclusion (i).

 \vskip.1truein

Now, we focus on Conclusion (ii) of Theorem \ref{thm11}.  We have the fact that $\big(m_{\alpha^*},w_{\alpha^*}, u_{\alpha^*}\big)$ given in Theorem \ref{thm11-optimal} is a minimizer of problem \eqref{sect2-equivalence-scaling} with $\alpha=\alpha^*=(n-\gamma')$. To simplify notation, we rewrite $(m_{\alpha^*},w_{\alpha^*},u_{\alpha^*})$ as $(m_*,w_*,u_*)$, then define
\begin{align}\label{scalingwitht}
( m_{*}^t, w_*^t)=\bigg(\frac{M}{M^*}t^n m_*(t(x-x_0)),\frac{M}{M^*}t^{n+1} w_*(t(x-x_0))\bigg)\in \mathcal K_{M}, \ \ \forall t>0,~ x_0\in\mathbb R^n.
\end{align}
where  the constraint set $\mathcal K_{M}$ and $M^*>0$ are  defined by \eqref{constraint-set-K}  and (\ref{Mstar-critical-mass}), respectively.
Since $u_* \in C^2(\mathbb R^n)$ and $m_*$ decays exponentially  as stated in Theorem \ref{thm11-optimal}, we utilize Lemma \ref{poholemma} to find
\begin{align}\label{invoke45}
C_L\int_{\mathbb R^n}\bigg|\frac{w_*}{m_*}\bigg|^{\gamma'} m_*\,dx=\frac{1}{2}\int_{\mathbb R^n}m_*(x)\big (K_{(n-\gamma')}\ast m_*\big)(x) \,dx.
\end{align}
Thanks to \eqref{invoke45}, we substitute \eqref{scalingwitht} into \eqref{41engliang}, then obtain that if $M>M^*,$
\begin{align}\label{supercriticalmasscase}
e_{\alpha^*,M}\leq \mathcal E(m_*^t,w_*^t)=&\frac{M}{M^*}\bigg(C_Lt^{\gamma'}\int_{\mathbb R^n}\Big|\frac{ w_*}{ m_*}\Big|^{\gamma'} m_*\,dx+\int_{\mathbb R^n}V(x)m_*\, dx\bigg)-\frac{t^{\gamma'}}{2}\bigg(\frac{M}{M^*}\bigg)^{2}\int_{\mathbb R^n}m_*(x)(K_{(n-\gamma')}\ast m_*)(x)\,dx\nonumber\\
=&\frac{M}{M^*}\Big[1-\bigg(\frac{M}{M^*}\bigg)\Big]\frac{t^{\gamma'}}{2}\int_{\mathbb R^n}m_*(x)\big(K_{(n-\gamma')}\ast m_*\big)(x)\,dx +MV(x_0)+o_t(1)\nonumber\\
&\rightarrow -\infty \ \ \text{as~}t\rightarrow +\infty.
\end{align}
Therefore, we have $e_{\alpha^*,M}=-\infty$ for $M>M^*,$ which indicates that problem (\ref{ealphaM-117}) does not admit any minimizer.

Now, we are concentrated at the critical case $M=M^*$ and plan to show Conclusion (iii).  To begin with, we prove that up to a subsequence,
\begin{align}\label{criticalfirstwantshow}
\lim_{ M\nearrow M^*}e_{\alpha^*,M}=e_{\alpha^*,M^*}=0.
\end{align}
Indeed, since $\inf_{x\in\mathbb R^n} V(x)=0$ as shown in $(V1)$ and $e_{\alpha^*,M^*}$ is defined by \eqref{ealphaM-117}, we have for any $\delta>0$, $\exists (m,w)\in\mathcal A_{M^*}$ such that
\begin{align}\label{combine419-thm12}
e_{\alpha^*,M^*}\leq \mathcal E(m,w)\leq e_{\alpha^*,M^*}+\delta.
\end{align}
Noting that $\frac{M}{M^*}(m,w)\in\mathcal A_{M}$, we further obtain
\begin{align}\label{combine420-thm12}
e_{\alpha^*,M}&\leq \mathcal E\bigg(\frac{M}{M^*}m,\frac{M}{M^*}w\bigg)\\
=&\mathcal E(m,w)\nonumber
+\Big(\frac{M}{M^*}-1\Big)\bigg[C_L\int_{\mathbb R^n}\Big|\frac{w}{m}\Big|^{\gamma'}m\,dx+\int_{\mathbb R^n}V(x)m\,dx\bigg]\nonumber
+\frac{1}{2}\bigg[1-\bigg(\frac{M}{M^*}\bigg)^{2}\bigg]\int_{\mathbb R^n}m(x)\big(K_{(n-\gamma')}\ast m\big)(x)\,dx.
\end{align}
By a straightforward computation, one has as $M\nearrow M^*,$
\begin{align}\label{combine421-thm12}
\Big(\frac{M}{M^*}-1\Big)\Big[C_L\int_{\mathbb R^n}\bigg|\frac{w}{m}\bigg|^{\gamma'}m\,dx+\int_{\mathbb R^n}V(x)m\,dx\Big]+\frac{1}{2}\bigg[1-\bigg(\frac{M}{M^*}\bigg)^{2}\bigg]\int_{\mathbb R^n}m(x)\big(K_{(n-\gamma')}\ast m\big)(x)\,dx \rightarrow 0.
\end{align}
We collect (\ref{combine419-thm12}), (\ref{combine420-thm12}) and (\ref{combine421-thm12}) to get
\begin{align}\label{424limitbefore}
\limsup_{M\nearrow M^*}e_{\alpha^*,M}\leq e_{\alpha^*,M^*}+\delta, \ \ \forall \delta>0.
\end{align}
Letting $\delta\rightarrow 0$ in (\ref{424limitbefore}), one has from (\ref{424limitbefore}) that
\begin{align}\label{combine4331critical}
\limsup_{M\nearrow M^*}e_{\alpha^*,M}\leq e_{\alpha^*,M^*}.
\end{align}
In addition, define $(\bar m_{\alpha^*,M},\bar w_{\alpha^*,M})\in\mathcal A_M$ as a minimizer of $e_{\alpha^*,M}=\inf_{(m,w)\in\mathcal A_M}\mathcal E(m,w)$ for any fixed $M\in(0,M^*)$, then we find $\frac{M^*}{M}({\bar m}_{\alpha^*,M},{\bar w}_{\alpha^*,M})\in\mathcal A_{M^*}$ and
\begin{align*}
e_{\alpha^*,M^*}\leq &\mathcal E\Big(\frac{M^*}{M}(\bar m_{\alpha^*,M},\bar w_{\alpha^*,M})\Big)\\
=&\frac{M^*}{M}\bigg[C_L\int_{\mathbb R^n}\Big|\frac{\bar w_{\alpha^*,M}}{\bar m_{\alpha^*,M}}\Big|^{\gamma'}{\bar m}_{\alpha^*,M}\,dx+\int_{\mathbb R^n}V(x)\bar m_{\alpha^*,M}\, dx-\frac{1}{2}\left(\frac{M^*}{M}\right)\int_{\mathbb R^n}{\bar m}_{\alpha^*,M}(x)\big(K_{(n-\gamma')}\ast \bar m_{\alpha^*,M} \big)(x)\,dx\bigg]\\
\leq &\frac{M^*}{M}\mathcal E(\bar m_{\alpha^*,M},\bar w_{\alpha^*,M})=\frac{M^*}{M}e_{\alpha^*,M},\ \ \forall M<M^*.
\end{align*}
  It follows that
\begin{align}\label{combine4332critical}
e_{\alpha^*,M^*}\leq \liminf_{M\nearrow M^*}\frac{M^*}{M}e_{\alpha^*,M}=\lim_{M\nearrow M^*} e_{\alpha^*,M}.
\end{align}
Combining (\ref{combine4331critical}) with (\ref{combine4332critical}), one has
\begin{align}\label{togetherwithenergycriticalvalue}
\lim_{M\nearrow M^*} e_{\alpha^*,M}=e_{\alpha^*,M^*}\geq 0.
\end{align}
In light of assumptions (V1) and (V2) stated in Subsection \ref{mainresults11} for potential $V$, we set $M=M^*$ in (\ref{supercriticalmasscase}) to get
\begin{align*}
e_{\alpha^*,M^*}\leq \mathcal E(m_*^t,w_*^t)=M^*V(x_0)+o_t(1)\rightarrow 0,\text{~if~}V(x_0)=0\text{~and~}t\rightarrow +\infty.
\end{align*}
Hence $e_{\alpha^*,M^*}\leq 0$, which together with (\ref{togetherwithenergycriticalvalue}) implies (\ref{criticalfirstwantshow}).

Now, we focus on the proof Conclusion (iii).  If conclusion (iii) is not true, then we assume that $e_{\alpha^*,M^*}$ has a minimizer $(\hat m,\hat w)\in\mathcal A_{M^*}$. By using (\ref{criticalfirstwantshow}), we further obtain
\begin{align*}
0=\mathcal E(\hat m,\hat w)=\int_{\mathbb R^n}C_L\Big|\frac{\hat w}{\hat m}\Big|^{\gamma'}\hat m\,dx +\int_{\mathbb R^n}V(x)\hat m\,dx-\frac{1}{2}\int_{\mathbb R^n}\hat m(x)\big(K_{(n-\gamma')}\ast \hat m \big)(x)\, dx\geq 0.
\end{align*}
Combining this with \eqref{optimalinequality415}, one gets 
\begin{align}\label{eq5.85}
C_L\int_{\mathbb R^n}\Big|\frac{\hat w}{\hat m}\Big|^{\gamma'}\hat m\,dx=\frac{1}{2}\int_{\mathbb R^n}\hat m(x)\big(K_{(n-\gamma')}\ast \hat m \big)(x)\, dx\text{~and~}\int_{\mathbb R^n}V(x)\hat m\,dx =0,
\end{align}
%and
%$\int_{\mathbb R^n}V(x)\hat m\,dx =0.$
which implies $\text{supp}V(x)\cap \text{supp~}\hat m=\emptyset.$  Whereas, with the assumption (\ref{V2mainassumption_3}) and the fact $\gamma'< n,$ we have $\text{supp}V=\mathbb R^n.$  It follows that $\hat m=0$ a.e., which is a contradiction.  Consequently, we complete the proof of Conclusion (iii).
\end{proof}

Theorem \ref{thm11} implies that when the potential $V$ satisfies some mild assumptions given by (V1), (V2) and (V3) stated in Section \ref{intro1}, system (\ref{goalmodel}) admits the ground states only when $M<M^*$, where $M^*$ is explicitly shown in Theorem \ref{thm11-optimal} and has a strong connection with the existence of ground states to the potential-free nonlocal Mean-field Games system.  In the next section, we shall discuss the asymptotic behaviors of ground states to problem (\ref{goalmodel}) as $M\nearrow M^*.$    

%under some technical assumptions on potential $V$, there exists the critical mass phenomenon to problem \eqref{ealphacritical-117}.  In detail, there exists $M^*>0$ such that when $M<M^*$, there exists at least one minimizer; otherwise if $M\geq M^*,$ problem \eqref{ealphacritical-117} has no minimzer.  A natural question is the asymptotic behaviors of minimizers to \eqref{ealphacritical-117} as $M\nearrow M^*$ and we investigate this in Section \ref{sect5preciseblowup}.

%---

\section{Asymptotics of Ground States as $M\nearrow M^*$ }\label{sect5preciseblowup}
 This section is devoted to the proof of Theorem \ref{thm13basicbehavior} and Theorem \ref{thm14preciseblowup}. More precisely,  we shall describe the asymptotic profile of 
 least energy solutions to \eqref{MFG-SS} as $M\nearrow M^*.$ 
% We would like to emphasize that the results are parallel to Theorem 1.2 in  \cite{cesaroni2018concentration} whereas they only consider the subcritical mass exponent case.
\subsection{Basic Blow-up Behaviors}

In this subsection, we analyze the basic asymptotic behaviors of ground states to \eqref{MFG-SS} as $M\nearrow M^*$ and prove Theorem \ref{thm13basicbehavior}.

\medskip

\textbf{Proof of Theorem \ref{thm13basicbehavior}:}
\begin{proof}
    To prove Conclusion (i), we perform the blow-up argument and assume
    $$\limsup_{M\nearrow M^*}\int_{\mathbb R^n}\bigg|\frac{w_M}{m_M}\bigg|^{\gamma'}m_M\,dx<+\infty.$$
Then we utilize Lemma \ref{lemma21-crucial} to get 
\begin{align}\label{inlightofsect5blowupbasicpro}
\limsup_{M\nearrow M^*}\Vert m_M\Vert_{W^{1,\hat q}(\mathbb R^n)}, ~\limsup_{M\nearrow M^*}\Vert w_M\Vert_{L^{\hat q}(\mathbb R^n)},~\limsup_{M\nearrow M^*}\Vert w_M\Vert_{L^1(\mathbb R^n)}<+\infty.
\end{align}
 Consequently, we have there exists $(m,w)\in  W^{1,\hat q}(\mathbb R^n)\times  L^{\hat q}(\mathbb R^n) $ such that%, as $M\nearrow M^*$,
%\begin{align}
%\limsup_{M\nearrow M^*}\int_{\mathbb R^n}m_M^{1+\frac{r}{n}}\,dx\leq C<+\infty,
%\end{align}
%where $C$ is a positive constant.  In light of (\ref{criticalfirstwantshow}), we obtain that
\begin{align}\label{combine1sect5}
m_M\rightharpoonup m \text{~in~} W^{1,\hat q}(\mathbb R^n)~\text{ and }~w_M\rightharpoonup w\text{~in~}  L^{\hat q}(\mathbb R^n)~\text{ as }M\nearrow M^*.
\end{align}
Now, we prove $(m,w)\in \mathcal K_{M^*}$ given by \eqref{constraint-set-K}.  Indeed, noting \eqref{inlightofsect5blowupbasicpro}, we have
\begin{align}\label{combinesec3}
\limsup_{M\nearrow M^*}\int_{\mathbb R^n}V(x)m_M\,dx<+\infty.
\end{align}
By using the assumptions (V1), (V2) and (V3) satisfied by $V$, we conclude from \eqref{combine1sect5}, \eqref{combinesec3} and Lemma \ref{lem4.1} that
\begin{align}\label{eq5.4}
m_M\rightarrow  m \text{~in~} L^1(\mathbb R^n)\cap L^{\frac{2n}{2n-\gamma'}}(\mathbb R^n), ~\text{as $M\nearrow M^*$},
\end{align}
which implies $\int_{\mathbb R^n}m\,dx=M^*.$  Moreover, thanks to \eqref{combine1sect5}, one gets $\Delta m= \nabla\cdot w$ {weakly.}  It follows that
\begin{equation*}
\int_{\mathbb R^n}|w| dx=\int_{\mathbb R^n}|w| |m|^{-\frac{(\gamma'-1)}{\gamma'}} |m|^{\frac{(\gamma'-1)}{\gamma'}}dx\leq \left(\int_{\mathbb R^n}|m|\Big|\frac{w}{m}\Big|^{\gamma'}\, dx \right)^\frac{1}{\gamma'}\left(\int_{\mathbb R^n}m\,dx\right)^\frac{\gamma'-1}{\gamma'}<+\infty,
\end{equation*}
which implies $w\in L^1(\mathbb R^n)$. 
  % By summarizing the argument above, 
Hence, we obtain $(m,w)\in \mathcal K_{M^*}$ and further $\liminf\limits_{M\nearrow M^*} \mathcal E(m_M,w_M)\geq\mathcal E(m,w)$ due to \eqref{combine1sect5} and \eqref{eq5.4}. Moreover, one has from \eqref{criticalfirstwantshow} that
$$e_{\alpha^*,M^*}\geq \mathcal E(m,w)\geq e_{\alpha^*,M^*}.$$
Therefore, $(m,w)$ is a minimizer of $e_{\alpha^*,M^*},$ which yields a contradiction to Conclusion (iii) in Theorem \ref{thm11}. This finishes the proof of Conclusion (i).

(ii).  Note that  
\begin{align*}
    {\varepsilon}_M={\varepsilon}:=\Big(C_L\int_{\mathbb R^n}\bigg|\frac{w_M}{m_M}\bigg|^{\gamma'}m_M\,dx\Big)^{-\frac{1}{\gamma'}}\rightarrow 0\text{~as~}M \nearrow M^*.
\end{align*}
% $\varepsilon_M$, which will be denoted as $\varepsilon$ for simplicity,  is defined as \eqref{thm51property1}. 
As stated in Conclusion (i) of Theorem \ref{thm11}, we have each $u_M\in C^2(\mathbb R^n)$ is bounded from below and satisfies $\lim\limits_{|x|\rightarrow+\infty} u_M(x)=+\infty$.  Hence, there exists $x_{\varepsilon}\in \mathbb R^n$ such that $u_M(x_{\varepsilon})=\inf\limits_{x\in\mathbb R^n}u_{M}(x)$, which indicates $0=u_{\varepsilon}(0)=\inf\limits_{x\in\mathbb R^n}u_{\varepsilon}(x)$ thanks to the definition given in (\ref{thm51property2}).  %In what follows, we denote $\varepsilon_M$ by $\varepsilon$ for simplicity of notations.

In light of \eqref{125potentialfreesystem} and \eqref{thm51property2}, we find that $(u_\varepsilon, m_\varepsilon, w_\varepsilon)$ satisfies the following system
 \begin{align}\label{eqscalingaftersect5mfgnew}
 \left\{\begin{array}{ll}
 -\Delta u_{\varepsilon}+C_H|\nabla u_{\varepsilon}|^{\gamma}+\lambda_M \varepsilon^{\gamma'}=-\big(K_{(n-\gamma')}\ast m_{\varepsilon}\big)(x)+\varepsilon^{\gamma'}V(\varepsilon x+x_{\varepsilon}),\\
 -\Delta m_{\varepsilon}-C_H\gamma\nabla\cdot(m_{\varepsilon}|\nabla u_{\varepsilon}|^{\gamma-2}\nabla u_{\varepsilon})=-\Delta m_{\varepsilon}+\nabla \cdot w_{\varepsilon}=0,\\
 \int_{\mathbb R^n}m_{\varepsilon}\,dx=M.
 \end{array}
 \right.
 \end{align}
Collecting \eqref{thm51property1}, \eqref{optimalinequality415} and \eqref{criticalfirstwantshow}, one gets
\begin{align}\label{54equalitynormalize}
C_L\int_{\mathbb R^n}\bigg|\frac{w_{\varepsilon}}{m_{\varepsilon}}\bigg|^{\gamma'}m_{\varepsilon}\,dx=\varepsilon^{\gamma'}C_L\int_{\mathbb R^n}\bigg|\frac{w_{M}}{m_{M}}\bigg|^{\gamma'}m_{M}\,dx\equiv 1,
\end{align}
\begin{align}\label{55equalitynormalize}
\int_{\mathbb R^n}m_{\varepsilon}(x)\big(K_{(n-\gamma')}\ast m_{\varepsilon}\big)(x)\,dx=\varepsilon^{\gamma'}\int_{\mathbb R^n}m_{M}(x)\big(K_{(n-\gamma')}\ast m_{M}\big)(x)\,dx\rightarrow 2,
 \end{align}
and
\begin{align}\label{59limitafterV}
\int_{\mathbb R^n} V(\varepsilon x+x_{\varepsilon})m_{\varepsilon}\,dx=\int_{\mathbb R^n}V(x)m_M\,dx\rightarrow 0\text{~as~}M\nearrow M^*.
\end{align}
Following the similar argument employed in the derivation of \eqref{eq4.301}, we utilize \eqref{eqscalingaftersect5mfgnew} and \eqref{54equalitynormalize} to obtain
\begin{align*}
M\lambda_M=\mathcal E(m_M,w_M)-\frac{1}{2}\int_{\mathbb R^n}m_M(x)\big(K_{(n-\gamma')}\ast m_M \big)(x)\,dx=o(1)-\frac{1}{2}\varepsilon^{-\gamma'}\int_{\mathbb R^n}m_{\varepsilon}(x)\big(K_{(n-\gamma')}\ast m_{\varepsilon} \big)(x)\,dx,
\end{align*}
which implies
\begin{align}\label{eq5.8}
\lambda_M\varepsilon^{\gamma'}\rightarrow -\frac{1}{M^*}\text{~as~}M\nearrow M^*.
\end{align}

We apply the maximum principle to the $u$-equation in \eqref{eqscalingaftersect5mfgnew}, then deduce that
\begin{align}\label{maximumprincipleconclusionsect5}
\lambda_M\varepsilon^{\gamma'}\geq -\big(K_{(n-\gamma')}\ast m_{\varepsilon}\big)(0)+\varepsilon^{\gamma'}V(x_{\varepsilon})\geq -\big(K_{(n-\gamma')}\ast m_{\varepsilon}\big)(0),
\end{align}
which indicates
\begin{align}\label{eq510mlowerbounduniform}
\big(K_{(n-\gamma')}\ast m_{\varepsilon}\big)(0)\geq -\lambda_M\varepsilon^{\gamma'}\geq C>0.
\end{align}
% where and in the arguments below,  $C>0$ denotes a  constant independent of $\varepsilon,$ which may change line to line.
%\ref{blowupanalysismlinfboundcritical}
{
Now, we claim that there exists some constant $C>0$ such that
\begin{align}\label{eq6.110}
\varepsilon^{\gamma'}V(x_{\varepsilon} )\leq C.
\end{align}
If this is not the case, one can find some subsequence $\varepsilon_l\rightarrow 0$ such that $\varepsilon_l^{\gamma'}V(x_{\varepsilon_l})\rightarrow +\infty$. Then, with the aid of \eqref{maximumprincipleconclusionsect5}, one has %Note also
%\begin{align*}
%\varepsilon^rV(x_{\varepsilon})-m_{\varepsilon}^{\alpha^*}(0)\leq \lambda_{\varepsilon}\varepsilon^{r}\rightarrow-C,
%\end{align*}
%which implies
%\begin{align}
%\varepsilon^r V(x_{\varepsilon})\leq m^{\alpha^*}_{\varepsilon}(0),
%\end{align}
%and
\begin{align}\label{eq6.111}
\frac{\big(K_{\alpha^*}\ast m_{\varepsilon_l}\big)(0)}{\varepsilon_{l}^{ \gamma'}V(x_{\varepsilon_l})}\geq C,
\end{align}
where $C>0$ is some constant independent of $\varepsilon_l.$  
% It then follows that 
% $$\frac{m_{\varepsilon}(x_0)}{\varepsilon_l^rV(x_{\varepsilon_l})}\geq C,$$
% where $x_0$ and $C>0$ are independent of $\varepsilon.$
Define
\begin{align}\label{eqscaling}
v_{l}(x):=a_{l}^{\gamma'-2}u_{l}(x_0+a_{l}x),~~\mu_{l}(x):=a_{l}^{n}m_{l}(x_0+a_{l}x),~~a_{l}:=\frac{1}{\varepsilon_lV(x_{\varepsilon_l})^{\frac{1}{\gamma'}}},
\end{align}
then one has
\begin{align*}
a_{l}^{\gamma'}=\frac{1}{\varepsilon_l^{\gamma'}V(x_{\varepsilon_l})}\rightarrow 0,~~a_{l}^{\gamma'}\varepsilon_l^{\gamma'}V(x_{\varepsilon_l})=1.
\end{align*}
By substituting \eqref{eqscaling} into \eqref{eqscalingaftersect5mfgnew}, we find
\begin{align}\label{413rescale2024206}
\left\{\begin{array}{ll}
-\Delta v_l+C_H|\nabla v_l|^{\gamma}+a_l^{\gamma'}\lambda_{M}=a_l^{\gamma'}V(x_l+a_lx)-a^{\gamma'-n}_l\Big(K_{\alpha^*}\ast\mu_l \Big), &x\in\mathbb R^n,\\
\Delta \mu_l+C_H\gamma\nabla\cdot(|\nabla v_l|^{\gamma-2}\nabla v_l\mu_l)=0,&x\in\mathbb R^n.
\end{array}
\right.
\end{align}
%We have
%\begin{align}
%a_{\varepsilon}^r\varepsilon^rV(x_{\varepsilon})=1.
%\end{align}
By using the assumption \eqref{V2mainassumption_2}, one gets
\begin{align*}
a_{l}^{\gamma'}\varepsilon^{\gamma'}_l V(a_{l}\varepsilon_l x+x_{\varepsilon_l})=\frac{V(a_{\varepsilon_l}\varepsilon_l x+x_{\varepsilon_l})}{V(x_{\varepsilon_l})}\leq C,
\end{align*}
where $C>0$ is some constant independent of $l.$  Noting that
\begin{align*}
\Vert \mu_l^{\frac{n-\alpha^*}{n+\alpha^*}}\Vert^{1+\frac{n+\alpha^*}{n-\alpha^*}}_{L^{1+\frac{n+\alpha^*}{n-\alpha^*}}(B_R(0))}=a_l^{\frac{\gamma'n}{2n-\gamma'}}\Vert m_{\varepsilon_l}\Vert_{L^{\frac{2n}{2n-\gamma'}}(B_{Ra_l}(x_l))}\rightarrow 0\text{~as~}l\rightarrow +\infty,
\end{align*}
we utilize the maximal regularity shown in Lemma \ref{thmmaximalregularity} to obtain
%\begin{align}
%\int|\mu_{\varepsilon}^{\alpha^*}|^{1+\frac{1}{\alpha^*}}\,dx=\int \mu_{\varepsilon}^{1+\alpha^*}\,dx=a_{\varepsilon}^{\alpha^* n}\int m_{\varepsilon}^{1+\alpha^*}\,dx\rightarrow 0.
%\end{align}
\begin{align*}
\Vert |\nabla v_{l}|^{\gamma}\Vert_{L^{1+\frac{n+\alpha^*}{n-\alpha^*}}(B_{R/2})}\leq C,
\end{align*}
where $R>0$ and $C>0$ are some constants.  Focusing on the $m$-equation of \eqref{413rescale2024206}, we similarly apply the standard elliptic regularity estimates (See Theorem 1.6.5 in \cite{bogachev2022fokker}) to obtain $\mu_l\in C^{0,\theta}(B_{R/4}(0))$ with $\theta\in(0,1)$ independent of $l$. By a direct calculation, we conclude from  \eqref{eq6.111} that
\begin{align}\label{eq6.112}
\Big(K_{\alpha^*}\ast \mu_{l}\Big)(0)=a_{l}^{r}\Big(K_{\alpha^*}\ast m_{l}\Big)(0)= \frac{\Big(K_{\alpha^*}\ast \mu_{l}\Big)(0)}{\varepsilon_l^{r}V(x_{\varepsilon_l})}\geq C>0.
\end{align}
% since $\mu_{l}(0)=a_{l}^nm_{l}(0)$, one finds
% \begin{align*}
% \mu_{l}^{\alpha^*}(0)=\frac{m^{\alpha^*}_{l}(0)}{\varepsilon_l^{r}V(x_{\varepsilon_l})}\geq C>\delta>0.
% \end{align*}
This together with the H\"{o}lder's continuity of $\mu_l$ implies that
\begin{align}\label{lbd11172024}
\int_{B_{R/4}(0)}\mu_{l}(x)\,dx\geq C>0,
\end{align}
where $R>0$ sufficiently large and independent of $l$. %Hence, there must be a point $x_0\in B_{\frac{R}{4}}(0)$, without loss of generality we may assume $x_0=0$, such that 
%\begin{align*}
%\mu_l(0)\geq \delta>0.
%\end{align*}
%Then, using the H\"{o}lder's continuity of $\mu_l$ again, we get 
%\begin{align*}
%\mu_l \geq \delta>0 \text{~in~}B_{\tilde{R}}(0),
%\end{align*}
%where $0<\tilde{R}<R/4$.
In light of $\varepsilon^{\gamma'}_lV(x_{\varepsilon_l})\rightarrow+\infty$, we have the fact that $|x_{\varepsilon_l}|\rightarrow +\infty.$  As a consequence, there exists $\delta>0$ such that $V(x_{\varepsilon_l})\geq 2\delta.$  Then
It follows from \eqref{lbd11172024} that 
\begin{align*}
&\int_{\mathbb R^n} V(\varepsilon_l x+x_{\varepsilon_l})m_{\varepsilon_l} (x)\,dx\nonumber\\
%=&\int_{\mathbb R^n}  V(\varepsilon x+x_{\varepsilon})a_{\varepsilon}^{-n}\mu_{\varepsilon} (x/a_{\varepsilon})\,dx\nonumber\\
=&\int_{\mathbb R^n}V(\varepsilon_l a_{\varepsilon_l }x+x_{\varepsilon_l})\mu_{l}\,dx\geq \delta\int_{B_{{R}}(0)}\mu_l\,dx\geq C{\delta}>0,
\end{align*}
as $\varepsilon_l\rightarrow 0.$
Whereas, 
\begin{align*}
\int_{\mathbb R^n} V(\varepsilon x+x_{\varepsilon})m_{\varepsilon} (x)\,dx\rightarrow 0 ~\text{as}~\varepsilon\rightarrow 0,
\end{align*}
which reaches a contradiction.  This completes the proof of our claim \eqref{eq6.110}.
}

%By applying Theorem 4.1 shown in \cite{cesaroni2018concentration} when $r\leq n$ or using $W^{1,\hat q}(\mathbb R^n)\hookrightarrow C^{0,\theta}(\mathbb R^n) $ for some $\theta\in(0,1)$ when $r>n$,}  we can get  %from the above %estimates that
 %\begin{equation}\label{eq5.11}
  %   \Vert m_{\varepsilon}\Vert_{L^\infty}\leq C<\infty.
 %\end{equation}

% \leq m_{\varepsilon}(0)^{\frac{r}{n}}+\lambda_M\varepsilon^r
%Therefore,  we have %follows from   (\ref{maximumprincipleconclusionsect5}) again  that
 %\begin{align}\label{eq5.12}
 %\varepsilon^rV(x_{\varepsilon})\leq  C<+\infty.
 %\end{align}
 % On the other hand, noting that $V$ satisfies (\ref{V2mainassumption_2}), one finds for $R>0$ large enough,
%\begin{align}\label{eq512insect5new}
%\varepsilon^rV(\varepsilon x+x_{\varepsilon})\leq C_R<+\infty,\text{ for all }|x|\leq 4R,
%\end{align}
%where positive constant $C_R$ depends on $R$ but is independent of $\varepsilon.$

%Similarly as shown in the proof of Theorem \ref{thm11}, we  estimate  $\nabla u_{\varepsilon}$ and rewrite the $u$-equation of \eqref{eqscalingaftersect5mfgnew} as
%\begin{equation}
%-\Delta u_{\varepsilon}=-C_H|\nabla u_{\varepsilon}|^{r'}+g_\varepsilon(x)\ \text{ with }g_\varepsilon(x):=-\lambda_{M}\varepsilon^r +\varepsilon^r  V(x_{\varepsilon}+\varepsilon x)-m^{\frac{r}{n}}_{\varepsilon}.
% \end{equation}
% By using (\ref{eq512insect5new}), we find
% $|g_{\varepsilon}(x)|\leq C_R<+\infty$ for $|x|\leq 4R.$ One further has
% $|\nabla u(x)|\leq C_R<+\infty$ for $|x|<2R$ by invoking \eqref{26lemmagradientestimategeneralvu}.
Moreover, since $V$ satisfies (\ref{V2mainassumption_2}), one further obtains for $R>0$ large enough,
\begin{align}\label{eq512insect5new}
				\varepsilon^{\gamma'}V(\varepsilon x+x_{\varepsilon})\leq C_R<+\infty,\text{ for all }|x|\leq 4R,
			\end{align}
			where constant $C_R>0$ depends on $R$ and is independent of $\varepsilon.$
			
		Similarly as discussed in the proof of Theorem \ref{thm11}, we  estimate  $\nabla u_{\varepsilon}$ and rewrite the $u$-equation of \eqref{eqscalingaftersect5mfgnew} as
			\begin{equation}\label{negativedeltaueq6521}
				-\Delta u_{\varepsilon}=-C_H|\nabla u_{\varepsilon}|^{\gamma}+g_\varepsilon(x)\ \text{ with }g_\varepsilon(x):=-\lambda_{M}\varepsilon^{\gamma'} +\varepsilon^{\gamma'} V(x_{\varepsilon}+\varepsilon x)-\big(K_{\alpha^*}\ast m_{\varepsilon}\big)(x).
			\end{equation}
		Noting that $\big(K_{\alpha^*}\ast m_{\varepsilon}\big)\in L^{1+\frac{n+\alpha^*}{n-\alpha^*}}(\mathbb R^n)$, we utilize Lemma \ref{thmmaximalregularity} to get $|\nabla u_{\varepsilon}|^{\gamma}\in L_{\rm loc}^{1+\frac{n+\alpha^*}{n-\alpha^*}}(\mathbb R^n)$, i.e. $|\nabla u_{\varepsilon}|^{\gamma-1}\in L_{\rm loc}^{\big(1+\frac{n+\alpha^*}{n-\alpha^*}\big)\gamma'}(\mathbb R^n)$.  By using Lemma \ref{lemma21-crucial-cor}, we further obtain that $m\in C^{0,\theta}_{\text{loc}}(\mathbb R^n)$ for some $\theta\in(0,1)$ since $m$ satisfies the second equation in (\ref{eqscalingaftersect5mfgnew}).% Thanks to Lemma \ref{sect2-lemma21-gradientu}, we further obtain
		%	\begin{align*}
		%		|\nabla u_{\varepsilon}|\leq C_R<+\infty,~~\forall |x|<2R,
	%		\end{align*}
	%		which implies
	%		\begin{align*}
	%		\Vert |\nabla u_{\varepsilon}|^{r'}\Vert_{L^p(B_{2R}(0))}+\Vert g_{\varepsilon}\Vert_{L^p(B_{2R}(0)}\leq C_{p,R}<+\infty.
%			\end{align*}
%%   Then, similarly as the derivation of \eqref{eq4.38}  and \eqref{eq4.401},  we can obtain that for some $\theta\in(0,1)$,
 %\begin{align}\label{eq514upperboundmcalpha}
%    \Vert m_{\varepsilon}\Vert_{C^{0,\theta}(B_{2R}(0))}\leq C<\infty,~~
 %\end{align}
%and
 \begin{align}\label{eq515C2thetaquestion}
\Vert u_{\varepsilon}\Vert_{C^{2,\theta}(B_R(0))}\leq C<\infty.
 \end{align}
  In light of (\ref{eq510mlowerbounduniform}), we have from (\ref{eq515C2thetaquestion}) that there exists a constant $R_0\in(0,1)$ such that
 \begin{align}\label{eq516lowerboundmindependent}
 m_{\varepsilon}(x)\geq C>0,~~\forall |x|<R_0.
 \end{align}

Now, we claim that up to a subsequence,
\begin{equation}\label{eq5.18}
    \lim_{\varepsilon\to0}x_{\varepsilon}= x_0 \ \text{ with }~V(x_0)=0.
\end{equation}
If not, one has either $|x_{\varepsilon}|\rightarrow +\infty$ or $x_{\varepsilon}\rightarrow x_0$ with $V(x_0)>0.$  In the two cases, we both have
$\lim\limits_{x_\varepsilon\to0}V(\varepsilon x+x_\varepsilon)\geq A$ a.e. in $\mathbb R^n$ for some $A>0$.  It then follows from \eqref{eq516lowerboundmindependent} that
\begin{align*}
\lim_{\varepsilon\to 0}\int_{\mathbb R^n}V(\varepsilon x+x_{\varepsilon})m_{\varepsilon}\,dx\geq \frac{A}{2}\int_{B_{R_0}(0)}m_{\varepsilon} (x)\, dx\geq \frac{AC}{2}|B_{R_0}(0)|,
\end{align*}
which contradicts \eqref{59limitafterV}.  Therefore, we find \eqref{eq5.18} holds.

By using (\ref{54equalitynormalize}), we find there exists $(m_0,w_0) \in W^{1,\hat q}(\mathbb R^n)\times \big(L^1(\mathbb R^n)\cap L^{\hat q}(\mathbb R^n)\big)$ such that
\begin{align}\label{eq5.20519}
(m_{\varepsilon},w_{\varepsilon})\rightharpoonup (m_0,w_0) \ \ \text{in~} W^{1,\hat q}(\mathbb R^n)\times  L^{\hat q}(\mathbb R^n) \ \text{ as }\varepsilon\to 0,
\end{align}
where  $m_0\not\equiv 0$ thanks to \eqref{eq516lowerboundmindependent}  and  $\hat q$ is given by (\ref{hatqconstraint}).  Furthermore, invoking (\ref{eq515C2thetaquestion}), one has $u_{\varepsilon}\rightarrow u_0$ in $C^2_{\rm loc}(\mathbb R^n)$.  Moreover, combining \eqref{eqscalingaftersect5mfgnew} with \eqref{eq5.8}, we obtain  $(m_0,u_0)$ satisfies
 \begin{align} \label{eq5.20}
 \left\{\begin{array}{ll}
 -\Delta u_0+C_H|\nabla u_0|^{\gamma}-\frac{1}{M^*}=-K_{(n-\gamma')}\ast m_{0},\\
 -\Delta m_0=-C_H\gamma\nabla\cdot(m_0|\nabla u_0|^{\gamma-2}\nabla u_0)=-\nabla \cdot w_0,\\
 0<\int_{\mathbb R^n}m _0\,dx\leq M^*,~~w_0=-C_Hm_0|\nabla u_0|^{\gamma-2}\nabla u_0,
 \end{array}
 \right.
 \end{align}
 where we have followed the procedure performed in the proof of \eqref{combining1strongconverge2} shown in Section \ref{sect3-optimal}.  In particular, we have used Lemma \ref{poholemma} to obtain that $(m_0, w_0)$ is a minimizer of \eqref{sect2-equivalence-scaling}  and $\int_{\mathbb R^n}m _0\,dx=M^*$.  Thus, we have from (\ref{eq5.20}) that $(u_0,m_0,w_0)$ satisfies \eqref{limitingproblemminimizercritical0}.
On the other hand, we obtain $m_\varepsilon\to m_0$ in $L^1(\mathbb R^n)$, and then with the aid of  \eqref{eq5.20519}, one finds
 \begin{align*}
m_{\varepsilon}\to m_0  \ \ \text{in~} L^{p}(\mathbb R^n),~ \forall ~ p\in[1,{\hat q}^*)  \ \text{ as }\varepsilon\to 0,
\end{align*}
which indicates that \eqref{eq1.40} holds.

Finally, we prove that \eqref{eq141realtionuminmmax}  holds when \eqref{cirant-V} is imposed on $V$.  To this end, we argue by contradiction and assume that, 
then, up to a subsequence, 
 \begin{align}\label{eq5.23}
 \liminf_{\varepsilon\rightarrow 0}\frac{|\bar x_{\varepsilon}-x_{\varepsilon}|}{\varepsilon}=+\infty.
 \end{align}
Define
\begin{align}\label{eq5.24}
\left\{\begin{array}{ll}
\bar m_{\varepsilon}(x):=\varepsilon^{n}m_M(\varepsilon x+\bar x_{\varepsilon})=m_{\varepsilon}\Big(x+\frac{\bar x_{\varepsilon}-x_{\varepsilon}}{\varepsilon}\Big),\\
\bar w_{\varepsilon}(x):=\varepsilon^{n+1}w_M(\varepsilon x+\bar x_{\varepsilon})=w_{\varepsilon}\Big(x+\frac{\bar x_{\varepsilon}-x_{\varepsilon}}{\varepsilon}\Big),\\
\bar u_{\varepsilon}(x):=\varepsilon^{\frac{2-\gamma}{\gamma-1}} u_M(\varepsilon x+\bar x_{\varepsilon})=u_{\varepsilon}\Big(x+\frac{\bar x_{\varepsilon}-x_{\varepsilon}}{\varepsilon}\Big).
\end{array}
\right.
\end{align}
Now, we claim that $\exists R_0>0$ and $C>0$ independent of $\varepsilon$ such that
\begin{align}\label{claimuniformlowerboundmcriticalsect5}
\bar m_{\varepsilon}(x)\geq C>0,\ \ \forall~ |x|<R_0.
\end{align}
Invoking \eqref{eq5.24}, we have (\ref{claimuniformlowerboundmcriticalsect5}) is equivalent to
\begin{align}\label{eq5.25lowerboundsect5}
m_{\varepsilon}(x)\geq C>0, \ \ \forall~\Big|x-\frac{\bar x_{\varepsilon}-x_{\varepsilon}}{\varepsilon}\Big|<R_0.
\end{align}
In light of \eqref{eq510mlowerbounduniform}, we find
\begin{equation}\label{eq5.27}
    \bar m_\varepsilon(0)=\|\bar m_\varepsilon\|_{L^\infty(\mathbb R^n)}=\| m_\varepsilon\|_{L^\infty(\mathbb R^n)}>C>0.
\end{equation}
To show (\ref{eq5.25lowerboundsect5}), we have from the first equation in \eqref{eqscalingaftersect5mfgnew} that $\bar u_{\varepsilon}$ satisfies
\begin{align}
-\Delta \bar u_{\varepsilon}+C_H|\nabla \bar u_{\varepsilon}|^{\gamma}=\bar g_{\varepsilon}(x):=-\lambda_M\varepsilon^{\gamma'}-\big(K_{(n-\gamma')}\ast \bar m_{\varepsilon}\big)(x)+\varepsilon^{\gamma'}V(\varepsilon x+\bar x_{\varepsilon}).
\end{align}  
Following the argument shown in \cite[Theorem 4.1]{cesaroni2018concentration}, we consider the following two cases:

\textbf{Case 1:} Assume that there exists some constant $C>0$ independent of $\varepsilon$ such that $\bar x_{\varepsilon}$ satisfies
 \begin{align*}
 \limsup_{\varepsilon\rightarrow 0}\varepsilon^{\gamma'}V(\bar x_{\varepsilon})\leq C<+\infty.
 \end{align*}
 Then thanks to \eqref{eq5.27}, we follow the same argument performed in the derivation of \eqref{negativedeltaueq6521}, \eqref{eq515C2thetaquestion} and \eqref{eq516lowerboundmindependent} to obtain the claim (\ref{claimuniformlowerboundmcriticalsect5}).

\textbf{Case 2:}  Suppose that $\bar x_{\varepsilon}$ satisfies
\begin{align}\label{eq5.29}
\liminf_{\varepsilon\rightarrow 0} \varepsilon^{\gamma'}V(\bar x_{\varepsilon})=+\infty.
\end{align}
Define
\begin{align}\label{eq5.30}
\tilde m(x)=\varepsilon^n m_M(\varepsilon x)=m_{\varepsilon}\bigg(x-\frac{x_{\varepsilon}}{\varepsilon}\bigg),\
\tilde u(x)=\varepsilon^{\frac{2-\gamma}{\gamma-1}}u_M(\varepsilon x),
\tilde w(x)=\varepsilon^{n+1}w_M(\varepsilon x),
\end{align}
then obtain from \eqref{eqscalingaftersect5mfgnew} that $(\tilde m,\tilde u,\tilde w)$ satisfies
\begin{align}
\left\{\begin{array}{ll}
-\Delta \tilde u+C_H|\nabla \tilde u|^{\gamma}+\lambda_M\varepsilon^{\gamma'}=\varepsilon^{\gamma'}V(\varepsilon x)-K_{(n-\gamma')}\ast \tilde m_{\varepsilon},&x\in\mathbb R^n,\\
-\Delta \tilde m-C_H\gamma\nabla\cdot(\tilde m|\nabla \tilde u|^{\gamma-2}\nabla \tilde u)=0,&x\in\mathbb R^n,\\
\int_{\mathbb R^n}\tilde m_{\varepsilon}\,dx =M\nearrow M^*, \ \ \tilde w_{\varepsilon}=-C_H\gamma\tilde m|\nabla \tilde u|^{\gamma-2}\nabla \tilde u.
\end{array}
\right.
\end{align}
Since $V$ satisfies (\ref{cirant-V}), we utilize Lemma \ref{sect2-lemma21-gradientu} to get
\begin{align}\label{eq5.31sect5gradtildeu}
|\nabla \tilde u_\varepsilon|\leq C\big(1+\sigma_\varepsilon^{\frac{1}{\gamma}}|x|^{\frac{b}{\gamma}}\big), \ \ \sigma_\varepsilon:=\varepsilon^{\gamma'+b}.
\end{align}
 Denote $y_{\varepsilon}:=\frac{x_{\varepsilon}}{\varepsilon}$ and $\bar y_{\varepsilon}:=\frac{\bar x_{\varepsilon}}{\varepsilon}$, which are  the minimum and maximum points of $\tilde u_\varepsilon(x)$ and $\tilde m_\varepsilon(x),$ respectively. With the aid of \eqref{eq5.18}, we obtain
$|y_{\varepsilon}|\leq C\varepsilon^{-1}.$  Then, we obtain from  \eqref{eq5.31sect5gradtildeu} that
\begin{align}\label{eq5.33}
|\tilde u_{\varepsilon}(0)|\leq |\tilde u_{\varepsilon}(y_{\varepsilon})|+| y_{\varepsilon}|\sup_{|y|\leq |y_{\varepsilon}|}|\nabla \tilde u_{\varepsilon}(y)|
\leq 1+C\varepsilon^{-1}+C\varepsilon^{-1}\sigma_\varepsilon^{\frac{1}{\gamma}}|y_{\varepsilon}|^{\frac{b}{\gamma}}\leq 1+C\varepsilon^{-1}.
\end{align}
As a consequence,
\begin{align}\label{eq534uniformupperboundsect5}
\tilde u_{\varepsilon}(x)\leq \tilde u_{\varepsilon}(0)+|x|\sup|\nabla u_{\varepsilon}(x)|
\leq 1+C\varepsilon^{-1}+\sigma_\varepsilon^{\frac{1}{\gamma}}|x|^{\frac{b}{\gamma}+1}.
\end{align}
Collecting \eqref{eq5.29}, \eqref{eq5.33} and (\ref{eq534uniformupperboundsect5}), we proceed the same argument shown in \cite[Theorem 4.1]{cesaroni2018concentration} to get $\tilde m_{\varepsilon}\in C^{0,\theta}(B_R(\bar y_\varepsilon))$ with $\theta\in(0,1)$ and $R>0$ independent of $\varepsilon.$  Since $\bar y_\varepsilon$ is maximum point of $\tilde m_\varepsilon(x)$, we combine \eqref{eq5.27} with \eqref{eq5.30} to get $\tilde m_{\varepsilon}(\bar y_{\varepsilon})\geq C>0.$
Hence, we have there exists some $R_0>0$ independent of $\varepsilon$ such that
\begin{align*}
\tilde m_{\varepsilon}(x)>\frac{C}{2}>0,\forall~|x-\bar y_{\varepsilon}|<R_0.
\end{align*}
Noting $\bar y_\varepsilon=\frac{\bar x_\varepsilon}{\varepsilon}$, we find from the above estimate and \eqref{eq5.30} that (\ref{eq5.25lowerboundsect5}) holds. 

Thus, if the potential $V$ satisfies (\ref{cirant-V}), then (\ref{claimuniformlowerboundmcriticalsect5}) and \eqref{eq5.25lowerboundsect5} hold.  Whereas, \eqref{eq5.25lowerboundsect5} together with \eqref{eq5.23} contradicts the fact that  $m_{\varepsilon}$ converges strongly  to $m_0$ in $L^1(\mathbb R^n)$.  As a consequence, \eqref{eq141realtionuminmmax} holds and this  completes the proof of Theorem \ref{thm13basicbehavior}. \end{proof}
%\textcolor{red}{$u_{\varepsilon}\rightharpoonup u_0$.}  Thus, $m_{\varepsilon}\rightarrow m_0$ in $L^1(\mathbb R^n)\cap L^{1+\frac{r}{n}}(\mathbb R^n)$ as $\varepsilon_M\searrow 0.$

% Let $1\leq \alpha'<r$ and $\beta=\frac{r-1}{r}\alpha'$, then
 %\begin{align*}
 %\int_{\mathbb R^n} |w|^{\alpha'}\,dx=&\int_{\mathbb R^n}|w|^{\alpha'}|m|^{-\beta}m^{\beta}\,dx\\
 %\leq & \bigg(\int_{\mathbb R^n}\Big(|w|^{\alpha}m^{-\beta}\Big)^{\frac{\alpha}{r}}\,dx\bigg)^{\frac{\alpha}{r}}\bigg(\int_{\mathbb R^n}(m^{\beta})^{\frac{r}{r-\alpha}}\,dx\bigg)^{\frac{r-\alpha}{r}}\\
 %=&\Big(\int_{\mathbb R^n}|w|^rm^{r-1}\,dx\Big)^{\frac{\alpha}{r}}\Big(\int_{\mathbb R^n}|m|^{\frac{r-1}{r-\alpha}}\,dx\Big)^{\frac{r-\alpha}{r}},
 %\end{align*}
 %textcolor{red}{which implies}
 %\begin{align}
 %\Vert \nabla m\Vert_{L^\alpha}\leq \Gamma_{\alpha}\Vert w\Vert_{L^\alpha}<+\infty.
 %\end{align}

In Theorem \ref{thm13basicbehavior}, we see that  as $M\nearrow M^*,$ the ground states $(m_M,w_M)$ to problem (\ref{ealphaM-117}) concentrate and become localized patterns, in which the profiles are determined by $(m_0,w_0),$ the minimizer to problem (\ref{optimal-inequality-sub}).  We mention that with some typical expansions imposed on potential $V$ locally, the detailed asymptotics of ground states can be captured and we shall discuss them in Subsection \ref{refinedblow}.

%---

\subsection{Refined Blow-up Behaviors}\label{refinedblow}
%\subsection{Refined Blow-up Behaviors of Ground States as $M\nearrow M^*$ }\label{refinedblow}
In this subsection, we shall analyze the refined asymptotic profiles of the rescaled minimizer $(m_{\varepsilon},w_{\varepsilon})$ and prove Theorem \ref{thm14preciseblowup}. 
As shown in Theorem \ref{thm14preciseblowup}, we assume $V(x)$ has $l\in \mathbb N$ distinct zeros defined by $\{P_1,\cdots, P_l\}\subset \mathbb{R}^n$; moreover, $\exists a_i>0$, $q_i>0$, $d>0$ such that
\begin{align}\label{taylorexpansionV}
V(x)=a_i|x-P_i|^{q_i}+O(|x-P_i|^{q_i+1}), \ \ \text{~if~}|x-P_i|\leq d.
\end{align}
Define $q=\max\{q_1,\cdots,q_l\}$, $Z=\{P_i~|~q_i=q, i=1,\cdots,l\}$ and denote
\begin{equation}\label{eq6.3}
\mu=\min\{\mu_i~|~P_i\in Z, i=1,\cdots,l\} ~\text{ with }~\mu_i=\min\limits_{y\in\mathbb R^n}H_i(y),\ H_i(y)=\int_{\mathbb R^n} a_i|x+y|^{q_i}m_0(x)\,dx.
\end{equation}
%and $\mu_i=\min\limits_{y\in\mathbb R^n}H_i(y).$  Let $\mu=\min\{\mu_i|P_i\in Z, i=1,\cdots,l\}$
Set $Z_0=\{P_i~|~P_i\in Z\text{~and~} \mu_i=\mu, i=1,\cdots,l\}$ consisted of all weighted flattest zeros of $V(x).$ Collecting the above notations, we first establish the precise upper bound of $e_{\alpha^*,M}$  as $M\nearrow M^*$ stated as follows:
\begin{lemma}\label{lemma61}
The $e_{\alpha^*,M}$, defined by \eqref{ealphacritical-117}, satisfies
\begin{align}\label{eq6.5}
e_{\alpha^*,M}\leq [1+o(1)]\frac{q+\gamma'}{q}\bigg(\frac{q\mu}{\gamma'}\bigg)^{\frac{\gamma'}{\gamma'+q}}\bigg[1-\frac{M}{M^*}\bigg]^{\frac{q}{\gamma'+q}},\ \text{as} \ M\nearrow M^*. 
\end{align}
\end{lemma}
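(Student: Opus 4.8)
The plan is to construct an explicit competitor for $e_{\alpha^*,M}$ built from the minimizer $(m_0,w_0)$ of the optimal Gagliardo-Nirenberg inequality \eqref{sect2-equivalence-scaling} (equivalently \eqref{optimal-inequality-sub} with $\alpha=\alpha^*$), suitably scaled and centered. Concretely, fix $P_i\in Z_0$, so that $V(x)=a_i|x-P_i|^{q}+O(|x-P_i|^{q+1})$ near $P_i$ with $q_i=q$ and $\mu_i=\mu$. For parameters $t>0$ and $y\in\mathbb R^n$ to be optimized, set
\begin{align}\label{eq:competitor}
m^{t,y}(x):=\frac{M}{M^*}\,t^n m_0\!\big(t(x-P_i)-y\big),\qquad w^{t,y}(x):=\frac{M}{M^*}\,t^{n+1} w_0\!\big(t(x-P_i)-y\big),
\end{align}
which lies in $\mathcal K_M$ since $\int m^{t,y}=M$ and the continuity equation is scaling invariant. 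I would substitute \eqref{eq:competitor} into the energy \eqref{41engliang}. The kinetic term scales like $t^{\gamma'}$ and the nonlocal term like $t^{\gamma'}$ as well (recall $\alpha^*=n-\gamma'$, so $n-\alpha^*=\gamma'$); using the Pohozaev identity \eqref{370criticalpohoidentity} for $(m_0,w_0)$, namely $C_L\int |w_0/m_0|^{\gamma'}m_0 = \frac12\int m_0(K_{\alpha^*}\ast m_0)$ together with $\int m_0\,dx=M^*$ and $M^*=2\Gamma_{\alpha^*}$, the kinetic-minus-nonlocal combination collapses to $\frac{t^{\gamma'}}{2}\big(1-\frac{M}{M^*}\big)\int m_0(K_{\alpha^*}\ast m_0)\,dx=t^{\gamma'}\big(1-\frac{M}{M^*}\big)$ after normalizing $C_L\int|w_0/m_0|^{\gamma'}m_0=1$ (so $\int m_0(K_{\alpha^*}\ast m_0)=2$).

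The potential term requires the local expansion. Since $m_0$ decays exponentially (Theorem \ref{thm11-optimal}), the mass of $m^{t,y}$ concentrates in a ball of radius $O(1/t)$ around $P_i+y/t$, which lies within the expansion neighborhood once $t$ is large. Changing variables $z=t(x-P_i)-y$,
\begin{align}\label{eq:Vterm}
\int_{\mathbb R^n}V(x)m^{t,y}(x)\,dx=\frac{M}{M^*}\int_{\mathbb R^n}\Big(a_i\big|\tfrac{z+y}{t}\big|^{q}+O\big(\tfrac{|z+y|^{q+1}}{t^{q+1}}\big)\Big)m_0(z)\,dz
= \frac{M}{M^*}\,\frac{H_i(y)}{t^q}+O\!\big(t^{-(q+1)}\big),
\end{align}
using the definition $H_i(y)=\int a_i|z+y|^{q}m_0(z)\,dz$ from \eqref{eq6.3}. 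Thus, up to lower-order terms and the factor $M/M^*\to1$,
\begin{align}\label{eq:Efinal}
e_{\alpha^*,M}\le \mathcal E(m^{t,y},w^{t,y})= \big(1+o(1)\big)\Big[t^{\gamma'}\big(1-\tfrac{M}{M^*}\big)+\tfrac{H_i(y)}{t^q}\Big]+o\big(t^{-q}\big).
\end{align}
Choosing $y$ so that $H_i(y)$ is as close to $\mu=\mu_i=\inf_y H_i(y)$ as desired, and then optimizing the scalar function $t\mapsto t^{\gamma'}\delta + \mu t^{-q}$ with $\delta:=1-\frac{M}{M^*}$, the minimum is attained at $t_*=\big(\frac{q\mu}{\gamma'\delta}\big)^{1/(\gamma'+q)}$ and equals $\frac{q+\gamma'}{q}\big(\frac{q\mu}{\gamma'}\big)^{\gamma'/(\gamma'+q)}\delta^{q/(\gamma'+q)}$; note $t_*\to\infty$ as $M\nearrow M^*$, which justifies the use of the local expansion and the absorption of the $O(t^{-(q+1)})$ error into $o(t_*^{-q})=o(\delta^{q/(\gamma'+q)})$. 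This yields exactly \eqref{eq6.5}.

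The main technical point — rather than a deep obstacle — is making the error control in \eqref{eq:Vterm} uniform: one must verify that the tail of $m_0$ outside the expansion ball contributes only a negligible amount to $\int V\,m^{t,y}$ despite $V$ possibly growing (like $e^{\delta|x|}$ by (V2)); here the exponential decay $m_0(x)\le c_1 e^{-c_2|x|}$ beats any such growth after rescaling, since on the region $|z|\gtrsim \eta t$ the factor $e^{-c_2|z|}$ is super-polynomially small in $t$ while $V$ evaluated there is at most exponential in $|z|/t\cdot$(bounded), so the product is controlled. One also needs $H_i$ to actually attain or be approached near its infimum $\mu$; if $H_i$ attains $\mu$ at some finite $y_0$ we simply take $y=y_0$, otherwise we take a minimizing sequence $y_k$ and pass $k\to\infty$ after $M\nearrow M^*$ via a diagonal argument. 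Everything else is the elementary one-variable optimization of $t^{\gamma'}\delta+\mu t^{-q}$ and bookkeeping of the $(1+o(1))$ factors coming from $M/M^*\to1$.
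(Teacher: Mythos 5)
Your proof is correct and takes what is almost certainly the same route as the reference \cite[Lemma 6.1]{cirant2024critical} that the paper cites without reproducing: insert a trial pair obtained by mass-rescaling, translating, and dilating the ground state $(m_0,w_0)$ of the critical Gagliardo--Nirenberg problem, use the Pohozaev identity \eqref{6dian2} to reduce the kinetic-minus-nonlocal part to a single $t^{\gamma'}$ term, expand $V$ locally around a flattest zero, and then optimize the scalar function $t^{\gamma'}\delta+\mu t^{-q}$.

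Two minor points, neither of which affects the conclusion. First, you dropped a factor $M/M^*$ in the kinetic-minus-nonlocal combination: the exact identity is $\frac{M}{M^*}t^{\gamma'}\bigl(1-\frac{M}{M^*}\bigr)$ rather than $t^{\gamma'}\bigl(1-\frac{M}{M^*}\bigr)$; this is harmless since $M/M^*=1+o(1)$, but for cleanliness it is worth tracking it explicitly so that the $(1+o(1))$ in the final bound has a transparent source. Second, the contingency ``otherwise we take a minimizing sequence $y_k$\ldots'' is unnecessary: by the exponential decay \eqref{eq6.1}, $H_i$ is finite, continuous (dominated convergence), and coercive (since $m_0\not\equiv 0$), so its minimum is attained; indeed the paper's definition of $\mu_i$ as a $\min$ presupposes this. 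Your discussion of the tail outside $|x-P_i|\le d$ is the only real technical point here, and your argument (exponential decay of $m_0$ beating the at-most-exponential growth of $V$, with a $t$-dependent improvement once $c_2 t>\delta$) is the right one.
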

\begin{proof}
The proof is similar as the argument shown in \cite[Lemma 6.1]{cirant2024critical} with slight modifications.  We omit the details.
\end{proof}

In Section \ref{sect5preciseblowup}, we find 
% As shown in Section \ref{sect5preciseblowup}, 
$(m_{\varepsilon},w_{\varepsilon},u_{\varepsilon})$
converges to $(m_0,w_0,u_0)$ in the following sense: $$\text{$m_{\varepsilon}\rightarrow m_0$ in $L^p(\mathbb R^n)\ \forall ~p\in[1,{\hat q}^*) $, $w_{\varepsilon}\rightharpoonup w_0$ $L^{\hat q}(\mathbb R^n)$ and $u_{\varepsilon}\rightarrow u_0$ in $C^2_{\text{loc}}(\mathbb R^n)$},$$
where $(m_0,w_0)$ is the minimizer of $\Gamma_{\alpha^*}$ and correspondingly, $(u_0, m_0,w_0)$ satisfies \eqref{limitingproblemminimizercritical0}. Moreover, Lemma \ref{mdecaylemma} and Lemma \ref{poholemma} imply $\exists\delta_1>0$ and $C_{\delta_1}>0$ such that
\begin{equation}\label{eq6.1}
 m_0(x)\leq C_{\delta_1}C^{-\delta_1|x|},\end{equation}
and
\begin{align}\label{6dian2}
C_L\int_{\mathbb R^n}\Big|\frac{w_0}{m_0}\Big|^{\gamma'}m_0\,dx=\frac{1}{2}\int_{\mathbb R^n}m_0(x)\big(K_{(n-\gamma')}\ast m_0 \big)(x)\,dx=1.
\end{align}

Next, invoking Lemma \ref{lemma61}, \eqref{eq6.1} and \eqref{6dian2}, we are going to prove Theorem \ref{thm14preciseblowup}, which is

% With the upper bound of $e_{\alpha^*,M}$ given in Lemma \ref{lemma61}, we shall prove Theorem \ref{thm14preciseblowup}, which is

\medskip
\textbf{Proof of Theorem \ref{thm14preciseblowup}:}
\begin{proof}
Thanks to Theorem \ref{thm13basicbehavior}, we have $x_{\varepsilon}\rightarrow P_i$ for some $1\leq i\leq l.$  In addition, noting that $(m_M,w_M)$ is the minimizer of problem \eqref{ealphacritical-117}, one gets
\begin{align}
e_{\alpha^*,M}=\mathcal E(m_M,w_M)=&\varepsilon^{-\gamma'}C_L\int_{\mathbb R^n}\bigg|\frac{w_{\varepsilon}}{m_{\varepsilon}}\bigg|^{\gamma'}m_{\varepsilon}\,dx-\frac{\varepsilon^{-r}}{2}\int_{\mathbb R^n}m_{\varepsilon}\big(K_{(n-\gamma')}\ast m_{\varepsilon}\big)(x)\,dx+\int_{\mathbb R^n}V(\varepsilon x+x_{\varepsilon})m_{\varepsilon}(x)\,dx\nonumber\\
\geq &\frac{1}{2}\varepsilon^{-\gamma'}\Big[\frac{M^*}{M}-1\Big]\int_{\mathbb R^n}m_{\varepsilon}\big(K_{(n-\gamma')}\ast m_{\varepsilon}\big)(x)\,dx+\int_{\mathbb R^n}V(\varepsilon x+x_{\varepsilon})m_{\varepsilon}(x)\,dx.\label{eq6.10}
\end{align}
%Note that
%\begin{align}
%\frac{1}{1+\frac{r}{n}}\lim_{\varepsilon \rightarrow 0}\int_{\mathbb R^n}m_{\varepsilon}^{1+\frac{r}{n}}\,dx=1,
%\end{align}
By the direct calculation, we obtain
\begin{align}\label{eq6.11}
\int_{\mathbb R^n}V(\varepsilon x+x_{\varepsilon})m_{\varepsilon}(x)\,dx=\varepsilon^{q_i}\int_{\mathbb R^n}\frac{V(\varepsilon x+x_{\varepsilon})}{|\varepsilon x+x_{\varepsilon}-P_i|^{q_i}}\bigg|x+\frac{x_{\varepsilon}-P_i}{\varepsilon}\bigg|^{q_i}m_{\varepsilon}(x)\,dx.
\end{align}
In light of $x_{\varepsilon}\rightarrow P_i$, then one has
\begin{align}\label{eq6.12}
\lim_{\varepsilon\rightarrow 0}\frac{V(\varepsilon x+x_{\varepsilon})}{|\varepsilon x+x_{\varepsilon}-P_i|^{q_i}}=\lim_{\varepsilon\rightarrow 0}\frac{a_i|\varepsilon x+x_{\varepsilon}-P_i|^{q_i}+O(|\varepsilon x+x_{\varepsilon}-P_i|^{q_i+1})}{|\varepsilon x+x_{\varepsilon}-P_i|^{q_i}}=a_i, \ \text{~a.e.~in~}\mathbb R^n.
\end{align}
 Now, we claim that
\begin{align}\label{629uniformlyboundxp}
\text{$q_i=q=\max\{q_1,\cdots,q_l\}$ and }~\limsup_{\varepsilon\rightarrow 0}\Big|\frac{x_{\varepsilon}-P_i}{\varepsilon}\Big| \text{~is uniformly bounded.}
\end{align}
Indeed, if $\eqref{629uniformlyboundxp}$ is not true, then we have either $q_i<q$ or up to  a subsequence, $\lim_{\varepsilon\rightarrow 0}\big|\frac{x_{\varepsilon}-P_i}{\varepsilon}\big|=+\infty$. Then by using Fatou's lemma, we conclude from \eqref{eq1.40}, \eqref{eq6.11} and \eqref{eq6.12} that
\begin{align*}
\lim_{\varepsilon\to 0}\varepsilon^{-q}\int_{\mathbb R^n}V(\varepsilon x+x_{\varepsilon})m_{\varepsilon}\,dx
=\lim_{\varepsilon\to 0}\varepsilon^{q_i-q}\int_{\mathbb R^n}\frac{V(\varepsilon x+x_{\varepsilon})}{|\varepsilon x+x_{\varepsilon}-P_i|^{q_i}}\Big|x+\frac{x_{\varepsilon}-P_i}{\varepsilon}\Big|^{q_i}m_{\varepsilon}\,dx\geq \beta\gg1
\end{align*}
for any constant $\beta\gg1$ large enough.  Combining \eqref{54equalitynormalize} with \eqref{eq6.10}, one gets
%When $q_i<q$, one applies Conclusion (i) in Theorem \ref{thm13basicbehavior} to get
%\begin{align}\label{sect611combine1}
%\lim_{\varepsilon\searrow 0} \varepsilon^{q_i-q}=+\infty.
%\end{align}
%If $\Big|\frac{x_{\varepsilon}-P_i}{\varepsilon}\Big|\rightarrow +\infty,$ we have from Fatou's lemma that
%\begin{align}\label{combine2sect622}
%&\lim_{\varepsilon\searrow 0}\int_{\mathbb R^n}\frac{V(\varepsilon x+x_{\varepsilon})}{|\varepsilon x+x_{\varepsilon}-P_i|^{q_i}}\Big|x-\frac{x_{\varepsilon}-P_i}{\varepsilon}\Big|^{q_i}m_{\varepsilon}\,dx\nonumber\\
%\geq & \int_{\mathbb R^n}\lim_{\varepsilon\rightarrow 0}\frac{V(\varepsilon x+x_{\varepsilon})}{|\varepsilon x+x_\varepsilon-P_i|^{q_i}}\Big|x-\frac{x_{\varepsilon}-P_i}{\varepsilon}\Big|^{q_i}m_{\varepsilon}\,dx\nonumber\\
%\geq &\int_{\mathbb R^n} a_0\lim_{\varepsilon\rightarrow 0}\Big|x-\frac{x_{\varepsilon}-P_i}{\varepsilon}\Big|^{q_i}m_0\,dx=+\infty.
%\end{align}
%Combining (\ref{sect611combine1}) with (\ref{combine2sect622}), one obtains for large constant $\beta>0,$
\begin{align*}
e_{\alpha^*,M}\geq& \frac{1}{2} \varepsilon^{-\gamma'}\Big[\frac{M^*}{M}-1\Big]\int_{\mathbb R^n}m_{\varepsilon}\big(K_{(n-\gamma')}\ast m_{\varepsilon}\big)(x)\,dx+\beta\varepsilon^{q}
=[1+o_{\varepsilon}(1)]\Big[\frac{M^*}{M}-1\Big]\varepsilon^{-\gamma'}+\beta\varepsilon^{q}\\
\geq &(1+o_{\varepsilon}(1))\frac{q+\gamma'}{q}\bigg(\frac{q\beta}{\gamma'}\bigg)^{\frac{\gamma'}{\gamma'+q}}\Bigg[\frac{M^*}{M}-1\Bigg]^{\frac{q}{\gamma'+q}}, ~\text{ where }~ \beta\gg1,
\end{align*}
which contradicts  Lemma \ref{lemma61}.  This completes the proof of claim \eqref{629uniformlyboundxp}.

With the help of \eqref{629uniformlyboundxp}, we obtain that $\exists y_0\in\mathbb R^n$ such that, up to a subsequence,
\begin{align*}
\lim_{\varepsilon\rightarrow 0}\frac{x_{\varepsilon}-P_i}{\varepsilon}=y_0.
\end{align*}
Then we aim to prove that $y_0$ satisfies \eqref{132thm14}, i.e. $H_i(y_0)=\inf\limits_{y\in \mathbb R^n}H_i(y)=\mu$ with $P_i\in Z_0$.  To begin with, noting $q_i=q$, we apply Fatou's lemma then conclude from \eqref{taylorexpansionV}, \eqref{eq6.3} and \eqref{eq1.40} that
\begin{align}
\lim_{\varepsilon\rightarrow 0}\varepsilon^{-q}\int_{\mathbb R^n} V(\varepsilon x+x_{\varepsilon})m_{\varepsilon}\,dx&=\lim_{\varepsilon\rightarrow 0}\int_{\mathbb R^n} \frac{V\Big(\varepsilon \big(x+\frac{x_{\varepsilon}-P_i}{\varepsilon}\big)+P_i\Big)}{|\varepsilon \big(x+\frac{x_{\varepsilon}-P_i}{\varepsilon}\big)|^q}\bigg| x+\frac{x_{\varepsilon}-P_i}{\varepsilon}\bigg|^q m_{\varepsilon}\,dx\nonumber\\
&\geq \int_{\mathbb R^n}a_i|x+y_0|^{q}m_0\,dx  \label{byusingsect61}
\geq \mu,
\end{align}
where the last two equalities hold if and only if \eqref{132thm14} holds. Thus, we have
\begin{equation}\label{eq6.16}
\begin{split}
e_{\alpha^*,M}\geq &\varepsilon^{-\gamma'}\Big[\frac{M^*}{M}-1\Big](1+o(1))+\varepsilon^{q}\mu(1+o(1))\\
\geq &(1+o(1))\frac{q+\gamma'}{q}\Big(\frac{q\mu}{\gamma'}\Big)^{\frac{\gamma'}{\gamma'+q}}\Big[\frac{M^*}{M}-1\Big]^{\frac{q}{\gamma'+q}}\\
=&(1+o(1))\frac{q+\gamma'}{q}\Big(\frac{q\mu}{\gamma'}\Big)^{\frac{\gamma'}{\gamma'+q}}\Big[1-\frac{M}{M^*}\Big]^{\frac{q}{\gamma'-q}}\Big(\frac{M^*}{M}\Big)^{\frac{q}{\gamma'+q}}\\
\geq&(1+o(1))\frac{q+\gamma'}{q}\Big(\frac{q\mu}{\gamma'}\Big)^{\frac{\gamma'}{\gamma'+q}}\Big[1-\frac{M}{M^*}\Big]^{\frac{q}{\gamma'+q}},
\end{split}
\end{equation}
where the equality holds in the second step if and only if
\begin{align}\label{eq6.17}
\varepsilon=\left[\frac{\gamma'}{q\mu}\left[1-\frac{M}{M^*}\right]\right]^{\frac{1}{\gamma'+q}}(1+o(1)).
\end{align}
Thus, combining \eqref{eq6.16} with \eqref{eq6.5}, one has all equalities in \eqref{eq6.16} hold.  It immediately follows that all "=" in \eqref{byusingsect61} also hold.  Now, we obtain 
 \eqref{131thm14} and \eqref{132thm14}, which completes the proof of Theorem. \ref{thm14preciseblowup}.
\end{proof}
Theorem \ref{thm14preciseblowup} implies that if the local expansion \eqref{taylorexpansionV} is imposed on potential $V$, then the minimizers to problem \eqref{ealphacritical-117} will concentrates at the location where $V$ is weighted flattest as $M\nearrow M^*.$  In particular, the asymptotic behavior of scaling factor $\varepsilon$ is accurately characterized.

%---
\section{Discussion}

In this paper, we mainly investigated the existence of ground states to (\ref{goalmodel}) with critical mass exponent in the nonlocal coupling.  First of all, we analyzed the attainability of the best constant in the Gagliardo-Nirenberg type's ratio defined by (\ref{sect2-equivalence-scaling}), which corresponds the existence of ground states to the potential-free Mean-field Games system.  Next, with the aid of Gagliardo-Nirenberg type's inequality, we employ the variational approach to classify the existence of minimizers to the constrained minimization problem (\ref{ealphaM-117}).  In particular, while discussing the existence of classical solutions to (\ref{goalmodel}) under the subcritical mass, we introduced the mollifier and showed the $L^{\infty}$ of $m$ to the mollified minimization problems, in which the Hardy-Littlewood-Sobolev inequality is crucial.  Then taking the limit and applying standard elliptic regularities, we obtained the existence of classical solutions to (\ref{goalmodel}) under the subcritical mass.  Finally, with some assumptions imposed in the potential $V,$ we performed the scaling argument and blow-up analysis to derive the asymptotic behaviors of ground states to (\ref{goalmodel}) in the singular limit of $M$, where the Pohozaev identities have been intensively used for the $L^1$ convergence of $m.$

There are some interesting problems that deserve the explorations in the future.  In Section \ref{sect3-optimal}, some technical restriction on $m$ was imposed, which is the boundedness of $\int_{\mathbb R^n}m|x|^b\,dx$ for sufficiently small $b>0.$  It is an open problem to remove this condition while establishing the Gagliardo-Nirenberg type's inequality.  It is also intriguing to investigate the properties of ground states including uniqueness, symmetries, etc. to potential-free Mean-field Games systems (\ref{MFG-SS}) with the Hartree coupling and polynomial Hamiltonian.  The extension of our results into a general class of potential $V$ is a challenging problem due to the lower bounds of the value function $u.$ %One of interesting directions is the extension of the potential functions $V$.  We impose some mild assumptions (\ref{V2mainasumotiononv}) on $V$, which does not include the super exponential growth case and the logarithmic potential.  It seems a challenge to remove the technical assumptions due to the lower bound estimates of the value function $u.$ % Owing to the regularity estimates of $m$, the blow-up analysis is required to guarantee the uniform boundedness of $m$ in $L^\infty$.  To achieve this, the potential $V$ is restricted as the form of $|x|^b$ especially when $r\leq n$ with $r'$, the conjugate number of $r$, given in (\ref{MFG-H}).
% However, it seems a challenge to remove the polynomial growth assumption on $V$ in the proof of uniformly boundedness of $\Vert m\Vert_{L^\infty}$ when $r\leq n.$

\begin{appendices}
\setcounter{equation}{0}
\renewcommand\theequation{A.\arabic{equation}}

\section{Basic proerties of Riesz potential}\label{appendixA}
This Appendix is devoted to some well-known results for the estimates involving Riesz potential, which can be found in \cite[Theorem 4.3]{LiebLoss}, \cite[Theorem 14.37]{WheedenZygmund} and \cite[Theorem 2.8]{bernardini2023ergodic}.

\begin{lemma}[Hardy Littlewood-Sobolev inequality] \label{H-L-S}
Assume that $0<\alpha<n$ and $1<r<\frac{n}{\alpha}$. Then for any $f\in L^{r}(\mathbb R^n)$, it holds
\begin{align}\label{eqHLS_1}
	\Vert K_{\alpha}* f\Vert_{L^{\frac{nr}{n-\alpha r}}(\mathbb R^n)}\leq C(n,\alpha,r) 	\Vert  f\Vert_{L^{r}(\mathbb R^n)},
\end{align}
where constant $C>0$ depending on $n$, $\alpha$ and $r.$ 

Moreover, suppose that $r,s>1$ with $\frac{1}{r}-\frac{\alpha}{n}+\frac{1}{s}=1$, $f\in L^{r}(\mathbb R^n)$ and $g\in L^{s}(\mathbb R^n)$. Then, we have there exists a sharp constant $C(n,\alpha,r)$ independent
of $f$ and $g$ such that
\begin{align}\label{eqHLS_2}
\bigg|\int_{\mathbb R^n}\int_{\mathbb R^n}\frac{ f(x) g(y)}{|x-y|^{n-\alpha}}\,dx\,dy\bigg|\leq C(n,\alpha,r)\Vert f\Vert_{L^{r}(\mathbb R^n)}\Vert g\Vert_{L^{s}(\mathbb R^n)}.
\end{align}
\end{lemma}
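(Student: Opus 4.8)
The plan is to establish \eqref{eqHLS_1} first and then deduce \eqref{eqHLS_2} from it by duality. For the implication \eqref{eqHLS_1}$\Rightarrow$\eqref{eqHLS_2}: by Fubini's theorem the bilinear quantity equals $\int_{\mathbb R^n}g(y)(K_\alpha*f)(y)\,dy$, so H\"older's inequality applied with the conjugate pair $\big(\tfrac{nr}{n-\alpha r},s\big)$ gives the bound $\|K_\alpha*f\|_{L^{nr/(n-\alpha r)}}\|g\|_{L^s}$; one checks that the arithmetic $\tfrac1r-\tfrac\alpha n+\tfrac1s=1$ is exactly the condition making $s$ the H\"older conjugate of $\tfrac{nr}{n-\alpha r}$, and that the hypothesis $r<\tfrac n\alpha$ forces $\tfrac1r-\tfrac\alpha n>0$ so this exponent is finite and larger than $1$. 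Combining with \eqref{eqHLS_1} yields \eqref{eqHLS_2} with a finite constant; the optimal (``sharp'') constant is then simply the infimum over admissible constants, and if the explicit value and the extremizers are needed one invokes the Riesz rearrangement inequality to reduce to radially symmetric decreasing $f,g$, which is the route taken in \cite[Theorem 4.3]{LiebLoss}.

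For \eqref{eqHLS_1} I would use Hedberg's argument. Assume $f\ge 0$ (otherwise replace $f$ by $|f|$). For each $x\in\mathbb R^n$ and each $R>0$ split $(K_\alpha*f)(x)=I_R(x)+II_R(x)$ according to $|x-y|<R$ and $|x-y|\ge R$. Decomposing $\{|x-y|<R\}$ into dyadic annuli $\{2^{-j-1}R\le|x-y|<2^{-j}R\}$ and bounding $|x-y|^{\alpha-n}$ by its largest value on each annulus, the average of $f$ over the ball $B_{2^{-j}R}(x)$ is controlled by the Hardy--Littlewood maximal function $Mf(x)$, and the geometric series in $j$ converges because $\alpha>0$, giving $I_R(x)\le C R^{\alpha}Mf(x)$. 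For the far part, H\"older's inequality with exponents $(r,r')$ gives $II_R(x)\le\|f\|_{L^r}\big(\int_{|z|\ge R}|z|^{(\alpha-n)r'}\,dz\big)^{1/r'}$, where the last integral converges exactly because $r<\tfrac n\alpha$ (equivalently $(\alpha-n)r'<-n$), yielding $II_R(x)\le C R^{\alpha-n/r}\|f\|_{L^r}$. Optimizing the sum $C_1R^{\alpha}Mf(x)+C_2R^{\alpha-n/r}\|f\|_{L^r}$ over $R>0$ — choosing $R$ so the two terms balance — produces the pointwise bound
\begin{equation*}
(K_\alpha*f)(x)\le C\,\big(Mf(x)\big)^{1-\alpha r/n}\,\|f\|_{L^r}^{\alpha r/n}.
\end{equation*}

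Finally, raise this inequality to the power $p:=\tfrac{nr}{n-\alpha r}$ and integrate: since $p\,(1-\alpha r/n)=r$, one gets $\|K_\alpha*f\|_{L^p}\le C\,\|f\|_{L^r}^{\alpha r/n}\,\|Mf\|_{L^r}^{1-\alpha r/n}$, and the Hardy--Littlewood maximal theorem $\|Mf\|_{L^r}\le C(n,r)\|f\|_{L^r}$, valid precisely for $r>1$, closes the estimate. The main obstacle — and the only genuinely non-elementary input — is this maximal function bound, which itself rests on the Vitali covering lemma together with the weak-$(1,1)$ estimate and Marcinkiewicz interpolation; it is also the step that explains why the endpoint $r=1$ must be excluded, since there $K_\alpha*$ maps $L^1$ only into weak-$L^{n/(n-\alpha)}$ and a genuine $L^1\to L^{n/(n-\alpha)}$ bound fails (scale a fixed bump by dilations). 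Everything else in the proof is the two elementary splitting estimates and the scaling optimization in $R$.
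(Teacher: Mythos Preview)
Your argument is correct. The Hedberg pointwise bound via the maximal function is a standard and clean route to \eqref{eqHLS_1}, and your duality reduction of \eqref{eqHLS_2} to \eqref{eqHLS_1} is the right observation.

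Note, however, that the paper does not actually prove this lemma: it is stated in the appendix as a well-known result with citations to \cite[Theorem~4.3]{LiebLoss} and \cite[Theorem~14.37]{WheedenZygmund}. So there is no ``paper's own proof'' to compare against --- you have supplied more than the authors did. For completeness, the route in \cite{LiebLoss} is different from yours: Lieb--Loss work directly with the bilinear form \eqref{eqHLS_2}, use the Riesz rearrangement inequality to reduce to symmetric-decreasing $f,g$, and then a layer-cake/conformal argument to identify the sharp constant and the Gaussian-type extremizers. Your Hedberg approach gives a finite constant with less machinery but does not by itself yield the sharp value; you correctly flag this and defer the sharpness claim to the cited reference, which is the appropriate thing to do here.
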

In particular, we find from Lemma \ref{H-L-S} that if $r=s$ in \eqref{eqHLS_2} and $f\in L^{\frac{2n}{n+\alpha}}(\mathbb R^n)$,  then there exists a sharp constant $C(n, \alpha)$ independent
of $f$ and $g$ such that
\begin{align}\label{eqHLS_3}
	\bigg|\int_{\mathbb R^n}\int_{\mathbb R^n}\frac{f(x) f(y)}{|x-y|^{n-\alpha}}\,dx\,dy\bigg|\leq C(n,\alpha)\Vert f\Vert^{2}_{L^{\frac{2n}{n+\alpha}}(\mathbb R^n)}.
\end{align}

\begin{lemma}[C.f. Theorem 2.8 in \cite{bernardini2023ergodic}] \label{HolderforRiesz}
Let $0<\alpha<n$ and $1<r\leq +\infty$ be positive constants such that $r>\frac{n}{\alpha}$ and $s\in\big[1,\frac{n}{\alpha}\big)$ . Then for any $f\in L^{r}(\mathbb R^n)\cap L^{s}(\mathbb R^n)$, we have
\begin{align}\label{InfityRiesz}
	\Vert K_{\alpha}* f\Vert_{L^{\infty}(\mathbb R^n)}\leq C_1 \Vert  f\Vert_{L^{r}(\mathbb R^n)}+C_2 \Vert  f\Vert_{L^{s}(\mathbb R^n)}.
\end{align}\label{holderRiesz}
where $C_1=C(n,\alpha,r)$ and $C_2=C(n,\alpha,s)$. Moreover, if  $0<\alpha-\frac{n}{r}<1$, we have 
\begin{align}
K_{\alpha}* f\in C^{0,\alpha-\frac{n}{r}}(\mathbb R^n).	
\end{align}
In particular, there exists constant $C:=C(n,\alpha, r)>0$ such that
\begin{align*}
\frac{\big|K_{\alpha}* f(x)-K_{\alpha}\ast f(y)\big|}{|x-y|^{\alpha-\frac{n}{r}}}\leq C \Vert  f\Vert_{L^{r}(\mathbb R^n)}, \ \  \ \ \ \forall  x \neq y.
\end{align*} 
\end{lemma}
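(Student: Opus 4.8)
The plan is to prove both assertions by the classical device of splitting the Riesz kernel $K_\alpha(z)=|z|^{-(n-\alpha)}$ into a piece supported near the origin and a piece supported away from it, and estimating each convolution by Hölder's inequality (equivalently, Young's inequality for convolutions). For the $L^\infty$ bound I would write $K_\alpha=K_\alpha\chi_{B_1(0)}+K_\alpha\chi_{\mathbb{R}^n\setminus B_1(0)}=:K^{\mathrm{in}}+K^{\mathrm{out}}$. Since $(n-\alpha)r'<n$ holds precisely when $r>\tfrac{n}{\alpha}$, one has $K^{\mathrm{in}}\in L^{r'}(\mathbb{R}^n)$ with $\|K^{\mathrm{in}}\|_{L^{r'}}=C(n,\alpha,r)$ (the case $r=\infty$, $r'=1$ being trivial because $n-\alpha<n$); similarly $(n-\alpha)s'>n$ holds precisely when $s<\tfrac{n}{\alpha}$, so $K^{\mathrm{out}}\in L^{s'}(\mathbb{R}^n)$ with $\|K^{\mathrm{out}}\|_{L^{s'}}=C(n,\alpha,s)$. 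Young's inequality then gives $\|K^{\mathrm{in}}*f\|_{L^\infty}\le C(n,\alpha,r)\|f\|_{L^r}$ and $\|K^{\mathrm{out}}*f\|_{L^\infty}\le C(n,\alpha,s)\|f\|_{L^s}$, and summing yields \eqref{InfityRiesz}. In particular $K_\alpha*f$ is well-defined and bounded on $\mathbb{R}^n$; this is the only use of the hypothesis $f\in L^s$.

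For the Hölder bound, fix $x\neq y$, put $\delta:=|x-y|$, and split $\mathbb{R}^n=N\cup F$ with $N:=B_{2\delta}(x)$ and $F:=\mathbb{R}^n\setminus B_{2\delta}(x)$, so that
\[
K_\alpha*f(x)-K_\alpha*f(y)=\int_{N}\big(K_\alpha(x-z)-K_\alpha(y-z)\big)f(z)\,dz+\int_{F}\big(K_\alpha(x-z)-K_\alpha(y-z)\big)f(z)\,dz.
\]
On $N$ I would discard the difference and bound each kernel separately: Hölder with exponents $r,r'$ gives $\int_{N}|x-z|^{-(n-\alpha)}|f(z)|\,dz\le\|f\|_{L^r}\big(\int_{B_{2\delta}(0)}|w|^{-(n-\alpha)r'}dw\big)^{1/r'}=C(n,\alpha,r)\,\delta^{\,\alpha-\frac{n}{r}}\|f\|_{L^r}$, the $w$-integral being finite because $(n-\alpha)r'<n$; since $N\subset B_{3\delta}(y)$ the same bound holds with $|x-z|$ replaced by $|y-z|$. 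On $F$ the whole segment $[x,y]$ stays at distance at least $\tfrac12|x-z|$ from $z$, so the mean value inequality applied to $t\mapsto|t-z|^{-(n-\alpha)}$ gives $|K_\alpha(x-z)-K_\alpha(y-z)|\le C(n,\alpha)\,\delta\,|x-z|^{-(n-\alpha+1)}$, and Hölder with $r,r'$ yields $\int_{F}|K_\alpha(x-z)-K_\alpha(y-z)|\,|f(z)|\,dz\le C\,\delta\,\|f\|_{L^r}\big(\int_{|w|\geq 2\delta}|w|^{-(n-\alpha+1)r'}dw\big)^{1/r'}=C(n,\alpha,r)\,\delta^{\,\alpha-\frac{n}{r}}\|f\|_{L^r}$. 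The convergence of this tail integral and the exact power of $\delta$ both rely on the equivalence $(n-\alpha+1)r'>n\iff\alpha-\tfrac{n}{r}<1$, i.e. on the standing hypothesis. Adding the two contributions gives the seminorm estimate for all $x\neq y$, and combined with the $L^\infty$ bound this shows $K_\alpha*f\in C^{0,\alpha-\frac{n}{r}}(\mathbb{R}^n)$.

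I do not expect a genuine obstacle here: the argument is elementary once the kernel is split. The only points requiring care are the exponent bookkeeping — verifying that $r>\tfrac{n}{\alpha}$, $s<\tfrac{n}{\alpha}$ and $0<\alpha-\tfrac{n}{r}<1$ are exactly the integrability conditions $(n-\alpha)r'<n$, $(n-\alpha)s'>n$ and $(n-\alpha+1)r'>n$ used above, and keeping track of the endpoint convention $r=\infty$, $r'=1$ — together with the elementary geometric observation on $F$ that makes the mean value inequality applicable with a clean constant. These can all be checked by direct computation, and the conclusion $K_\alpha*f\in C^{0,\alpha-\frac{n}{r}}(\mathbb{R}^n)$ is immediate from boundedness plus the global Hölder seminorm bound.
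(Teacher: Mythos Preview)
Your argument is correct and is precisely the classical kernel-splitting proof. Note, however, that the paper does not supply its own proof of this lemma: it is stated in the appendix as a known fact with references to \cite{bernardini2023ergodic} and \cite{WheedenZygmund}, so there is nothing to compare beyond observing that your near/far decomposition and H\"older/mean-value estimates are exactly the standard proof one finds in those sources.
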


Lemma \ref{HolderforRiesz} exhibits the $L^{\infty}$ and H\"{o}lder estimates of $K_{\alpha}* f$ under certain conditions of $f$ and $\alpha$.

\end{appendices}

\section*{Acknowledgments}

% Cirant partially supported by the Gruppo Nazionale per l'Analisi Matematica, la Probabilit\`a e le loro Applicazioni (GNAMPA) of the Istituto Nazionale di Alta Matematica (INdAM), Italy, by the King Abdullah University of Science and Technology (KAUST) project CRG2021-4674 ``Mean-Field Games: models, theory, and computational aspects'', and by the project funded by the EuropeanUnion--NextGenerationEU under the National Recovery and Resilience Plan (NRRP), Mission 4 Component 2 Investment 1.1 - Call PRIN 2022 No.\ 104 of February 2, 2022 of Italian Ministry of University and Research; Project 2022W58BJ5 (subject area: PE - Physical Sciences and Engineering) ``PDEs and optimal control methods in mean field games, population dynamics and multi-agent models''.  The research of F. Kong and J. Wei is partially supported by NSERC RGPIN-2018-03773.  
Xiaoyu Zeng is supported by NSFC (Grant Nos. 12322106, 12171379, 12271417) . Huan-Song Zhou  is supported by NSFC (Grant Nos. 11931012, 12371118) . 

% We thank Professor M. Ward for stimulating discussions, critically reading the manuscript and  many critical suggestions. We also thank Professors C. Cosner and N. Rodriguez for  valuable comments.  The research of J. Wei is partially supported by NSERC of Canada.

\bibliographystyle{abbrv}
\bibliography{ref}

\begin{thebibliography}{10}

\bibitem{bartsch1995}
T.~Bartsch and Z.~Wang.
\newblock {Existence and multiplicity results for some superlinear elliptic problems on ${\mathbb R}^N$}.
\newblock {\em Commun. Part. Diff. Eq.}, 20(9-10):1725--1741, 1995.

\bibitem{BF1984}
A.~Bensoussan and J.~Frehse.
\newblock Nonlinear elliptic systems in stochastic game theory.
\newblock {\em J. Reine Angew. Math.}, 350:23--67, 1984.

\bibitem{BF1995}
A.~Bensoussan and J.~Frehse.
\newblock Ergodic {B}ellman systems for stochastic games in arbitrary dimension.
\newblock {\em Proc. Roy. Soc. London Ser. A}, 449(1935):65--77, 1995.

\bibitem{bernardini2022mass}
C.~Bernardini.
\newblock {Mass concentration for Ergodic Choquard Mean-Field Games}.
\newblock {\em ESAIM: COCV}, 2023.

\bibitem{bernardini2023ergodic}
C.~Bernardini and A.~Cesaroni.
\newblock Ergodic mean-field games with aggregation of choquard-type.
\newblock {\em J. Differential Equations}, 364:296--335, 2023.

\bibitem{bogachev2022fokker}
V.~Bogachev, N.~Krylov, M.~R{\"o}ckner, and S.~Shaposhnikov.
\newblock {\em Fokker--Planck--Kolmogorov Equations}, volume 207.
\newblock American Mathematical Society, 2022.

\bibitem{CGMT13}
F.~Cagnetti, D.~Gomes, H.~Mitake, and H.~Tran.
\newblock A new method for large time behavior of degenerate viscous {H}amilton-{J}acobi equations with convex {H}amiltonians.
\newblock {\em Ann. Inst. H. Poincar\'{e}}, 32(1):183--200, 2015.

\bibitem{Car13}
P.~Cardaliaguet.
\newblock Long time average of first order mean field games and weak {KAM} theory.
\newblock {\em Dyn. Games Appl.}, 3(4):473--488, 2013.

\bibitem{Car12}
P.~Cardaliaguet, J.~Lasry, P.~Lions, and A.~Porretta.
\newblock Long time average of mean field games.
\newblock {\em Netw. Heterog. Media}, 7(2):279--301, 2012.

\bibitem{cesaroni2018concentration}
A.~Cesaroni and M.~Cirant.
\newblock Concentration of ground states in stationary mean-field games systems.
\newblock {\em Anal. PDE}, 12(3):737--787, 2018.

\bibitem{cirant2015generalization}
M.~Cirant.
\newblock {A generalization of the Hopf--Cole transformation for stationary Mean-Field Games systems}.
\newblock {\em C. R. Math. Acad. Sci. Paris}, 353(9):807--811, 2015.

\bibitem{cirant2016stationary}
M.~Cirant.
\newblock Stationary focusing mean-field games.
\newblock {\em Commun. Part. Diff. Eq.}, 41(8):1324--1346, 2016.

\bibitem{cirantgoffi2021}
M.~Cirant and A.~Goffi.
\newblock Maximal {$L^q$}-regularity for parabolic {H}amilton-{J}acobi equations and applications to mean field games.
\newblock {\em Ann. PDE}, 7(2):Paper No. 19, 40, 2021.

\bibitem{cirant2024critical}
M.~Cirant, F.~Kong, J.~Wei, and X.~Zeng.
\newblock Critical mass phenomena and blow-up behavior of ground states in stationary second order mean-field games systems with decreasing cost.
\newblock {\em preprint}, 2024.

\bibitem{GM15}
D.~Gomes and H.~Mitake.
\newblock Existence for stationary mean-field games with congestion and quadratic {H}amiltonians.
\newblock {\em NoDEA Nonlinear Differential Equations Appl.}, 22(6):1897--1910, 2015.

\bibitem{GPV13}
D.~Gomes, S.~Patrizi, and V.~Voskanyan.
\newblock On the existence of classical solutions for stationary extended mean field games.
\newblock {\em Nonlinear Anal.}, 99:49--79, 2014.

\bibitem{GPM13}
D.~Gomes, E.~Pimentel, and H.~S\'{a}nchez-Morgado.
\newblock Time-dependent mean-field games in the superquadratic case.
\newblock {\em ESAIM Control Optim. Calc. Var.}, 22(2):562--580, 2016.

\bibitem{GPM12}
D.~Gomes, G.~Pires, and H.~S\'{a}nchez-Morgado.
\newblock A-priori estimates for stationary mean-field games.
\newblock {\em Netw. Heterog. Media}, 7(2):303--314, 2012.

\bibitem{gomes2016regularity}
D.~A. Gomes, E.~Pimentel, and V.~Voskanyan.
\newblock {\em Regularity theory for mean-field game systems}.
\newblock Springer, 2016.

\bibitem{Huang}
M.~Huang, R.~Malham\'{e}, and P.~Caines.
\newblock Large population stochastic dynamic games: closed-loop {M}c{K}ean-{V}lasov systems and the {N}ash certainty equivalence principle.
\newblock {\em Commun. Inf. Syst.}, 6(3):221--251, 2006.

\bibitem{Lasry}
J.~Lasry and P.~Lions.
\newblock Mean field games.
\newblock {\em Jpn. J. Math.}, 2(1):229--260, 2007.

\bibitem{LiebLoss}
E.~H. Lieb and M.~Loss.
\newblock {\em Analysis}, volume~14 of {\em Grad. Stud. Math.}
\newblock Providence, RI: AMS, American Mathematical Society, 1996.

\bibitem{meszaros2015variational}
A.~M{\'e}sz{\'a}ros and F.~Silva.
\newblock A variational approach to second order mean field games with density constraints: the stationary case.
\newblock {\em J. Math. Pures Appl.}, 104(6):1135--1159, 2015.

\bibitem{reed2012methods}
M.~Reed.
\newblock {\em Methods of modern mathematical physics: Functional analysis}.
\newblock Elsevier, 2012.

\bibitem{WheedenZygmund}
R.~L. Wheeden and A.~Zygmund.
\newblock {\em Measure and integral. {An} introduction to real analysis}, volume~43 of {\em Pure Appl. Math., Marcel Dekker}.
\newblock Marcel Dekker, Inc., New York, NY, 1977.

\end{thebibliography}

\end{document}